\newcommand\tenq[2][1]{%
\def\useanchorwidth{T}%
\ifnum#1>1%
\stackunder[0pt]{\tenq[\numexpr#1-1\relax]{#2}}{\!\scriptscriptstyle\thicksim}%
\else%
\stackunder[1pt]{#2}{\!\scriptstyle\thicksim}%
\fi%
}
\DeclareRobustCommand\widecheck[1]{{\mathpalette\@widecheck{#1}}}
\def\@widecheck#1#2{%
    \setbox\z@\hbox{\m@th$#1#2$}%
    \setbox\tw@\hbox{\m@th$#1%
       \widehat{%
          \vrule\@width\z@\@height\ht\z@
          \vrule\@height\z@\@width\wd\z@}$}%
    \dp\tw@-\ht\z@
    \@tempdima\ht\z@ \advance\@tempdima2\ht\tw@ \divide\@tempdima\thr@@
    \setbox\tw@\hbox{%
       \raise\@tempdima\hbox{\scalebox{1}[-1]{\lower\@tempdima\box
\tw@}}}%
    {\ooalign{\box\tw@ \cr \box\z@}}}
\def\given{\,|\,}
\def\biggiven{\,\big{|}\,}
\def\Biggiven{\,\Big{|}\,}
\def\tr{\mathop{\text{tr}}\kern.2ex}
\def\tZ{{\tilde Z}}
\def\tX{{\tilde X}}
\def\tV{{\tilde V}}
\def\tW{{\tilde W}}
\def\P{{\mathrm P}}
\def\E{{\mathrm E}}
\def\d{{\mathrm d}}
\newcommand{\zahl}[1]{\llbracket #1\rrbracket}
\newcommand\yestag{\addtocounter{equation}{1}\tag{\theequation}}
\newcolumntype{L}[1]{>{\raggedright\let\newline\\\arraybackslash\hspace{0pt}}m{#1}}
\newcolumntype{C}[1]{>{  \centering\let\newline\\\arraybackslash\hspace{0pt}}m{#1}}
\newcolumntype{R}[1]{>{ \raggedleft\let\newline\\\arraybackslash\hspace{0pt}}m{#1}}
\newcolumntype{d}[1]{D{.}{.}{#1}}
\newcolumntype{H}{>{\setbox0=\hbox\bgroup}c<{\egroup}@{}}
\newcolumntype{Z}{>{\setbox0=\hbox\bgroup}c<{\egroup}@{\hspace*{-\tabcolsep}}}
\newcolumntype{b}{X}
\newcolumntype{s}{>{\hsize=.5\hsize}X}
\numberwithin{equation}{section}
\newtheorem{theorem}{Theorem}[section]
\newtheorem{lemma}{Lemma}[section]
\newtheorem{proposition}{Proposition}[section]
\newtheorem{assumption}{Assumption}[section]
\newtheorem{corollary}{Corollary}[section]
\providecommand{\customgenericname}{}
\newcommand{\newcustomtheorem}[2]{%
  \newenvironment{#1}[1]
  {%
   \renewcommand\customgenericname{#2}%
   \renewcommand\theinnercustomgeneric{##1}%
   \innercustomgeneric
  }
  {\endinnercustomgeneric}
}
\theoremstyle{definition}
\newtheorem{definition}{Definition}[section]
\newtheorem{remark}{Remark}[section]
\newcommand{\mylabel}[2]{#2\def\@currentlabel{#2}\label{#1}}
\begin{document}

\setlength{\abovedisplayskip}{5pt}
\setlength{\belowdisplayskip}{5pt}
\setlength{\abovedisplayshortskip}{5pt}
\setlength{\belowdisplayshortskip}{5pt}
\hypersetup{colorlinks,breaklinks,urlcolor=blue,linkcolor=blue}

\title{\LARGE Estimation based on nearest neighbor matching: from density ratio to average treatment effect}

\author{Zhexiao Lin\thanks{Department of Statistics, University of Washington, Seattle, WA 98195, USA; e-mail: {\tt zxlin@uw.edu}}, ~~Peng Ding\thanks{Department of Statistics, University of California, Berkeley, CA 94720, USA; email: {\tt pengdingpku@berkeley.edu}}, ~and~Fang Han\thanks{Department of Statistics, University of Washington, Seattle, WA 98195, USA; e-mail: {\tt fanghan@uw.edu}}
}

\date{\today}

\maketitle

\vspace{-1em}

\begin{abstract}
Nearest neighbor (NN) matching as a tool to align data sampled from different groups is both conceptually natural and practically well-used. In a landmark paper, \cite{abadie2006large} provided the first large-sample analysis of NN matching under, however, a crucial assumption that the number of NNs, $M$, is fixed. This manuscript reveals something new out of their study and shows that, once allowing $M$ to diverge with the sample size, an intrinsic statistic in their analysis actually constitutes a consistent estimator of the density ratio. Furthermore, through selecting a suitable $M$, this statistic can attain the minimax lower bound of estimation over a Lipschitz density function class. Consequently, with a diverging $M$, the NN matching provably yields a doubly robust estimator of the average treatment effect and is semiparametrically efficient if the density functions are sufficiently smooth and the outcome model is appropriately specified. It can thus be viewed as a precursor of double machine learning estimators.
\end{abstract}

{\bf Keywords:} graph-based statistics, stochastic geometry, double robustness, double machine learning, propensity score.

%\hf{Timeline: (1) fix Theorem 5.1 and its proof (today); (2) organize proof sections (today + tomorrow); (3) }

\section{Introduction}\label{sec:intro}

With observations from different groups, matching methods \citep{greenwood1945experimental, chapin1947experimental} aim to balance them through minimizing group differences in observed covariates. Such methods have proven their usefulness for causal inference in various disciplines,  including economics \citep{imbens2004nonparametric}, epidemiology \citep{brookhart2006variable}, political science \citep{ho2007matching,sekhon2008multivariate}, sociology \citep{morgan2006matching}, and statistics \citep{cochran1973controlling,rubin2006matched,rosenbaum2010design}. %, to which a inference of the causal effects is of interest. 

Among all the matching methods, nearest neighbor (NN) matching  \citep{rubin1973matching} is likely the most well-used and easiest to implement approach. In addition, it is computationally attractive as the time complexity of locating NNs is low. In the simplest treatment-control study, NN matching assigns each treatment (control) individual to $M$ control (treatment) individuals with the smallest distance to it. In this regard, two natural questions arise. First, how do we select the number of matches, $M$? This is referred to in the literature as ratio matching, and is both important and delicate, well-known to be related to the bias-variance tradeoff in nonparametic statistics \citep{smith19976,rubin2000combining}. Second, how do we perform large-sample statistical inference for NN matching methods? Such an analysis is usually nonstandard and thus believed to be mathematically challenging. Indeed, it was long-lacking in the literature until \cite{abadie2006large}.

To answer the above two questions, in a series of ingenious papers,  \cite{abadie2006large,abadie2008failure,abadie2011bias,abadie2012martingale} established large-sample properties of $M$-NN matching for estimating the average treatment effect (ATE). These results are, however, only valid under a crucial assumption that, in ratio matching, $M$ is fixed. The according message is then mixed. As a matter of fact, %one of the most well-accepted critiques for $M$-NN matching in estimating ATE pertains to its loss of statistical efficiency as was claimed in \cite{abadie2006large} who 
\cite{abadie2006large} argued --- which we quote here --- that the ATE estimator based on $M$-NN matching with a fixed $M$ is both asymptotically biased and statistically inefficient, namely, it ``does not achieve the semiparametric efficiency bound as calculated by \cite{hahn1998role}''. While bias correction is now feasible to alleviate the first issue \citep{abadie2011bias}, the lack of efficiency seems fundamental. %In the subsequent literature, this has brought concerns, motivated (heuristic) adjustments, and promoted alternative matching approaches \citep{kallus2020generalized} \fbox{to DP: maybe a different reference?}, while the core efficiency problem of $M$-NN matching
%, especially in the case when $M$ is allowed to grow with the sample size $n$, 
%remains mathematically unsettled.

This manuscript revisits the study of \cite{abadie2006large} from a new perspective, bridging $M$-NN matching to density ratio estimation \citep{nguyen2010estimating,sugiyama2012density} as well as double robustness \citep{scharfstein1999adjusting,bang2005doubly}. To this end, our analysis stresses, in ratio matching, the importance of forcing $M$ to diverge with the sample size $n$ in order to achieve statistical efficiency. Our claim is thus aligned with similar ones in various other random graph-based inference problems \citep{MR2083,MR532236,MR947577,MR1212489,MR1701112,MR3909934,MR3961499}, which also include a series of results made by some of the authors in this paper \citep{shi2021ac,shi2020power,lin2021boosting}.

The contributions of this manuscript are mainly two-fold. First, we show that an intrinsic statistic that plays a central role in the analysis of \cite{abadie2006large}, $K_M(x)$ (\citet[Page 240]{abadie2006large}; to be defined in \eqref{eq:KM} of Section \ref{sec:method}), actually gives rise to a consistent density ratio estimator in the two-sample setting. %with covariates from a general $d$-dimensional real space. 
Even more interestingly, from the angle of density ratio estimation, this NN matching-based estimator is to our knowledge the first one that simultaneously satisfies the following three properties.
\begin{enumerate}
\item[\mylabel{itm:1}{(P1)}]  Conceptually {\it one-step}: it directly estimates the density ratio with no need to estimate individual densities, and is thus in line with Vapnik's rule that ``[w]hen solving a problem of interest, one should not solve a more general problem as an intermediate step'' \citep[on the top of Page 477]{vapnik2006estimation}.
\item[\mylabel{itm:2}{(P2)}]  Computationally {\it of low complexity}: it is of a sub-quadratic (and nearly linear when $M$ is small) time complexity  via a careful algorithmic formulation based on $k$-d trees (cf. Algorithms \ref{alg:1}-\ref{alg:2} and Theorem \ref{prop:complexity} ahead), and thus in many scientific applications is computationally more attractive than its optimization-based alternatives \citep{lima2008estimating,kremer2015nearest,borgeaud2021improving}.
\item[\mylabel{itm:3}{(P3)}] Statistically {\it rate-optimal}: it is information-theoretically efficient in terms of achieving an upper bound of estimation accuracy that matches the corresponding minimax lower bound over a class of Lipschitz density functions (cf. the set of results in Section \ref{sec:theory} ahead).
\end{enumerate} 
This estimator itself is accordingly an appealing alternative to existing density ratio estimators. Moreover, it is potentially useful in many data analysis problems (e.g., $f$-divergence estimation, classification, importance sampling, and etc.) where density ratio estimation plays a pivotal role. 

Getting back to the original ATE estimation problem, our second contribution is to bridge the above insights to the bias-corrected matching-based estimator proposed in \cite{abadie2011bias} as well as the double robustness and double machine learning framework introduced in \cite{scharfstein1999adjusting}, \cite{bang2005doubly}, and \cite{chernozhukov2018double}. In fact, their bias-corrected estimator can be formulated as 
\[
\hat{\tau}_M^{\rm bc} = \hat{\tau}^{\rm reg} + \frac{1}{n} \Big[ \sum_{i=1,D_i = 1}^n \Big(1 + \frac{K^1_M(i)}{M}\Big) \hat{R}_i - \sum_{i=1,D_i = 0}^n \Big(1 + \frac{K^0_M(i)}{M}\Big) \hat{R}_i \Big]
\]
(notation to be introduced in Section \ref{sec:matching}), where $1 + K^1_M(\cdot)/M$ and $1 + K^0_M(\cdot)/M$ approach the inverse of the propensity score $e(x)$ and $1-e(x)$, respectively. One could then leverage the general double robustness and double machine learning theory to validate the following two claims of (a double machine learning version of) $\hat{\tau}_M^{\rm bc}$.
\begin{itemize}
\item[(1)] {\it Consistency:} as long as either the density (propensity score) functions satisfy certain conditions or the outcome (regression) model is correctly specified, $M\log n/n\to 0$, and $M\to \infty$ as $n\to \infty$, $\hat{\tau}_M^{\rm bc}$ converges in probability to the population ATE, denoted as $\tau$.
\item[(2)] {\it Semiparametric efficiency:} if the density functions are sufficiently smooth, the outcome model is appropriately specified, and $M$ scales with $n$ at an proper rate, then (a sample-splitting and cross-fitting version of) $\hat{\tau}_M^{\rm bc}$ is an asymptotically normal estimator of $\tau$ with the asymptotic variance attaining the semiparametric efficiency lower bound \citep{hahn1998role}. Furthermore, a simple consistent estimator of the asymptotic variance is available.
\end{itemize}
Our results thus complement those made in \cite{abadie2006large,abadie2011bias}, rendering necessary confidence for practitioners to implement NN matching for inferring the ATE. In addition, although \citet[Theorem 5]{abadie2006large} hints at the necessity of allowing $M$ to diverge for gaining efficiency, we provide rigorous theory for their conjecture.

Technically speaking, our analysis hinges on those $M$'s that grow with $n$. Existing results in NN matching literature, including \cite{abadie2006large}, \cite{abadie2008failure}, \cite{abadie2011bias}, and \cite{abadie2012martingale}, are then limited as they are all focused on a fixed $M$. Instead, we take a different route to establish nonasymptotic moment bounds of $K_M(x)$, where there is more room for $M$ to move around; of note, similar ideas were also pursued in \cite{lin2021boosting} by the authors to analyze rank-based statistics. Detailed explanation of our theoretical analysis, however, has to be left to latter sections.

\paragraph*{Paper organization.} The rest of this manuscript is organized as follows. In Sections \ref{sec:method}-\ref{sec:theory} we introduce the method, computation, and theory of density ratio estimation via NN matching. In detail, Section \ref{sec:method} introduces the statistical setup and the matching-based estimator of density ratio. Section \ref{sec:computation} introduces the algorithms to implement the matching-based estimator constructed on the $k$-d tree structure. Section \ref{sec:theory} delivers the main theory, quantifying both pointwise and global approximation accuracy. Built on the previous three sections, Section \ref{sec:matching} formally elaborates on the double robustness and semiparametric efficiency of the bias-corrected NN matching-based estimator of the ATE. More applications of the proposed matching-based estimator will be covered in Section \ref{sec:discussion}, with proofs of the main results relegated to Section \ref{sec:main-proof} and the rest put in the appendix. Additional results about NN matching that cannot be incorporated in the double robustness and double machine learning framework are exhibited in the supplement. 

\paragraph*{Notation.}
For any integers $n,d\ge 1$, let $\zahl{n}:= \{1,2,\ldots,n\}$, $n!$ be the factorial of $n$, and $\bR^d$ be the $d$-dimensional real space. A set consisting of distinct elements $x_1,\dots,x_n$ is written as either $\{x_1,\dots,x_n\}$ or $\{x_i\}_{i=1}^{n}$, and its cardinality is written by $\lvert \{x_i\}_{i=1}^n \rvert$. The corresponding sequence is denoted $[x_1,\dots,x_n]$ or $[x_i]_{i=1}^{n}$.
The notation $\ind(\cdot)$ is saved for the indicator function.
For any $a,b \in \bR$, write $a \vee b = \max\{a,b\}$ and $a \wedge b = \min\{a,b\}$.
For any two real sequences $\{a_n\}$ and $\{b_n\}$, write $a_n \lesssim b_n$ (or equivalently, $b_n \gtrsim a_n$ or $a_n=O(b_n)$) if there exists a universal constant $C>0$ such that $a_n/b_n \le C$ for all sufficiently large $n$, and write $a_n \prec b_n$ (or equivalently, $b_n \succ a_n$ and $a_n=o(b_n)$) if $a_n/b_n \to 0$ as $n$ goes to infinity. We write $a_n\asymp b_n$ if both $a_n\lesssim b_n$ and $b_n\lesssim a_n$ holds. We use  $\stackrel{\sf d}{\longrightarrow}$ and $\stackrel{\sf p}{\longrightarrow}$ to denote convergence in distribution and in probability, respectively. For any sequence of random variables $\{X_n\}$, write $X_n = o_\P(1)$ if $X_n \stackrel{\sf p}{\longrightarrow} 0$. For any random variable $Z$, $\P_Z$ represents its law. Denote the closed ball in $\bR^d$ centered at $x$ with radius $\delta$ by $B_{x,\delta}$. In the sequel, let $c,C,C',C'',C''',...$ be generic positive constants whose actual values may change at different locations. %Denote the Lebesgue measure by $\lambda$. 
%Denote .

%\fbox{to ZX: double-check whether all notation was used in the main text. }

%\fbox{we can relegate those only used in the proof to the beginning of the proof section.}

\section{Density ratio estimation I: method}\label{sec:method}

From this section to Section \ref{sec:theory}, we consider $X,Z$ to be two general random vectors in $\bR^d$ that are defined on the same probability space, with $d$ to be a fixed positive integer. 

Let $\nu_0$ and $\nu_1$ represent the probability measures of $X$ and $Z$, respectively. Assume $\nu_0$ and $\nu_1$ are absolutely continuous with respect to the Lebesgue measure $\lambda$ on $\bR^d$ equipped with the Euclidean norm $\lVert \cdot \rVert$; denote the corresponding densities (Radon-Nikodym derivatives) by $f_0$ and $f_1$. Assume further that $\nu_1$ is absolutely continuous with respect to $\nu_0$ and write the corresponding density ratio, $f_1/f_0$, as $r$; we set $0/0=0$ by default.

Assume $X_1,\ldots,X_{N_0}$ are $N_0$ independent copies of $X$, $Z_1,\ldots,Z_{N_1}$ are $N_1$ independent copies of $Z$, and $[X_i]_{i=1}^{N_0}$ and $[Z_j]_{j=1}^{N_1}$ are mutually independent. The problem of estimating the density ratio $r$ based on $\{X_1,\ldots,X_{N_0},Z_1,\ldots,Z_{N_1}\}$ is fundamental in economics \citep{cunningham2021causal},   information theory \citep{cover1999elements}, machine learning \citep{sugiyama2012density}, statistics \citep{imbens2015causal}, and other fields. %Indeed, a number of methods have been proposed in the literature, including, among many others, those based on moment-matching \citep{gretton2009covariate}, probabilistic-classification \citep{qin1998inferences}, density-fitting \citep{sugiyama2008direct,nguyen2010estimating}, and density-ratio-fitting \citep{kanamori2009least}. However, assuming $N_0\asymp N_1\asymp {\rm (some)~} N$, none of the above methods is able to achieve a sub-quadratic $o(N^2)$ time complexity \citep{noshad2017direct} and is hence computationally heavier than, e.g., naive 

In density ratio estimation, NN-based estimators are advocated before due to its computational efficiency; cf. \cite{lima2008estimating}, \cite{poczos2011estimation}, \cite{kremer2015nearest}, \cite{noshad2017direct}, \cite{MR3909934}, \cite{zhao2020minimax}, among many others. %However, except for \cite{noshad2017direct}, none of them directly estimates the density ratio and is hence undesirable for various reasons; see, e.g., \citet[Chapter 1.3]{sugiyama2012density} for some high-level discussions in favor of one-step over multiple-step approaches.
In this manuscript, based on \cite{abadie2006large,abadie2008failure,abadie2011bias,abadie2012martingale}'s NN matching framework, we unveil a new density ratio estimator based on NN matching. To this end, some necessary notation is introduced first.

\begin{definition}[NN matching]\label{def:NN-matching}
 For any $x, z \in \bR^d$ and $M \in \zahl{N_0}$, 
\begin{enumerate}[itemsep=-.5ex,label=(\roman*)]
 \item let $\cX_{(M)}(\cdot): \bR^d \to \{X_i\}_{i=1}^{N_0}$ 
  be the mapping %mapping from $\bR^d$ to the sample points in the first group 
  that returns the value of the input $z$'s $M$-th NN in $\{X_i\}_{i=1}^{N_0}$, i.e., the value of $x \in \{X_i\}_{i=1}^{N_0}$ such that
  \begin{align}\label{eq:cX}
    \sum_{i=1}^{N_0} \ind\Big(\lVert X_i - z \rVert \le \lVert x - z \rVert\Big) = M;
  \end{align}
%  (note that the absolute continuity of $\nu_0$ has guaranteed that both the existence and (almost surely) uniqueness of $\cX_{(M)}$ in the above formulation;
  \item\label{def:NN-matching-2}  let $K_M(\cdot): \bR^d\to \zahl{N_1}$ be the mapping  that returns the number of matched times of $x$, i.e., 
  \begin{align}\label{eq:KM}
    K_M(x) = K_M\big(x;\{X_i\}_{i=1}^{N_0},\{Z_j\}_{j=1}^{N_1}\big) := \sum_{j=1}^{N_1} \ind\Big(\lVert x - Z_j \rVert \le \lVert \cX_{(M)}(Z_j) - Z_j \rVert\Big);
  \end{align}
  \item\label{def:NN-matching-3} let $A_M(\cdot): \bR^d\to \cB(\bR^d)$ be the corresponding mapping from $\bR^d$ to the class of all Borel sets in $\bR^d$ so that
    \begin{align}\label{eq:AM}
    A_M(x) = A_M\big(x;\{X_i\}_{i=1}^{N_0}\big) := \Big\{z \in \bR^d : \lVert x - z \rVert \le \lVert \cX_{(M)}(z) - z \rVert\Big\},
  \end{align}
   returns the catchment area of $x$ in the setting of %Definition \ref{def:NN-matching}
   \ref{def:NN-matching-2};
   \item for $i \in \zahl{N_0}$, let $K_M(i)$ and $A_M(i)$ be shorthands of $K_M(X_i)$ and $A_M(X_i)$, respectively. 
\end{enumerate}
\end{definition}

\begin{remark}
Of note, since $\nu_0$ is absolutely continuous with respect to the Lebesgue measure, a solution to \eqref{eq:cX} exists and is unique. \citet[Pages 240 and 260]{abadie2006large} introduced the terms $K_M(\cdot)$ and $A_M(\cdot)$ to analyze the asymptotic behavior of their NN matching-based ATE estimator. We also adopt the terminology ``catchment area'' in Definition \ref{def:NN-matching}\ref{def:NN-matching-3} to align with them.  %However, although they are well known intrinsic statistics in inferring ATE based on NN matching, before this manuscript,  it appears that the relation between $K_M(\cdot), A_M(\cdot)$ and density ratio estimation was never revealed in the literature. 
\end{remark}

The following proposition formally links $K_M(\cdot)$ to $A_M(\cdot)$, was shown and used in the proof of \citet[Lemma 3]{abadie2006large}, and is stated here for aiding understanding. % in our subsequent developments. 
\begin{proposition}[\cite{abadie2006large}]
  For any $x \in \bR^d$, we have
  \[
    K_M(x) = \sum_{j=1}^{N_1} \ind\Big(Z_j \in A_M(x)\Big).
  \]
\end{proposition}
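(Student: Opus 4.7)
The plan is essentially a one-line unravelling of the definitions, so the proof proposal will be brief. Starting from the definition of $K_M(x)$ in \eqref{eq:KM},
\[
K_M(x) = \sum_{j=1}^{N_1} \ind\Big(\lVert x - Z_j\rVert \le \lVert \cX_{(M)}(Z_j) - Z_j\rVert\Big),
\]
I would simply observe that the indicator inside the sum is, by construction, the membership predicate that defines the catchment area $A_M(x)$. Specifically, applying the definition \eqref{eq:AM} with the generic point $z$ instantiated to $Z_j$ shows that
\[
Z_j \in A_M(x) \iff \lVert x - Z_j\rVert \le \lVert \cX_{(M)}(Z_j) - Z_j\rVert,
\]
so each summand in \eqref{eq:KM} equals $\ind(Z_j \in A_M(x))$, yielding the claim.

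There is no real obstacle here: the statement is purely tautological given Definition \ref{def:NN-matching}, and the only thing to be mindful of is that the set $A_M(x)$ is well-defined as a Borel set (which follows because $\cX_{(M)}(\cdot)$ is a measurable function, owing to the almost-sure uniqueness of the solution to \eqref{eq:cX} under the absolute continuity of $\nu_0$). Consequently the random sum on the right-hand side is itself well-defined, and equality holds pointwise for every $x\in\bR^d$ and every realization of $\{X_i\}_{i=1}^{N_0}$ and $\{Z_j\}_{j=1}^{N_1}$. The content of the proposition is therefore not a computation but rather a useful reformulation: it recasts the matched-count statistic $K_M(x)$ as a counting measure $\sum_j \ind(Z_j\in A_M(x))$ against the empirical measure of $[Z_j]_{j=1}^{N_1}$, which is the representation that will later allow one to relate $K_M(x)/M$ to an integral of the density ratio $r$ over $A_M(x)$.
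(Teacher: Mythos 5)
Your proof is correct and is exactly the definitional unravelling one would expect: since \eqref{eq:AM} defines $A_M(x)$ by the predicate $\lVert x - z\rVert \le \lVert \cX_{(M)}(z)-z\rVert$, the summand in \eqref{eq:KM} is $\ind(Z_j\in A_M(x))$ term by term. The paper does not spell out a proof (it simply records the identity, crediting \cite{abadie2006large}), so there is no alternative route to compare against; your remarks on measurability and on the conceptual role of the reformulation are accurate and appropriate.
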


\begin{remark}[Relation between $A_M(i)$'s and Voronoi tessellation when $M=1$]\label{remark:vt}
It is easy to verify that, due to the absolute continuity of $\nu_0$, $[A_1(i)]_{i=1}^{N_0}$ are almost surely disjoint except for a Lebesgue measure zero area, and partition $\bR^d$ into $N_0$ polygons. Furthermore,  one can verify that $\{A_1(i)\}_{i=1}^{N_0}$ are exactly the Voronoi tessellation defined in \cite{voronoi1908nouvelles}, which plays a vital role in stochastic and computational geometry. In this case, each element $A_1(i)$ is the Voronoi cell from the definition of \eqref{eq:AM}.
\end{remark}

With these notation and concepts, we are now ready to introduce the following density ratio estimator based on NN matching.

\begin{definition}[NN matching-based density ratio estimator]
  For any $M\in\zahl{N_0}$ and $x \in \bR^d$, we define the following estimator of $r(x)$,
  \begin{align}\label{eq:rhat}
    \hat{r}_M(x) = \hat{r}_M\Big(x;\{X_i\}_{i=1}^{N_0},\{Z_j\}_{j=1}^{N_1}\Big) := \frac{N_0}{N_1} \frac{K_M(x)}{M}.
  \end{align}
\end{definition}

The estimator $\hat r_M(\cdot)$ is by construction a one-step estimator, and thus automatically satisfies Property \ref{itm:1} in \hyperref[sec:intro]{Introduction}. In the following two sections, we will show that $\hat r_M(\cdot)$ also satisfies Properties \ref{itm:2} and \ref{itm:3}. %, and is thus the first density ratio estimator that simultaneously attains all three properties.

\section{Density ratio estimation II: computation}\label{sec:computation}

This section discusses implementation of and establishes Property \ref{itm:2} for the proposed estimator $\hat r_M(\cdot)$. To this end, we separately discuss two cases:
\begin{itemize}
\item[] {\bf Case I}: estimating only the values of $\hat r_M(\cdot)$ at the observed data points $X_1,\ldots,X_{N_0}$;
\item[] {\bf Case II}: estimating the values of $\hat r_M(\cdot)$ at both the observed data points $X_1,\ldots,X_{N_0}$ and $n$ new points $x_1,\ldots,x_n\in \bR^d$.
\end{itemize} 

{\bf Case I.} In many applications, %including estimation of ATE as well as of $f$-divergence in information theory,  
we are only interested in a functional of density ratios at observed sample points, i.e., the values of $\Phi\big(\{r(X_i)\}_{i=1}^{N_0}\big)$ for some given functions $\Phi$ defined on $\bR^{N_0}$. Check, e.g., \eqref{eq:kl} and -- in a slightly different but symmetric form -- \eqref{eq:mbc} ahead for such examples on Kullback-Leibler (KL) divergence  and ATE estimation. To this end, it is natural to consider the plug-in estimator $\Phi\big(\{\hat{r}_M(X_i)\}_{i=1}^{N_0}\big)$, for which it suffices to compute the values of $\{\hat{r}_M(X_i)\}_{i=1}^{N_0}$. 

Built on the  $k$-d tree structure \citep{bentley1975multidimensional} for tracking NNs, Algorithm~\ref{alg:1} below outlines an easy to implement algorithm to simultaneously compute all the values of $\{\hat{r}_M(X_i)\}_{i=1}^{N_0}$. This algorithm could be regarded as a direct extension of the celebrated Friedman-Bentley-Finkel algorithm \citep{10.1145/355744.355745} to the NN matching setting. 

\begin{algorithm}
\caption{Density ratio estimators at sample points.}\label{alg:1}
  \KwIn{$\{X_i\}_{i=1}^{N_0}$, $\{Z_j\}_{j=1}^{N_1}$, and $M$.}
  \KwOut{$\{\hat{r}_M(X_i)\}_{i=1}^{N_0}$.}
  Build a $k$-d tree using $\{X_i\}_{i=1}^{N_0}$\;
  \For{$j = 1:N_1$}{
    Search the $M$-NNs of $Z_j$ in $\{X_i\}_{i=1}^{N_0}$ using the $k$-d tree\;
    Store the indices of the $M$-NNs of $Z_j$ as $S_j$\;
  }
 Count and store the number of occurrence in $\bigcup_{j=1}^{N_1} S_j$ for each element in $\zahl{N_0}$, which is then $\{K_M(i)\}_{i=1}^{N_0}$\;
  Obtain $\{\hat{r}_M(X_i)\}_{i=1}^{N_0}$ based on \eqref{eq:rhat}.
\end{algorithm}

\vspace{0.3cm}

{\bf Case II.} Suppose we are interested in estimating density ratios at both the observed and $n$ new points in $\bR^d$. A naive algorithm is then to insert each new point into observed points and perform Algorithm~\ref{alg:1} in order. However, this algorithm is not ideal as the corresponding time complexity would be $n$ times the complexity of Algorithm~\ref{alg:1}, which could be computationally heavy with a large number of new points. 

Instead, we develop a more sophisticated implementation. Let the new points be $\{x_i\}_{i=1}^n$. Algorithm~\ref{alg:2} computes all the values of $\{\hat{r}_M(x_i)\}_{i=1}^n$ as well as $\{\hat{r}_M(X_i)\}_{i=1}^{N_0}$. The key message delivered here is that, compared to the aforementioned naive implementation, in Algorithm~\ref{alg:2} we only need to construct one single $k$-d tree;  the matching elements are then categorized to two different sets, corresponding to those with regard to $X_i$'s and $x_i$'s, separately. Such an implementation is thus intuitively much more efficient.

\begin{algorithm}[t]
\caption{Density ratio estimators at both sample and new points.}\label{alg:2}
  \KwIn{$\{X_i\}_{i=1}^{N_0}$, $\{Z_j\}_{j=1}^{N_1}$, $M$, and new points $\{x_i\}_{i=1}^n$.}
  \KwOut{$\{\hat{r}_M(X_i)\}_{i=1}^{N_0}$ and $\{\hat{r}_M(x_i)\}_{i=1}^n$.}
  Build a $k$-d tree using $\{X_i\}_{i=1}^{N_0} \cup \{x_i\}_{i=1}^n$\;
  \For{$j = 1:N_1$}{
    Set $S_j$ and $S'_j$ be two empty sets\;
    $m \gets 1$\;
    \While{$\lvert S_j \rvert < M$}{
      Search the $m$-th NN of $Z_j$ in $\{X_i\}_{i=1}^{N_0} \cup \{x_i\}_{i=1}^n$\;
      \eIf{the $m$-th NN of $Z_j$ is in $\{X_i\}_{i=1}^{N_0}$}{
        add the index into $S_j$\;
      }{
        add the index into $S'_j$\;
      }
      $m \gets m+1$\;
    }
    %\If{$\lvert S_j' \rvert>0$}{Delete the last element of $S_j$\;}
    Store the indices sets $S_j$ and $S'_j$;
  }
  Count and store the number of occurrence in $\bigcup_{j=1}^{N_1} S_j$ for each element in $\zahl{N_0}$, which is then $\{K_M(i)\}_{i=1}^{N_0}$. Count and store the number of occurrence in $\bigcup_{j=1}^{N_1} S'_j$ for each element in $\zahl{n}$, which is then $\{K_M(x_i)\}_{i=1}^n$\;
  Obtain $\{\hat{r}_M(X_i)\}_{i=1}^{N_0}$ and $\{\hat{r}_M(x_i)\}_{i=1}^n$ based on \eqref{eq:rhat}.
\end{algorithm}

The following is the main theorem of this section, elaborating on the computational advantage of the proposed estimator. 

%\fbox{To ZX: note that I changed the following result to a theorem}

\begin{theorem}\phantomsection \label{prop:complexity}
\begin{itemize}
\item[(1)] The average time complexity of Algorithm~\ref{alg:1} to compute all the values of $\{\hat{r}_M(X_i)\}_{i=1}^{N_0}$  is 
\[
O\Big( (d+ N_1M/N_0)N_0\log N_0\Big).
\]
\item[(2)] Assume $[x_i]_{i=1}^n$ are independent and identically distributed (i.i.d.) following $\nu_0$ and are independent of $[X_i]_{i=1}^{N_0}$. Then the average time complexity of Algorithm~\ref{alg:2} to compute all the values of $\{\hat{r}_M(x_i)\}_{i=1}^n$ and $\{\hat{r}_M(X_i)\}_{i=1}^{N_0}$ is 
\[
O\Big( (d + N_1M/N_0) (N_0 + n) \log(N_0 + n) \Big).
\]
\end{itemize}
\end{theorem}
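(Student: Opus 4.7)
The proof plan is to decompose each algorithm's runtime into (a) building the $k$-d tree, (b) answering $N_1$ successive $M$-NN queries, and (c) tallying occurrences to obtain $\{K_M(i)\}$ (and, for Algorithm~\ref{alg:2}, $\{K_M(x_i)\}$). Constructing a balanced $k$-d tree on $N$ points in $\bR^d$ costs $O(dN\log N)$ time via the standard median-split recursion, which contributes the $dN_0\log N_0$ (respectively $d(N_0+n)\log(N_0+n)$) term in the two bounds. The aggregation step (c) is linear in $\sum_j|S_j|$ and $\sum_j|S_j'|$, and will be dominated by step (b).

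The core ingredient is the expected cost of an $M$-NN query in a balanced $k$-d tree. By the classical Friedman--Bentley--Finkel analysis, the expected number of nodes visited during a single-NN search is $O(\log N)$ under mild regularity on the query distribution relative to the data distribution. For the $M$-NN variant --- which maintains a size-$M$ max-heap of candidates and prunes subtrees whose bounding-box distance exceeds the current $M$-th candidate distance --- one can charge each heap admission to an $O(\log N)$-length root-to-leaf path, giving expected per-query cost $O(M\log N)$. Summing over the $N_1$ queries yields an $O(N_1 M\log N_0)$ query-phase cost for Algorithm~\ref{alg:1}, and together with (a) and (c) this reproduces Part~(1).

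For Part~(2), the new twist is that Algorithm~\ref{alg:2} probes successive NNs of $Z_j$ in the joint tree on $\{X_i\}\cup\{x_i\}$ until $M$ hits inside $\{X_i\}_{i=1}^{N_0}$ have been collected. Because both point sets are i.i.d.\ from $\nu_0$, exchangeability implies that, conditional on the unordered joint cloud and on $Z_j$, the indicator that the $k$-th NN of $Z_j$ belongs to $\{X_i\}$ behaves like sampling without replacement from a population of $N_0+n$ items containing $N_0$ ``$X$-items''. The stopping time is then a negative-hypergeometric random variable with expectation $M(N_0+n+1)/(N_0+1)=O(M(N_0+n)/N_0)$. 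Multiplying by the per-probe tree cost $O(\log(N_0+n))$, summing over the $N_1$ queries, and adding (a) and (c) yields the stated bound.

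The main obstacle I anticipate is a clean justification of the $O(M\log N)$ expected $M$-NN query cost when the query and data distributions may differ, as in Algorithm~\ref{alg:1} where queries come from $\nu_1$ while the tree stores $\nu_0$-samples. The standard Friedman--Bentley--Finkel argument requires either a bounded density ratio or an appeal to worst-case balanced $k$-d tree guarantees; isolating the right formulation that faithfully reflects the algorithm yet suffices for the stated average-case bound is the genuinely delicate piece. The negative-hypergeometric step in Part~(2), though intuitive, similarly needs a careful exchangeability argument on the conditional joint distribution of the NN ordering to be made rigorous.
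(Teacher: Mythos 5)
Your decomposition into tree construction, query cost, and the final counting pass is exactly the paper's, and your Part~(1) analysis (charging each of the $M$ heap admissions to an $O(\log N_0)$ Friedman--Bentley--Finkel search path) matches the paper's invocation of that result. For Part~(2) your negative-hypergeometric stopping-time argument is an equivalent reformulation of the paper's calculation: the paper instead writes the number of probes for query $Z_j$ as $M + \sum_i \ind(\lVert x_i - Z_j\rVert \le \lVert \cX_{(M)}(Z_j) - Z_j\rVert)$ and evaluates $\E[\sum_i K_M(x_i)] = N_1 n M/(N_0+1)$ via the probability-integral transform and uniform order statistics, giving $N_1 M(N_0+n+1)/(N_0+1)$ in total --- which is exactly your negative-hypergeometric mean multiplied by $N_1$; the underlying exchangeability argument is the same. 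Your caveat about the validity of the $O(\log N_0)$ per-search cost when the query and data laws differ is legitimate, but the paper simply cites Friedman--Bentley--Finkel without elaborating, so your proposal is not less rigorous than the reference proof on that point.
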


\begin{remark}[Comparison to non-NN-based estimators]
Assuming $N_0\asymp N_1\asymp N$, it is worth noting that optimization-based methods are commonly of a time complexity $O(N^2)$ if not worse \citep{noshad2017direct}. They are thus less appealing in terms of handling gigantic data as was argued in, e.g., astronomy \citep{lima2008estimating,kremer2015nearest} and big text analysis \citep{borgeaud2021improving} applications.
\end{remark}

\begin{remark}[Comparison to the two-step NN-based density ratio estimator]
Regarding Case I, a direct calculation yields that the time complexity of the simple two-step NN-based method, which separately estimates $f_1$ and $f_0$ based on individual $M$-NN density estimators,  is 
\[
O(d N_0 \log N_0 + d N_1 \log N_1 + N_0 M \log N_0 + N_0 M \log N_1).
\]
It is thus of the same order as Algorithm~\ref{alg:1} when $N_1 \asymp N_0$,  while computationally heavier when $N_1 \prec N_0$. Regarding Case II, the time complexity of the simple two-step NN-based method is 
\[
O(d N_0 \log N_0 + d N_1 \log N_1 + (N_0+n)M\log N_0 + (N_0+n)M\log N_1). 
\]
Thus, if $n$ is of less or equal order of $N_0$, it is of the same order when $N_1 \asymp N_0$, while computationally heavier than Algorithm~\ref{alg:2} when $N_1 \prec N_0$.
\end{remark}

\begin{remark}[Comparison to the one-step NN-based density ratio estimator in \cite{noshad2017direct}]\label{remark:hero} It is worth noting that, in order to estimate $f$-divergence measures, \cite{noshad2017direct} constructed another one-step NN-based estimator admitting the following simple form,
\[
\hat r_M^{\prime}(x)=\frac{N_0}{N_1}\frac{\cM_i}{\cN_i+1},
\]
where $\cN_i$ and $\cM_i$ are the numbers of points in $\{X_i\}_{i=1}^{N_0}$ and $\{Z_i\}_{i=1}^{N_1}$ among the $M$ NNs of $x$; cf. \citet[Equ. (20)]{noshad2017direct}. For Case I, its time complexity is
\[
O(d (N_0+N_1) \log (N_0+N_1) + N_0 M \log (N_0+N_1));
\]
while for Case II it is
\[
O(d (N_0+N_1) \log (N_0+N_1) + (N_0+n) M \log (N_0+N_1)).
\]
Both are at the same order as the naive NN-based one, but unlike the naive approach, this estimator is indeed one-step. However, it is still theoretically unclear if this estimator is statistically efficient; see Remark \ref{remark:hero2} ahead for more details.
%\fbox{to ZX: please double check and complete this remark.}
\end{remark}

\section{Density ratio estimation III: theory}\label{sec:theory}

This section introduces the theory for density ratio estimation based on NN matching. To this end, before establishing detailed theoretical properties (e.g., consistency and the rate of convergence) for $\hat r_M(\cdot)$, we first exhibit a lemma elaborating on the (asymptotic) $L^p$ moments of $\nu_1(A_M(x))$, the $\nu_1$-measure of the catchment area. This novel result did not appear in Abadie and Imbens's analysis. It is also of independent interest in stochastic and computational geometry in light of Remark~\ref{remark:vt}.

%\fbox{To ZX: note that I combined the original two lemmas to one here; please double-check.}

\begin{lemma}[Asymptotic $L^p$ moments of catchment areas's $\nu_1$-measure]%\label{lemma:mean,catch} Assuming $M\log N_0/N_0 \to 0$, then for $\nu_0$-almost all $x$, we have
%  \[
%    \lim_{N_0\to\infty} \frac{N_0}{M} \E\big[\nu_1\big(A_M(x)\big)\big] = r(x).
 % \]
%\end{lemma}
%\begin{lemma}[Finite moments of the measure of catchment areas]
\label{lemma:moment,catch} Assuming $M\log N_0/N_0 \to 0$ as $N_0 \to \infty$, we have
\[
    \lim_{N_0\to\infty} \frac{N_0}{M} \E\big[\nu_1\big(A_M(x)\big)\big] = r(x)
\]
holds for $\nu_0$-almost all $x$. If we further assume $M \to \infty$, then for any positive integer $p$, we have
  \[
    \lim_{N_0\to\infty} \Big(\frac{N_0}{M}\Big)^p \E\big[\nu_1^p\big(A_M(x)\big)\big] = \big[r(x)\big]^p
  \]
  holds for $\nu_0$-almost all $x$.
\end{lemma}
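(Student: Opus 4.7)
The starting point is Fubini:
\[
\E\big[\nu_1^p(A_M(x))\big] \;=\; \int_{\bR^{pd}} \Big(\prod_{i=1}^p f_1(z_i)\Big)\, \Pr\Big(\bigcap_{i=1}^p \{z_i \in A_M(x)\}\Big)\, dz_1\cdots dz_p.
\]
Because $\nu_0$ is absolutely continuous, the event $\{z \in A_M(x)\}$ is equivalent to ``strictly fewer than $M$ of the $X_j$'s lie in the open ball $B^{\circ}_{z,\lVert x-z\rVert}$,'' so the one-variable marginal probability is the Binomial tail $\Pr(\mathrm{Bin}(N_0, p_{z,x}) < M)$ with $p_{z,x} := \nu_0(B_{z,\lVert x-z\rVert})$. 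For $p \ge 2$ the $p$ events share the same sample and are merely correlated, to be handled later by simultaneous concentration.

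\paragraph*{Rescaling and pointwise limits.} At a Lebesgue point $x$ of both $f_0$ and $f_1$ (which is a $\nu_0$-full set), I would substitute $z_i = x + u_i/a$ with $a := (N_0 f_0(x)\, \tau_d / M)^{1/d}$, where $\tau_d$ denotes the volume of the unit ball in $\bR^d$. The Jacobian pulls out the factor $(M/(N_0 f_0(x)\tau_d))^p$; at each fixed $u_i$ the integrand satisfies $f_1(x+u_i/a) \to f_1(x)$ and $N_0\, p_{x+u_i/a,\, x} \to M\lVert u_i\rVert^d$. For the first assertion (allowing even $M$ to remain bounded), the Poisson limit theorem gives $\Pr(\mathrm{Bin}(N_0, p_{z,x}) < M) \to \Pr(\mathrm{Poisson}(M\lVert u\rVert^d) < M)$; passing to polar coordinates and substituting $t = M\lVert u\rVert^d$ reduces the limit integral to $(\tau_d/M)\int_0^\infty \Pr(\mathrm{Poisson}(t) < M)\, dt = \tau_d$, where we exploited the identity $\int_0^\infty \Pr(\mathrm{Poisson}(t) < M)\, dt = M$ (the integrand is the survival function of $\mathrm{Gamma}(M,1)$). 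Combining this with the Jacobian gives $(N_0/M)\E[\nu_1(A_M(x))] \to f_1(x)/f_0(x) = r(x)$. For $p \ge 2$ under the strengthened hypothesis $M\to\infty$, Bernstein concentration replaces the Poisson tail by $\ind(\lVert u_i\rVert < 1)$ for each marginal, and a union bound on the complements upgrades this to $\Pr(\bigcap_i\{z_i \in A_M(x)\}) \to \prod_i \ind(\lVert u_i\rVert < 1)$ pointwise. The limit integral then equals $\tau_d^p$, yielding $(N_0/M)^p \E[\nu_1^p(A_M(x))] \to r(x)^p$.

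\paragraph*{Main obstacle.} The principal technical hurdle is justifying the limit interchange via dominated convergence uniformly in $N_0$. The workhorse estimate is a Bernstein-type tail bound: whenever $N_0 p_{z,x} \ge 2M$ one has $\Pr(\mathrm{Bin}(N_0, p_{z,x}) < M) \le e^{-cM}$, which handles the ``far'' regime. For the ``bulk'' regime, one needs the rescaled volume $\lambda(\{u : N_0 p_{x+u/a,x} \le 2M\})$ to be uniformly $O(1)$ (after rescaling this set is essentially contained in a bounded ball around the origin), combined with a local-integrability bound on $f_1$ near $x$ from Lebesgue differentiation. Together these supply an $N_0$-independent integrable dominator. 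The rate condition $M\log N_0/N_0 \to 0$ is used both to secure Binomial-to-Poisson convergence for the first assertion and to keep the Bernstein exponent $e^{-cM}$ beating any polynomial-in-$N_0$ prefactor in the normalization; the additional assumption $M\to\infty$ for the higher moments is precisely what converts the $M$-dependent Poisson tail into the clean indicator $\ind(\lVert u\rVert < 1)$ needed for the product limit to factorize.
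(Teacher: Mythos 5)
Your plan is structurally natural—Fubini reduces $\E[\nu_1^p(A_M(x))]$ to an integral against a Binomial lower-tail probability, and the rescaling $z=x+u/a$ with $a=(N_0 f_0(x)\tau_d/M)^{1/d}$ correctly isolates the limiting shape. But there is a genuine gap at the central step. You assert that ``at each fixed $u_i$ the integrand satisfies $f_1(x+u_i/a)\to f_1(x)$.'' This is a pointwise limit of the density and it does \emph{not} follow from $x$ being a Lebesgue point: Lebesgue differentiation gives $\frac{1}{\lambda(B_{x,r})}\int_{B_{x,r}}|f_1(y)-f_1(x)|\,dy\to 0$, i.e.\ convergence of ball-averages, but it says nothing about $f_1(z)$ as $z\to x$ along a fixed direction (for a general $L^1$ density the quantity $f_1(x+u/a)$ is not even well defined independently of the chosen representative). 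Since the lemma does not assume continuity of $f_1$, a pointwise-DCT argument that extracts $f_1(x)^p$ from $\prod_i f_1(x+u_i/a)$ cannot be justified. What \emph{does} converge at a Lebesgue point is $a^d\nu_1(B_{x,R/a})\to\tau_d R^d f_1(x)$, and the paper's proof is built precisely around this: it introduces a fresh sample $Z\sim\nu_1$, notes via the probability integral transform that $\nu_1(B_{x,\lVert x-Z\rVert})\sim U(0,1)$ and that $\nu_0\big(B_{Z,\lVert\cX_{(M)}(Z)-Z\rVert}\big)$ has the law of the order statistic $U_{(M)}$, and uses Lemma~\ref{lemma:leb,p} to sandwich $\nu_0,\nu_1$ of off-center balls—so everything is carried out measure-theoretically and $f_1$ is never evaluated pointwise. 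Your argument would need to be reorganized along those lines (replacing $f_1(x+u_i/a)$ by averaged quantities such as $\nu_1$-measures of balls) to close.

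A secondary concern is your tail control. You claim $M\log N_0/N_0\to 0$ ``keeps the Bernstein exponent $e^{-cM}$ beating any polynomial-in-$N_0$ prefactor,'' but that would need a \emph{lower} bound $M\gtrsim\log N_0$, which is not assumed; for the first moment $M$ may even remain bounded, and then $e^{-cM}$ is a constant while the rescaling produces a factor $a^d\asymp N_0/M\to\infty$. The paper avoids this by truncating $U_{(M)}$ at $\eta_N M/N_0$ with a growing multiplier $\eta_N=4p\log(N_0/M)$, after which the Chernoff bound gives $(N_0/M)^{p-2pM}\to 0$ for every $M\ge 1$. If you keep a fixed truncation at $2M/N_0$ you must instead exploit the sharper decay $\approx e^{-cM\lVert u\rVert^d}$ for $\lVert u\rVert$ large and treat the far regime $\lVert u\rVert>\delta a$ separately using a lower bound on $\nu_0$ outside a $\delta$-ball of $x$, as the paper does; a single $e^{-cM}$ estimate does not suffice.
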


\begin{remark}[Relation to the measure of Voronoi cells]
  When $M=1$ and $\nu_0 = \nu_1$, the measure of catchment areas reduces to the measure of Voronoi cells as pointed out in Remark~\ref{remark:vt}. Interestingly, in the stochastic geometry literature, \cite{devroye2017measure} studied a related problem of bounding the moments of the measure of Voronoi cells (cf. Theorem 2.1 therein). Setting $M=1$ and $\nu_0 = \nu_1$ in the first part of Lemma~\ref{lemma:moment,catch} and recalling Remark \ref{remark:vt}, we can derive their Theorem 2.1(i). On the other hand, \citet[Theorem 2.1(ii)]{devroye2017measure} showed that, as $\nu_0 = \nu_1$, $p=2$, and $d\leq 3$, unlike $(M^{-1}N_0)^2 \E[\nu_1^2\big(A_M(x)\big)]$, $N_0^2\E[\nu_1^2(A_1(x))]$ does not converge to 1; cf. \citet[Section 4.2]{devroye2017measure}. This supports the necessity of forcing $M \to \infty$ for stabilizing the moments of $\hat r_M(\cdot)$. % in Lemma~\ref{lemma:moment,catch} for the consistency of our estimator.
\end{remark}

\subsection{Consistency}\label{subsec:cons}

We first establish the pointwise consistency of the estimator $\hat r_M(x)$ for estimating $r(x)$. This requires nearly no assumption on $\nu_0,\nu_1$ except for those made at the beginning of Section \ref{sec:method}, in line with similar observations made in NN-based density estimation \citep[Theorem 3.1]{MR3445317}. 

%\fbox{To ZX: note that I again combined two results to one.}

\begin{theorem}[Pointwise consistency]\label{thm:cons,lp} Assume $M\log N_0/N_0 \to 0$.
\begin{enumerate}[itemsep=-.5ex,label=(\roman*)]
\item\label{thm:cons,lp1} (Asymptotic unbiasedness) For $\nu_0$-almost all $x$, we have
  \[
    \lim_{N_0\to\infty} \E\big[\hat{r}_M(x)\big] = r(x).
  \]
\item\label{thm:cons,lp2} (Pointwise $L_p$ consistency) Let $p$ be any positive integer and assume $MN_1/N_0 \to \infty$ and $M \to \infty$ as $N_0 \to \infty$. Then for $\nu_0$-almost all $x$, we have
  \[
    \lim_{N_0\to\infty} \E \big[ \lvert \hat{r}_M(x) - r(x) \rvert^p\big] = 0.
  \]
\end{enumerate}
\end{theorem}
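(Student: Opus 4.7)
The key observation is that, conditionally on the covariate sample $[X_i]_{i=1}^{N_0}$, the catchment area $A_M(x)$ is a fixed (measurable) set, and the indicators $\ind(Z_j \in A_M(x))$ are i.i.d.\ Bernoulli with success probability $q := \nu_1(A_M(x))$. Hence $K_M(x) \mid [X_i]_{i=1}^{N_0} \sim \text{Binomial}(N_1, q)$. This reduces everything about $\hat r_M(x)$ to moments of $\nu_1(A_M(x))$, for which Lemma~\ref{lemma:moment,catch} is tailor-made.

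For part~\ref{thm:cons,lp1}, I would simply take the conditional expectation:
\[
\E[\hat{r}_M(x)] = \frac{N_0}{N_1 M}\, \E\big[\E[K_M(x) \mid [X_i]_{i=1}^{N_0}]\big] = \frac{N_0}{M}\, \E[\nu_1(A_M(x))],
\]
and the first part of Lemma~\ref{lemma:moment,catch} (which only requires $M\log N_0/N_0\to 0$) delivers convergence to $r(x)$ for $\nu_0$-almost every $x$.

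For part~\ref{thm:cons,lp2}, I would first establish $\E[\hat r_M(x)^p] \to r(x)^p$ for every positive integer $p$. Using the Stirling-number expansion of binomial moments,
\[
\E[K_M(x)^p \mid [X_i]_{i=1}^{N_0}] = \sum_{k=1}^{p} S(p,k)\, N_1(N_1-1)\cdots(N_1-k+1)\, q^{k},
\]
so that, unconditionally,
\[
\E[\hat r_M(x)^p] = \sum_{k=1}^{p} S(p,k)\, \frac{N_1(N_1-1)\cdots(N_1-k+1)}{N_1^{p}}\, \Big(\frac{N_0}{M}\Big)^{p-k}\, \Big(\frac{N_0}{M}\Big)^{k} \E\big[\nu_1^k(A_M(x))\big].
\]
By the second part of Lemma~\ref{lemma:moment,catch}, each factor $(N_0/M)^k\E[\nu_1^k(A_M(x))]$ converges to $r(x)^k$; and the prefactor $N_1(N_1-1)\cdots(N_1-k+1)/N_1^{p} \sim N_1^{k-p}$ combines with $(N_0/M)^{p-k}$ to give $(N_0/(MN_1))^{p-k}$, which tends to $0$ for all $k<p$ under the assumption $MN_1/N_0 \to \infty$. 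Hence only the $k=p$ term survives and yields $S(p,p)\,r(x)^p = r(x)^p$.

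Finally, I would upgrade moment convergence to $L_p$ convergence. For even $p$, $(\hat r_M(x) - r(x))^p$ expands as a finite polynomial in $\hat r_M(x)^k$ for $k=0,\ldots,p$, so the previous step immediately gives
\[
\E[|\hat r_M(x) - r(x)|^p] = \sum_{k=0}^{p} \binom{p}{k}(-r(x))^{p-k}\,\E[\hat r_M(x)^k] \longrightarrow \big(r(x)-r(x)\big)^{p} = 0.
\]
For odd $p$, apply Jensen/Lyapunov to lift from the next even exponent: $\E[|\hat r_M(x)-r(x)|^p] \le (\E[|\hat r_M(x)-r(x)|^{p+1}])^{p/(p+1)} \to 0$. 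The main obstacle is the moment-computation step: one must verify that the error incurred by replacing $\E[\nu_1^k(A_M(x))]$ by $r(x)^k(M/N_0)^k$ is genuinely $o(1)$ after multiplication by the possibly large prefactor $(N_0/M)^{k}$, but this is precisely what the asymptotic statement of Lemma~\ref{lemma:moment,catch} guarantees (it controls the ratio, not merely the difference), so the bookkeeping reduces to tracking Stirling-number combinatorics together with the rate $N_0/(MN_1)\to 0$.
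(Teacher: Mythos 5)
Your proposal is correct, and it takes a genuinely different route from the paper on part~(ii). The paper uses a bias--variance-style decomposition
\[
\E\big[ \lvert \hat r_M(x)-r(x)\rvert^p\big]\le 2^{p-1}\Big(\E\big[\lvert \hat r_M(x)-\E[\hat r_M(x)\mid\mX]\rvert^p\big]+\E\big[\lvert \E[\hat r_M(x)\mid\mX]-r(x)\rvert^p\big]\Big),
\]
then kills the second term by Lemma~\ref{lemma:moment,catch} and the first by a recursion for binomial \emph{central} moments (citing Romanovsky) that yields $\E[(K_M(x)-N_1\nu_1(A_M(x)))^p]\lesssim (N_1 M/N_0)^{p/2}$ for even $p$, hence $(N_0/(N_1M))^{p/2}\to 0$ after rescaling. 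You instead work with \emph{raw} moments: the Stirling expansion of $\E[K_M(x)^p\mid\mX]$ plus the second part of Lemma~\ref{lemma:moment,catch} gives $\E[\hat r_M(x)^k]\to r(x)^k$ for every $k\le p$ (the prefactor $S(p,k)\,N_1^{(k)}N_1^{-p}(N_0/M)^{p-k}\asymp (N_0/(MN_1))^{p-k}$ vanishes for $k<p$ and tends to $1$ at $k=p$, while Lemma~\ref{lemma:moment,catch} supplies the bounded factor $(N_0/M)^k\E[\nu_1^k(A_M(x))]\to r(x)^k$), and then the binomial expansion of $(\hat r_M(x)-r(x))^p$ for even $p$ telescopes to $0$, with Lyapunov lifting odd $p$. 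Both routes need the same three ingredients --- the conditional binomial structure, Lemma~\ref{lemma:moment,catch}, and $N_0/(MN_1)\to 0$ --- but your Stirling-number computation avoids the Romanovsky recursion and keeps all the bookkeeping at the level of raw moments, which is a bit more self-contained; the paper's central-moment split, on the other hand, is the version that generalizes most cleanly to the nonasymptotic rate bounds in Theorem~\ref{thm:pw-rate}, where bias and variance must be tracked separately anyway. Part~(i) is handled identically in both.
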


For evaluating the global consistency of the estimator, on the other hand, it is necessary to introduce the following (global) $L_p$ risk:
\begin{align}\label{eq:risk}
 L_p \text{ risk}:=& \E \Big[ \Big\lvert \hat{r}_M(X) - r(X) \Big\rvert^p ~\Big |~X_1,\ldots,X_{N_0},Z_1,\ldots,Z_{N_1}\Big] \notag\\
  =& \int_{\bR^d} \Big\lvert \hat{r}_M(x) - r(x) \Big\rvert^p f_0(x) \d x,
\end{align}
where $X$ is a copy drawn from $\nu_0$ that is independent of the data. For the $L_p$ risk consistency of the estimator, we impose conditions on $\nu_0$ and $\nu_1$ further as follows.

Denote the supports of $\nu_0$ and $\nu_1$ by $S_0$ and $S_1$, respectively. For any set $S\subset \bR^d$, denote the diameter of $S$ by 
\[
  {\rm diam}(S):= \sup_{x,z \in S} \Big\lVert x-z \Big\rVert.
\]

\begin{assumption} \phantomsection \label{asp:risk}
  \begin{enumerate}[itemsep=-.5ex,label=(\roman*)]
      \item $\nu_0, \nu_1$ are two probability measures on $\bR^d$, both are absolutely continuous with respect to $\lambda$, and $\nu_1$ is absolutely continuous with respect to $\nu_0$.
    \item There exists a constant $R>0$ such that ${\rm diam}(S_0) \le R$.
    \item There exist two constants $f_L,f_U>0$ such that for any $x \in S_0$ and $z \in S_1$, $f_L \le f_0(x) \le f_U$ and $f_1(z) \le f_U$.
    \item There exists a constant $a \in (0,1)$ such that for any $\delta \in (0,{\rm diam}(S_0)]$ and $z \in S_1$,
    \[
      \lambda(B_{z,\delta} \cap S_0) \ge a \lambda(B_{z,\delta}),
    \]
    recalling that $B_{z,\delta}$ represents the closed ball in $\bR^d$ with center at $z$ and radius $\delta$.
  \end{enumerate}
\end{assumption}

\begin{remark}
Assumption \ref{asp:risk} is standard in the literature for establishing the global consistency of density ratio estimators. The regularity conditions on the support ensure that the angle of the support is not too sharp, which trivially hold for any $d$-dimensional cube. These conditions were also enforced in \citet[Theorem 1]{nguyen2010estimating}, \citet[Assumption 1]{sugiyama2008direct}, \citet[Definition 1]{kpotufe2017lipschitz}, among many others.
%\fbox{to ZX: please complete.}
\end{remark}

We then establish the $L_p$ risk consistency of the estimator via the Hardy–Littlewood maximal inequality \citep{stein2016singular}; cf. Lemma \ref{lemma:HL} ahead. Of note, this inequality was used in \cite{han2020optimal} in a relative manner in order to study the information-theoretical limit of entropy estimation.

\begin{theorem}[$L_p$ risk consistency]\label{thm:risk,lp}
Assume the pair of $\nu_0, \nu_1$ satisfies Assumption~\ref{asp:risk}. Let $p$ be any positive integer. Assume further that $M\log N_0/N_0 \to 0$, $MN_1/N_0 \to \infty$, and $M \to \infty$ as $N_0 \to \infty$. We then have
  \[
    \lim_{N_0\to\infty} \E \Big[ \int_{\bR^d} \Big\lvert \hat{r}_M(x) - r(x) \Big\rvert^p f_0(x) \d x \Big] = 0.
  \]
\end{theorem}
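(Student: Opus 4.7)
The plan is to reduce the $L_p$ risk to pointwise $L_p$ loss via Fubini, invoke Theorem~\ref{thm:cons,lp}\ref{thm:cons,lp2} for pointwise consistency, and then secure dominated convergence by controlling $\E[\hat r_M(x)^p]$ with the aid of the Hardy--Littlewood maximal inequality. By Fubini,
\[
\E\Big[\int_{\bR^d}|\hat r_M(x) - r(x)|^p f_0(x)\,\d x\Big] = \int_{\bR^d} \E\big[|\hat r_M(x) - r(x)|^p\big] f_0(x)\,\d x,
\]
and the integrand already tends to $0$ for $\nu_0$-a.e.\ $x$ by Theorem~\ref{thm:cons,lp}\ref{thm:cons,lp2}, so the remaining task is to produce an $f_0$-integrable dominating function that is uniform in $N_0, N_1, M$. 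Since Assumption~\ref{asp:risk}(iii) forces $r \le f_U/f_L$ on the bounded support $S_0$, the contribution of $r^p$ is immediate, and the crux is to dominate $\E[\hat r_M(x)^p]$ alone.

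Conditioning on $\{X_i\}_{i=1}^{N_0}$ makes $K_M(x)$ a $\mathrm{Binomial}(N_1, \nu_1(A_M(x)))$ variable, whose $p$-th moment bound gives
\[
\E\big[\hat r_M(x)^p\big] \le C_p \Big(\frac{N_0}{M}\Big)^p \E\big[\nu_1(A_M(x))^p\big] + C_p\frac{N_0}{M}\Big(\frac{N_0}{N_1 M}\Big)^{p-1} \E\big[\nu_1(A_M(x))\big].
\]
Under $N_1 M/N_0 \to \infty$, together with the first-moment statement in Lemma~\ref{lemma:moment,catch}, the second term is $o(1)$ pointwise and bounded by a constant uniformly in $N_0,N_1,M$. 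The heart of the argument is to dominate the first term in terms of the Hardy--Littlewood maximal function $M_{f_1}$ of $f_1$. Using Assumption~\ref{asp:risk}(iv) to guarantee $\lambda(B_{x,\delta}\cap S_0) \gtrsim \delta^d$ uniformly in $x\in S_0$ and $\delta\in(0,R]$, together with a binomial occupancy estimate for $\{X_i\}$ inside balls centered at $x$, one obtains a geometric containment $A_M(x) \subseteq B(x, R_M(x))$ along with $L^p$ control of $R_M(x)^d$ scaling like $M/(N_0 f_0(x))$, which, combined with $\nu_1(A_M(x)) \le \lambda(B(x,R_M(x)))\cdot M_{f_1}(x)$, yields
\[
\Big(\frac{N_0}{M}\Big)^p \E\big[\nu_1(A_M(x))^p\big] \le C_{p,d}\,\big[M_{f_1}(x)/f_0(x)\big]^p.
\]

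Integrating this bound against $f_0$ and invoking the strong $(p,p)$ Hardy--Littlewood inequality (Lemma~\ref{lemma:HL}) gives
\[
\int_{S_0} \big[M_{f_1}(x)\big]^p \big[f_0(x)\big]^{1-p}\,\d x \le f_L^{1-p} \|M_{f_1}\|_{L^p}^p \le C_p f_L^{1-p}\|f_1\|_{L^p}^p \le C_p f_L^{1-p} f_U^{p-1},
\]
which is finite and independent of $N_0, N_1, M$ whenever $p\ge 2$; the case $p=1$ follows from the $p=2$ bound and Hölder's inequality against $f_0(x)\,\d x$. This supplies the dominating function and closes the dominated convergence argument. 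The main obstacle will be the intermediate step above: although on typical realizations the catchment area $A_M(x)$ has radius of order $(M/(N_0 f_0(x)))^{1/d}$, on the atypical event that $x$ sits in a sparse region of $\{X_i\}$ the radius $R_M(x)$ inflates, and uniformly controlling the $p$-th moments of $R_M(x)^d$ (so that the resulting bound depends on $x$ only through $M_{f_1}(x)/f_0(x)$) requires careful handling of the tail of the $M$-th nearest neighbor distance using only the lower density bound $f_L$ and the support regularity in Assumption~\ref{asp:risk}(iv).
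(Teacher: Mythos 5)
Your high-level strategy --- Fubini to bring the expectation inside, invoke the pointwise $L_p$ consistency of Theorem~\ref{thm:cons,lp}\ref{thm:cons,lp2}, then close with dominated convergence --- is sound and is genuinely different from the paper's argument. The paper does not use DCT with pointwise consistency as a black box; instead, it re-runs the pointwise argument with uniformity in $x$ built in (Lusin approximation to get a uniform-in-$x$ oscillation scale $\delta$ outside a set $A_1$ controlled by the weak $(1,1)$ Hardy--Littlewood inequality, then a three-region split, then $\epsilon\to 0$). Your route is cleaner conceptually, \emph{provided} you can exhibit an $N_0$-uniform dominating function, and that is exactly where the proposal has a genuine gap.

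The problematic step is the claimed intermediate bound
$(N_0/M)^p\,\E\!\left[\nu_1(A_M(x))^p\right] \le C_{p,d}\,\big[M_{f_1}(x)/f_0(x)\big]^p$,
which you explicitly flag as ``the main obstacle'' and leave unproven. Two concrete issues. First, the $f_0(x)^{-p}$ factor is not supported by the geometry: whether $z\in A_M(x)$ is governed by $\nu_0(B_{z,\lVert z-x\rVert})$, a ball \emph{centered at $z$} that merely passes through $x$, so the relevant density lower bound is the global $f_L$ together with Assumption~\ref{asp:risk}(iv), not the pointwise value $f_0(x)$. Obtaining a genuine $f_0(x)^{-p}$ rather than $f_L^{-p}$ would require local continuity of $f_0$ of the kind assumed in~\ref{asp:ptw}, which is not available in Assumption~\ref{asp:risk}. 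Second, the proposed containment $A_M(x)\subseteq B(x,R_M(x))$ together with $L^p$ control of $R_M(x)^d$ is a nontrivial claim requiring its own proof (handling the supremum over directions, the tail over sparse configurations of $\{X_i\}$, and boundary effects), and the moment scaling $\E[R_M(x)^{dp}]\lesssim (M/(N_0 f_0(x)))^p$ would not follow under the stated assumptions for the reason above.

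Moreover, the maximal-function machinery is unnecessary here: under Assumption~\ref{asp:risk}(iii) you already have $M_{f_1}(x)\le f_U$ and $f_0(x)\ge f_L$ on $S_0$, so even if your bound were correct it would collapse to a constant, rendering the strong $(p,p)$ Hardy--Littlewood inequality superfluous. The clean way to supply the dominating function is the inequality
$\nu_0(B_{z,\lVert z-x\rVert}) \ge a f_L\,\lambda(B_{z,\lVert z-x\rVert}) \ge (a f_L/f_U)\,\nu_1(B_{x,\lVert z-x\rVert})$
for $x\in S_0$, $z\in S_1$ (from Assumption~\ref{asp:risk}(iii)--(iv)), which gives
$(N_0/M)^p\,\E\!\left[\nu_1(A_M(x))^p\right] \le (f_U/(a f_L))^p\big(1+o(1)\big)$
with the $o(1)$ depending only on $(M,N_0,p)$ and not on $x$ --- this is precisely the computation the paper performs in Case~III of its proof. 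Combined with the binomial moment bound you already wrote down, the second moment term being $O((N_0/(N_1M))^{p-1})=o(1)$, and $r(x)\le f_U/f_L$, this yields a constant dominating function on the bounded support $S_0$, and DCT finishes. If you restructure the proposal along these lines, dropping $M_{f_1}$ and the $R_M(x)$ moments, you get a correct and arguably more transparent alternative to the paper's proof.
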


%\begin{remark}
%  Under Assumption~\ref{asp:risk}, the $L_p$ risk integrated using $\nu_0$ is equivalent to that using $\lambda$. We conjecture that the $L_p$ risk integrated using $\nu_0$ is still consistent even when the support of $\nu_0$ is unbounded if the density ratio is bounded.
%\end{remark}

As a direct corollary of Theorem~\ref{thm:risk,lp}, one can obtain the limit of any finite moment of $\nu_1(A_M(\cdot))$ with a random center. This could be regarded as a global extension of Lemma \ref{lemma:moment,catch}.

%\fbox{To ZX: note that I revised the following corollary based on the (deleted) remark.}

%\fbox{not sure if $R$ is used; if so, please feel free to change the notation!}

\begin{corollary}\label{crl:moment,catch,random}
Assume the same conditions as in Theorem \ref{thm:risk,lp}. %$(\nu_0,\nu_1)$ satisfies Assumption~\ref{asp:risk}. Let $p$ be any positive integer. Assuming $M\log N_0/N_0 \to 0$, $MN_1/N_0 \to \infty$ and $M \to \infty$ as $N_0 \to \infty$, then 
We then have
  \[
    \lim_{N_0\to\infty} \Big(\frac{N_0}{M}\Big)^p \E\big[\nu_1^p\big(A_M(W)\big)\big] = \E \big(\big[r(W)\big]^p\big),
  \]
where $W$ follows an arbitrary distribution that is absolutely continuous with respect to $\nu_0$ and has density bounded upper and below by two positive constants. In particular, it holds when $W$ is drawn from $\nu_0$.
\end{corollary}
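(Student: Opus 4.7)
The plan is to deduce the corollary from the $L^p$ risk consistency of Theorem~\ref{thm:risk,lp} by exploiting the conditional Binomial structure of $K_M(W)$, which links $\E[\hat r_M(W)^p]$ to a linear combination of $\E[\nu_1^k(A_M(W))]$ for $k=1,\ldots,p$. The $k=p$ summand will be the quantity of interest; the assumption $MN_1/N_0\to\infty$ forces the lower-order summands to vanish, and an induction on $p$ will close the argument.

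First I would transfer the $L^p$ risk consistency from the weight $f_0$ to the Lebesgue density $g$ of $W$. Under Assumption~\ref{asp:risk}(iii), $f_0\ge f_L>0$ on $S_0$; absolute continuity of $W$ with respect to $\nu_0$ forces $g$ to be supported in $S_0$ up to a $\lambda$-null set, so $g\le (C/f_L)\,f_0$ almost everywhere. Theorem~\ref{thm:risk,lp} then yields $\E\int|\hat r_M(x)-r(x)|^p g(x)\,\d x\to 0$. Because $r\le f_U/f_L$ on $S_0$ makes $\E[r(W)^p]$ finite, this $L^p$ convergence promotes to $\E[\hat r_M(W)^p]\to\E[r(W)^p]$.

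Next I would use the conditional Binomial structure: given $\{X_i\}_{i=1}^{N_0}$ and $W$, the variable $K_M(W)=\sum_{j=1}^{N_1}\ind(Z_j\in A_M(W))$ is $\mathrm{Binomial}(N_1,\nu_1(A_M(W)))$. The standard Stirling-number identity for Binomial moments, combined with taking unconditional expectations, gives
\[
\E[\hat r_M(W)^p] \;=\; \sum_{k=1}^{p} S(p,k)\, \frac{N_1^{\underline{k}}}{N_1^p}\,\Big(\frac{N_0}{M}\Big)^p\, \E\big[\nu_1^k(A_M(W))\big],
\]
where $S(p,k)$ are the Stirling numbers of the second kind and $N_1^{\underline{k}}$ denotes the falling factorial. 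The $k=p$ summand equals $(1+o(1))(N_0/M)^p\E[\nu_1^p(A_M(W))]$, while each $k<p$ summand is a bounded multiple of $(N_0/(MN_1))^{p-k}\cdot (N_0/M)^k\E[\nu_1^k(A_M(W))]$.

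Finally I would induct on $p$. The base case $p=1$ is immediate because the display has no lower-order terms. For the inductive step, the hypothesis supplies $(N_0/M)^k\E[\nu_1^k(A_M(W))]\to\E[r(W)^k]<\infty$ for every $k<p$; combined with $MN_1/N_0\to\infty$, every lower-order summand is $o(1)$, and solving for the $k=p$ term (using $N_1^{\underline p}/N_1^p\to 1$) produces the claimed limit. The only genuine subtlety is bookkeeping: verifying that the Binomial moment expansion survives the two-layer conditioning on both $\{X_i\}$ and $W$ and that the $k<p$ factors are controlled tightly enough for the induction to close. This is mechanical once the conditional distribution of $K_M(W)$ is isolated.
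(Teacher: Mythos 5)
Your argument is correct, but it takes a genuinely different route from the paper. The paper's proof exploits the identity $\E[\hat r_M(x)\,|\,\mX]=\tfrac{N_0}{M}\nu_1(A_M(x))$, so that the target quantity $\tfrac{N_0}{M}\nu_1(A_M(W))$ is literally the conditional expectation appearing in the bias--variance decomposition used to prove Theorem~\ref{thm:risk,lp}; the authors note that the intermediate bound
\[
\E\Big[\int\Big|\tfrac{N_0}{M}\nu_1(A_M(x))-r(x)\Big|^p f_0(x)\,\d x\Big]\to 0
\]
was already established there, transfer it to the $W$-measure via the bounded Radon--Nikodym derivative $r_W$ exactly as you do, and conclude by the finiteness of $\E[r(W)^p]$. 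You instead treat Theorem~\ref{thm:risk,lp} as a black box (giving $\E[\hat r_M(W)^p]\to\E[r(W)^p]$) and recover $\tfrac{N_0}{M}\E[\nu_1^p(A_M(W))]$ by inverting the Stirling-number expansion for Binomial moments, peeling off the lower-order terms via $MN_1/N_0\to\infty$ and an induction on $p$. Your route pays the price of bookkeeping (Stirling coefficients, falling factorials, the induction) but has the compensating virtue of not re-opening the proof of Theorem~\ref{thm:risk,lp}; the paper's route is shorter but requires recognizing that the relevant convergence is a byproduct of that proof rather than of its statement. Both are sound; the one thing to double-check in your write-up is the induction base and the observation that the inductive hypothesis supplies the \emph{boundedness} of $\big(\tfrac{N_0}{M}\big)^k\E[\nu_1^k(A_M(W))]$ for $k<p$, so the $k<p$ summands are $O\big((N_0/(MN_1))^{p-k}\big)=o(1)$ --- you state this correctly, so the argument closes.
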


%\begin{remark}
% Note that the above corollary still holds when the distribution of $X$ is replaced by any other distribution that is absolutely continuous with respect to $\nu_0$ and is of a bounded density. In Corollary \ref{crl:moment,catch,random} we restrict $X \sim \nu_0$ since we can then replace $A_M(X)$ by $A_M(i)$ for any $i \in \zahl{N_0}$ from the fact that $[X_i]_{i=1}^{N_0}$ are i.i.d.
%\end{remark}

\subsection{Rates of Convergence}\label{subsec:rate}

In this section we establish the rates of convergence for $\hat r(x)$ under both pointwise and global measures. We first consider the pointwise mean square error (MSE) convergence rate and show that $\hat r_M(\cdot)$ is minimax optimal in that regard. In the sequel, we fix an $x \in \bR^d$ and consider the following local assumption on $(\nu_0,\nu_1)$.

\begin{assumption}[Local assumption]  \phantomsection \label{asp:ptw} 
  \begin{enumerate}[itemsep=-.5ex,label=(\roman*)]
    \item $\nu_0, \nu_1$ are two probability measures on $\bR^d$, both are absolutely continuous with respect to $\lambda$, and $\nu_1$ is absolutely continuous with respect to $\nu_0$.
    \item There exist two constants $f_L,f_U>0$ such that $f_0(x) \ge f_L$ and $f_1(x) \le f_U$.
    \item There exists a constant $\delta>0$ such that for any $z \in B_{x,\delta}$,
    \[
      \lvert f_0(x) - f_0(z) \rvert \vee \lvert f_1(x) - f_1(z) \rvert \le L \lVert x-z \rVert,
    \]
    for some constants $L>0$.
  \end{enumerate}
\end{assumption}

Define the following probability class 
%\fbox{to ZX: note the added subindex $x$ in $\cP_{x,\rm p}$} 
%$\cP_{x,\rm p}(f_L,f_U,L,d,\delta)$:
\[
\cP_{x,\rm p}(f_L,f_U,L,d,\delta):=\Big\{(\nu_0,\nu_1): \text{Assumption \ref{asp:ptw} holds} \Big\}.
\]
The following theorem establishes the uniform pointwise convergence rate of $\hat r_M(\cdot)$.

%\fbox{to ZX: note the change below.}

\begin{theorem}[Pointwise rates of convergence]\label{thm:pw-rate} Assume $M\log N_0/N_0 \to 0$ and $M/\log N_0 \to \infty$, and consider a sufficiently large $N_0$.
\begin{enumerate}[itemsep=-.5ex,label=(\roman*)]
\item\label{thm:pw-rate1} Asymptotic bias: 
  \[
    \sup_{(\nu_0,\nu_1) \in \cP_{x,\rm p}(f_L,f_U,L,d,\delta)}\Big\lvert \E\big[\hat{r}_M(x)\big] - r(x) \Big\rvert \le C \Big(\frac{M}{N_0}\Big)^{1/d},
  \]
  where $C>0$ is a constant only depending on $f_L,f_U,L,d$.
\end{enumerate}  
Further assume $MN_1/N_0 \to \infty$.  
\begin{enumerate}[resume*]
  \item\label{thm:pw-rate2} Asymptotic variance:
  \[
    \sup_{(\nu_0,\nu_1) \in \cP_{x,\rm p}(f_L,f_U,L,d,\delta)}\Var\big[\hat{r}_M(x)\big] \le C' \Big[ \Big(\frac{1}{M}\Big) + \Big(\frac{N_0}{MN_1}\Big)\Big],
  \]
  where $C'>0$ is a constant only depending on $f_L,f_U$.
  \item Asymptotic MSE:   \[
    \sup_{(\nu_0,\nu_1) \in \cP_{x,\rm p}(f_L,f_U,L,d,\delta)} \E\big[\hat{r}_M(x) - r(x)\big]^2 \le C'' \Big[ \Big(\frac{M}{N_0}\Big)^{2/d} + \Big(\frac{1}{M}\Big) + \Big(\frac{N_0}{MN_1}\Big)\Big],
  \]
  where $C''>0$ is a constant only depending on $f_L,f_U,L,d$.
\end{enumerate}
Further assume $N_1^{-\frac{d}{2+d}}\log N_0 \to 0$. 
\begin{enumerate}[resume*]
 \item  Fix $\alpha > 0$ and take $M = \alpha \cdot\{N_0^{\frac{2}{2+d}} \vee (N_0 N_1^{-\frac{d}{2+d}})\}$. We have
  \begin{align}\label{eq:final-pw-rate}
    \sup_{(\nu_0,\nu_1) \in \cP_{x,\rm p}(f_L,f_U,L,d,\delta)} \E\big[\hat{r}_M(x) - r(x)\big]^2 \le C''' (N_0 \wedge N_1)^{-\frac{2}{2+d}},
  \end{align}
  where $C'''>0$ is a constant only depending on $f_L,f_U,L,d,\alpha$.
 \end{enumerate}
\end{theorem}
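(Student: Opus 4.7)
The plan is to derive part (iv) as a straightforward corollary of the MSE bound in part (iii) via a standard bias-variance tradeoff in $M$. Recall that part (iii) gives $\E[\hat{r}_M(x) - r(x)]^2 \lesssim (M/N_0)^{2/d} + 1/M + N_0/(MN_1)$, where the first summand is increasing in $M$ (squared bias from over-smoothing) and the other two are decreasing in $M$ (variance from under-smoothing). The optimal $M$ is thus obtained by balancing the bias against whichever variance term dominates.

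First I would balance the bias against $1/M$: setting $(M/N_0)^{2/d} = 1/M$ yields $M \asymp N_0^{2/(2+d)}$ with optimized rate $N_0^{-2/(2+d)}$. Then I would balance the bias against $N_0/(MN_1)$: setting $(M/N_0)^{2/d} = N_0/(MN_1)$ yields $M \asymp N_0 N_1^{-d/(2+d)}$ with optimized rate $N_1^{-2/(2+d)}$. Because choosing $M$ smaller than either candidate leaves the corresponding variance term dominant (and larger than either would inflate the bias), the correct global choice is the maximum of the two, which is precisely $M = \alpha\{N_0^{2/(2+d)} \vee (N_0 N_1^{-d/(2+d)})\}$ as stated.

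With that $M$, I would then substitute back into the bound and split into two regimes depending on which candidate attains the maximum: Case 1, $N_1 \ge N_0$, in which $M \asymp N_0^{2/(2+d)}$ and $N_0 \wedge N_1 = N_0$; Case 2, $N_1 < N_0$, in which $M \asymp N_0 N_1^{-d/(2+d)}$ and $N_0 \wedge N_1 = N_1$. A direct computation in each regime shows that all three terms in the MSE bound reduce to $O((N_0 \wedge N_1)^{-2/(2+d)})$; the cross-term $N_0/(MN_1)$ in Case 1 is absorbed using $N_1 \ge N_0$, and the cross-term $1/M$ in Case 2 is absorbed using $N_0 \ge N_1$.

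Before invoking part (iii), I would verify that the prescribed $M$ satisfies the three hypotheses of part (iii), namely $M\log N_0/N_0 \to 0$, $M/\log N_0 \to \infty$, and $MN_1/N_0 \to \infty$. Since $M/N_0 \asymp N_0^{-d/(2+d)} \vee N_1^{-d/(2+d)}$, the first reduces to $N_0^{-d/(2+d)}\log N_0 \to 0$ (automatic as $N_0 \to \infty$) together with the added hypothesis $N_1^{-d/(2+d)}\log N_0 \to 0$. The second follows from $M \ge \alpha N_0^{2/(2+d)}$ and $N_0 \to \infty$. The third reduces in each regime to $N_0^{2/(2+d)} \to \infty$ or $N_1^{2/(2+d)} \to \infty$, with $N_1 \to \infty$ implied by the added hypothesis in tandem with $\log N_0 \to \infty$. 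There is no essential obstacle here; the substance of Theorem \ref{thm:pw-rate} lies in parts (i)--(iii), while part (iv) amounts to a clean optimization with careful bookkeeping of the two regimes.
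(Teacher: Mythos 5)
Your argument for part (iv) is correct in all details: the regime split at $N_1 \ge N_0$ versus $N_1 < N_0$, the identification of which candidate attains the maximum, the substitution into the three-term MSE bound, and the verification that the chosen $M$ satisfies the hypotheses $M\log N_0/N_0 \to 0$, $M/\log N_0 \to \infty$, and $MN_1/N_0 \to \infty$ all check out, and this is exactly how the paper handles it (the paper simply calls it ``trivial'' and omits the computation).

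However, the statement to be proved is the entire theorem, parts (i)--(iv), and you have proven only the part the paper itself regards as routine. You explicitly acknowledge this (``the substance of Theorem \ref{thm:pw-rate} lies in parts (i)--(iii)''), but you then treat (i)--(iii) as given. The paper's proof spends essentially all its effort on parts (i) and (ii). Part (i) requires choosing $\delta_N = (4/(f_L V_d))^{1/d}(M/N_0)^{1/d}$, using the local Lipschitz condition to sandwich the ratio $\nu_0(B_{z,\lVert x-z\rVert})/\nu_1(B_{x,\lVert x-z\rVert})$ between $(f_0(x)\mp 2L\delta_N)/(f_1(x)\pm L\delta_N)$, splitting on whether $f_1(x) \gtrless L\delta_N$, and applying a Chernoff bound to $\P(U_{(M)} > 2M/N_0)$ where $U_{(M)}$ is the $M$-th order statistic of $N_0$ uniforms; the hypothesis $M/\log N_0 \to \infty$ is precisely what makes that tail probability negligible. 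Part (ii) is a law-of-total-variance decomposition: the within-$\mX$ term is controlled by the expected catchment-area mass, while the between-$\mX$ term requires Lemma~\ref{lemma:z,density} (bounding the density of $W = \nu_0(B_{Z,\lVert x-Z\rVert})$ near zero by roughly $f_U/f_L$) together with Lemma~\ref{lemma:ratevar} (a delicate covariance bound for the joint event $\{W\le V, \tW \le \tV\}$ expressed via Beta order statistics and yielding the $1/M$ rate). None of that machinery appears in your proposal, so as a proof of the stated theorem it has a substantial gap even though the piece you do supply is sound.
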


The final rate of convergence in Theorem \ref{thm:pw-rate}, Equ. \eqref{eq:final-pw-rate} matches the established minimax lower bound in Lipschitz density function estimation \citep[Section 2]{MR2724359}. By some simple manipulation, the argument in \citet[Exercise 2.8]{MR2724359} directly extends to density ratio as the latter is a harder statistical problem \citep[Remark 3]{kpotufe2017lipschitz}. This is formally stated in the following proposition.

\begin{proposition}[Pointwise MSE minimax lower bound]\label{thm:minimax,mse}
  For all sufficiently large $N_0$,
  \[
    \inf_{\tilde{r}} \sup_{(\nu_0,\nu_1) \in \cP_{x,\rm p}(f_L,f_U,L,d,\delta)} \E\big[\tilde{r}(x) - r(x)\big]^2 \ge c (N_0 \wedge N_1)^{-\frac{2}{2+d}},
  \]
  where $c>0$ is a constant only depending on $f_L,f_U,L,d$ and the infimum is taken over all measurable functions.
\end{proposition}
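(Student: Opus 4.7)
The plan is to apply a standard two-point Le Cam argument along the lines of \citet[Exercise 2.8]{MR2724359}, which establishes the $N^{-2/(2+d)}$ pointwise minimax rate for estimating a Lipschitz density at a single point. The starting observation is that density-ratio estimation is at least as hard as pointwise density estimation: by fixing one of the two densities to be a smooth, locally constant baseline near $x$, the problem of estimating $r(x) = f_1(x)/f_0(x)$ reduces, up to a known multiplicative constant, to estimating the value $f_1(x)$ (or $f_0(x)$) pointwise. It therefore suffices to produce two competing pairs in $\cP_{x,\rm p}(f_L,f_U,L,d,\delta)$ whose density ratios at $x$ are separated by $\asymp (N_0 \wedge N_1)^{-1/(2+d)}$ but whose induced joint laws on the data are statistically indistinguishable.

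Concretely, I would fix a smooth, compactly supported, mean-zero bump $\phi:\bR^d\to\bR$ with $\phi(0)>0$ and $\|\nabla\phi\|_\infty\le 1$, together with a smooth baseline density $f^\ast$ on $\bR^d$ that equals a constant $c\in(f_L,f_U)$ on $B_{x,\delta}$, takes values in $[f_L,f_U]$, and has global Lipschitz constant at most $L/2$. For parameters $h,w>0$ with $w\le\delta$ to be tuned, set
\[
g_{h,w}(z) \;:=\; h\,\phi\!\left(\frac{z-x}{w}\right),
\]
so that $g_{h,w}$ is supported in $B_{x,w}$, satisfies $\int g_{h,w}=0$ and $g_{h,w}(x)=h\phi(0)$, and obeys $\|g_{h,w}\|_\infty\lesssim h$, $\|g_{h,w}\|_{\rm Lip}\lesssim h/w$, $\|g_{h,w}\|_2^2\lesssim h^2w^d$. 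Consider the two hypotheses
\[
H_0:\ (f_0^{(0)},f_1^{(0)})=(f^\ast,f^\ast),\qquad H_1:\ (f_0^{(1)},f_1^{(1)})=(f^\ast,\,f^\ast+g_{h,w}).
\]
Choosing $w=2h/L$ keeps the Lipschitz constant of $f_1^{(1)}$ below $L$, and for $h$ small enough both densities lie in $[f_L/2, 2f_U]$; thus both pairs belong to $\cP_{x,\rm p}(f_L,f_U,L,d,\delta)$. By construction $r^{(0)}(x)=1$ and $r^{(1)}(x)=1+h\phi(0)/c$, so the separation satisfies $|r^{(1)}(x)-r^{(0)}(x)|\asymp h$.

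Next, I would bound the Kullback--Leibler divergence between the joint data laws $P^{(0)}$ and $P^{(1)}$. Since $\nu_0^{(0)}=\nu_0^{(1)}$, only the $Z$-sample contributes, and using $\KL\le\chi^2$ and $f_1^{(1)}\ge f_L/2$,
\[
\KL\bigl(P^{(0)}\,\|\,P^{(1)}\bigr)
\;=\; N_1\,\KL\bigl(\nu_1^{(0)}\,\|\,\nu_1^{(1)}\bigr)
\;\le\; N_1\!\int\!\frac{g_{h,w}^{\,2}}{f_1^{(1)}}
\;\lesssim\; \frac{N_1}{f_L}\,h^2 w^d
\;\asymp\; \frac{N_1\,h^{2+d}}{L^d}.
\]
Setting $h=c_0\,N_1^{-1/(2+d)}$ for a small absolute constant $c_0>0$ makes the right-hand side at most $1/8$, hence $\TV(P^{(0)},P^{(1)})\le 1/4$ by Pinsker. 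The Le Cam two-point inequality (\citet[Theorem 2.2]{MR2724359}) then yields
\[
\inf_{\tilde r}\ \sup_{(\nu_0,\nu_1)\in\cP_{x,\rm p}}\E\bigl[\tilde r(x)-r(x)\bigr]^2\;\gtrsim\; h^2\;\asymp\; N_1^{-2/(2+d)}.
\]

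Finally, a symmetric construction that perturbs $f_0$ rather than $f_1$ gives the bound $N_0^{-2/(2+d)}$: take $(f_0^{(1)},f_1^{(1)})=(f^\ast+g_{h,w},f^\ast)$, so that $r^{(1)}(x)$ differs from $r^{(0)}(x)=1$ by $\asymp h$ while $\KL(P^{(0)}\|P^{(1)})\lesssim N_0\,h^{2+d}/L^d$; taking $h=c_0\,N_0^{-1/(2+d)}$ closes the argument. Combining the two lower bounds yields the claimed rate $(N_0\wedge N_1)^{-2/(2+d)}$. The main point requiring care (rather than a real obstacle) is the regularity of the two-point construction: one must choose a baseline density $f^\ast$ that is constant on $B_{x,\delta}$ yet smooth enough elsewhere to remain a probability density within the prescribed bounds, and use a mean-zero bump so that $f_0^{(j)}$ and $f_1^{(j)}$ stay properly normalized without altering the separation at $x$; both are standard manipulations in density-estimation minimax theory \citep[Chapter 2]{MR2724359}.
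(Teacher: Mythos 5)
Correct, and this is essentially the approach taken in the paper: fix one density to a locally constant baseline (the paper uses a uniform density on a cube), perturb the other, and reduce the density-ratio problem to pointwise Lipschitz density estimation. The paper then simply cites \citet[Exercise~2.8]{MR2724359} for the final step, whereas you carry out the underlying two-point Le~Cam construction (mean-zero bump of height $h$ and width $w\asymp h$, $\chi^2$-bound on the KL divergence, Pinsker) in full and combine the two resulting bounds---$N_1^{-2/(2+d)}$ from perturbing $\nu_1$ and $N_0^{-2/(2+d)}$ from perturbing $\nu_0$---into $(N_0\wedge N_1)^{-2/(2+d)}$.
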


We then move on to the case of a global risk and study the rates of convergence in this regard. To this end, a global assumption on $(\nu_0,\nu_1)$ is given below.

\begin{assumption}[Global assumption]  \phantomsection \label{asp:global} 
  \begin{enumerate}[itemsep=-.5ex,label=(\roman*)]
    \item $\nu_0, \nu_1$ are two probability measures on $\bR^d$, both are absolutely continuous with respect to $\lambda$, and $\nu_1$ is absolutely continuous with respect to $\nu_0$.
    \item There exists a constant $R>0$ such that ${\rm diam}(S_0) \le R$.
    \item There exist two constants $f_L,f_U>0$ such that for any $x \in S_0$ and $z \in S_1$, $f_L \le f_0(x) \le f_U$ and $f_1(z) \le f_U$.
    \item There exists a constant $a \in (0,1)$ such that for any $\delta \in (0,{\rm diam}(S_0)]$ and any $z \in S_1$,
    \[
      \lambda(B_{z,\delta} \cap S_0) \ge a \lambda(B_{z,\delta}).
    \]
    \item There exists a constant $H>0$ such that the surface area (Hausdorff measure,  \citet[Section 3.3]{evans2018measure})
    %\fbox{to ZX: citation}
    of $S_1$ is bounded by $H$.
    \item There exists a constant $L>0$ such that for any $x,z \in S_1$,
    \[
      \lvert f_0(x) - f_0(z) \rvert \vee \lvert f_1(x) - f_1(z) \rvert \le L \lVert x-z \rVert.
    \]
  \end{enumerate}
\end{assumption}

\begin{remark}
Assumption \ref{asp:global} is standard in the literature for establishing the global risk of density ratio estimators; similar assumptions were made in \citet[Assumption 1]{zhao2020analysis} and \citet[Assumption 1]{zhao2020minimax}. Note that the regularity conditions on the support automatically hold for $d$-dimensional cubes, and the restriction on the surface area is added to control the boundary effect on NN-based methods. 
%\fbox{to ZX: please complete.}
\end{remark}

Define the following probability class
\begin{align}\label{eq:prob-class-g}
\cP_{\rm g}(f_L,f_U,L,d,a,H,R):=\Big\{(\nu_0,\nu_1): \text{Assumption~\ref{asp:global} holds} \Big\}.
\end{align}
The next theorem establishes the uniform rate of convergence of $\hat r(\cdot)$ within the above probability class and under the $L_1$ risk. This rate is further matched by a minimax lower bound derived in Theorem 1 of \cite{zhao2020analysis} using similar arguments as in the pointwise case. 

\begin{theorem}[Global rates of convergence under the $L_1$ risk]\label{thm:rate,risk} Assume $M\log N_0/N_0 \to 0$, $M/\log N_0 \to \infty$, $MN_1/N_0 \to \infty$, and consider a sufficiently large $N_0$.
  \begin{enumerate}[itemsep=-.5ex,label=(\roman*)]
\item[(i)] We have the following uniform upper bound,
  \[
    \sup_{(\nu_0,\nu_1) \in \cP_{\rm g}(f_L,f_U,L,d,a,H,R)} \E \Big[ \int_{\bR^d} \Big\lvert \hat{r}_M(x) - r(x) \Big\rvert f_0(x) \d x \Big] \le C \Big[ \Big(\frac{M}{N_0}\Big)^{1/d} + \Big(\frac{1}{M}\Big)^{1/2} + \Big(\frac{N_0}{MN_1}\Big)^{1/2}\Big] ,
  \]
  where $C>0$ is a constant only depending on $f_L,f_U,a,H,L,d$.
\item[(ii)] Further assume $N_1^{-\frac{d}{2+d}}\log N_0 \to 0$, fix $\alpha > 0$, and take $M = \alpha\cdot\{ N_0^{\frac{2}{2+d}} \vee (N_0 N_1^{-\frac{d}{2+d}})\}$. We then have
  \[
    \sup_{(\nu_0,\nu_1) \in \cP_{\rm g}(f_L,f_U,L,d,a,H,R)} \E \Big[ \int_{\bR^d} \Big\lvert \hat{r}_M(x) - r(x) \Big\rvert f_0(x) \d x \Big] \le C' (N_0 \wedge N_1)^{-\frac{1}{2+d}},
  \]
  where $C'>0$ is a constant only depending on $f_L,f_U,a,H,L,d,\alpha$.
 \end{enumerate}
\end{theorem}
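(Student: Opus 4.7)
The plan is a pointwise bias--variance decomposition integrated against $f_0$. Starting from
\[
\E\bigl|\hat r_M(x)-r(x)\bigr|\le\bigl|\E[\hat r_M(x)]-r(x)\bigr|+\sqrt{\Var[\hat r_M(x)]}\quad\text{for each }x\in S_0,
\]
integrate and apply Cauchy--Schwarz with $\int f_0=1$ to the square-root-variance piece, yielding
\[
\E\!\int\!\bigl|\hat r_M(x)-r(x)\bigr|f_0(x)\d x\le\int_{S_0}\!\bigl|\E[\hat r_M(x)]-r(x)\bigr|f_0(x)\d x+\Bigl(\int_{S_0}\!\Var[\hat r_M(x)]f_0(x)\d x\Bigr)^{1/2}.
\]
It then suffices to bound the integrated bias and the integrated variance separately, and to plug in the prescribed $M$ for part (ii).

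For the integrated variance, Theorem~\ref{thm:pw-rate}\ref{thm:pw-rate2} delivers $\Var[\hat r_M(x)]\lesssim M^{-1}+N_0/(MN_1)$ for every $x\in S_1$ at distance $\gtrsim\delta_n:=C(M/N_0)^{1/d}$ from $\partial S_1$, since the local Assumption~\ref{asp:ptw} holds there with universal constants inherited from Assumption~\ref{asp:global}(iii),(vi). For the remaining boundary layer and for $x\in S_0\setminus S_1$, the same rate can be reproduced directly from the conditional law $K_M(x)\mid\{X_i\}\sim\mathrm{Bin}(N_1,\nu_1(A_M(x)))$ combined with the moment control of Lemma~\ref{lemma:moment,catch}, extended via the $a$-regularity in Assumption~\ref{asp:global}(iv). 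The variance contribution to the $L_1$ risk is therefore $\lesssim M^{-1/2}+(N_0/(MN_1))^{1/2}$.

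For the integrated bias, split $S_0$ into the interior $I:=\{x\in S_1:d(x,\partial S_1)\ge\delta_n\}$ and its complement $T_{\delta_n}:=S_0\setminus I$. On $I$, Assumption~\ref{asp:ptw} holds with radius $\delta_n$ and the Lipschitz constant from Assumption~\ref{asp:global}(vi), so Theorem~\ref{thm:pw-rate}\ref{thm:pw-rate1} gives $|\E\hat r_M(x)-r(x)|\lesssim(M/N_0)^{1/d}$ pointwise; integration against $f_0\le f_U$ yields an interior contribution of the same order. On $T_{\delta_n}$, I bound the bias by a constant: $r(x)\le f_U/f_L$ by Assumption~\ref{asp:global}(iii), while $\E[\hat r_M(x)]=(N_0/M)\E[\nu_1(A_M(x))]=O(1)$ by Lemma~\ref{lemma:moment,catch} together with the local boundedness of $f_1/f_0$. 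The surface-area bound in Assumption~\ref{asp:global}(v), combined with the $a$-regularity in (iv), gives $\lambda(T_{\delta_n})\lesssim H\delta_n$, so the boundary contribution is also $\lesssim\delta_n\asymp(M/N_0)^{1/d}$. Summing the interior and boundary contributions completes part (i). Part (ii) follows by substituting $M=\alpha\{N_0^{2/(2+d)}\vee N_0 N_1^{-d/(2+d)}\}$ into the three terms of part (i) and verifying that each balances at order $(N_0\wedge N_1)^{-1/(2+d)}$.

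The main obstacle is the boundary analysis for the bias. Since $r$ is typically discontinuous across $\partial S_1$, for $x\in T_{\delta_n}$ the catchment area $A_M(x)$ straddles $\{f_1>0\}$ and $\{f_1=0\}$, and the pointwise rate of Theorem~\ref{thm:pw-rate}\ref{thm:pw-rate1} fails. The precise geometric content of Assumption~\ref{asp:global}(iv)--(v) --- the surface-area bound $H$ and the $a$-regularity --- is exactly what is needed to trade this failure for a Lebesgue-mass bound on $T_{\delta_n}$, keeping the boundary integral at the target rate $(M/N_0)^{1/d}$.
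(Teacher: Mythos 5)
Your high-level strategy --- split $S_0$ by distance to $\partial S_1$, invoke the pointwise Theorem~\ref{thm:pw-rate} on the interior, and trade a Lebesgue-mass bound for a crude pointwise bound near the boundary --- is the same as the paper's, but two of your steps fail as written.

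First, the claim $\lambda(T_{\delta_n})\lesssim H\delta_n$ is false in general. You define $T_{\delta_n}=S_0\setminus I$, which contains the \emph{entire} set $S_0\setminus S_1$, not just a thin collar. Assumption~\ref{asp:global} does not impose $S_0=S_1$, so $S_0\setminus S_1$ can have $\lambda$-measure of order one, making $\lambda(T_{\delta_n})=O(1)$. Your constant bias bound times $\lambda(T_{\delta_n})$ is then $O(1)$, not $O(\delta_n)$, and the argument collapses. The paper uses a \emph{three}-way split: (a) $\{x\in S_1:\Delta(x)\gtrsim\delta_n\}$, handled by Theorem~\ref{thm:pw-rate}; (b) the collar $\{x\in S_0:\Delta(x)\lesssim\delta_n\}$, of measure $\lesssim H\delta_n$, where a constant bound on $\E|\hat r_M(x)-r(x)|$ is enough; and (c) the deep exterior $\{x\in S_0\setminus S_1:\Delta(x)\gtrsim\delta_n\}$, where $r(x)=0$ and --- this is the missing ingredient --- $\E[\hat r_M(x)]$ is not merely $O(1)$ but $o(N_0^{-\gamma})$ for every $\gamma>0$, because by $a$-regularity the catchment area can reach $S_1$ only on the exponentially small event $\{U_{(M)}>2M/N_0\}$. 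It is case (c) that neutralizes the possibly large measure of $S_0\setminus S_1$.

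Second, your global Cauchy--Schwarz $\int\sqrt{\Var}\,f_0\le(\int\Var\,f_0)^{1/2}$ would still degrade the rate unless $\Var[\hat r_M(x)]\lesssim 1/M+N_0/(MN_1)$ holds \emph{uniformly on the collar}. You assert this but do not prove it, and it does not follow from Lemma~\ref{lemma:moment,catch}: that lemma controls $\E[\nu_1^p(A_M(x))]$, whereas the $1/M$ term in $\Var[\E(\hat r_M(x)\given\mX)]$ relies on a covariance cancellation that the paper obtains from a density bound on $W=\nu_0(B_{Z,\lVert x-Z\rVert})$ (Lemma~\ref{lemma:z,density}), whose proof uses the local Lipschitz structure and breaks near $\partial S_1$ where $f_1$ jumps to zero. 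If the collar variance is only $O(1)$, your global Jensen step yields a boundary term of order $(H\delta_n)^{1/2}\asymp(M/N_0)^{1/(2d)}$, strictly worse than the target $(M/N_0)^{1/d}$. The paper avoids the issue by applying Cauchy--Schwarz \emph{pointwise} on the interior, $\E|\hat r_M(x)-r(x)|\le(\E[\hat r_M(x)-r(x)]^2)^{1/2}$, and bounding $\E|\hat r_M(x)-r(x)|$ directly by a constant on the collar, so the collar contributes $H\delta_n$ rather than its square root.
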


%\fbox{to ZX: note that I changed the following to a proposition}
\begin{proposition}[Global minimax lower bound under the $L_1$ risk]\label{thm:minimax,risk}
  If $a$ is sufficiently small and $H,R$ are sufficiently large, then for all sufficiently large $N_0$,
  \[
    \inf_{\tilde{r}} \sup_{(\nu_0,\nu_1) \in \cP_{\rm g}(f_L,f_U,L,d,a,H,R)} \E \Big[ \int_{\bR^d} \Big\lvert \tilde{r}(x) - r(x) \Big\rvert f_0(x) \d x \Big] \ge c (N_0 \wedge N_1)^{-\frac{1}{2+d}},
  \]
  where $c>0$ is a constant only depending on $f_L,f_U,L,d$ and the infimum is taken over all measurable functions.
\end{proposition}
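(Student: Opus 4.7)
The plan is to reduce the two-sample density-ratio lower bound to two familiar Lipschitz density estimation lower bounds, one for each of the unknowns $f_0$ and $f_1$, and then combine them by taking a maximum. The reduction is through sub-classes of $\cP_{\rm g}$: if a genie tells us $f_0$ for free, then estimating $r = f_1/f_0$ in $L^1(f_0 \d x)$ is precisely the problem of estimating $f_1$ in $L^1(\d x)$ from the $N_1$ samples, a Lipschitz density estimation problem whose minimax rate is $N_1^{-1/(2+d)}$ via the classical hypercube argument in \citet[Chapter 2 and Exercise 2.8]{MR2724359}; symmetrically, if $f_1$ is known one is estimating $1/f_0$, a Lipschitz problem driven by the $N_0$ samples with rate $N_0^{-1/(2+d)}$. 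Since the two-sample minimax risk over $\cP_{\rm g}$ dominates each one-sided problem, taking the maximum yields the announced $(N_0 \wedge N_1)^{-1/(2+d)}$ bound.

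To implement the first reduction I would fix $f_0 \equiv 1$ on $S_0 = [0,1]^d$ and consider the Assouad hypercube family
\[
  f_1^\sigma(x) \;=\; 1 + c_0 \sum_{k=1}^{m^d} \sigma_k\, h\,\varphi\bigl(h^{-1}(x - x_k)\bigr), \qquad \sigma \in \{-1,+1\}^{m^d},
\]
where $\varphi$ is a fixed smooth bump with $\int \varphi = 0$, $\mathrm{supp}(\varphi)\subset[0,1]^d$ and $\lVert\varphi\rVert_\infty=1$; the $x_k$'s index $m^d$ congruent sub-cubes of side $h = 1/m$; and $c_0>0$ is a small constant. On this sub-class $r^\sigma = f_1^\sigma$, so the weighted $L^1$ risk coincides with $\lVert \tilde r - f_1^\sigma\rVert_{L^1}$. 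Each bump contributes $\asymp h^{d+1}$ to the $L^1$ separation per Hamming bit (giving Assouad separation $\tau \asymp h^{d+1}$ across $m^d$ bits), while the joint KL divergence of the sample reduces to $N_1 \KL(f_1^\sigma \,\|\, f_1^{\sigma'}) \lesssim N_1 h^{d+2}\, d_H(\sigma,\sigma')$, since $f_0$ is shared and the densities are bounded below by $f_L$. Choosing $h \asymp N_1^{-1/(2+d)}$, Assouad's lemma then produces a lower bound of order $m^d \cdot \tau \asymp h \asymp N_1^{-1/(2+d)}$. For the complementary bound I would run the symmetric construction, fixing $f_1\equiv 1$ on $[0,1]^d$ and perturbing $f_0$; since $g \mapsto 1/g$ is bi-Lipschitz on $[f_L,f_U]$, the $L^1$ separation between the $f_0^\sigma$'s transfers up to constants to the weighted $L^1$ separation between the corresponding ratios, yielding the complementary rate $N_0^{-1/(2+d)}$.

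The main technical obligation is not a single deep estimate but the bookkeeping needed to verify that every member of both hypercube families sits inside $\cP_{\rm g}(f_L,f_U,L,d,a,H,R)$: the amplitude $c_0$ must be small enough that all $f_i^\sigma$ stay in $[f_L,f_U]$ and have Lipschitz constant at most $L$ uniformly in $\sigma$ and $h$ (the bump has $\lVert\nabla(h\varphi(h^{-1}\cdot))\rVert_\infty \lesssim 1$, so this is automatic); the common support $[0,1]^d$ has fixed diameter, surface area, and interior-cone regularity constants, which is exactly why the statement requires $a$ sufficiently small and $H,R$ sufficiently large; and mutual absolute continuity is automatic from $f_0^\sigma, f_1^\sigma \ge f_L > 0$ on the shared support. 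Once these routine verifications are in place, the conclusion follows by taking the maximum of the two Assouad lower bounds, in line with the argument of \citet[Theorem 1]{zhao2020analysis}.
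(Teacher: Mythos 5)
Your reduction is exactly the one the paper uses --- fix one marginal to be uniform on a cube so that the density-ratio risk collapses to a one-sided Lipschitz density (or reciprocal-density) estimation problem driven by $N_1$ (resp.\ $N_0$) samples, then take the maximum of the two bounds --- and your Assouad hypercube computation is precisely the content of \citet[Theorem 1]{zhao2020analysis}, which the paper simply cites as a black box. The only slip is fixing $f_0\equiv 1$ on $[0,1]^d$, which silently requires $f_L\le 1\le f_U$; match the paper and set $f_0\equiv f_L$ on $[-f_L^{-1/d}/2,f_L^{-1/d}/2]^d$ (and symmetrically $f_1\equiv f_U$ in the second reduction) so the base measure lies in $\cP_{\rm g}$ for arbitrary $f_L,f_U$.
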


\begin{remark}[Comparison to the one-step estimator in \cite{noshad2017direct}]\label{remark:hero2}
The estimator introduced in Remark \ref{remark:hero} by \cite{noshad2017direct} is, to our knowledge, the only alternative density ratio estimator in the literature that is able to attain both the properties \ref{itm:1} and \ref{itm:2}. However, %the ratio of the sample sizes of two groups need to be a constant. Moreover,
the arguments in \citet[Section III]{noshad2017direct} can only yield the  bound %show (assuming further that $N_1/N_0$ converges to a positive constant) that for $(\nu_0,\nu_1) \in \cP_{x,\rm p}(f_L,f_U,L,d,\delta)$,
\[
  \E\big[\hat r_M^{\prime}(x) - r(x)\big]^2 \lesssim \Big(\frac{M}{N_0}\Big)^{1/d} + \Big(\frac{1}{M}\Big)
\]
for $(\nu_0,\nu_1) \in \cP_{x,\rm p}(f_L,f_U,L,d,\delta)$. This is via Equ. (21) therein, de-poissonizing the estimator, and further assuming $N_1/N_0$ converges to a positive constant. The above bound is strictly looser than the bound $(M/N_0)^{2/d}+M^{-1}$ for $\hat r_M(\cdot)$ shown in Theorem \ref{thm:pw-rate}. However, it seems mathematically challenging to improve their analysis and accordingly, unlike $\hat r_M(\cdot)$, it is still theoretically unclear if the estimator $\hat{r}_M^{\prime}(x)$ is a statistically efficient density ratio estimator. 
%\fbox{to ZX: please complete the remark.}
\end{remark}

%\fbox{stop point}

\section{Revisiting the matching-based estimator of the ATE}\label{sec:matching}

This section studies the bias-corrected NN matching-based estimator of the ATE, proposed in \cite{abadie2011bias} to correct the asymptotic bias of the original matching-based estimator derived by \cite{abadie2006large}. To this end, we leverage the new insights obtained in the last three sections, and bridge the study to both the classic double robustness framework \citep{scharfstein1999adjusting,bang2005doubly} and the modern double machine learning framework \citep{chernozhukov2018double}.

We first introduce the setup for the NN matching-based estimator and its bias-corrected version. Following \cite{abadie2006large}, let $[(X_i,D_i,Y_i)]_{i=1}^n$ be $n$ independent independent copies of $(X,D,Y)$, where $D\in\{0,1\}$ is a binary variable, $X \in \bR^d$ represents the individual covariates, assumed to be absolute continuous admitting a density $f_X$, and $Y \in \bR$ stands for the outcome variable. 

For each unit $i \in \zahl{n}$, we observe $D_i=1$ if in the treated group and $D_i=0$ if in the control group. Let $n_0:=\sum_{i=1}^n (1-D_i)$ and $n_1:=\sum_{i=1}^n D_i$ be the numbers of control and treated units, respectively. Under the potential outcome framework \citep{rubin1974estimating}, the unit $i$ has two potential outcomes, $Y_i(1)$ and $Y_i(0)$, but we observe only one of them:
\[
    Y_i = \begin{cases}
        Y_i(0), & \mbox{ if } D_i=0,\\
        Y_i(1), & \mbox{ if } D_i=1.
    \end{cases}
\]

The goal is to estimate the following population ATE, 
\begin{align*}
    \tau := \E\Big[Y_i(1)-Y_i(0)\Big],
\end{align*}
based on the observations $\{(X_i, D_i, Y_i)\}_{i=1}^n$. To estimate ATE, we consider its empirical counterpart
\begin{align*}
    \hat{\tau}_M := \frac{1}{n} \sum_{i=1}^n \Big[\hat{Y}_i(1) -\hat{Y}_i(0)\Big], 
\end{align*}
where $\hat{Y}_i(0)$ and $\hat{Y}_i(1)$ are the imputed outcomes of $Y_i(0)$ and $Y_i(1)$. With a fixed $M$, the matching-based estimator in \cite{abadie2006large} imputes the missing potential outcomes by
\begin{align*}
    \hat{Y}_i(0) := \begin{cases}
        Y_i, & \mbox{ if } D_i=0,\\
        \frac{1}{M} \sum_{j \in \mathcal{J}^0_M(i)} Y_j, & \mbox{ if } D_i=1,     
    \end{cases}
~~~{\rm and}~~~    \hat{Y}_i(1) := \begin{cases}
        \frac{1}{M} \sum_{j \in \mathcal{J}^1_M(i)} Y_j, & \mbox{ if } D_i=0,\\   
        Y_i, & \mbox{ if } D_i=1.
    \end{cases}    
\end{align*}
%\fbox{to ZX: double-check the abbreviation of $\omega$} 
Here for $\omega\in\{0,1\}$, $\mathcal{J}^\omega_M(i)$ represents the index set of $M$-NNs of $X_i$ in $\{X_j:D_j=\omega\}_{j=1}^n$, i.e., the set of all indices $j \in \zahl{n}$ such that $D_j=\omega$ and 
\[
    \sum_{\ell=1, D_\ell=\omega}^n \ind\Big(\lVert X_\ell -X_i \rVert \le \lVert X_j - X_i \rVert\Big) \le M.
\]
%and $\mathcal{J}^1_M(i)$ is the index set of $M$-NNs of $X_i$ in $\{X_j:D_j=1\}_{j=1}^n$, i.e., the set of all indices $j \in \zahl{n}$ such that $D_j=1$ and 
%\[
%    \sum_{\ell=1, D_\ell=1}^n \ind\big(\lVert X_\ell -X_i \rVert \le \lVert X_j - X_i \rVert\big) \le M.
%\]
Let $K^\omega_M(i)$ be the number of matched times for unit $i$ such that $D_i=\omega$, i.e.,%, and $K^1_M(i)$ be the number of matched time for unit $i$ such that $D_i=1$, i.e.,
\[
  K^\omega_M(i) := \sum_{j=1, D_j = 1-\omega}^n \ind\Big(i \in \cJ^\omega_M(j)\Big). %, ~~K^1_M(i) := \sum_{j=1, D_j = 0}^n \ind\big(i \in \cJ^1_M(j)\big).
\]
With the above notation and concepts, the matching-based estimator in \cite{abadie2006large} can be written as
\begin{align}\label{eq:m}
  \hat{\tau}_M = \frac{1}{n} \Big[ \sum_{i=1,D_i = 1}^n \Big(1 + \frac{K^1_M(i)}{M}\Big) Y_i - \sum_{i=1,D_i = 0}^n \Big(1 + \frac{K^0_M(i)}{M}\Big) Y_i\Big].
\end{align}

However, when $d>1$, the bias term of $\hat{\tau}_M$ is asymptotically non-negligible \citep{abadie2006large}. To fix this, \cite{abadie2011bias} proposed the following bias-corrected version for $\hat\tau_M$. In detail, let $\hat{\mu}_0(x)$ and $\hat{\mu}_1(x)$ be mappings from $\bR^d$ to $\bR$ that estimate the conditional means of the outcomes
\[
\mu_0(x) := \E [Y \given X=x,D=0]~~ {\rm and}~~ \mu_1(x) := \E [Y \given X=x,D=1], 
\]
respectively, with the corresponding residuals
\[
\hat{R}_i := Y_i - \hat{\mu}_{D_i}(X_i), ~~i\in\zahl{n}.
\]
The estimator based on the outcomes regression is
\[
\hat{\tau}^{\rm reg}:= n^{-1} \sum_{i=1}^n \Big[\hat{\mu}_1(X_i) - \hat{\mu}_0(X_i)\Big].
\]
One is then ready to check that the bias-corrected matching-based estimator in \cite{abadie2011bias} has the following equivalent form, summarized as a lemma.
\begin{lemma}\label{lemma:mbc}  The bias-corrected matching-based estimator in \cite{abadie2011bias} can be rewritten as
\begin{align}\label{eq:mbc}
  \hat{\tau}_M^{\rm bc} = \hat{\tau}^{\rm reg} + \frac{1}{n} \Big[ \sum_{i=1,D_i = 1}^n \Big(1 + \frac{K^1_M(i)}{M}\Big) \hat{R}_i - \sum_{i=1,D_i = 0}^n \Big(1 + \frac{K^0_M(i)}{M}\Big) \hat{R}_i \Big].
\end{align}
\end{lemma}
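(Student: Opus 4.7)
The plan is a direct algebraic rewriting: I would start from the definition of $\hat{\tau}_M^{\rm bc}$ in \cite{abadie2011bias}, which is the analog of $\hat{\tau}_M$ in \eqref{eq:m} but with each matched outcome $Y_j$ (where $D_j = \omega$, $j \in \mathcal{J}^\omega_M(i)$) replaced by the bias-corrected imputation $Y_j + \hat{\mu}_\omega(X_i) - \hat{\mu}_\omega(X_j)$. Concretely,
\[
\hat{Y}_i^{\rm bc}(0) = \begin{cases} Y_i, & D_i = 0,\\ \frac{1}{M}\sum_{j \in \mathcal{J}^0_M(i)}\bigl[Y_j + \hat{\mu}_0(X_i) - \hat{\mu}_0(X_j)\bigr], & D_i = 1, \end{cases}
\]
and symmetrically for $\hat{Y}_i^{\rm bc}(1)$, with $\hat{\tau}_M^{\rm bc} = n^{-1}\sum_i [\hat{Y}_i^{\rm bc}(1) - \hat{Y}_i^{\rm bc}(0)]$.

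Next I would split $\sum_i [\hat{Y}_i^{\rm bc}(1) - \hat{Y}_i^{\rm bc}(0)]$ into the two sub-sums over $\{i : D_i = 1\}$ and $\{i : D_i = 0\}$. In each inner matched sum, the matched units $j$ satisfy $D_j \in \{0,1\}$ matching the corresponding $\hat{\mu}_\omega$, so one can use the identity $Y_j - \hat{\mu}_{D_j}(X_j) = \hat{R}_j$. This eliminates $Y_j - \hat{\mu}_\omega(X_j)$ inside the inner sums, leaving a term $\hat{\mu}_\omega(X_i)$ (pulled out of the $M^{-1}\sum_{j \in \mathcal{J}^\omega_M(i)}$ as the average of a constant in $j$) plus the inner average of residuals $M^{-1}\sum_{j \in \mathcal{J}^\omega_M(i)} \hat{R}_j$.

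Then I would interchange the order of the resulting double sums using the defining identity
\[
\sum_{i : D_i = 1} \sum_{j \in \mathcal{J}^0_M(i)} \hat{R}_j = \sum_{j : D_j = 0} K^0_M(j) \hat{R}_j,
\]
and its $\omega = 1$ analog; this is immediate from the definition $K^\omega_M(i) = \sum_{j : D_j = 1-\omega} \mathbf{1}(i \in \mathcal{J}^\omega_M(j))$. Finally, on each sub-sum I would substitute $Y_i = \hat{R}_i + \hat{\mu}_{D_i}(X_i)$ for the directly observed term, so that after regrouping the pieces involving $\hat{\mu}_1(X_i) - \hat{\mu}_0(X_i)$ run over all $i \in \zahl{n}$, forming $n\hat{\tau}^{\rm reg}$. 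Collecting the remaining $\hat{R}_i$ terms yields the stated coefficients $1 + K^1_M(i)/M$ and $-(1 + K^0_M(i)/M)$, where the ``$1$'' comes from the directly observed $\hat{R}_i$ and the $K^\omega_M(i)/M$ from the swapped double sum.

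There is no real mathematical obstacle here — the lemma is essentially a bookkeeping identity. The only point requiring mild care is ensuring that the correct residual $\hat{R}_j = Y_j - \hat{\mu}_{D_j}(X_j)$ (as opposed to $Y_j - \hat{\mu}_{1-D_j}(X_j)$) appears after the bias-correction cancellation, and that the index swap uses $K^0_M(\cdot)$ on control units and $K^1_M(\cdot)$ on treated units, consistent with the definition in Section~\ref{sec:matching}. Once the two sub-sums are combined and $\hat{\tau}^{\rm reg}$ is identified, dividing by $n$ yields \eqref{eq:mbc}.
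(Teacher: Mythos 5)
Your proposal is correct and follows essentially the same algebraic manipulation as the paper's proof: starting from Abadie--Imbens's bias-corrected imputations, substituting $\hat{R}_j = Y_j - \hat{\mu}_{D_j}(X_j)$ inside the matched averages, and regrouping. The only cosmetic difference is that you spell out the index-swap identity $\sum_{i:D_i=1}\sum_{j\in\cJ^0_M(i)}\hat R_j = \sum_{j:D_j=0}K^0_M(j)\hat R_j$ (and its $\omega=1$ analogue) explicitly, whereas the paper leaves that step implicit in the final line; the bookkeeping is otherwise identical.
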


Equ. \eqref{eq:mbc} is related to doubly robust estimators. To compare them, we review the general double robustness framework. In detail, we first have some outcome models and residuals defined in the same way as above, and then let 
\[
\hat{e}(x):\bR^d \to \bR 
\]
be a genetic estimator of the propensity score
\[
e(x) := \P(D=1 \given X =x). 
\]
The doubly robust estimator in \cite{scharfstein1999adjusting} and \cite{bang2005doubly} could then be formulated as
\begin{align}\label{eq:dr}
  \hat{\tau}^{\rm dr} = \hat{\tau}^{\rm reg} + \frac{1}{n} \Big[ \sum_{i=1,D_i = 1}^n \frac{1}{\hat{e}(X_i)} \hat{R}_i - \sum_{i=1,D_i = 0}^n \frac{1}{1-\hat{e}(X_i)} \hat{R}_i \Big].
\end{align}

Notice that conditional on $\mD := (D_1, \ldots, D_n)$, $[X_i:D_i=\omega]_{i=1}^n$ are $n_{\omega}$ i.i.d. random variables sampled from the distribution of $X \given D=\omega$,  %$[X_i:D_i=1]_{i=1}^n$ are $n_1$ i.i.d samples from the distribution of $X \given D=1$, 
and the two groups of sample points, 
\[
[X_i:D_i=0]_{i=1}^n ~~~{\rm and}~~~ [X_i:D_i=1]_{i=1}^n, 
\]
are mutually independent. Denote the density of $X \given D=\omega$ by $f_{X \given D=\omega}$. % and the density of $X \given D=1$ by $f_{X \given D=1}$. 
From the construction of $K_M^0(i), K_M^1(i)$ and results in Section \ref{sec:theory}, under the density assumptions and an appropriate choice of $M$, conditional on $\mD$, 
\[
\frac{n_0}{n_1} \frac{K^0_M(i)}{M}~~~{\rm and}~~~ \frac{n_1}{n_0} \frac{K^1_M(i)}{M}
\]
are consistent estimators of 
\[
\frac{f_{X \given D=1}(X_i)}{f_{X \given D=0}(X_i)} ~~~{\rm and}~~~\frac{f_{X \given D=0}(X_i)}{f_{X \given D=1}(X_i)},
\]
respectively. Noting further that $n_1/n_0$ converges almost surely to $\P(D=1)/\P(D=0)$ by the law of large numbers, the following two statistics
\[
\frac{K^0_M(i)}{M}~~~{\rm and }~~~\frac{K^1_M(i)}{M}
\]
 are then consistent estimators of 
 \[
  \frac{e(X_i)}{1-e(X_i)}~~~{\rm and }~~~\frac{1-e(X_i)}{e(X_i)},
 \]
respectively. Thus, in view of \eqref{eq:dr}, the bias-corrected matching-based estimator $\hat{\tau}_M^{\rm bc}$ in \eqref{eq:mbc} is actually a doubly robust estimator of $\tau$, and accordingly, should also enjoy all the desirable properties of doubly robust estimators. This novel insight into \cite{abadie2011bias}'s bias-corrected matching estimator allows us to establish its asymptotic properties with a diverging $M$, which is the main topic in the rest of this section. It is worth mentioning here that Abadie and Imbens never pointed out the relation between matching-based ATE estimators and the double robustness framework, and there is no study in the double robustness literature that has analyzed NN matching-based estimators.

%\begin{remark}
%  The original matching estimator $\hat{\tau}_M$ is the Horvitz–Thompson estimator \citep{horvitz1952generalization}, or more generally, the inverse probability weighting estimator \citep{robins1994estimation}, which is also a special case of double robust estimator.
%\end{remark}

%Double machine learning \citep{farrell2015robust, chernozhukov2018double} under the average treatment effect estimation setting is the double robust estimator, which can attain semiparametric efficiency. 
To formally state the properties, we first leverage the results of %\cite{farrell2015robust} and 
\cite{chernozhukov2018double}. In the sequel, let $U_\omega := Y(\omega) - \mu_{\omega}(X)$ for $\omega \in \{0,1\}$ and $\bX$ be the support of $X$.

\begin{assumption}  \phantomsection \label{asp:dml1} 
  \begin{enumerate}[itemsep=-.5ex,label=(\roman*)]
    \item\label{asp:dml1-1} For almost all $x \in \bX$, $D$ is independent of $(Y(0),Y(1))$ conditional on $X=x$, and there exists some constant $\eta > 0$ such that $\eta < \P(D=1 \given X=x) < 1-\eta$.
    \item $[(X_i,D_i,Y_i)]_{i=1}^n$ are i.i.d. following the joint distribution of $(X,D,Y)$.
    \item $\E [U^2_\omega \given X=x] $ is uniformly bounded for almost all $x \in \bX$ and $\omega \in \{0,1\}$.
    \item $\E [\mu^2_\omega]$ is bounded for $\omega \in \{0,1\}$.
  \end{enumerate}
\end{assumption}

\begin{assumption}  \phantomsection \label{asp:dml2} 
  \begin{enumerate}[itemsep=-.5ex,label=(\roman*)]
    \item $\E [U^2_\omega]$ is bounded away from zero for $\omega \in \{0,1\}$.
    \item There exists some constants $\kappa>0$ such that $\E [\lvert Y \rvert^{2+\kappa}]$ is bounded.
  \end{enumerate}
\end{assumption}

\begin{assumption}\label{asp:dml3''}
  For $\omega \in \{0,1\}$, there exists a deterministic function $\bar{\mu}_\omega(\cdot):\bR^d \to \bR$  such that $\E [\bar{\mu}^2_\omega(X)]$ is bounded and the estimator $\hat{\mu}_\omega(x)$ satisfies
\[
\lVert \hat{\mu}_\omega - \bar{\mu}_\omega \rVert_\infty = o_\P(1),
\]
where $\lVert \cdot \rVert_\infty$ denotes the function $L_\infty$ norm.
\end{assumption}

\begin{assumption}\label{asp:dml3'}
  For $\omega \in \{0,1\}$, the estimator $\hat{\mu}_\omega(x)$ satisfies
\[
\lVert \hat{\mu}_\omega - \mu_\omega \rVert_\infty = o_\P(1).
\]
\end{assumption}

\begin{assumption}\label{asp:dml3}
   For $\omega \in \{0,1\}$, the estimator $\hat{\mu}_\omega(x)$ satisfies
\[
\lVert \hat{\mu}_\omega - \mu_\omega \rVert_\infty = o_\P(n^{-d/(4+2d)}).
\]
%where $\lVert \cdot \rVert_\infty$ denotes the function $L_\infty$ norm.
\end{assumption}

\begin{remark}
  Assumption~\ref{asp:dml1}\ref{asp:dml1-1} is the unconfoundedness and overlap assumptions, and is often referred to as the strong ignorability condition \citep{rosenbaum1983central}. Assumption~\ref{asp:dml2} corresponds to Assumption 5.1 in \cite{chernozhukov2018double}, and are similar to Assumption 4 in \cite{abadie2006large}. Assumption \ref{asp:dml3''} allows for the misspecification of the outcome models; for example, if $\hat{\mu}_\omega = \bar{\mu}_\omega = 0$,  $\hat{\tau}_M^{\rm bc}$ then reduces to $\hat{\tau}_M$. Assumption \ref{asp:dml3'} assumes that the outcome models are correctly specified.  Assumption~\ref{asp:dml3} assumes approximation accuracy of the outcome model. \cite{abadie2011bias} uses the power series approximation \citep{newey1997convergence} to estimate the outcome model, which under some classic nonparametric statistics assumptions automatically satisfy Assumption \ref{asp:dml3} (cf. Lemma A.1 in \cite{abadie2011bias}).
\end{remark}

\begin{remark}
 In Assumption~\ref{asp:dml3}, we assume an approximation rate under $L_\infty$ norm. This is different from the $L_2$ norm put in Assumption 5.1 in \cite{chernozhukov2018double}, but can be handled with some trivial modification to the proof of \citet[Theorem 5.1]{chernozhukov2018double} since one can replace the Cauchy–Schwarz inequality by the $L_1$-$L_\infty$ Hölder's inequality. Theorem~\ref{thm:rate,risk} can then be applied directly. %The reason is we directly utilize Theorem~\ref{thm:rate,risk} for the approximation rate of propensity score in $L_1$ norm, and then use Hölder's inequality instead of Cauchy–Schwarz inquality as in the proof of Theorem 5.1 in \cite{chernozhukov2018double}.
\end{remark}

Lastly, we review the semiparametric efficiency lower bound for estimating ATE \citep{hahn1998role}:
\[
  \sigma^2:= \E \Big[\mu_1(X) - \mu_0(X) + \frac{D(Y-\mu_1(X))}{e(X)} - \frac{(1-D)(Y-\mu_0(X))}{1-e(X)} - \tau \Big]^2.
\]
Theorem~\ref{thm:risk,lp} above and standard results on the double machine learning estimators (cf. \citet[Theorem 5.1]{chernozhukov2018double}) then imply the following theorem, showing that, once allowing $M$ to diverge at an appropriate rate, $\hat{\tau}_M^{\rm bc}$ in \eqref{eq:mbc} constitutes a doubly robust estimator of $\tau$. In addition, the counterpart of $\hat{\tau}_M^{\rm bc}$ via  sample splitting and cross fitting \citep[Definition 3.1]{chernozhukov2018double}, denoted by $\tilde{\tau}_{M,K}^{\rm bc}$ with $K \ge 2$ representing a fixed number of partitions, is semiparametrically efficient. Check the beginning of Theorem \ref{thm:dml}'s proof for a formal definition of $\tilde{\tau}_{M,K}^{\rm bc}$.

%\fbox{to ZX: note that  below I again combined two}

\begin{theorem}  \phantomsection \label{thm:dml} 
\begin{enumerate}[itemsep=-.5ex,label=(\roman*)]
 \item\label{thm:dml1} (Double robustness) On one hand, if the distribution of $(X,D,Y)$ satisfies Assumptions~\ref{asp:dml1}, \ref{asp:dml3''}, and either $(\P_{X\given D=0}, \P_{X\given D=1})$ or  $(\P_{X\given D=1}, \P_{X\given D=0})$ satisfies Assumption~\ref{asp:risk}, then if $M\log n/n \to 0$ and $M \to \infty$ as $n \to \infty$, 
  \begin{align*}
    \hat{\tau}_M^{\rm bc} - \tau \stackrel{\sf p}{\longrightarrow} 0~~~{\rm and}~~~ \tilde{\tau}_{M,K}^{\rm bc} - \tau \stackrel{\sf p}{\longrightarrow} 0.
  \end{align*}
  %\fbox{to ZX: please fill in the other half of double robustness}
  On the other hand, if the distribution of $(X,D,Y)$ satisfies Assumptions~\ref{asp:dml1} and \ref{asp:dml3'}, then
  \begin{align*}
    \hat{\tau}_M^{\rm bc} - \tau \stackrel{\sf p}{\longrightarrow} 0~~~{\rm and}~~~ \tilde{\tau}_{M,K}^{\rm bc} - \tau \stackrel{\sf p}{\longrightarrow} 0.
  \end{align*}
  \item\label{thm:dml2} (Semiparametric efficiency of $\tilde{\tau}_{M,K}^{\rm bc}$) Assume the distribution of $(X,D,Y)$ satisfies Assumptions~\ref{asp:dml1}, \ref{asp:dml2}, \ref{asp:dml3} and either $(\P_{X\given D=0}, \P_{X\given D=1})$ or  $(\P_{X\given D=1}, \P_{X\given D=0})$ satisfies Assumption~\ref{asp:global}. Then if we pick $M = \alpha n^{\frac{2}{2+d}}$ for some constant $\alpha>0$, 
  \begin{align*}
   \sqrt{n} (\tilde{\tau}_{M,K}^{\rm bc} - \tau) \stackrel{\sf d}{\longrightarrow} N(0,\sigma^2).
  \end{align*}
  In addition, letting 
  \begin{align*}
  \hat{\sigma}^2:= \frac{1}{n} \sum_{i=1}^n \Big[\hat{\mu}_1(X_i) - \hat{\mu}_0(X_i) + D_i\Big(1 + \frac{K^1_M(i)}{M}\Big)\hat R_i -
   (1-D_i)\Big(1 + \frac{K^0_M(i)}{M}\Big)\hat R_i - \hat{\tau}_M^{\rm bc} \Big]^2,
\end{align*}
we have $\hat{\sigma}^2\stackrel{\sf p}{\longrightarrow}\sigma^2$.
\end{enumerate}
\end{theorem}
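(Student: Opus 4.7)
The plan is to recognize $\hat\tau_M^{\rm bc}$ as a doubly robust estimator whose propensity score is implicitly estimated by NN matching, and then invoke the classical double robustness calculus for part \ref{thm:dml1} and the double machine learning master theorem of \cite{chernozhukov2018double} for part \ref{thm:dml2}. Define $\hat e_M(X_i) := M/(M+K_M^1(i))$ when $D_i=1$ and $\hat e_M(X_i) := K_M^0(i)/(M+K_M^0(i))$ when $D_i=0$, so that $1+K_M^1(i)/M = 1/\hat e_M(X_i)$ and $1+K_M^0(i)/M = 1/(1-\hat e_M(X_i))$; then \eqref{eq:mbc} coincides with the classical AIPW form \eqref{eq:dr}. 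Conditional on $\mD$, the covariate samples $[X_i:D_i=0]_{i=1}^n$ and $[X_i:D_i=1]_{i=1}^n$ are independent and i.i.d. with $n_0/n, n_1/n \in (\eta,1-\eta)$ eventually almost surely. Bayes' rule yields $e/(1-e) = (\P(D=1)/\P(D=0))\cdot f_{X\mid D=1}/f_{X\mid D=0}$, so the density-ratio results of Section \ref{sec:theory} applied conditionally on $\mD$ transfer directly into convergence statements for $\hat e_M - e$ ($L_1$ consistency via Theorem \ref{thm:risk,lp}, and the $n^{-1/(2+d)}$ rate via Theorem \ref{thm:rate,risk}).

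For part \ref{thm:dml1}, the plan is to use the standard decomposition $\hat\tau_M^{\rm bc} - \tau = (\text{oracle IF average}) + \text{Rem}$, where the oracle average is $o_\P(1)$ by the law of large numbers, and Rem splits into single-factor terms (vanishing whenever the corresponding nuisance estimator is consistent) plus a bilinear term bounded by $\lVert\hat\mu_\omega - \bar\mu_\omega\rVert_\infty \cdot \lVert 1/\hat e_M - 1/e\rVert_1$. Under the first hypothesis, Assumption \ref{asp:dml3''} makes the outcome error $o_\P(1)$ in $L_\infty$ and Theorem \ref{thm:risk,lp} makes the propensity error $o_\P(1)$ in $L_1$, so the bilinear term vanishes regardless of whether $\bar\mu_\omega = \mu_\omega$. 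Under the second hypothesis, correct specification (Assumption \ref{asp:dml3'}) annihilates the $\hat e_M$-dependent term directly, provided $\hat e_M$ is bounded away from $0$ and $1$ on an event of probability tending to one; this follows from a uniform concentration bound on $K_M^\omega(i)/M$ together with the overlap assumption. The cross-fitted version $\tilde\tau_{M,K}^{\rm bc}$ is handled identically as a finite average of $K$ fold-wise estimators.

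For part \ref{thm:dml2}, the plan is to apply \citet[Theorem 5.1]{chernozhukov2018double} to the Neyman-orthogonal AIPW score with the nuisance functions $\mu_0, \mu_1, e$ estimated by $\hat\mu_0, \hat\mu_1, \hat e_M$ on out-of-fold data. Cross-fitting decouples nuisance estimation from empirical averaging, so the central ingredient is the rate-product bound: Theorem \ref{thm:rate,risk}(ii) with $M = \alpha n^{2/(2+d)}$ gives $\lVert\hat e_M - e\rVert_1 = O_\P(n^{-1/(2+d)})$, Assumption \ref{asp:dml3} gives $\lVert\hat\mu_\omega - \mu_\omega\rVert_\infty = o_\P(n^{-d/(4+2d)})$, and since $1/(2+d) + d/(4+2d) = 1/2$, the bilinear remainder is $o_\P(n^{-1/2})$. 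Because the outcome bound is in $L_\infty$ rather than $L_2$, the Cauchy-Schwarz step in the original DML proof is replaced by an $L_1$-$L_\infty$ H\"older inequality, as noted in the paper's remark preceding the theorem. Combined with the moment and overlap conditions from Assumptions \ref{asp:dml1}-\ref{asp:dml2}, this yields $\sqrt n(\tilde\tau_{M,K}^{\rm bc} - \tau) \stackrel{\sf d}{\longrightarrow} N(0,\sigma^2)$. The variance consistency $\hat\sigma^2 \stackrel{\sf p}{\longrightarrow} \sigma^2$ is obtained by expanding the squared residuals and replacing $\hat\mu_\omega, \hat e_M, \hat\tau_M^{\rm bc}$ by their limits using the rate bounds above, with uniform integrability supplied by Assumption \ref{asp:dml2}.

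The main obstacle will be translating the density-ratio convergence of Section \ref{sec:theory} into a form usable by the DML framework at in-sample evaluation points. Evaluated at $X_i$, the statistic $\hat e_M(X_i)$ depends on the full arrangement of treated and control units through the matching structure, so it does not fit cleanly into the standard out-of-sample paradigm. Two devices handle this: for the consistency claim, conditioning on $\mD$ renders the two covariate samples independent, so Theorems \ref{thm:risk,lp} and \ref{thm:rate,risk} apply with $N_0, N_1 \asymp n$; for the efficiency claim, the cross-fitting construction of $\tilde\tau_{M,K}^{\rm bc}$ must be implemented so that matches for fold-$k$ points are drawn only from fold-$(-k)$ units of the appropriate treatment class, restoring the out-of-sample interpretation and allowing the DML rate conditions to be invoked fold by fold. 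A secondary subtlety is that each Cauchy-Schwarz step in the original proof of \citet[Theorem 5.1]{chernozhukov2018double} must be audited and replaced by an $L_1$-$L_\infty$ H\"older bound to accommodate the $L_\infty$-norm rate in Assumption \ref{asp:dml3}.
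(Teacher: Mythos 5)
Your overall plan for part (ii) -- invoking \citet[Theorem 5.1]{chernozhukov2018double} with the $L_1$--$L_\infty$ H\"older substitution and the rate balance $\tfrac{1}{2+d}+\tfrac{d}{4+2d}=\tfrac12$ -- matches the paper, as does your reading of the cross-fitting construction (fold-$k$ matches drawn from out-of-fold units) and the variance-consistency argument. But your proposal for the second branch of part (i) (outcome model correctly specified) contains a genuine gap.

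You write that correct specification ``annihilates the $\hat e_M$-dependent term directly, provided $\hat e_M$ is bounded away from $0$ and $1$ on an event of probability tending to one,'' and claim this follows from ``a uniform concentration bound on $K_M^\omega(i)/M$.'' That boundedness claim amounts to $\max_i K_M^\omega(i)/M = O_\P(1)$ and $\min_i K_M^\omega(i)/M$ bounded away from zero, uniformly over $n$ points. This is a substantially stronger statement than anything proved in Section~\ref{sec:theory}: the rate results there are in expectation or $L_1$, not sup-norm, and for fixed $M$ (which is allowed in this branch -- note the theorem imposes no condition on $M$ when Assumption~\ref{asp:dml3'} holds) $K_M^\omega(i)$ can equal zero for some $i$, so the boundedness away from the endpoints fails outright. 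The paper sidesteps the issue entirely through two elementary observations that never require pointwise control of $\hat e_M$: the exact combinatorial identity $\sum_{i:D_i=1} K_M^1(i) = n_0 M$ collapses the weighted outcome-error term to $\lVert\hat\mu_\omega-\mu_\omega\rVert_\infty$, and the weighted residual term $\tfrac1n\sum_i D_i\bigl(1+\tfrac{K_M^1(i)}{M}\bigr)(Y_i-\mu_1(X_i))$ has conditional mean zero given $(\mX,\mD)$, so its contribution is killed by a second-moment bound $\E\bigl[\bigl(1+K_M^1(i)/M\bigr)^2\bigm|\mD\bigr]\lesssim 1$ rather than a sup bound. You should replace your boundedness step with this moment-level argument. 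Separately, in the first branch of part (i) your phrasing of the ``bilinear term'' as $\lVert\hat\mu_\omega-\bar\mu_\omega\rVert_\infty\cdot\lVert 1/\hat e_M - 1/e\rVert_1$ is imprecise: the usual AIPW bilinear remainder is centered at the truth $\mu_\omega$, not the limit $\bar\mu_\omega$, and when $\bar\mu_\omega\neq\mu_\omega$ that product does not shrink. The paper's five-term decomposition around the oracle IPW estimator (anchoring the outcome part at $\bar\mu_\omega$ and the propensity part at the true $e$) is the correct way to organize the cancellations; you would need the same re-centering.
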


\begin{remark}\label{remark:dml}
Both $\hat\tau_{M}^{\rm bc}$ and $\tilde{\tau}_{M,K}^{\rm bc}$ directly estimate $1/e(X)$ and $1/(1-e(X))$ using $1+K_M^1(X)/M$ and $1+K_M^0(X)/M$, respectively. This is slightly different from the setup in \cite{chernozhukov2018double}, which considered plug-in estimators based on an estimate of $e(X)$. Accordingly, some minor modifications are needed for employing their Theorem 5.1, and are completed in the proof of Theorem \ref{thm:dml}\ref{thm:dml2}.
\end{remark}

\begin{remark}
  To be in line with the double robustness terminology, we can call Assumptions \ref{asp:risk} and  \ref{asp:global} in Theorem \ref{thm:dml}  the ``density (or propensity) model assumptions'' and Assumptions \ref{asp:dml3'} and \ref{asp:dml3} the ``outcome (or regression) model assumptions''. Note also that in Theorem \ref{thm:dml} we require additional smoothness (Lipschitz) conditions on the density functions. This is stronger than the corresponding conditions made in \cite{abadie2011bias} but is needed for us to prove the semiparametric efficiency of $\tilde{\tau}_{M,K}^{\rm bc}$'s based on the double machine learning framework. On the other hand, this requirement could be removed for $\hat{\tau}_M^{\rm bc}$ via a more careful treatment based on the particular structure of the matching-based estimators; cf. Theorem \ref{thm:mbc} and Remark \ref{remark:comp-to-dml} ahead. 
  %\fbox{to ZX: please double-check the correctness of my claim here.}
\end{remark}

The analysis of $\hat\tau_M^{\rm bc}$ itself, on the other hand, cannot be directly incorporated in the double machine learning and its analysis has to be done independently. This is given in the next theorem based on the following two sets of assumptions. 

\begin{assumption}  \phantomsection \label{asp:mbc1} 
  \begin{enumerate}[itemsep=-.5ex,label=(\roman*)]
    \item $\E [U^2_\omega \given X=x]$ is uniformly bounded away from zero for almost all $x \in \bX$ and $\omega \in \{0,1\}$.
    \item There exists some constants $\kappa>0$ such that $\E [\lvert U_\omega \rvert ^{2+\kappa} \given X=x]$ is uniformly bounded for almost all $x \in \bX$ and $\omega \in \{0,1\}$.
    \item\label{asp:mbc1-3} $\max_{t \in \Lambda_{\lfloor d/2 \rfloor + 1}} \lVert \partial^t \mu_{\omega} \rVert_\infty$ is bounded, where for any positive integer $k$, $\Lambda_k$ is the set of all $d$-dimensional vectors of nonnegative integers $t=(t_1,\ldots,t_d)$ such that $\sum_{i=1}^d t_i = k$ and $\lfloor \cdot \rfloor$ stands for the floor function.
  \end{enumerate}
\end{assumption}

\begin{assumption}  \phantomsection \label{asp:mbc2} 
  There exists some constant $\varepsilon>0$ such that for $\omega \in \{0,1\}$, the estimator $\hat{\mu}_\omega(x)$ satisfies
    \[
      \max_{t \in \Lambda_{\lfloor d/2 \rfloor + 1}} \lVert \partial^t \hat{\mu}_{\omega} \rVert_\infty = O_\P(1)~~~{\rm and}~~~ \max_{\ell \in \zahl{\lfloor d/2 \rfloor}} \max_{t \in \Lambda_\ell} \lVert \partial^t \hat{\mu}_{\omega} - \partial^t \mu_{\omega} \rVert_\infty = O_\P(n^{-\gamma_\ell}),
    \]
    with some constants $\gamma_\ell$'s satisfying $\gamma_\ell > \frac{1}{2} - \frac{\ell}{d} + \varepsilon$ for $\ell=1,2,\ldots,\lfloor d/2 \rfloor$.
\end{assumption}

\begin{theorem}[Semiparametric efficiency of $\hat\tau_M^{\rm bc}$]\label{thm:mbc} 
  Assume the distribution of $(X,D,Y)$ satisfies Assumptions~\ref{asp:dml1}, \ref{asp:mbc1}, \ref{asp:mbc2} and either $(\P_{X\given D=0}, \P_{X\given D=1})$ or  $(\P_{X\given D=1}, \P_{X\given D=0})$ satisfies Assumption~\ref{asp:risk}. Define
  \[
  \gamma := \min_{\ell \in \zahl{\lfloor d/2 \rfloor} }\Big\{ \Big[1-\Big(\frac{1}{2} - \gamma_\ell + \varepsilon\Big)\frac{d}{\ell}\Big] \wedge \Big[1- \Big(\frac{1}{2} + \varepsilon\Big)\frac{d}{\lfloor d/2 \rfloor + 1}\Big]\Big\};
  \]
 recall that $\gamma_\ell$'s and the constant $\varepsilon$ were introduced in Assumption \ref{asp:mbc2}. Then, if $M \to \infty$ as $n \to \infty$ and $M \lesssim n^\gamma$, 
  \begin{align*}
   \sqrt{n} (\hat\tau_M^{\rm bc} - \tau) \stackrel{\sf d}{\longrightarrow} N(0,\sigma^2).
  \end{align*}
 % In addition, letting 
 % \begin{align*}
 % \hat{\sigma}^2:= \frac{1}{n} \sum_{i=1}^n \Big[\hat{\mu}_1(X_i) - \hat{\mu}_0(X_i) + D_i\Big(1 + \frac{K^1_M(i)}{M}\Big)\hat R_i -
 %  (1-D_i)\Big(1 + \frac{K^0_M(i)}{M}\Big)\hat R_i - \hat{\tau}_M^{\rm bc} \Big]^2,
%\end{align*}
If in addition Assumption~\ref{asp:dml3'} holds, we have $\hat{\sigma}^2\stackrel{\sf p}{\longrightarrow}\sigma^2$.
\end{theorem}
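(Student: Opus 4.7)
The plan is to establish the asymptotically linear representation
\[
\sqrt{n}(\hat\tau_M^{\rm bc} - \tau) = \frac{1}{\sqrt{n}}\sum_{i=1}^n \psi(X_i, D_i, Y_i) + o_\P(1),
\]
with $\psi$ the efficient influence function for $\tau$, and then apply Lindeberg's CLT. Unlike Theorem~\ref{thm:dml}\ref{thm:dml2}, we do not use sample splitting and we do not assume an $L_\infty$ rate on $\hat\mu_\omega$; instead, we exploit the matching structure of $\hat\tau_M^{\rm bc}$ together with the derivative smoothness of Assumption~\ref{asp:mbc2} and the $K_M$-moment bounds from Section~\ref{sec:theory}.

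\textbf{Decomposition and the main CLT term.} Writing $\hat R_i = U_i - (\hat\mu_{D_i}(X_i) - \mu_{D_i}(X_i))$ in \eqref{eq:mbc} and adding/subtracting $n^{-1}\sum_i(\mu_1 - \mu_0)(X_i)$ splits $\hat\tau_M^{\rm bc} - \tau$ into (A) a mean-zero stochastic piece in which the $U_i$'s are weighted by $1 + K_M^{1-D_i}(i)/M$, plus the identification piece $n^{-1}\sum_i[\mu_1(X_i) - \mu_0(X_i) - \tau]$; and (B) a matching-weighted regression-residual cross-term involving $(1 + K_M^{1-D_i}(i)/M)(\hat\mu_{D_i} - \mu_{D_i})(X_i)$ and $n^{-1}\sum_i(\hat\mu_1 - \hat\mu_0 - \mu_1 + \mu_0)(X_i)$. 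For (A), conditional mean-zeroness of the $U_i$'s given covariates (Assumption~\ref{asp:dml1}\ref{asp:dml1-1}) together with Chebyshev's inequality show that replacing $1 + K_M^{1-D_i}(i)/M$ by the true inverse propensity score perturbs (A) only by $o_\P(n^{-1/2})$, since $K_M^{1-D_i}(i)/M$ is $L_2$-consistent for the corresponding propensity ratio by Theorem~\ref{thm:risk,lp} and the $U_i$'s have bounded higher moments by Assumption~\ref{asp:mbc1}. The resulting oracle expression is exactly $n^{-1}\sum_i \psi(X_i, D_i, Y_i)$, and the CLT delivers the $N(0, \sigma^2)$ limit.

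\textbf{Controlling the cross-term (B) via Taylor expansion.} This is the crux. The matching-to-average rewriting
\[
\sum_{D_i=0}\!\Big(1 + \frac{K^0_M(i)}{M}\Big)(\hat\mu_0 - \mu_0)(X_i) = \sum_{D_i=0}(\hat\mu_0 - \mu_0)(X_i) + \frac{1}{M}\sum_{D_j=1}\!\sum_{i \in \mathcal{J}^0_M(j)}\!(\hat\mu_0 - \mu_0)(X_i),
\]
and its $\omega = 1$ analogue recast (B) as a normalized sum of finite differences $(\hat\mu_\omega - \mu_\omega)(X_i) - (\hat\mu_\omega - \mu_\omega)(X_j)$ where $i \in \mathcal{J}^\omega_M(j)$. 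Taylor-expanding around $X_j$ up to order $\lfloor d/2 \rfloor + 1$: for each $\ell \in \{1,\ldots,\lfloor d/2 \rfloor\}$, the order-$\ell$ term is bounded in modulus by $\max_{t \in \Lambda_\ell}\|\partial^t(\hat\mu_\omega - \mu_\omega)\|_\infty$ times the $\ell$-th power of the $M$-NN matching distance, contributing $O_\P(n^{-\gamma_\ell}(M/n)^{\ell/d})$ per observation via Assumption~\ref{asp:mbc2} and the $M$-NN distance tail bounds (which in turn rely on the nonasymptotic $K_M$ moment bounds from Section~\ref{sec:theory}); the $(\lfloor d/2 \rfloor + 1)$-th order Taylor remainder contributes $O_\P((M/n)^{(\lfloor d/2 \rfloor + 1)/d})$ per observation because the top-order derivatives of $\hat\mu_\omega$ and $\mu_\omega$ are uniformly bounded (Assumptions~\ref{asp:mbc1}\ref{asp:mbc1-3} and~\ref{asp:mbc2}). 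Rescaling by $\sqrt{n}$ and invoking the definition of $\gamma$, each of these contributions is $o_\P(1)$ provided $M \lesssim n^\gamma$.

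\textbf{Variance consistency and main obstacle.} Under Assumption~\ref{asp:dml3'}, $\hat\mu_\omega \to \mu_\omega$ uniformly; combined with the just-established $\hat\tau_M^{\rm bc} \stackrel{\sf p}{\longrightarrow} \tau$ and the consistency $K_M^\omega(i)/M \to$ propensity ratio from Theorem~\ref{thm:risk,lp}, each summand defining $\hat\sigma^2$ converges in probability to the pointwise square of the efficient score; a uniform-integrability argument using Assumption~\ref{asp:mbc1}'s bounded higher moments plus the law of large numbers then yields $\hat\sigma^2 \stackrel{\sf p}{\longrightarrow} \sigma^2$. The central technical difficulty is Step~3, where the double sum induced by matching must be handled with a diverging $M$, precisely the regime that Abadie--Imbens's original analysis excluded; the new $K_M$ moment machinery from Section~\ref{sec:theory} is what makes the coupling between derivative estimation errors and NN-distance moments tractable.
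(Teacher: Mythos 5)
Your proposal is correct in substance and reproduces the paper's decomposition $\hat\tau_M^{\rm bc}-\tau = (\bar\tau(\mathbf{X})+E_M-\tau)+(B_M-\hat B_M)$ and its two pillars: Taylor expansion of the matching-discrepancy term $B_M-\hat B_M$ up to order $\lfloor d/2\rfloor+1$ using the derivative-approximation rates of Assumption~\ref{asp:mbc2} together with $M$-NN matching-distance moment bounds (the paper's Lemmas~\ref{lemma:mbc,dist} and~\ref{lemma:mbc,bias}), and the four-way decomposition of $\hat\sigma^2-\sigma^2$ for variance consistency. Where you depart from the paper is in the treatment of the stochastic piece. The paper keeps the random weights $1+K_M^{D_i}(i)/M$ inside $E_M$, applies the Lindeberg--Feller CLT conditionally on $(\mathbf{X},\mathbf{D})$ with the random conditional variance $V^E$, combines with the unconditional CLT for $\bar\tau(\mathbf{X})$ via an asymptotic-independence argument borrowed from Abadie--Imbens, and then separately shows $V^E\to\E[\sigma_1^2(X)/e(X)+\sigma_0^2(X)/(1-e(X))]$ via Theorem~\ref{thm:risk,lp} and Slutsky. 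You instead swap in the oracle weights $1/e(X_i)$ and $1/(1-e(X_i))$, bound $E_M-E_M^{\rm oracle}$ by Chebyshev plus the $L_2$-consistency of $K_M^\omega(\cdot)/M$ (Theorem~\ref{thm:risk,lp}, transferred to the $f_{X\mid D=\omega}$ evaluation measure as in Corollary~\ref{crl:moment,catch,random}), and then apply the ordinary i.i.d.\ CLT to $n^{-1}\sum_i\psi(X_i,D_i,Y_i)$. Your route is cleaner --- it sidesteps the conditional CLT, the separate $V^E$-convergence lemma, and the asymptotic-independence step --- while resting on exactly the same $L_2$-risk machinery as the paper's variance-convergence argument. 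Two small corrections for the record: the matching-weight superscript should be $K_M^{D_i}(i)$, not $K_M^{1-D_i}(i)$; and the $(2+\kappa)$-moment condition of Assumption~\ref{asp:mbc1} is needed by the paper for the conditional Lindeberg verification but is not actually needed in your Chebyshev-plus-i.i.d.-CLT route, where bounded conditional second moments and $\mathrm{Var}(\psi)<\infty$ suffice.
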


\begin{remark}
  Assumption~\ref{asp:mbc1} is comparable to Assumption A.4 and the assumptions of Theorem 2 in \cite{abadie2011bias}. Compared to the assumptions of Theorem 2 in \cite{abadie2011bias}, Assumption~\ref{asp:mbc1}\ref{asp:mbc1-3} is weaker in the sense that we only require a finite order of smoothness. Assumption~\ref{asp:mbc2} again assumes the approximation accuracy of the outcome model, with lower convergence rates required for higher order derivatives of the outcome model. We note that under some smoothness conditions on the outcome model as made in \cite{abadie2011bias}, Assumption~\ref{asp:mbc2} is satisfied using power series approximation \citep[Lemma A.1]{abadie2011bias}. 
\end{remark}

\begin{remark}\label{remark:comp-to-dml}
Due to the intrinsic structure of bias-corrected matching-based estimators, unlike a direct use of Hölder's inequality and assuming Assumption~\ref{asp:dml3} for double machine learning estimators, we instead assume Assumption~\ref{asp:mbc2} on the approximation accuracy of the derivatives of the outcome models. There are additionally two differences between Theorem \ref{thm:dml}\ref{thm:dml2} and Theorem \ref{thm:mbc}. First,  unlike in Theorem~\ref{thm:dml}\ref{thm:dml2} where $M$ needs to grow polynomially fast with $n$, in Theorem~\ref{thm:mbc} we only require $M$ to (i) diverge not so fast for controlling the difference of matching units; and (ii) diverge to infinity (no matter how slowly it is) for achieving semiparametric efficiency. The sets of assumptions in Theorems \ref{thm:dml}\ref{thm:dml2} and \ref{thm:mbc} both render semiparametric efficiency for bias-corrected matching-based estimators. Second, in Theorem \ref{thm:mbc} we only require Assumption~\ref{asp:risk} to hold for the density model. This is again weaker than the Lipschitz-type conditions (Assumption \ref{asp:global}) assumed in Theorem \ref{thm:dml}\ref{thm:dml2} but is in line with the observations made in \cite{abadie2006large} and \cite{abadie2011bias}.
\end{remark}

As $d = 1$, by picking $\hat{\mu}_\omega = 0$ for $\omega \in \{0,1\}$, Assumption~\ref{asp:mbc2} is automatically satisfied and the bias-corrected estimator $\hat\tau_M^{\rm bc}$ reduces to the original estimator $\hat\tau_M$ studied in \cite{abadie2006large}. Theorem \ref{thm:mbc} then directly implies the following corollary, which corresponds to \citet[Corollary 1]{abadie2006large} but with one key difference that $M\to\infty$ here. 

\begin{corollary}[Semiparametric efficiency of $\hat\tau_M$ when $d=1$]\label{crl:m} 
  Assume $d=1$, the distribution of $(X,D,Y)$ satisfies Assumptions~\ref{asp:dml1}, \ref{asp:mbc1}, and either $(\P_{X\given D=0}, \P_{X\given D=1})$ or  $(\P_{X\given D=1}, \P_{X\given D=0})$ satisfies Assumption~\ref{asp:risk}. If $M \to \infty$ as $n \to \infty$ and $M \lesssim n^{\frac{1}{2} - \varepsilon}$ for some $\varepsilon>0$, then we have
  \begin{align*}
   \sqrt{n} (\hat\tau_M - \tau) \stackrel{\sf d}{\longrightarrow} N(0,\sigma^2).
  \end{align*}
\end{corollary}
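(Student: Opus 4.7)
The plan is to derive this corollary as a direct consequence of Theorem~\ref{thm:mbc} via a trivial choice of regression estimators. I would set $\hat\mu_0 \equiv 0$ and $\hat\mu_1 \equiv 0$, so that $\hat R_i = Y_i$ and $\hat\tau^{\rm reg} = 0$; with these substitutions formula \eqref{eq:mbc} collapses exactly to formula \eqref{eq:m}, giving $\hat\tau_M^{\rm bc} = \hat\tau_M$. Assumptions~\ref{asp:dml1}, \ref{asp:mbc1}, and the density condition Assumption~\ref{asp:risk} are carried over directly from the corollary's hypotheses, so the only remaining task is to verify Assumption~\ref{asp:mbc2} for the zero regression and to identify the resulting exponent $\gamma$ from Theorem~\ref{thm:mbc}.

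For Assumption~\ref{asp:mbc2}: when $d=1$ we have $\lfloor d/2 \rfloor = 0$ and $\lfloor d/2 \rfloor + 1 = 1$, so $\Lambda_1$ consists of the single first-order partial. The condition $\max_{t \in \Lambda_1} \lVert \partial^t \hat\mu_\omega \rVert_\infty = O_\P(1)$ is trivial since every derivative of the zero function vanishes, and the second condition in Assumption~\ref{asp:mbc2} ranges over $\ell \in \zahl{0} = \emptyset$ and is thus vacuous. For the exponent, I would read the defining expression
\[
\gamma = \min_{\ell \in \zahl{\lfloor d/2 \rfloor}} \Big\{\Big[1-\Big(\tfrac{1}{2} - \gamma_\ell + \varepsilon\Big)\tfrac{d}{\ell}\Big] \wedge \Big[1 - \Big(\tfrac{1}{2} + \varepsilon\Big)\tfrac{d}{\lfloor d/2 \rfloor + 1}\Big]\Big\}
\]
in the degenerate case by retaining only the $\ell$-free bound (the only one actually imposed by the proof of Theorem~\ref{thm:mbc} when no nontrivial derivatives are being estimated), yielding $\gamma = 1 - (1/2 + \varepsilon) \cdot 1 = 1/2 - \varepsilon$. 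Consequently the rate restriction $M \lesssim n^\gamma$ in Theorem~\ref{thm:mbc} becomes exactly $M \lesssim n^{1/2-\varepsilon}$, matching the hypothesis of the corollary.

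With Assumption~\ref{asp:mbc2} verified and $\gamma$ identified, a direct application of Theorem~\ref{thm:mbc} delivers $\sqrt{n}(\hat\tau_M - \tau) \stackrel{\sf d}{\longrightarrow} N(0,\sigma^2)$. There is no genuine technical obstacle here: the substantive work is entirely absorbed into Theorem~\ref{thm:mbc}, and the only mildly subtle point is the interpretation of $\gamma$ in the degenerate case $d=1$, which must be reconciled with the statement's explicit $1/2-\varepsilon$ growth condition. Conceptually, the corollary is simply the observation that in one dimension the bias-corrected and uncorrected matching estimators coincide under the trivial regression choice, and that semiparametric efficiency then follows for any diverging $M$ below the $n^{1/2-\varepsilon}$ threshold -- the threshold that is dictated by balancing the $O(M/n)^{1/d} = O(M/n)$ matching bias against the parametric $n^{-1/2}$ rate.
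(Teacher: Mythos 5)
Your proposal is correct and follows the same route the paper takes: the text immediately preceding Corollary~\ref{crl:m} says exactly that one should pick $\hat\mu_\omega \equiv 0$, observe that Assumption~\ref{asp:mbc2} holds automatically when $d=1$ and that $\hat\tau_M^{\rm bc}$ collapses to $\hat\tau_M$, and then invoke Theorem~\ref{thm:mbc}. Your careful reading of the degenerate exponent -- dropping the (empty) $\ell$-minimum and keeping the $\ell$-free term $1-(1/2+\varepsilon)\cdot d/(\lfloor d/2\rfloor+1) = 1/2-\varepsilon$ -- correctly reproduces the $M\lesssim n^{1/2-\varepsilon}$ threshold and matches what the proof of Lemma~\ref{lemma:mbc,bias} actually imposes when no derivative differences need to be controlled.
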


\begin{remark}
By picking $\hat{\mu}_\omega = 0$ for $\hat\tau_M$, Assumption~\ref{asp:dml3'} is no longer satisfied. Accordingly, in Corollary \ref{crl:m}, $\hat{\sigma}^2$ may not be a consistent estimator of $\sigma^2$ without additional assumptions. However, by decomposing $\sigma^2$ into the form of Theorem 1 in \cite{hahn1998role}, one could still estimate $\sigma^2$ via a similar and direct way as what is outlined in Section 4 in \cite{abadie2006large}. We do not pursue this track in detail here as the case of $d=1$ without Assumption~\ref{asp:dml3'} is beyond the main scope of this manuscript.
\end{remark}

\begin{remark}

There are three additional problems in \cite{abadie2006large, abadie2012martingale}. First, estimation of the average treatment effect on the treated (ATT) can be incorporated in the double robustness and double machine learning framework (Theorem~\ref{thm:dml}) and matching framework (Theorem~\ref{thm:mbc}) in the same way. Second, asymptotic normality (with an additional asymptotic bias term) of $\hat{\tau}_M$ in general $d$ can be established as Theorem~\ref{thm:mbc}. Third, unbalanced designs with $n_0 \succ n_1$ cannot be incorporated in the double robustness and double machine learning framework. In the supplement, we shall study the above unbalanced design case as $M$ is forced to diverge with $n$ and establish the corresponding properties.

% There are essentially four additional problems in \cite{abadie2006large, abadie2012martingale}  that cannot be incorporated in the double robustness and double machine learning framework: 
% \begin{itemize}
% \item[(a)] estimation of the average treatment effect on the treated (ATT); 
% \item[(b)] asymptotic normality (and semiparametric efficiency) of $\hat{\tau}_M$ when $d=1$;
% \item[(c)] asymptotic normality (with an additional asymptotic bias term) of $\hat{\tau}_M$ in general $d$; 
% %\fbox{to ZX: this is not covered in the last version}
% \item[(d)] unbalanced designs with  $n_0 \succ n_1$.
% \end{itemize}
% In the supplement, we shall study the above four problems as $M$ is forced to diverge with $n$ and establish the corresponding properties.
\end{remark}

%\fbox{stop point}

\section{Discussion}\label{sec:discussion}

The success of NN matching in estimating ATE raises natural question about possible extension to other functional estimation problems. In what follows we present some partial results along this direction for the KL divergence estimation, which is a topic of interest in many applications; cf. the references at the beginning of \cite{zhao2020minimax}. 

In detail, let's consider the same two-sample setting as Sections \ref{sec:method}-\ref{sec:theory}. Denote $\phi(x) := x\log x$ with the convention that $\phi(0) = 0$. We consider estimating the KL divergence between $\nu_1$ and $\nu_0$, written as 
\begin{align*}
  D_\phi(\nu_1 \Vert \nu_0) := \int \phi(r(x)) f_0(x) \d x,
\end{align*}
based on the data points $\{X_i\}_{i=1}^{N_0},\{Z_j\}_{j=1}^{N_1}$. To estimate $D_\phi(\nu_1 \Vert \nu_0)$, it is natural to consider the following plug-in estimator,
\begin{align}\label{eq:kl}
  \hat{D}_\phi = \hat{D}_\phi\Big(\{X_i\}_{i=1}^{N_0},\{Z_j\}_{j=1}^{N_1}\Big) := \frac{1}{N_0} \sum_{i=1}^{N_0} \phi\Big(\hat{r}_M(X_i)\Big),
\end{align}
where $\hat r_M(\cdot)$ is the NN matching-based density ratio estimator introduced in Section \ref{sec:method}.

Define the following probability class
\begin{align}\label{eq:prob-class-kl}
\cP_{\rm KL}(f_L,f_U,L,d,a,H,R,f_U'):=&\Big\{(\nu_0,\nu_1): \text{Assumption~\ref{asp:global} holds} \notag\\ 
&~~~~~~\text{and } \E_{X\sim \nu_0}\{[\log(f_1(X))]^4\} \le f_U' \Big\}.
\end{align}
The following results establish the MSE convergence rate of $\hat{D}_\phi$ and show that the estimator is minimax optimal over $\cP_{\rm KL}(f_L,f_U,L,d,a,H,R,f_U')$. Under the condition that $N_0\gtrsim N_1$, this rate is further (up to some $\log n$-terms) minimax optimal in view of \citet[Theorem 7]{han2020optimal} and similar arguments as used in Section \ref{sec:theory}.

%\fbox{Introduce $\cP_{\rm KL}...$}

\begin{theorem}[Rates of convergence, KL divergence estimation] \phantomsection \label{thm:rate,kl}
  \begin{enumerate}[itemsep=-.5ex,label=(\roman*)]
\item[(i)]  Assume $M\log N_0/N_0 \to 0$, $M/\log N_0 \to \infty$, $MN_1/(N_0\log^2N_1) \to \infty$, $N_1^{-\frac{d}{1+d}}\log N_0 \to 0$. Then for all sufficiently large $N_0$,
  \[
    \sup_{(\nu_0,\nu_1) \in \cP_{\rm KL}(f_L,f_U,L,d,a,H,R,f_U')} \E\Big[\hat{D}_\phi - D_\phi(\nu_1 \Vert \nu_0) \Big]^2 \le C \Big[ \frac{1}{N_0} + \frac{1}{N_1} + \Big(\frac{M}{N_0}\Big)^{2/d} + \Big(\frac{1}{M}\Big)^2 + \Big(\frac{N_0}{MN_1}\Big)^2\Big] ,
  \]
  where $C>0$ is a constant only depending on $f_L,f_U,a,H,L,d,f_U'$.
\item[(ii)]  Fix $\alpha > 0$, and take $M = \alpha (N_0^{\frac{1}{1+d}}) \vee (N_0 N_1^{-\frac{d}{1+d}})$. Then for all sufficiently large $N_0$,
  \[
    \sup_{(\nu_0,\nu_1) \in \cP_{\rm KL}(f_L,f_U,L,d,R,a,H,f_U')} \E\Big[\hat{D}_\phi - D_\phi(\nu_1 \Vert \nu_0) \Big]^2 \le C' (N_0 \wedge N_1)^{-\frac{2}{1+d}},
  \]
  where $C'>0$ is a constant only depending on $f_L,f_U,a,H,L,d,f_U',\alpha$.
\end{enumerate}
\end{theorem}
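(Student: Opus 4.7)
My plan is to linearize $\phi$ at each sample point via a first-order Taylor expansion and separate the error into three terms with distinct statistical behavior. Setting $\epsilon_i := \hat{r}_M(X_i) - r(X_i)$, I would write
\[
\hat{D}_\phi - D_\phi(\nu_1 \Vert \nu_0) = T_{\rm MC} + T_{\rm lin} + T_{\rm rem},
\]
where $T_{\rm MC} := N_0^{-1}\sum_i \phi(r(X_i)) - D_\phi(\nu_1 \Vert \nu_0)$ is a pure Monte Carlo error, $T_{\rm lin} := N_0^{-1}\sum_i \phi'(r(X_i))\epsilon_i$ is the linear contribution of the density-ratio error, and $T_{\rm rem}$ collects the Taylor remainder. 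Since $\phi'(r) = 1 + \log r = 1 + \log f_1 - \log f_0$, the hypothesis $\E[(\log f_1(X))^4] \le f_U'$ together with the boundedness of $\log f_0$ on $S_0$ (from Assumption~\ref{asp:global}) yields finite fourth-order moments for $\phi'(r(X))$ and finite second-order moments for $\phi(r(X))$, so $\E[T_{\rm MC}^2] \lesssim 1/N_0$ by elementary Monte Carlo.

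The main effort is on $T_{\rm lin}$ and $T_{\rm rem}$. For $T_{\rm lin}$, I would exploit the representation $\hat r_M(X_i) = (N_0/(N_1 M))\sum_j \ind(Z_j \in A_M(X_i))$ and interchange sums to rewrite
\[
T_{\rm lin} = \frac{1}{N_1 M}\sum_{j=1}^{N_1} g(Z_j; \cX) - \frac{1}{N_0}\sum_{i=1}^{N_0} \phi'(r(X_i)) r(X_i),
\]
where $g(z; \cX) := \sum_i \phi'(r(X_i))\,\ind(z \in A_M(X_i))$ and $\cX := \{X_i\}_{i=1}^{N_0}$. Conditional on $\cX$, the first sum is an average of $N_1$ i.i.d. terms, and the key combinatorial fact that each $z$ lies in \emph{exactly} $M$ catchment areas gives, via Cauchy--Schwarz applied to $g$, the bound $\E[g(Z;\cX)^2\mid\cX]\le M \sum_i \phi'(r(X_i))^2 \nu_1(A_M(X_i))$; Lemma~\ref{lemma:moment,catch} together with $\sum_i \nu_1(A_M(X_i))=M$ then delivers $\E[\,\mathrm{Var}(T_{\rm lin}\mid\cX)\,] \lesssim 1/N_1$. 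The bias $\E T_{\rm lin}$ is controlled via the uniform pointwise bias bound in Theorem~\ref{thm:pw-rate}(i), yielding $(\E T_{\rm lin})^2 \lesssim (M/N_0)^{2/d}$, and $\mathrm{Var}(\E[T_{\rm lin}\mid\cX])$ is of order $1/N_0$ by a bounded-differences-type argument using that resampling a single $X_i$ only perturbs a bounded number of catchment areas.

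For $T_{\rm rem}$, Cauchy--Schwarz gives $T_{\rm rem}^2 \le N_0^{-1}\sum_i R_i^2$ with $|R_i| \le \tfrac{1}{2}\phi''(\xi_i)\epsilon_i^2$ and $\xi_i$ between $r(X_i)$ and $\hat r_M(X_i)$. The main obstacle here is the singularity $\phi''(\xi) = 1/\xi$ as $\xi\downarrow 0$, since $\cP_{\rm KL}$ does not lower-bound $r$. I would handle it on the localization event $\{|\epsilon_i| \le r(X_i)/2\}$, on which $\xi_i \ge r(X_i)/2$ and the bound reduces to $\epsilon_i^2/r(X_i)$, whose fourth moment can be integrated against the $\nu_0$-density by the $L^4(\log f_1)$ assumption; on the complementary bad event, a Bernstein bound on $K_M(X_i)\mid\cX$ (which is binomial with parameter $\nu_1(A_M(X_i))$) makes the probability negligible---and this is precisely where the strengthened hypotheses $M/\log N_0 \to \infty$ and $MN_1/(N_0 \log^2 N_1) \to \infty$ enter. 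An $L^4$ extension of Theorem~\ref{thm:pw-rate} (following from the higher-moment version of Lemma~\ref{lemma:moment,catch} already in hand) then yields $\E[\epsilon_1^4] \lesssim (M/N_0)^{4/d} + M^{-2} + (N_0/(MN_1))^2$, and hence $\E[T_{\rm rem}^2]$ is of the same order. Combining the three pieces and noting $(M/N_0)^{4/d} \le (M/N_0)^{2/d}$ proves part (i). Part (ii) is then a routine optimization: balancing $(M/N_0)^{2/d}$ against $M^{-2}$ and against $(N_0/(MN_1))^2$ gives $M \asymp N_0^{1/(1+d)} \vee N_0 N_1^{-d/(1+d)}$ and the rate $(N_0 \wedge N_1)^{-2/(1+d)}$, which dominates $1/N_0 + 1/N_1$ under the posited growth conditions.
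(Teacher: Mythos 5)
Your decomposition via a first-order Taylor expansion of $\phi$ around $r(X_i)$ is genuinely different from the paper's: the paper instead telescopes through the nested conditional means $\E[\hat r_M(X_i)\mid\mX]$ and $\E[\hat r_M(X_i)\mid X_i]$, localizes to a ``good'' set $\cE$ of points with $f_1$ and boundary distance bounded below, and uses the Ditzian--Totik second-order modulus of smoothness to bypass the non-differentiability of $\phi$ at the origin. Your decomposition is cleaner-looking, and your treatment of the conditional variance of $T_{\rm lin}$ (Cauchy--Schwarz over the $M$ catchment areas containing each $Z_j$, plus $\sum_i \nu_1(A_M(X_i)) = M$) is a valid and rather elegant route to the $1/N_1$ contribution. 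However, there are two genuine gaps.

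First, the remainder bound fails. On your localization event $\{|\epsilon_i| \le r(X_i)/2\}$ you get $|R_i| \lesssim \epsilon_i^2 / r(X_i)$, so $\E[T_{\rm rem}^2]$ requires control of $\E[\epsilon_1^4 / r(X_1)^2]$ (or at least $\E[\epsilon_1^2/r(X_1)]^2$). But $\cP_{\rm KL}$ only assumes $\E[(\log f_1(X))^4] \le f_U'$, and this does \emph{not} control any positive moment of $1/r = f_0/f_1$: integrability of $|\log f_1|^4$ against $f_0$ is a much weaker tail condition than integrability of any power of $f_0/f_1$. Since the class $\cP_{\rm KL}$ allows $f_1$ to vanish on a positive-$\lambda$-measure subset of $S_0$ (and the KL functional is still finite because $\phi(0) = 0$), $\E[1/r(X)^k]$ can be infinite for every $k > 0$. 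This is exactly the singularity the paper's argument is engineered to avoid: in Lemma~\ref{lemma:T1} the paper approximates $\phi$ by a $g$ with $\|x g''(x)\|_\infty < \infty$ and invokes the inequality $|g(t) - g(x) - g'(x)(t-x)| \le 4\frac{(t-x)^2}{x}\|x g''(x)\|_\infty$ with $x = \E[\hat r_M(X_i)\mid\mX]$, so the remainder is divided by the conditional mean rather than by $r(X_i)$. Because $K_M(X_i)\mid\mX$ is binomial, that ratio becomes $\mathrm{Var}[\hat r_M(X_i)\mid\mX]/\E[\hat r_M(X_i)\mid\mX] = \frac{N_0}{N_1 M}\big(1-\nu_1(A_M(i))\big)$, which is deterministically bounded by $\frac{N_0}{N_1 M}$. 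The equivalence theorem (Lemma~\ref{lemma:equiv}) with $\omega_\varphi^2(\phi, t)_\infty \asymp t^2$ then closes the argument. Without that substitution of the conditional mean for $r$ in the denominator, your remainder term is not integrable over $\cP_{\rm KL}$.

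Second, the bias control of $T_{\rm lin}$ is incomplete. You invoke the ``uniform pointwise bias bound in Theorem~\ref{thm:pw-rate}(i),'' but that theorem is over the local class $\cP_{x,\rm p}$, which requires $f_1(x) \ge f_L$ and a local Lipschitz ball around $x$. Under $\cP_{\rm KL}$ (Assumption~\ref{asp:global}) there are points $x \in S_0$ with $f_1(x)$ arbitrarily small or with $\Delta(x)$ close to $0$, where that pointwise bound simply does not apply. The paper's proof handles this by restricting to $\cE = \{x : f_1(x) \ge C_0(M/N_0)^{1/d},\ \Delta(x) \ge C_0(M/N_0)^{1/d}\}$ and devoting Lemma~\ref{lemma:T4} to the complementary boundary/low-density contribution, using the surface-area bound and the $L^4(\log f_1)$ hypothesis in a targeted way. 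Your sketch would need analogous truncation and a separate bound on the complement before the MSE claim closes.
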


%\fbox{to ZX: note that I changed the following to a proposition}
\begin{proposition}[MSE minimax rate, KL divergence estimation]\label{thm:minimax,kl}
  If $a$ is sufficiently small and $L,R,H,f_U'$ are sufficiently large, then for all sufficiently large $N_0$,
  \[
    \inf_{\tilde{D}_\phi} \sup_{(\nu_0,\nu_1) \in \cP_{\rm KL}(f_L,f_U,L,d,a,H,R,f_U')} \E\Big[\tilde{D}_\phi - D_\phi(\nu_1 \Vert \nu_0) \Big]^2 \ge c \Big[ \Big(\frac{1}{N_1}\Big)^{\frac{2}{1+d}} \Big(\frac{1}{\log N_1}\Big)^{\frac{2+4d}{1+d}} + \frac{1}{N_1} \Big],
  \]
  where $c>0$ is a constant only depending on $f_L,f_U,d$ and the infimum is taken over all measurable functions.
\end{proposition}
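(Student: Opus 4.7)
The plan is to reduce the problem to estimating the (negative) differential entropy of $\nu_1$ over a Lipschitz density class, then import the minimax lower bound from \citet[Theorem 7]{han2020optimal} for the nonparametric term, and finally run a standard Le Cam two-point argument for the parametric $1/N_1$ term.

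The reduction step is to fix $\nu_0$ as a uniform distribution of some density $c_0 \in [f_L,f_U]$ on a $d$-dimensional cube of diameter $\le R$; with $a$ small and $H$ large this $\nu_0$ trivially meets the shape and support requirements in Assumption~\ref{asp:global}. For any $\nu_1$ supported in the same cube with density $f_1$,
\[
  D_\phi(\nu_1 \Vert \nu_0) = \int f_1(x) \log \frac{f_1(x)}{c_0} \, \d x = -H(\nu_1) - \log(c_0),
\]
so estimating $D_\phi$ reduces (up to the known constant $\log c_0$) to estimating $-H(\nu_1)$, and since $\nu_0$ is fully known the $N_0$ samples carry no additional information.

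For the $(1/N_1)^{2/(1+d)}(1/\log N_1)^{(2+4d)/(1+d)}$ term, I would port the construction from \citet[Theorem 7]{han2020optimal}, which achieves exactly this rate for entropy estimation over a Lipschitz density ball on $[0,1]^d$. Their argument is a fuzzy hypothesis test between two priors on densities formed by independent bump perturbations on a grid of cells of side $\asymp (\log N_1/N_1)^{1/(1+d)}$, with amplitudes calibrated to respect the Lipschitz constraint; the two priors induce different expected entropies but asymptotically indistinguishable $N_1$-fold sample laws, and the Ibragimov--Khasminskii fuzzy-hypothesis lemma then yields the stated rate. For the $1/N_1$ term, I would run a two-point Le Cam argument with two smooth densities $f_1^{(0)}, f_1^{(1)}$ in the class that differ by a fixed-support smooth bump of amplitude $\asymp N_1^{-1/2}$: a first-order expansion of $\phi(u) = u\log u$ shows $\lvert D_\phi(\nu_1^{(0)} \Vert \nu_0) - D_\phi(\nu_1^{(1)} \Vert \nu_0) \rvert \asymp N_1^{-1/2}$, while the Hellinger distance between the $N_1$-fold products stays bounded away from $1$, so Le Cam's two-point lemma gives MSE $\gtrsim 1/N_1$. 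Taking the maximum of the two lower bounds yields their sum up to a constant.

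The main obstacle is the nonparametric step: adapting the construction of \citet{han2020optimal} and then verifying that every member of the hard sub-family simultaneously satisfies all constraints defining $\cP_{\rm KL}$, namely (i) $L$ large enough to absorb the bump slopes, (ii) the perturbed $f_1$'s staying uniformly bounded away from both $0$ and $\infty$ so that $\E_{X\sim\nu_0}[(\log f_1(X))^4] \le f_U'$, and (iii) the $a$- and $H$-type support regularity respected. The quantifier ordering in the hypothesis (``$a$ sufficiently small, $L, R, H, f_U'$ sufficiently large'') provides exactly the slack needed to accommodate all of these.
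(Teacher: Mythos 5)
Your proposal matches the paper's proof: fix $\nu_0$ to be a known uniform distribution so that $D_\phi(\nu_1\Vert\nu_0) = -H(\nu_1) - \log c_0$, observe the $N_0$ sample is then uninformative, and invoke the minimax lower bound of \citet[Theorem~7]{han2020optimal} for differential entropy over a bounded-support Lipschitz class with $N_1$ samples. Note that the cited theorem already contains both the nonparametric $(1/N_1)^{2/(1+d)}(\log N_1)^{-(2+4d)/(1+d)}$ term and the parametric $1/N_1$ term, so your separate two-point Le Cam argument, while correct, is redundant; the paper simply reads off both terms from the one citation.
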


%\fbox{the following is optional in the first version}

% We secondly establish some preliminary weak convergence-type results of $\hat{D}_\phi$. In this regard, we have to restrict our attention to the univariate case with $d=1$. Let
% \[
%   \sigma_0^2 := \frac{1}{N_0} \Var\Big[\sum_{i=1}^{N_0} \phi\Big(\E[\hat{r}_M(X_i) \given \mX]\Big) \Big]~~~{\rm and}~~~\sigma_1^2 := \Var[\phi'(r(Z))].
% \]
% The following theorem provides a central limit for $\hat{D}_\phi$.

% \fbox{to ZX: double-check the following theorem}

% \begin{theorem}[Central limit theorem, KL divergence estimation]\label{thm:clt,kl}
% Let $d=1$. Assume  $(\nu_0,\nu_1) \in \cP_{\rm g}(f_L,f_U,L,d,R,a,H)$ and $r(Z)$ is not $\nu_1$-almost surely a constant. Further assume for any $z \in S_1$, $f_1 (z) \ge f_L$.
% \begin{itemize}
% \item[(i)] We then have $\sigma_0^2, \sigma_1^2 < \infty$.  
% \item[(ii)] Asymptotic normality: if we further have
% \[
%   (N_0 N_1^{-2/3}) \vee (N_0 \wedge N_1)^{-1/4} \prec M \prec (N_0 N_1^{-1/2}) \wedge N_0^{1/2-\epsilon}
% \]
% for some $\epsilon>0$, then
% \begin{align*}
%   \Big(\frac{1}{N_0} \sigma_0^2 + \frac{1}{N_1} \sigma_1^2\Big)^{-1/2} \Big(\hat{D}_\phi - D_\phi(\nu_1 \Vert \nu_0)\Big) \stackrel{\sf d}{\longrightarrow} N\Big(0,1\Big).
% \end{align*}
% In addition, the following statistic
% \begin{align*}
%   \hat{\sigma}_1^2 := \frac{1}{N_0} \sum_{i=1}^{N_0} [\phi'(\hat{r}_M(X_i))]^2\hat{r}_M(X_i) - \Big(\frac{1}{N_0} \sum_{i=1}^{N_0} \phi(\hat{r}_M(X_i))\hat{r}_M(X_i)\Big)^2
% \end{align*}
% is a consistent estimator of $\sigma_1^2$.
% \end{itemize}
% \end{theorem}

%\newpage
%\appendix

\section{Proofs of the main results}\label{sec:main-proof}

\paragraph*{Additional notation.} We use $\mX$ and $\mZ$ to represent $(X_1,X_2,\ldots,X_{N_0})$ and $(Z_1,Z_2,\ldots,Z_{N_1})$, respectively. Let $U(0,1)$ denote the uniform distribution on $[0,1]$. In the sequel, let $U \sim U(0,1)$ and $U_{(M)}$ be the $M$-th order statistic of $N_0$ independent random variables from $U(0,1)$, assumed to be mutually independent and both independent of $(\mX,\mZ)$.
%\fbox{to ZX: please reorganize the following proofs in the order of (a) Section XX: proof of main results;} 
%\fbox{(b) Appendix: the rest proofs, like in the previous paper.}
It is well known that $U_{(M)}$ follows the beta distribution ${\rm Beta}(M, N_0+1-M)$. Let ${\rm Bin}(\cdot,\cdot)$ denote the binomial distribution. Let $L_1(\bR^d)$ denote the space of all functions $f:\bR^d\to\bR$ such that $\int|f(x)|\d x<\infty$. For any $x \in \bR^d$ and function $f:\bR^d \to \bR$, we say $x$ is a Lebesgue point of $f$ if
\[
  \lim_{\delta \to 0^+} \frac{1}{\lambda(B_{x,\delta})} \int_{B_{x,\delta}} \lvert f(x) - f(y) \rvert \d y = 0.
\]

\subsection{Proof of Lemma~\ref{lemma:moment,catch}}

\begin{proof}[Proof of Lemma~\ref{lemma:moment,catch}]

From the Lebesgue differentiation theorem, for any $f \in L_1(\bR^d)$, for $\lambda$-almost all $x$, $x$ is a Lebesgue point of $f$. Then for $\nu_0$-almost all $x$, we have $f_0(x)>0$ and $x$ is a Lebesgue point of $f_0$ and $f_1$ from the absolute continuity of $\nu_0$ and $\nu_1$. We then only need to consider those $x \in \bR^d$ such that $f_0(x)>0$ and $x$ is a Lebesgue point of $f_0$ and $f_1$.

We first introduce a lemma about the Lebesgue point.

\begin{lemma}\label{lemma:leb,p}
  Let $\nu$ be a probability measure on $\bR^d$ admitting a density $f$ with respect to the Lebesgue measure. Let $x \in \bR^d$ be a Lebesgue point of $f$. We then have, for any $\epsilon \in (0,1)$, there exists $\delta = \delta_x>0$ such that for any $z \in \bR^d$ such that $\lVert z - x \rVert \le \delta$, we have
  \begin{align*}
    \Big\lvert \frac{\nu(B_{x,\lVert z-x \rVert})}{\lambda(B_{x,\lVert z-x \rVert})} - f(x) \Big\rvert \le \epsilon,~~ \Big\lvert \frac{\nu(B_{z,\lVert z-x \rVert})}{\lambda(B_{z,\lVert z-x \rVert})} - f(x) \Big\rvert \le \epsilon.
  \end{align*}
\end{lemma}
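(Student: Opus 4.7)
The plan is to reduce both bounds to direct consequences of the Lebesgue point definition.

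First I would recall that
\[
\frac{\nu(B_{x,r})}{\lambda(B_{x,r})} - f(x) = \frac{1}{\lambda(B_{x,r})}\int_{B_{x,r}}\bigl[f(y)-f(x)\bigr]\d y,
\]
so that in absolute value it is bounded by the average of $|f(y)-f(x)|$ over $B_{x,r}$. Taking $r = \lVert z-x\rVert$, the first inequality then follows immediately from the Lebesgue point hypothesis: there exists $\delta_1>0$ so that this average is at most $\epsilon$ whenever $r\le\delta_1$.

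For the second inequality I would observe two facts. By the triangle inequality, $B_{z,\lVert z-x\rVert}\subset B_{x,2\lVert z-x\rVert}$, and by translation invariance $\lambda(B_{z,r})=\lambda(B_{x,r})$, so $\lambda(B_{x,2r}) = 2^d\lambda(B_{z,r})$. Hence
\[
\Big|\frac{\nu(B_{z,r})}{\lambda(B_{z,r})}-f(x)\Big|
\le \frac{1}{\lambda(B_{z,r})}\int_{B_{z,r}}|f(y)-f(x)|\d y
\le \frac{2^d}{\lambda(B_{x,2r})}\int_{B_{x,2r}}|f(y)-f(x)|\d y.
\]
Applying the Lebesgue point property again with $\epsilon/2^d$ in place of $\epsilon$, there is $\delta_2>0$ such that $2r\le \delta_2$ makes the right-hand side at most $\epsilon$. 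Taking $\delta=\min\{\delta_1,\delta_2/2\}$ gives both inequalities simultaneously.

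Neither step presents a real obstacle: the content of the lemma is essentially a book-keeping exercise showing that ``balls slightly off-center'' are still controlled by the Lebesgue point condition, via the containment $B_{z,r}\subset B_{x,2r}$ and the resulting harmless $2^d$ factor. The only point to be careful about is not to confuse the radius with the distance $\lVert z-x\rVert$: in both inequalities the ball's radius equals that distance, and it is this coincidence that makes the simple inclusion argument work.
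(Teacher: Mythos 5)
Your proof is correct and follows essentially the same route as the paper's: the first bound is immediate from the Lebesgue point definition, and the second uses the inclusion $B_{z,r}\subset B_{x,2r}$ together with the volume ratio $2^d$ to reduce to the Lebesgue point condition at $x$. The only cosmetic difference is that you make the choice $\delta=\min\{\delta_1,\delta_2/2\}$ explicit, whereas the paper leaves this implicit.
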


{\bf Part I.} This part proves the first claim. We separate the proof of Part I into two cases based on the value of $f_1(x)$.

{\bf Case I.1.} $f_1(x)>0$.
Since $x$ is a Lebesgue point of $\nu_0$ and $\nu_1$, by Lemma \ref{lemma:leb,p}, for any $\epsilon \in (0,1)$, there exists some $\delta = \delta_x>0$ such that for any $z \in \bR^d$ with $\lVert z - x \rVert \le \delta$, we have
\begin{align*}
  & \Big\lvert \frac{\nu_0(B_{x,\lVert z-x \rVert})}{\lambda(B_{x,\lVert z-x \rVert})} - f_0(x) \Big\rvert \le \epsilon f_0(x),~~ \Big\lvert \frac{\nu_0(B_{z,\lVert z-x \rVert})}{\lambda(B_{z,\lVert z-x \rVert})} - f_0(x) \Big\rvert \le \epsilon f_0(x),\\
  & \Big\lvert \frac{\nu_1(B_{x,\lVert z-x \rVert})}{\lambda(B_{x,\lVert z-x \rVert})} - f_1(x) \Big\rvert \le \epsilon f_1(x),~~ \Big\lvert \frac{\nu_1(B_{z,\lVert z-x \rVert})}{\lambda(B_{z,\lVert z-x \rVert})} - f_1(x) \Big\rvert \le \epsilon f_1(x).
\end{align*}
Accordingly, if $\lVert z - x \rVert \le \delta$, we have
\[
  \frac{1-\epsilon}{1+\epsilon} \frac{f_0(x)}{f_1(x)} \le \frac{\nu_0(B_{z,\lVert x-z \rVert})}{\lambda(B_{z,\lVert x-z \rVert})} \frac{\lambda(B_{x,\lVert x-z \rVert})}{\nu_1(B_{x,\lVert x-z \rVert})} \le \frac{1+\epsilon}{1-\epsilon} \frac{f_0(x)}{f_1(x)}.
\]
Since $\lambda(B_{z,\lVert x-z \rVert}) = \lambda(B_{x,\lVert x-z \rVert})$, we then have
\begin{align}\label{eq:meancatch2}
  \frac{1-\epsilon}{1+\epsilon} \frac{f_0(x)}{f_1(x)} \le \frac{\nu_0(B_{z,\lVert x-z \rVert})}{\nu_1(B_{x,\lVert x-z \rVert})} \le \frac{1+\epsilon}{1-\epsilon} \frac{f_0(x)}{f_1(x)}.
\end{align}

On the other hand, for any $z \in \bR^d$ such that $\lVert z-x \rVert > \delta$,
\[
  \nu_0(B_{z,\lVert z-x \rVert}) \ge \nu_0(B_{z^*,\delta}) \ge (1-\epsilon)f_0(x) \lambda(B_{z^*,\delta}) = (1-\epsilon)f_0(x) \lambda(B_{0,\delta}),
\]
where $z^*$ is the intersection point of the surface of $B_{x,\delta}$ and the line connecting $z$ and $x$.

Let $\eta_N = 4\log(N_0/M)$. Since $M \log N_0 /N_0 \to 0$, we can take $N_0$ large enough so that
\[
  \eta_N \frac{M}{N_0} = 4\frac{M}{N_0} \log\Big(\frac{N_0}{M}\Big) < (1-\epsilon)f_0(x) \lambda(B_{0,\delta}).
\]
Then for any $z \in \bR^d$ such that $\nu_0(B_{z,\lVert z-x \rVert}) \le \eta_N M/N_0$, we have $\lVert z-x \rVert \le \delta$ since otherwise it would contradict the selection of $N_0$.

Let $Z$ be a copy from $\nu_1$ independent of the data. Then
\begin{align*}
  & \E\big[\nu_1\big(A_M(x)\big)\big] = \P \Big(Z \in A_M(x)\Big)\\
  = & \P\Big(\lVert x - Z \rVert \le \lVert \cX_{(M)}(Z) - Z \rVert\Big) = \P\Big( \nu_0(B_{Z,\lVert x - Z \rVert}) \le \nu_0(B_{Z,\lVert \cX_{(M)}(Z) - Z \rVert}) \Big).
  \yestag\label{eq:meancatch0}
\end{align*}
For any given $z \in \bR^d$, $[\nu_0(B_{z,\lVert X_i - z \rVert})]_{i=1}^{N_0}$ are i.i.d. from $U(0,1)$ since $[X_i]_{i=1}^{N_0}$ are i.i.d. from $\nu_0$ and we use the probability integral transform. Then $\nu_0(B_{Z,\lVert \cX_{(M)}(Z) - Z \rVert})$ has the same distribution as $U_{(M)}$ and is independent of $Z$.

{\bf Upper bound.} With a slight abuse of notation, we note $W = \nu_0(B_{Z,\lVert x-Z \rVert})$. We then have, from \eqref{eq:meancatch2} and \eqref{eq:meancatch0},
\begin{align*}
  & \E\big[\nu_1\big(A_M(x)\big)\big] = \P \Big(W \le \nu_0(B_{Z,\lVert \cX_{(M)}(Z) - Z \rVert})\Big) \\
  \le& \P \Big(W \le \nu_0(B_{Z,\lVert \cX_{(M)}(Z) - Z \rVert}), \nu_0(B_{Z,\lVert \cX_{(M)}(Z) - Z \rVert}) \le \eta_N \frac{M}{N_0}\Big) + \P\Big(\nu_0(B_{Z,\lVert \cX_{(M)}(Z) - Z \rVert}) > \eta_N \frac{M}{N_0}\Big)\\
  =&\P \Big(W \le \nu_0(B_{Z,\lVert \cX_{(M)}(Z) - Z \rVert}), \nu_0(B_{Z,\lVert \cX_{(M)}(Z) - Z \rVert}) \le \eta_N \frac{M}{N_0}, \lVert Z - x \rVert \le \delta\Big) + \P\Big(U_{(M)} > \eta_N \frac{M}{N_0}\Big)\\
  \le& \P \Big(\nu_0(B_{Z,\lVert x-Z \rVert}) \le \nu_0(B_{Z,\lVert \cX_{(M)}(Z) - Z \rVert}), \lVert Z - x \rVert \le \delta\Big) + \P\Big(U_{(M)} > \eta_N \frac{M}{N_0}\Big)\\
  \le& \P \Big(\frac{1-\epsilon}{1+\epsilon} \frac{f_0(x)}{f_1(x)}  \nu_1(B_{x,\lVert x-Z \rVert}) \le \nu_0(B_{Z,\lVert \cX_{(M)}(Z) - Z \rVert}), \lVert Z - x \rVert \le \delta\Big) + \P\Big(U_{(M)} > \eta_N \frac{M}{N_0}\Big)\\
  \le&\P \Big(\frac{1-\epsilon}{1+\epsilon} \frac{f_0(x)}{f_1(x)}  \nu_1(B_{x,\lVert x-Z \rVert}) \le \nu_0(B_{Z,\lVert \cX_{(M)}(Z) - Z \rVert})\Big) + \P\Big(U_{(M)} > \eta_N \frac{M}{N_0}\Big)\\
  =& \P \Big(\frac{1-\epsilon}{1+\epsilon} \frac{f_0(x)}{f_1(x)}   U \le U_{(M)}\Big) + \P\Big(U_{(M)} > \eta_N \frac{M}{N_0}\Big).
  \yestag\label{eq:meancatch1}
\end{align*}

For the second term in \eqref{eq:meancatch1}, notice that $\eta_N \to \infty$ as $N_0 \to \infty$. Then from the Chernoff bound and for $N_0$ sufficiently large, we have
\begin{align*}
  & \frac{N_0}{M} \P\Big(U_{(M)} > \eta_N \frac{M}{N_0}\Big) = \frac{N_0}{M} \P\Big({\rm Bin}\Big(N_0,\eta_N \frac{M}{N_0}\Big) \le M \Big)\\
  \le & \frac{N_0}{M}\exp\Big((1+\log \eta_N - \eta_N) M\Big) \le \frac{N_0}{M} \exp\Big(-\frac{1}{2} \eta_N M \Big) = \Big(\frac{N_0}{M}\Big)^{1-2M}.
\end{align*}
Since $M /N_0 \to 0$ and $M \ge 1$, we then obtain
\[
  \lim_{N_0 \to \infty} \frac{N_0}{M} \P\Big(U_{(M)} > \eta_N \frac{M}{N_0}\Big) = 0.
  \yestag\label{eq:meancatch3}
\]

For the first term in \eqref{eq:meancatch1}, we have
\begin{align*}
  & \frac{N_0}{M} \P \Big(\frac{1-\epsilon}{1+\epsilon} \frac{f_0(x)}{f_1(x)}   U \le U_{(M)}\Big) = \frac{N_0}{M} \int_0^1 \P \Big( U_{(M)} \ge \frac{1-\epsilon}{1+\epsilon} \frac{f_0(x)}{f_1(x)}  t \Big)\d t\\
  =& \frac{1+\epsilon}{1-\epsilon} \frac{f_1(x)}{f_0(x)} \int_0^{\frac{1-\epsilon}{1+\epsilon} \frac{f_0(x)}{f_1(x)} \frac{N_0}{M}} \P \Big( U_{(M)} \ge \frac{M}{N_0}t \Big) \d t \le  \frac{1+\epsilon}{1-\epsilon} \frac{f_1(x)}{f_0(x)} \int_0^{\infty} \P \Big( \frac{N_0}{M}U_{(M)} \ge t \Big) \d t\\
  =& \frac{1+\epsilon}{1-\epsilon} \frac{f_1(x)}{f_0(x)} \frac{N_0}{M} \E[U_{(M)}] = \frac{1+\epsilon}{1-\epsilon} \frac{f_1(x)}{f_0(x)} \frac{N_0}{N_0+1}.
  \yestag\label{eq:meancatch8}
\end{align*}
We then obtain
\[
  \limsup_{N_0 \to \infty}  \frac{N_0}{M} \P \Big(\frac{1-\epsilon}{1+\epsilon} \frac{f_0(x)}{f_1(x)}   U \le U_{(M)}\Big) \le \frac{1+\epsilon}{1-\epsilon} \frac{f_1(x)}{f_0(x)}.
  \yestag\label{eq:meancatch4}
\]

Plugging \eqref{eq:meancatch3} and \eqref{eq:meancatch4} to \eqref{eq:meancatch1} then yields
\begin{align}\label{eq:meancatch6}
  \limsup_{N_0 \to \infty} \frac{N_0}{M} \E\big[\nu_1\big(A_M(x)\big)\big] \le \frac{1+\epsilon}{1-\epsilon} \frac{f_1(x)}{f_0(x)}.
\end{align}

{\bf Lower bound.}
We have, from \eqref{eq:meancatch2} and \eqref{eq:meancatch0},
\begin{align*}
  & \E\big[\nu_1\big(A_M(x)\big)\big] = \P \Big(W \le \nu_0(B_{Z,\lVert \cX_{(M)}(Z) - Z \rVert})\Big) \\
  \ge& \P \Big(W \le \nu_0(B_{Z,\lVert \cX_{(M)}(Z) - Z \rVert}), \nu_0(B_{Z,\lVert \cX_{(M)}(Z) - Z \rVert}) \le \eta_N \frac{M}{N_0}\Big)\\
  =& \P \Big(W \le \nu_0(B_{Z,\lVert \cX_{(M)}(Z) - Z \rVert}), \nu_0(B_{Z,\lVert \cX_{(M)}(Z) - Z \rVert}) \le \eta_N \frac{M}{N_0}, \lVert Z - x \rVert \le \delta\Big)\\
  \ge& \P \Big(\frac{1+\epsilon}{1-\epsilon} \frac{f_0(x)}{f_1(x)}  \nu_1(B_{x,\lVert x-Z \rVert}) \le \nu_0(B_{Z,\lVert \cX_{(M)}(Z) - Z \rVert}), \nu_0(B_{Z,\lVert \cX_{(M)}(Z) - Z \rVert}) \le \eta_N \frac{M}{N_0}, \lVert Z - x \rVert \le \delta\Big)\\
  =& \P \Big(\frac{1+\epsilon}{1-\epsilon} \frac{f_0(x)}{f_1(x)}  \nu_1(B_{x,\lVert x-Z \rVert}) \le \nu_0(B_{Z,\lVert \cX_{(M)}(Z) - Z \rVert}), \nu_0(B_{Z,\lVert \cX_{(M)}(Z) - Z \rVert}) \le \eta_N \frac{M}{N_0}\Big)\\
  \ge& \P \Big(\frac{1+\epsilon}{1-\epsilon} \frac{f_0(x)}{f_1(x)}  \nu_1(B_{x,\lVert x-Z \rVert}) \le \nu_0(B_{Z,\lVert \cX_{(M)}(Z) - Z \rVert})\Big) - \P\Big(\nu_0(B_{Z,\lVert \cX_{(M)}(Z) - Z \rVert}) > \eta_N \frac{M}{N_0}\Big)\\
  =& \P \Big(\frac{1+\epsilon}{1-\epsilon} \frac{f_0(x)}{f_1(x)} U \le U_{(M)}\Big) - \P\Big(U_{(M)} > \eta_N \frac{M}{N_0}\Big).
  \yestag\label{eq:meancatch5}
\end{align*}

The second last equality is from the fact that for $\lVert Z-x \rVert > \delta$,
\begin{align*}
  &\frac{1+\epsilon}{1-\epsilon} \frac{f_0(x)}{f_1(x)}  \nu_1(B_{x,\lVert x-Z \rVert}) \ge \frac{1+\epsilon}{1-\epsilon} \frac{f_0(x)}{f_1(x)}  \nu_1(B_{x,\delta})\\
  \ge& \frac{1+\epsilon}{1-\epsilon} \frac{f_0(x)}{f_1(x)} f_1(x)(1-\epsilon) \lambda(B_{0,\delta}) = (1+\epsilon)f_0(x) \lambda(B_{0,\delta}) > \eta_N \frac{M}{N_0},
\end{align*}
and then that $\frac{1+\epsilon}{1-\epsilon} \frac{f_0(x)}{f_1(x)}  \nu_1(B_{x,\lVert x-Z \rVert}) \le \eta_N \frac{M}{N_0}$ implies $\lVert Z-x \rVert \le \delta$.

For the first term in \eqref{eq:meancatch5}, we have
\begin{align*}
  \frac{N_0}{M} \P \Big(\frac{1+\epsilon}{1-\epsilon} \frac{f_0(x)}{f_1(x)}   U \le U_{(M)}\Big) &= \frac{N_0}{M} \int_0^1 \P \Big( U_{(M)} \ge \frac{1+\epsilon}{1-\epsilon} \frac{f_0(x)}{f_1(x)}  t \Big)\d t\\
  &= \frac{1-\epsilon}{1+\epsilon} \frac{f_1(x)}{f_0(x)} \int_0^{\frac{1+\epsilon}{1-\epsilon} \frac{f_0(x)}{f_1(x)} \frac{N_0}{M}} \P \Big( U_{(M)} \ge \frac{M}{N_0}t \Big) \d t.
\end{align*}
If $\frac{1+\epsilon}{1-\epsilon} \frac{f_0(x)}{f_1(x)} \ge 1$, since $U_{(M)} \in [0,1]$, we have
\begin{align*}
  \frac{N_0}{M} \P \Big(\frac{1+\epsilon}{1-\epsilon} \frac{f_0(x)}{f_1(x)}   U \le U_{(M)}\Big) = \frac{1-\epsilon}{1+\epsilon} \frac{f_1(x)}{f_0(x)} \frac{N_0}{M} \E[U_{(M)}] = \frac{1-\epsilon}{1+\epsilon} \frac{f_1(x)}{f_0(x)} \frac{N_0}{N_0+1}.
\end{align*}
Then
\begin{align*}
  \lim_{N_0 \to \infty} \frac{N_0}{M} \P \Big(\frac{1+\epsilon}{1-\epsilon} \frac{f_0(x)}{f_1(x)}   U \le U_{(M)}\Big) = \frac{1-\epsilon}{1+\epsilon} \frac{f_1(x)}{f_0(x)}.
\end{align*}
If $\frac{1+\epsilon}{1-\epsilon} \frac{f_0(x)}{f_1(x)} < 1$, from the Chernoff bound,
\begin{align*}
  & \int_{\frac{1+\epsilon}{1-\epsilon} \frac{f_0(x)}{f_1(x)} \frac{N_0}{M}}^{\frac{N_0}{M}} \P \Big( U_{(M)} \ge \frac{M}{N_0}t \Big) \d t \\
  \le& \Big[1-\frac{1+\epsilon}{1-\epsilon} \frac{f_0(x)}{f_1(x)}\Big] \frac{N_0}{M} \P \Big( U_{(M)} \ge \frac{1+\epsilon}{1-\epsilon} \frac{f_0(x)}{f_1(x)} \Big)\\
  \le&  \Big[1-\frac{1+\epsilon}{1-\epsilon} \frac{f_0(x)}{f_1(x)}\Big] \frac{N_0}{M} \exp \Big[ M - \frac{1+\epsilon}{1-\epsilon} \frac{f_0(x)}{f_1(x)} N_0 - M\log M + M \log\Big(\frac{1+\epsilon}{1-\epsilon} \frac{f_0(x)}{f_1(x)} N_0\Big)\Big].
\end{align*}
Since $f_0(x)>0$ and $M\log N_0 /N_0 \to 0$, we obtain
\[
  \lim_{N_0 \to \infty} \int_{\frac{1+\epsilon}{1-\epsilon} \frac{f_0(x)}{f_1(x)} \frac{N_0}{M}}^{\frac{N_0}{M}} \P \Big( U_{(M)} \ge \frac{M}{N_0}t \Big) \d t = 0.
\]
Then
\begin{align*}
  \lim_{N_0 \to \infty} \frac{N_0}{M} \P \Big(\frac{1+\epsilon}{1-\epsilon} \frac{f_0(x)}{f_1(x)}   U \le U_{(M)}\Big) = \frac{1-\epsilon}{1+\epsilon} \frac{f_1(x)}{f_0(x)}.
\end{align*}

Plugging the above argument along with \eqref{eq:meancatch3} to \eqref{eq:meancatch5} yields
\begin{align}\label{eq:meancatch7}
  \liminf_{N_0 \to \infty} \frac{N_0}{M} \E\big[\nu_1\big(A_M(x)\big)\big] \ge \frac{1-\epsilon}{1+\epsilon} \frac{f_1(x)}{f_0(x)}.
\end{align}

Lastly, combining \eqref{eq:meancatch6} with \eqref{eq:meancatch7} and noticing that $\epsilon$ is arbitrary, we obtain
\begin{align}\label{eq:meancatch9}
  \lim_{N_0 \to \infty} \frac{N_0}{M} \E\big[\nu_1\big(A_M(x)\big)\big] = \frac{f_1(x)}{f_0(x)} = r(x).
\end{align}

{\bf Case I.2.} $f_1(x)=0$. Again, for any $\epsilon \in (0,1)$, by Lemma \ref{lemma:leb,p}, there exists some $\delta = \delta_x>0$ such that for any $z \in \bR^d$ with $\lVert z - x \rVert \le \delta$, we have
\begin{align*}
  \Big\lvert \frac{\nu_0(B_{z,\lVert z-x \rVert})}{\lambda(B_{z,\lVert z-x \rVert})} - f_0(x) \Big\rvert  \le \epsilon f_0(x),~~ \Big\lvert \frac{\nu_1(B_{x,\lVert z-x \rVert})}{\lambda(B_{x,\lVert z-x \rVert})} \Big\rvert \le \epsilon.
\end{align*}

Recall that $W = \nu_0(B_{Z,\lVert x-Z \rVert})$. Then if $\lVert Z - x \rVert \le \delta$, we have
\begin{align*}
  Z \ge (1-\epsilon) f_0(x) \lambda(B_{Z,\lVert x-Z \rVert}) = (1-\epsilon) f_0(x) \lambda(B_{x,\lVert x-Z \rVert}) \ge \frac{1-\epsilon}{\epsilon} f_0(x) \nu_1(B_{x,\lVert x-Z \rVert}).
\end{align*}

Proceeding in the same way as \eqref{eq:meancatch1}, we obtain
\begin{align*}
  & \E\big[\nu_1\big(A_M(x)\big)\big] \\
  \le & \P \Big(W \le \nu_0(B_{Z,\lVert \cX_{(M)}(Z) - Z \rVert}), \nu_0(B_{Z,\lVert \cX_{(M)}(Z) - Z \rVert}) \le \eta_N \frac{M}{N_0}, \lVert Z - x \rVert \le \delta\Big) + \P\Big(U_{(M)} > \eta_N \frac{M}{N_0}\Big)\\
  \le& \P \Big(\frac{1-\epsilon}{\epsilon} f_0(x) U \le U_{(M)}\Big) + \P\Big(U_{(M)} > \eta_N \frac{M}{N_0}\Big).
\end{align*}

For the first term above,
\begin{align*}
  & \frac{N_0}{M} \P \Big(\frac{1-\epsilon}{\epsilon} f_0(x) U \le U_{(M)}\Big) = \frac{N_0}{M} \int_0^1 \P \Big( U_{(M)} \ge \frac{1-\epsilon}{\epsilon} f_0(x) t \Big)\d t\\
  =& \frac{\epsilon}{1-\epsilon} \frac{1}{f_0(x)} \int_0^{\frac{1-\epsilon}{\epsilon} f_0(x) \frac{N_0}{M}} \P \Big( U_{(M)} \ge \frac{M}{N_0}t \Big) \d t \le  \frac{\epsilon}{1-\epsilon} \frac{1}{f_0(x)} \int_0^{\infty} \P \Big( \frac{N_0}{M}U_{(M)} \ge t \Big) \d t\\
  =& \frac{\epsilon}{1-\epsilon} \frac{1}{f_0(x)} \frac{N_0}{M} \E[U_{(M)}] = \frac{\epsilon}{1-\epsilon} \frac{1}{f_0(x)} \frac{N_0}{N_0+1}.
\end{align*}

By \eqref{eq:meancatch3} and noticing $\epsilon$ is arbitrary, one has
\begin{align}\label{eq:meancatch10}
  \lim_{N_0 \to \infty} \frac{N_0}{M} \E\big[\nu_1\big(A_M(x)\big)\big] = 0 = r(x).
\end{align}

Combining \eqref{eq:meancatch9} and \eqref{eq:meancatch10} completes the proof of the first claim.

\vspace{0.5cm}

{\bf Part II.} This part proves the second claim. We also separate the proof of Part II into two cases based on the value of $f_1(x)$.

{\bf Case II.1.} $f_1(x)>0$. Again, for any $\epsilon \in (0,1)$, we take $\delta$ in the same way as in Case I.1. Let $\eta_N = \eta_{N,p} = 4p\log(N_0/M)$. We also take $N_0$ sufficiently large so that
\[
  \eta_N \frac{M}{N_0} = 4p\frac{M}{N_0} \log\Big(\frac{N_0}{M}\Big) < (1-\epsilon)f_0(x) \lambda(B_{0,\delta}).
\]

Let $\tZ_1,\ldots,\tZ_p$ be $p$ independent copies that are drawn from $\nu_1$ independent of the data. Then
\begin{align*}
  & \E\big[\nu_1^p(A_M(x))\big] = \P \Big(\tZ_1,\ldots,\tZ_p \in A_M(x)\Big)\\
  =& \P\Big(\lVert x - \tZ_1 \rVert \le \lVert \cX_{(M)}(\tZ_1) - \tZ_1 \rVert, \ldots, \lVert x - \tZ_p \rVert \le \lVert \cX_{(M)}(\tZ_p) - \tZ_p \rVert\Big)\\
  =& \P\Big(\nu_0(B_{\tZ_1,\lVert x - \tZ_1 \rVert}) \le \nu_0(B_{\tZ_1,\lVert \cX_{(M)}(\tZ_1) - \tZ_1 \rVert}), \ldots, \nu_0(B_{\tZ_p,\lVert x - \tZ_p \rVert}) \le \nu_0(B_{\tZ_p,\lVert \cX_{(M)}(\tZ_p) - \tZ_p \rVert})\Big).
\end{align*}

Let $W_k = \nu_0(B_{\tZ_k,\lVert x - \tZ_k \rVert})$ and $V_k = \nu_0(B_{\tZ_k,\lVert \cX_{(M)}(\tZ_k) - \tZ_k \rVert})$ for any $k \in \zahl{p}$. Then $[W_k]_{k=1}^p$ are i.i.d. since $[\tZ_k]_{k=1}^p$ are i.i.d. For any $k \in \zahl{p}$ and $\tZ_k \in \bR^d$ given, $V_k \given \tZ_k $ has the same distribution as $U_{(M)}$. Then for any $k \in \zahl{p}$, $V_k$ has the same distribution as $U_{(M)}$, and $V_k$ is independent of $\tZ_k$.

Let $W_{\max} = \max_{k \in \zahl{p}} W_k$ and $V_{\max} = \max_{k \in \zahl{p}} V_k$. Then
\begin{align*}
  \E\big[\nu_1^p(A_M(x))\big] &= \P\Big(W_1 \le V_1, \ldots, W_p \le V_p\Big) \le \P\Big(W_{\max} \le V_{\max}\Big)\\
  &\le \P\Big(W_{\max} \le V_{\max}, V_{\max} \le \eta_N \frac{M}{N_0} \Big) + \P\Big( V_{\max} > \eta_N \frac{M}{N_0} \Big)
  \yestag\label{eq:momentcatch1}
\end{align*}

For the second term in \eqref{eq:momentcatch1},
\[
  \P\Big( V_{\max} > \eta_N \frac{M}{N_0}\Big) \le \sum_{k=1}^p \P\Big( V_k > \eta_N \frac{M}{N_0}\Big) = p \P\Big( U_{(M)} > \eta_N \frac{M}{N_0}\Big).
\]
Proceeding as \eqref{eq:meancatch3},
\begin{align*}
  \Big(\frac{N_0}{M}\Big)^p \P\Big(U_{(M)} > \eta_N \frac{M}{N_0}\Big) \le \Big(\frac{N_0}{M}\Big)^p \exp\Big(-\frac{1}{2} \eta_N M \Big) = \Big(\frac{N_0}{M}\Big)^{p(1-2M)}.
\end{align*}
We then obtain
\begin{align}\label{eq:momentcatch2}
  \lim_{N_0 \to \infty} \Big(\frac{N_0}{M}\Big)^p \P\Big( V_{\max} > \eta_N \frac{M}{N_0}\Big) = 0.
\end{align}

For the first term in \eqref{eq:momentcatch1}, notice that $[\nu_1(B_{x,\lVert \tZ_k - x \rVert})]_{k=1}^p$ are i.i.d. from $U(0,1)$ since $[\tZ_k]_{k=1}^p$ are i.i.d. We then have
\begin{align*}
  & \Big(\frac{N_0}{M}\Big)^p \P\Big(W_{\max} \le V_{\max}, V_{\max} \le \eta_N \frac{M}{N_0} \Big) \\
  =& \Big(\frac{N_0}{M}\Big)^p \P\Big(W_{\max} \le V_{\max}, V_{\max} \le \eta_N \frac{M}{N_0} , \max_{k \in \zahl{p}} \lVert \tZ_k - x \rVert \le \delta \Big)\\
  \le & \Big(\frac{N_0}{M}\Big)^p \P\Big(\frac{1-\epsilon}{1+\epsilon} \frac{f_0(x)}{f_1(x)} \max_{k \in \zahl{p}} \nu_1(B_{x,\lVert \tZ_k - x \rVert}) \le V_{\max}, V_{\max} \le \eta_N \frac{M}{N_0} , \max_{k \in \zahl{p}} \lVert \tZ_k - x \rVert \le \delta \Big)\\
  \le & \Big(\frac{N_0}{M}\Big)^p \P\Big(\frac{1-\epsilon}{1+\epsilon} \frac{f_0(x)}{f_1(x)} \max_{k \in \zahl{p}} \nu_1(B_{x,\lVert \tZ_k - x \rVert}) \le V_{\max} \Big)\\
  = & \Big(\frac{N_0}{M}\Big)^p \int_0^1 pt^{p-1} \P\Big( V_{\max} \ge \frac{1-\epsilon}{1+\epsilon} \frac{f_0(x)}{f_1(x)} t \Biggiven \max_{k \in \zahl{p}} \nu_1(B_{x,\lVert \tZ_k - x \rVert}) = t \Big) \d t\\
  = & p \Big(\frac{1+\epsilon}{1-\epsilon} \frac{f_1(x)}{f_0(x)}\Big)^p \int_0^{\frac{1-\epsilon}{1+\epsilon} \frac{f_0(x)}{f_1(x)} \frac{N_0}{M}} t^{p-1} \P\Big( V_{\max} \ge \frac{M}{N_0} t \Biggiven \max_{k \in \zahl{p}} \nu_1(B_{x,\lVert \tZ_k - x \rVert}) = \frac{1+\epsilon}{1-\epsilon} \frac{f_1(x)}{f_0(x)} \frac{M}{N_0}t \Big) \d t\\
  = & p \Big(\frac{1+\epsilon}{1-\epsilon} \frac{f_1(x)}{f_0(x)}\Big)^p \Big[ \int_0^1 t^{p-1} \P\Big( V_{\max} \ge \frac{M}{N_0} t \Biggiven \max_{k \in \zahl{p}} \nu_1(B_{x,\lVert \tZ_k - x \rVert}) = \frac{1+\epsilon}{1-\epsilon} \frac{f_1(x)}{f_0(x)} \frac{M}{N_0}t \Big) \d t \\
  & + \int_1^{\frac{1-\epsilon}{1+\epsilon} \frac{f_0(x)}{f_1(x)} \frac{N_0}{M}} t^{p-1} \P\Big( V_{\max} \ge \frac{M}{N_0} t \Biggiven \max_{k \in \zahl{p}} \nu_1(B_{x,\lVert \tZ_k - x \rVert}) = \frac{1+\epsilon}{1-\epsilon} \frac{f_1(x)}{f_0(x)} \frac{M}{N_0}t \Big) \d t\Big].
\end{align*}

For the first term,
\[
  \int_0^1 t^{p-1} \P\Big( V_{\max} \ge \frac{M}{N_0} t \Biggiven \max_{k \in \zahl{p}} \nu_1(B_{x,\lVert \tZ_k - x \rVert}) = \frac{1+\epsilon}{1-\epsilon} \frac{f_1(x)}{f_0(x)} \frac{M}{N_0}t \Big) \d t \le \int_0^1 t^{p-1} \d t = \frac{1}{p}. 
\]

For the second term, using the Chernoff bound, conditional on $\tilde{\mZ} = (\tZ_1, \ldots, \tZ_p)$,
\begin{align*}
  & \int_1^{\frac{1-\epsilon}{1+\epsilon} \frac{f_0(x)}{f_1(x)} \frac{N_0}{M}} t^{p-1} \P\Big( V_{\max} \ge \frac{M}{N_0} t \Biggiven \tilde{\mZ} \Big) \d t\\
  \le & \int_1^\infty t^{p-1} \P\Big( V_{\max} \ge \frac{M}{N_0} t \Biggiven \tilde{\mZ} \Big) \d t = \int_0^\infty (1+t)^{p-1} \P\Big( V_{\max} \ge \frac{M}{N_0} (1+t) \Biggiven \tilde{\mZ}  \Big) \d t\\
  \le & \int_0^\infty (1+t)^{p-1} \Big[ \sum_{k=1}^p  \P\Big( V_k \ge \frac{M}{N_0} (1+t) \Biggiven \tilde{\mZ} \Big) \Big] \d t = p \int_0^\infty (1+t)^{p-1} \P\Big( U_{(M)} \ge \frac{M}{N_0} (1+t) \Big) \d t\\
  \le & p \int_0^\infty (1+t)^{p-1} (1+t)^M \exp(-tM) \d t = p e^M \int_1^\infty t^{M+p-1} \exp(-tM) \d t\\
  \le & p e^M \int_0^\infty t^{M+p-1} \exp(-tM) \d t = \frac{pe^M}{M^{M+p}} \Gamma(M+p) = \frac{pe^M}{M^{M+p}} (M+1)^{p-1} \Gamma(M+1) (1+o(1)) \\
  = & \frac{pe^M}{M^{M+p}} (M+1)^{p-1} \sqrt{2\pi M} \frac{M^M}{e^M} (1+o(1)) = \sqrt{2\pi} p M^{-1/2} \Big(1+\frac{1}{M}\Big)^{p-1} (1+o(1)),
\end{align*}
where the last three steps are from Stirling's approximation using $M \to \infty$.

Then
\[
  \lim_{N_0 \to \infty}p \Big(\frac{1+\epsilon}{1-\epsilon} \frac{f_1(x)}{f_0(x)}\Big)^p \int_1^{\frac{1-\epsilon}{1+\epsilon} \frac{f_0(x)}{f_1(x)} \frac{N_0}{M}} t^{p-1} \P\Big( V_{\max} \ge \frac{M}{N_0} t \Biggiven \max_{k \in \zahl{p}} \nu_1(B_{x,\lVert \tZ_k - x \rVert}) = \frac{1+\epsilon}{1-\epsilon} \frac{f_1(x)}{f_0(x)} \frac{M}{N_0}t \Big) \d t = 0,
\]
and then we obtain
\begin{align}\label{eq:momentcatch3}
  \limsup_{N_0 \to \infty} \Big(\frac{N_0}{M}\Big)^p \P\Big(W_{\max} \le V_{\max}, V_{\max} \le \eta_N \frac{M}{N_0} \Big) \le \Big(\frac{1+\epsilon}{1-\epsilon} \frac{f_1(x)}{f_0(x)}\Big)^p.
\end{align}

Plugging \eqref{eq:momentcatch2} and \eqref{eq:momentcatch3} into \eqref{eq:momentcatch1} yields
\begin{align}\label{eq:momentcatch4}
  \limsup_{N_0 \to \infty} \Big(\frac{N_0}{M}\Big)^p \E\big[\nu_1^p(A_M(x))\big] \le \Big(\frac{1+\epsilon}{1-\epsilon} \frac{f_1(x)}{f_0(x)}\Big)^p = \Big(\frac{1+\epsilon}{1-\epsilon} r(x) \Big)^p.
\end{align}

Lastly, using Hölder's inequality, 
\[
  \Big(\frac{N_0}{M}\Big)^p \E\big[\nu_1^p(A_M(x))\big] \ge \Big[\frac{N_0}{M} \E\big[\nu_1\big(A_M(x)\big)\big] \Big]^p.
\]
Employing the first claim, we have
\begin{align}\label{eq:momentcatch5}
  \liminf_{N_0 \to \infty} \Big(\frac{N_0}{M}\Big)^p \E\big[\nu_1^p(A_M(x))\big] \ge \big[r(x)\big]^p.
\end{align}
Combining \eqref{eq:momentcatch4} with \eqref{eq:momentcatch5} and noting that $\epsilon$ is arbitrary, we obtain
\begin{align}\label{eq:momentcatch6}
  \lim_{N_0\to\infty} \Big(\frac{N_0}{M}\Big)^p \E\big[\nu_1^p\big(A_M(x)\big)\big] = \big[r(x)\big]^p.
\end{align}

{\bf Case II.2.} $f_1(x)=0$. For any $\epsilon \in (0,1)$, we take $\delta$ in the same way as in the proof of Case I.2 and take $\eta_N$ as in the proof of Case II.1.

By \eqref{eq:momentcatch1},
\begin{align*}
  \Big(\frac{N_0}{M}\Big)^p \E\big[\nu_1^p(A_M(x))\big] \le \Big(\frac{N_0}{M}\Big)^p \P\Big(W_{\max} \le V_{\max}, V_{\max} \le \eta_N \frac{M}{N_0} \Big) + \Big(\frac{N_0}{M}\Big)^p \P\Big( V_{\max} > \eta_N \frac{M}{N_0} \Big).
\end{align*}

For the first term,
\begin{align*}
  & \Big(\frac{N_0}{M}\Big)^p \P\Big(W_{\max} \le V_{\max}, V_{\max} \le \eta_N \frac{M}{N_0} \Big) \\
  =& \Big(\frac{N_0}{M}\Big)^p \P\Big(W_{\max} \le V_{\max}, V_{\max} \le \eta_N \frac{M}{N_0} , \max_{k \in \zahl{p}} \lVert Z_k - x \rVert \le \delta \Big)\\
  \le & \Big(\frac{N_0}{M}\Big)^p \P\Big(\frac{1-\epsilon}{\epsilon} f_0(x)  \max_{k \in \zahl{p}} \nu_1(B_{x,\lVert \tZ_k - x \rVert}) \le V_{\max}, V_{\max} \le \eta_N \frac{M}{N_0} , \max_{k \in \zahl{p}} \lVert Z_k - x \rVert \le \delta \Big)\\
  \le & \Big(\frac{N_0}{M}\Big)^p \P\Big(\frac{1-\epsilon}{\epsilon} f_0(x)  \max_{k \in \zahl{p}} \nu_1(B_{x,\lVert \tZ_k - x \rVert}) \le V_{\max} \Big)\\
  = & \Big(\frac{N_0}{M}\Big)^p \int_0^1 pt^{p-1} \P\Big( V_{\max} \ge \frac{1-\epsilon}{\epsilon} f_0(x) t \Biggiven \max_{k \in \zahl{p}} \nu_1(B_{x,\lVert \tZ_k - x \rVert}) = t \Big) \d t\\
  = & p \Big(\frac{\epsilon}{1-\epsilon} \frac{1}{f_0(x)}\Big)^p \int_0^{\frac{1-\epsilon}{\epsilon} f_0(x) \frac{N_0}{M}} t^{p-1} \P\Big( V_{\max} \ge \frac{M}{N_0} t \Biggiven \max_{k \in \zahl{p}} \nu_1(B_{x,\lVert \tZ_k - x \rVert}) = t \Big) \d t\\
  = & p \Big(\frac{\epsilon}{1-\epsilon} \frac{1}{f_0(x)}\Big)^p \Big[ \int_0^1 t^{p-1} \P\Big( V_{\max} \ge \frac{M}{N_0} t \Biggiven \max_{k \in \zahl{p}} \nu_1(B_{x,\lVert \tZ_k - x \rVert}) = t \Big) \d t \\
  & + \int_1^{\frac{1-\epsilon}{\epsilon} f_0(x) \frac{N_0}{M}} t^{p-1} \P\Big( V_{\max} \ge \frac{M}{N_0} t \Biggiven \max_{k \in \zahl{p}} \nu_1(B_{x,\lVert \tZ_k - x \rVert}) = t \Big) \d t\Big].
\end{align*}
Then proceeding in the same way as \eqref{eq:momentcatch3}, we have
\begin{align*}
  \limsup_{N_0 \to \infty} \Big(\frac{N_0}{M}\Big)^p \P\Big(W_{\max} \le V_{\max}, V_{\max} \le \eta_N \frac{M}{N_0} \Big) \le \Big(\frac{\epsilon}{1-\epsilon} \frac{1}{f_0(x)}\Big)^p.
\end{align*}

Lastly, using \eqref{eq:momentcatch2} and noting again that $\epsilon$ is arbitrary, one obtains
\begin{align}\label{eq:momentcatch7}
  \lim_{N_0\to\infty} \Big(\frac{N_0}{M}\Big)^p \E\big[\nu_1^p\big(A_M(x)\big)\big] = 0 = \big[r(x)\big]^p.
\end{align}

Combining \eqref{eq:momentcatch6} and \eqref{eq:momentcatch7} then completes the proof of the second claim.
\end{proof}

\subsection{Proof of Theorem~\ref{thm:cons,lp}}

\begin{proof}[Proof of Theorem~\ref{thm:cons,lp}\ref{thm:cons,lp1}]

By \eqref{eq:rhat} and that $[Z_j]_{j=1}^{N_1}$ are i.i.d, 
\begin{align*}
  \E\big[\hat{r}_M(x)\big] = \E\Big[\frac{N_0}{N_1} \frac{K_M(x)}{M}\Big] = \frac{N_0}{N_1 M} \E\Big[ \sum_{j=1}^{N_1} \ind\big(Z_j \in A_M(x)\big) \Big] = \frac{N_0}{M} \E\big[\nu_1\big(A_M(x)\big)\big].
\end{align*}

Employing Lemma~\ref{lemma:moment,catch} then completes the proof.
\end{proof}

\begin{proof}[Proof of Theorem~\ref{thm:cons,lp}\ref{thm:cons,lp2}]

By Hölder's inequality, it suffices to consider the case when $p$ is even. Noticing that $x^p$ is convex for $p >1$ and $x>0$, one has
\begin{align}\label{eq:conslp1}
  \E \big[ \lvert \hat{r}_M(x) - r(x) \rvert^p\big] \le 2^{p-1} \Big( \E \Big[ \Big\lvert \hat{r}_M(x) - \E[\hat{r}_M(x) \given \mX] \Big\rvert^p\Big] + \E \Big[ \Big\lvert \E[\hat{r}_M(x) \given \mX] - r(x) \Big\rvert^p\Big] \Big).
\end{align}

For the second term in \eqref{eq:conslp1}, Lemma~\ref{lemma:moment,catch} implies
\begin{align}\label{eq:conslp2}
  \lim_{N_0 \to \infty} \E \Big[ \Big\lvert \E[\hat{r}_M(x) \given \mX] - r(x) \Big\rvert^p\Big] &= \lim_{N_0 \to \infty} \E \Big[ \Big\lvert \frac{N_0}{M} \nu_1\big(A_M(x)\big) - r(x) \Big\rvert^p\Big] = 0
\end{align}
by expanding the product term.

For the first term in \eqref{eq:conslp1}, noticing that $[Z_j]_{j=1}^{N_1}$ are i.i.d, we have $K_M(x) \given \mX \sim {\rm Bin}(N_1,\nu_1(A_M(x)))$. Using Lemma~\ref{lemma:moment,catch} and $MN_1/N_0 \to \infty$, for any positive integers $p$ and $q$, we have
\begin{align*}
  \lim_{N_0 \to \infty} \Big( \frac{N_0}{N_1M}\Big)^p \E[N_1^p \nu_1^p(A_M(x))] = \big[r(x)\big]^p,\\
  \lim_{N_0 \to \infty} \Big( \frac{N_0}{N_1M}\Big)^p \Big( \frac{N_0}{M}\Big)^q \E[N_1^p \nu_1^{p+q}(A_M(x))] = \big[r(x)\big]^p,
\end{align*}
and then $\E[N_1^p \nu_1^p(A_M(x))]$ is the dominated term among $[\E[N_1^k \nu_1^{k+q}(A_M(x))]]_{k \le p, q \ge 0}$.

To complete the proof, for any positive integer $c$ and $Z \sim {\rm Bin}(n,p')$, let $\mu_c := \E[(Z-\E[Z])^c]$ be the $c$-th central moment. According to \cite{romanovsky1923note}, we have
\[
  \mu_{c+1} = p'(1-p')\Big(nc\mu_{c-1}+\frac{\d \mu_c}{\d p'}\Big).
\]
Then for even $p$, we obtain
\[
  \E \Big[ \Big(K_M(x) - N_1\nu_1(A_M(x)) \Big)^p\Big] \lesssim \E[N_1 \nu_1(A_M(x))]^{p/2} \lesssim \Big( \frac{N_1M}{N_0}\Big)^{p/2}.
\]
The first term in \eqref{eq:conslp1} then satisfies
\begin{align*}
  \E \Big[ \Big\lvert \hat{r}_M(x) - \E[\hat{r}_M(x) \given \mX] \Big\rvert^p\Big] &= \Big( \frac{N_0}{N_1M}\Big)^p \E \Big[ \Big(K_M(x) - N_1\nu_1(A_M(x)) \Big)^p\Big] \lesssim \Big( \frac{N_0}{N_1M}\Big)^{p/2}.
\end{align*}
Since $MN_1/N_0 \to \infty$, we obtain
\begin{align}\label{eq:conslp3}
  \lim_{N_0 \to \infty} \E \Big[ \Big\lvert \hat{r}_M(x) - \E[\hat{r}_M(x) \given \mX] \Big\rvert^p\Big] = 0.
\end{align}

Plugging \eqref{eq:conslp2} and \eqref{eq:conslp3} into \eqref{eq:conslp1} then completes the proof.
\end{proof}

\subsection{Proof of Theorem~\ref{thm:risk,lp}}

\begin{proof}[Proof of Theorem~\ref{thm:risk,lp}]

We first cite the Hardy–Littlewood maximal inequality. 
\begin{lemma}[Hardy–Littlewood maximal inequality \citep{stein2016singular}]\label{lemma:HL}
  For any locally integrable function $f:\bR^d \to \bR$, define
  \[
    {\sf M}f(x):= \sup_{\delta>0} \frac{1}{\lambda(B_{x,\delta})} \int_{B_{x,\delta}} \lvert f(z) \rvert \d z.
  \]
  Then for $d \ge 1$, there exists a constant $C_d>0$ only depending on $d$ such that for all $t>0$ and $f \in L_1(\bR^d)$, we have
  \[
    \lambda(\{x:{\sf M}f(x) > t\}) < \frac{C_d}{t} \lVert f \rVert_{L_1},
  \]
  where $\lVert \cdot \rVert_{L_1}$ stands for the function $L_1$ norm.
\end{lemma}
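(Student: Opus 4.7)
The final stated result is the Hardy--Littlewood maximal inequality (Lemma~\ref{lemma:HL}), which the paper cites as a tool; so my plan is to sketch the classical proof. The overall strategy is a Vitali-type covering argument, controlling the level set $E_t := \{x : {\sf M}f(x) > t\}$ by the measure of a disjoint family of balls witnessing the failure of the maximal average, each of which gobbles up mass at least $t$ times its own volume.

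First, I would observe that $E_t$ is open (in fact, ${\sf M}f$ is lower semicontinuous, since shrinking any ball slightly keeps the average close) and reduce the problem to showing the bound for an arbitrary compact set $K \subset E_t$, then take $\sup_K \lambda(K) = \lambda(E_t)$ by inner regularity of Lebesgue measure. For each $x \in K$, use the definition ${\sf M}f(x) > t$ to pick a closed ball $B_{x,\delta_x}$ such that
\[
\int_{B_{x,\delta_x}} |f(z)|\,\d z > t \cdot \lambda(B_{x,\delta_x}).
\]
By compactness of $K$, extract a finite subcover $\{B_{x_i,\delta_i}\}_{i=1}^{m}$.

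Next, I would invoke the $3r$-covering (Vitali) lemma on this finite collection: greedily pick the ball of largest radius, remove all balls that intersect it, and repeat. This produces a disjoint subfamily indexed by $J \subseteq \{1,\dots,m\}$ such that every removed ball is contained in the triple $B_{x_j,3\delta_j}$ for some $j \in J$, whence $\bigcup_{j \in J} B_{x_j, 3\delta_j} \supseteq K$. Then
\[
\lambda(K) \le \sum_{j \in J} \lambda(B_{x_j,3\delta_j}) = 3^d \sum_{j \in J} \lambda(B_{x_j,\delta_j}) < \frac{3^d}{t} \sum_{j \in J} \int_{B_{x_j,\delta_j}} |f(z)|\,\d z \le \frac{3^d}{t} \|f\|_{L_1},
\]
where disjointness is what gives the last inequality. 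Taking the supremum over $K \subset E_t$ yields the claim with $C_d = 3^d$.

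The only real obstacle is the covering step: since the radii $\delta_x$ need not be bounded above a priori, one cannot directly apply the $3r$-lemma to the infinite family $\{B_{x,\delta_x}\}_{x \in E_t}$. Passing first to a compact subset $K$ and then to a finite subcover circumvents this cleanly, because on the finite subcover the greedy ``largest radius first'' selection terminates in finitely many steps and the triple-cover property becomes elementary. A minor cosmetic point is that the statement has a strict inequality; this is immediate here because each chosen ball satisfies the average bound strictly and only finitely many appear in the sum.
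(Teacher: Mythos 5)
The paper does not prove Lemma~\ref{lemma:HL} at all; it simply cites \cite{stein2016singular}, so there is no ``paper's route'' to compare against. Your proof is the standard Vitali-covering argument and it is correct as written: reduce to a compact $K\subset E_t$ by inner regularity (measurability of $E_t$ follows because ${\sf M}f$ is lower semicontinuous, being a supremum over $\delta$ of the continuous maps $x\mapsto \lambda(B_{x,\delta})^{-1}\int_{B_{x,\delta}}|f|$), pass to a finite subcover, run the greedy $3r$-selection to obtain a disjoint subfamily whose tripled balls still cover $K$, and sum. The chain $\lambda(K)\le 3^d\sum_{j\in J}\lambda(B_{x_j,\delta_j})<\frac{3^d}{t}\sum_{j\in J}\int_{B_{x_j,\delta_j}}|f|\le \frac{3^d}{t}\|f\|_{L_1}$ is exactly right, and the finite-subcover detour is the correct way to avoid the unbounded-radii issue when invoking the elementary $3r$-lemma. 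Taking $C_d=3^d$ gives the stated weak-$(1,1)$ bound.
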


Let $\epsilon>0$ be given. We assume $\epsilon \le f_L$. From Assumption~\ref{asp:risk}, $S_0$ and $S_1$ are bounded, then $\nu_0$ and $\nu_1$ are compactly supported. Since $f_0, f_1 \in L_1$ and the class of continuous functions are dense in the class of compactly supported $L_1$ functions from simple use of Lusin's theorem, we can find $g_0,g_1$ such that $g_0,g_1$ are continuous and $\lVert f_0 - g_0 \rVert_{L_1} \le \epsilon^3$ and $\lVert f_1 - g_1 \rVert_{L_1} \le \epsilon^3$.

Since $g_0, g_1$ are continuous with compact supports, they are uniformly continuous, that is, there exists $\delta > 0$ such that for any $x,z \in \bR^d$ and $\lVert z-x \rVert \le \delta$, we have
\[
  \lvert g_0(x) - g_0(z) \rvert \le \frac{\epsilon^2}{3}~~~{\rm and}~~~ \lvert g_1(x) - g_1(z) \rvert \le \frac{\epsilon^2}{3}.
\]

For any $x \in \bR^d$, we have
\begin{align*}
  & \frac{1}{\lambda(B_{x,\delta})} \int_{B_{x,\delta}} \lvert f_0(x) - f_0(z) \rvert \d z \\
  \le & \frac{1}{\lambda(B_{x,\delta})} \int_{B_{x,\delta}} \lvert f_0(x) - g_0(x) \rvert \d z + \frac{1}{\lambda(B_{x,\delta})} \int_{B_{x,\delta}} \lvert g_0(x) - g_0(z) \rvert \d z + \frac{1}{\lambda(B_{x,\delta})} \int_{B_{x,\delta}} \lvert f_0(z) - g_0(z) \rvert \d z\\
  = & \lvert f_0(x) - g_0(x) \rvert + \frac{1}{\lambda(B_{x,\delta})} \int_{B_{x,\delta}} \lvert g_0(x) - g_0(z) \rvert \d z + \frac{1}{\lambda(B_{x,\delta})} \int_{B_{x,\delta}} \lvert f_0(z) - g_0(z) \rvert \d z.
  \yestag\label{eq:risklp1}
\end{align*}

For the first term in \eqref{eq:risklp1}, using Markov's inequality, one has
\begin{align}\label{eq:risklp2}
  \lambda\Big(\Big\{x: \Big\lvert f_0(x) - g_0(x) \Big\rvert > \frac{\epsilon^2}{3} \Big\}\Big) \le \frac{3}{\epsilon^2} \lVert f_0 - g_0 \rVert_{L_1} \le 3\epsilon.
\end{align}

For the second term in \eqref{eq:risklp1}, by the selection of $\delta$,
\begin{align}\label{eq:risklp3}
  \frac{1}{\lambda(B_{x,\delta})} \int_{B_{x,\delta}} \lvert g_0(x) - g_0(z) \rvert \d z \le \max_{z \in B_{x,\delta}} \lvert g_0(x) - g_0(z) \rvert \le \frac{\epsilon^2}{3}.
\end{align}

For the third term,
\[
  \frac{1}{\lambda(B_{x,\delta})} \int_{B_{x,\delta}} \lvert f_0(z) - g_0(z) \rvert \d z \le \sup_{\delta>0} \frac{1}{\lambda(B_{x,\delta})} \int_{B_{x,\delta}} \lvert f_0(z) - g_0(z) \rvert \d z = {\sf M}(f_0-g_0)(x).
\]
Lemma~\ref{lemma:HL} then yields
\begin{align}\label{eq:risklp4}
  \lambda\Big(\Big\{x:{\sf M}(f_0-g_0)(x) > \frac{\epsilon^2}{3} \Big\}\Big) < \frac{3C_d}{\epsilon^2} \lVert f_0 - g_0 \rVert_{L_1} \le 3C_d\epsilon.
\end{align}
We can establish similar results for $f_1,g_1$.

Let
\begin{align*}
  A_1 := & \Big\{x: \Big\lvert f_0(x) - g_0(x) \Big\rvert > \frac{\epsilon^2}{3} \Big\} \bigcup \Big\{x: \Big\lvert f_1(x) - g_1(x) \Big\rvert > \frac{\epsilon^2}{3} \Big\} \bigcup \\
  & \Big\{x:{\sf M}(f_0-g_0)(x) > \frac{\epsilon^2}{3} \Big\} \bigcup \Big\{x:{\sf M}(f_1-g_1)(x) > \frac{\epsilon^2}{3} \Big\}.
\end{align*}
Plugging \eqref{eq:risklp2}, \eqref{eq:risklp3}, \eqref{eq:risklp4} into \eqref{eq:risklp1}, for any $x \notin A_1$ and $\lVert z - x \rVert \le \delta$, we have
\begin{align*}
  \frac{1}{\lambda(B_{x,\delta})} \int_{B_{x,\delta}} \lvert f_0(x) - f_0(z) \rvert \d z \le \epsilon^2, ~~ \frac{1}{\lambda(B_{x,\delta})} \int_{B_{x,\delta}} \lvert f_1(x) - f_1(z) \rvert \d z \le \epsilon^2,
\end{align*}
and
\[
  \lambda(A_1) \le 6(C_d+1)\epsilon.
\]

Let $A_2 := \Big\{x: f_1(x) \le \epsilon\Big\}$. We then seperate the proof into three cases. In the following, it suffices to consider $f_0(x)>0$ due to the definition of $L_p$ risk in \eqref{eq:risk}.

{\bf Case I.} $x \notin A_1 \cup A_2$.

According to $\epsilon \le f_L$ and by the definition of $A_2$, for any $x \notin A_1 \cup A_2$ and $\lVert z-x \rVert \le \delta$,
\begin{align*}
  & \frac{1}{\lambda(B_{x,\delta})} \int_{B_{x,\delta}} \lvert f_0(x) - f_0(y) \rvert \d z \le \epsilon^2 \le \epsilon f_L \le \epsilon f_0(x),\\
  & \frac{1}{\lambda(B_{x,\delta})} \int_{B_{x,\delta}} \lvert f_1(x) - f_1(y) \rvert \d z \le \epsilon^2 \le \epsilon f_1(x).
\end{align*}
We then obtain
\begin{align*}
  & \Big\lvert \frac{\nu_0(B_{x,\lVert z-x \rVert})}{\lambda(B_{x,\lVert z-x \rVert})} - f_0(x) \Big\rvert \le \epsilon f_0(x), ~~ \Big\lvert \frac{\nu_0(B_{z,\lVert z-x \rVert})}{\lambda(B_{z,\lVert z-x \rVert})} - f_0(x) \Big\rvert \le \epsilon f_0(x),\\
  & \Big\lvert \frac{\nu_1(B_{x,\lVert z-x \rVert})}{\lambda(B_{x,\lVert z-x \rVert})} - f_1(x) \Big\rvert \le \epsilon f_1(x), ~~\Big\lvert \frac{\nu_1(B_{z,\lVert z-x \rVert})}{\lambda(B_{z,\lVert z-x \rVert})} - f_1(x) \Big\rvert \le \epsilon f_1(x).
\end{align*}

Let $\eta_N = \eta_{N,p} = 4p\log(N_0/M)$. We also take $N_0$ large enough so that
\[
  \eta_N \frac{M}{N_0} = 4p\frac{M}{N_0} \log\Big(\frac{N_0}{M}\Big) < (1-\epsilon)f_L \lambda(B_{0,\delta}).
\]
Then for any $x \in \bR^d$ such that $f_0(x)>0$, we have
\[
  \eta_N \frac{M}{N_0} < (1-\epsilon)f_0(x) \lambda(B_{0,\delta}).
\]

Proceeding as in the proof of Case II.1 of Lemma~\ref{lemma:moment,catch} and also Theorem~\ref{thm:cons,lp} by using Fubini's theorem, since $\epsilon$ is arbitrary, we obtain
\begin{align}\label{eq:risklp5}
  \lim_{N_0\to\infty} \E \Big[ \int_{\bR^d} \Big\lvert \hat{r}_M(x) - r(x) \Big\rvert^p f_0(x) \ind(x \notin A_1 \cup A_2)\d x \Big] = 0.
\end{align}

{\bf Case II.} $x \in A_2 \setminus A_1$.

In this case, we have
\begin{align*}
  & \Big\lvert \frac{\nu_0(B_{x,\lVert z-x \rVert})}{\lambda(B_{x,\lVert z-x \rVert})} - f_0(x) \Big\rvert \le \epsilon f_0(x),~~ \Big\lvert \frac{\nu_0(B_{z,\lVert z-x \rVert})}{\lambda(B_{z,\lVert z-x \rVert})} - f_0(x) \Big\rvert \le \epsilon f_0(x),\\
  & \Big\lvert \frac{\nu_1(B_{x,\lVert z-x \rVert})}{\lambda(B_{x,\lVert z-x \rVert})} - f_1(x) \Big\rvert \le \epsilon^2,~~ \Big\lvert \frac{\nu_1(B_{z,\lVert z-x \rVert})}{\lambda(B_{z,\lVert z-x \rVert})} - f_1(x) \Big\rvert \le \epsilon^2.
\end{align*}

Take $\eta_N$ and take $N_0$ sufficiently large as in Case I above. Proceeding as the proof of Case II.2 of Lemma~\ref{lemma:moment,catch} and also Theorem~\ref{thm:cons,lp} by using Fubini's theorem, since $\epsilon$ is arbitrary, we obtain
\begin{align}\label{eq:risklp6}
  \lim_{N_0\to\infty} \E \Big[ \int_{\bR^d} \Big\lvert \hat{r}_M(x) - r(x) \Big\rvert^p f_0(x) \ind(x \in A_2 \setminus A_1)\d x \Big] = 0.
\end{align}

{\bf Case III.} $x \in A_1$.

In this case, for any $x \in A_1$ and $z \in S_1$,
\[
  \nu_0(B_{z,\lVert z-x \rVert}) \ge f_L \lambda(B_{z,\lVert z-x \rVert} \cap S_0) \ge af_L \lambda(B_{z,\lVert z-x \rVert}) \ge \frac{af_L}{f_U} \nu_1(B_{x,\lVert z-x \rVert}).
\]

Then for any $x \in A_1$, from \eqref{eq:momentcatch1} and in the same way as \eqref{eq:momentcatch3},
\begin{align*}
  & \Big(\frac{N_0}{M}\Big)^p \E\big[\nu_1^p\big(A_M(x)\big)\big] = \Big(\frac{N_0}{M}\Big)^p \P\Big(W_1 \le V_1, \ldots, W_p \le V_p\Big) \le \Big(\frac{N_0}{M}\Big)^p \P\Big(W_{\max} \le V_{\max}\Big)\\
  \le& \Big(\frac{N_0}{M}\Big)^p \P\Big(\frac{af_L}{f_U} \max_{k \in \zahl{p}} \nu_1(B_{x,\lVert \tZ_k - x \rVert}) \le V_{\max} \Big) \le \Big(\frac{f_U}{af_L}\Big)^p (1+o(1)) = O(1).
\end{align*}

Proceeding as in the proof of Theorem~\ref{thm:cons,lp}, and due to the boundedness assumptions on $f_0$ and $f_1$, for any $x \in A_1$ and $p$ even,
\begin{align*}
  \E \Big[ \Big\lvert \hat{r}_M(x) - r(x) \Big\rvert^p \Big] \lesssim \E \Big[ \Big\lvert \hat{r}_M(x) - \E[\hat{r}_M(x) \given \mX] \Big\rvert^p \Big] + \E \Big[ \Big( \E[\hat{r}_M(x) \given \mX] \Big)^p \Big] + \Big\lvert r(x) \Big\rvert^p \lesssim 1.
\end{align*}
Then
\[
  \E \Big[ \int_{\bR^d} \Big\lvert \hat{r}_M(x) - r(x) \Big\rvert^p f_0(x) \ind(x \in A_1)\d x \Big] \lesssim f_U \lambda(A_1) \lesssim \epsilon.
\]
Since $\epsilon$ is arbitrary, one has
\begin{align}\label{eq:risklp7}
  \lim_{N_0\to\infty} \E \Big[ \int_{\bR^d} \Big\lvert \hat{r}_M(x) - r(x) \Big\rvert^p f_0(x) \ind(x \in A_1)\d x \Big] = 0.
\end{align}

Combining \eqref{eq:risklp5}, \eqref{eq:risklp6}, and \eqref{eq:risklp7} completes the proof.
\end{proof}

\subsection{Proof of Theorem~\ref{thm:pw-rate}}

We only have to prove the first two claims as the rest are trivial.

\begin{proof}[Proof of Theorem~\ref{thm:pw-rate}\ref{thm:pw-rate1}]

For any $z \in \bR^d$ such that $\lVert z-x \rVert \le \delta/2$, since $B_{z,\lVert z-x \rVert} \subset B_{x,2\lVert z-x \rVert} \subset B_{x,\delta}$, we have
\begin{align*}
  \Big\lvert \frac{\nu_0(B_{z,\lVert z-x \rVert})}{\lambda(B_{z,\lVert z-x \rVert})} - f_0(x) \Big\rvert \le \frac{1}{\lambda(B_{z,\lVert z-x \rVert})} \int_{B_{z,\lVert z-x \rVert}} \lvert f_0(y) - f_0(x) \rvert \d y \le 2L \lVert z-x \rVert,\\
  \Big\lvert \frac{\nu_1(B_{x,\lVert z-x \rVert})}{\lambda(B_{x,\lVert z-x \rVert})} - f_1(x) \Big\rvert \le \frac{1}{\lambda(B_{x,\lVert z-x \rVert})} \int_{B_{x,\lVert z-x \rVert}} \lvert f_1(z) - f_1(x) \rvert \d z \le L \lVert z-x \rVert.
\end{align*}

Consider any $\delta_N>0$ such that $\delta_N \le \delta/2$. If $\lVert z-x \rVert \le \delta_N$ and $f_0(x) > 2L\delta_N$, then
\[
  \frac{f_0(x)-2L\delta_N}{f_1(x)+L\delta_N} \le \frac{\nu_0(B_{z,\lVert x-z \rVert})}{\lambda(B_{z,\lVert x-z \rVert})} \frac{\lambda(B_{x,\lVert x-z \rVert})}{\nu_1(B_{x,\lVert x-z \rVert})}.
\]
If further $f_1(x) > L\delta_N$, then
\[
  \frac{\nu_0(B_{z,\lVert x-z \rVert})}{\lambda(B_{z,\lVert x-z \rVert})} \frac{\lambda(B_{x,\lVert x-z \rVert})}{\nu_1(B_{x,\lVert x-z \rVert})} \le \frac{f_0(x)+2L\delta_N}{f_1(x)-L\delta_N}.
\]
On the other hand, if $\lVert z-x \rVert \ge \delta_N$ and $f_0(x) > 2L\delta_N$,
\[
  \nu_0(B_{z,\lVert z-x \rVert}) \ge (f_0(x)-2L\delta_N) \lambda(B_{0,\delta_N}) = (f_0(x)-2L\delta_N) V_d \delta_N^d,
\]
where $V_d$ is the Lebesgue measure of the unit ball on $\bR^d$.

Let $\delta_N = (\frac{4}{f_L V_d})^{1/d} (\frac{M}{N_0})^{1/d}$. Since $M/N_0 \to 0$, we have $\delta_N \to 0$ as $N_0 \to \infty$. Taking $N_0$ large enough so that $\delta_N < f_L/(4L)$ and $\delta_N \le \delta/2$, then
\[
  2LV_d \delta_N^{d+1} = \frac{M}{N_0} \frac{8L}{f_L} \delta_N < 2\frac{M}{N_0}.
\]
Then for any $(\nu_0,\nu_1) \in \cP_{x,\rm p}(f_L,f_U,L,d,\delta)$,
\[
  (f_0(x)-2L\delta_N) V_d \delta_N^d > 4\frac{f_0(x)}{f_L} \frac{M}{N_0} - 2\frac{M}{N_0} \ge 2\frac{M}{N_0}.
\]
With a slight abuse of notation, let $W := \nu_0(B_{Z,\lVert x-Z \rVert})$. Then $W \le 2\frac{M}{N_0}$ implies that $\lVert Z-x \rVert \le \delta_N$.

Depending on the value of $f_1(x)$, the proof is separated into two cases.

{\bf Case I.} $f_1(x) > L\delta_N$.

{\bf Upper bound.} Proceeding similar to \eqref{eq:meancatch1}, we have
\begin{align*}
  & \E\big[\hat{r}_M(x)\big] = \frac{N_0}{M} \E\big[\nu_1\big(A_M(x)\big)\big] = \frac{N_0}{M} \P \Big(W \le \nu_0(B_{Z,\lVert \cX_{(M)}(Z) - Z \rVert})\Big) \\
  \le& \frac{N_0}{M} \P \Big(W \le \nu_0(B_{Z,\lVert \cX_{(M)}(Z) - Z \rVert}), \nu_0(B_{Z,\lVert \cX_{(M)}(Z) - Z \rVert}) \le 2 \frac{M}{N_0}\Big) + \frac{N_0}{M} \P\Big(U_{(M)} > 2 \frac{M}{N_0}\Big)\\
  =& \frac{N_0}{M} \P \Big(W \le \nu_0(B_{Z,\lVert \cX_{(M)}(Z) - Z \rVert}), \nu_0(B_{Z,\lVert \cX_{(M)}(Z) - Z \rVert}) \le 2 \frac{M}{N_0}, \lVert Z - x \rVert \le \delta_N \Big) + \frac{N_0}{M} \P\Big(U_{(M)} > 2 \frac{M}{N_0}\Big)\\
  \le& \frac{N_0}{M} \P \Big(W \le \nu_0(B_{Z,\lVert \cX_{(M)}(Z) - Z \rVert}), \lVert Z - x \rVert \le \delta_N\Big) + \frac{N_0}{M} \P\Big(U_{(M)} > 2 \frac{M}{N_0}\Big)\\
  \le& \frac{N_0}{M} \P \Big(\frac{f_0(x)-2L\delta_N}{f_1(x)+L\delta_N} \nu_1(B_{x,\lVert x-Z \rVert}) \le \nu_0(B_{Z,\lVert \cX_{(M)}(Z) - Z \rVert}), \lVert Z - x \rVert \le \delta_N\Big) + \frac{N_0}{M} \P\Big(U_{(M)} > 2 \frac{M}{N_0}\Big)\\
  \le& \frac{N_0}{M} \P \Big(\frac{f_0(x)-2L\delta_N}{f_1(x)+L\delta_N} \nu_1(B_{x,\lVert x-Z \rVert}) \le \nu_0(B_{Z,\lVert \cX_{(M)}(Z) - Z \rVert})\Big) + \frac{N_0}{M} \P\Big(U_{(M)} > 2 \frac{M}{N_0}\Big)\\
  =& \frac{N_0}{M} \P \Big(\frac{f_0(x)-2L\delta_N}{f_1(x)+L\delta_N} U \le U_{(M)}\Big) + \frac{N_0}{M} \P\Big(U_{(M)} > 2 \frac{M}{N_0}\Big).
  \yestag\label{eq:ratebias1}  
\end{align*}

For the second term in \eqref{eq:ratebias1}, since $M/\log N_0 \to \infty$, for any $\gamma>0$,
\begin{align}\label{eq:ratebias-new1}
  \frac{N_0}{M} \P\Big(U_{(M)} > 2 \frac{M}{N_0}\Big) &= \frac{N_0}{M} \P\Big({\rm Bin}\Big(N_0,2 \frac{M}{N_0}\Big) \le M \Big)\notag\\
  \le& \frac{N_0}{M}\exp\Big((\log2 -1) M\Big) = \frac{N_0}{M} N_0^{-(1-\log 2)M/\log N_0} \prec N_0^{-\gamma}.
\end{align}

For the first term in \eqref{eq:ratebias1}, proceeding as \eqref{eq:meancatch8}, we obtain
\begin{align*}
  \frac{N_0}{M} \P \Big(\frac{f_0(x)-2L\delta_N}{f_1(x)+L\delta_N} U \le U_{(M)}\Big) \le \frac{f_1(x)+L\delta_N}{f_0(x)-2L\delta_N} \frac{N_0}{N_0+1}.
\end{align*}

Then we obtain
\begin{align}\label{eq:ratebias2}
  \E\big[\hat{r}_M(x)\big] \le \frac{f_1(x)+L\delta_N}{f_0(x)-2L\delta_N} \frac{N_0}{N_0+1} + o(N_0^{-\gamma}).
\end{align}

{\bf Lower bound.} Proceeding similar to \eqref{eq:meancatch5}, we have
\begin{align*}
  & \E\big[\hat{r}_M(x)\big] = \frac{N_0}{M} \E\big[\nu_1\big(A_M(x)\big)\big] = \frac{N_0}{M} \P \Big(W \le \nu_0(B_{Z,\lVert \cX_{(M)}(Z) - Z \rVert})\Big) \\
  \ge& \frac{N_0}{M} \P \Big(W \le \nu_0(B_{Z,\lVert \cX_{(M)}(Z) - Z \rVert}), \nu_0(B_{Z,\lVert \cX_{(M)}(Z) - Z \rVert}) \le 2 \frac{M}{N_0}\Big)\\
  =& \frac{N_0}{M} \P \Big(W \le \nu_0(B_{Z,\lVert \cX_{(M)}(Z) - Z \rVert}), \nu_0(B_{Z,\lVert \cX_{(M)}(Z) - Z \rVert}) \le 2 \frac{M}{N_0}, \lVert Z - x \rVert \le \delta_N\Big)\\
  \ge& \frac{N_0}{M} \P \Big(\frac{f_0(x)+2L\delta_N}{f_1(x)-L\delta_N} \nu_1(B_{x,\lVert x-Z \rVert}) \le \nu_0(B_{Z,\lVert \cX_{(M)}(Z) - Z \rVert}), \nu_0(B_{Z,\lVert \cX_{(M)}(Z) - Z \rVert}) \le 2 \frac{M}{N_0}, \lVert Z - x \rVert \le \delta_N\Big)\\
  =& \frac{N_0}{M} \P \Big(\frac{f_0(x)+2L\delta_N}{f_1(x)-L\delta_N} \nu_1(B_{x,\lVert x-Z \rVert}) \le \nu_0(B_{Z,\lVert \cX_{(M)}(Z) - Z \rVert}), \nu_0(B_{Z,\lVert \cX_{(M)}(Z) - Z \rVert}) \le 2 \frac{M}{N_0}\Big)\\
  \ge& \frac{N_0}{M} \P \Big(\frac{f_0(x)+2L\delta_N}{f_1(x)-L\delta_N} U \le U_{(M)}\Big) - \frac{N_0}{M} \P\Big(U_{(M)} > 2 \frac{M}{N_0}\Big)\\
  =& \frac{f_1(x)-L\delta_N}{f_0(x)+2L\delta_N} \int_0^{\frac{f_0(x)+2L\delta_N}{f_1(x)-L\delta_N} \frac{N_0}{M}} \P \Big( U_{(M)} \ge \frac{M}{N_0}t \Big) \d t - \frac{N_0}{M} \P\Big(U_{(M)} > 2 \frac{M}{N_0}\Big).
\end{align*}

Consider the first term. If $\frac{f_0(x)+2L\delta_N}{f_1(x)-L\delta_N} \ge 1$, then
\begin{align*}
  \frac{f_1(x)-L\delta_N}{f_0(x)+2L\delta_N} \int_0^{\frac{f_0(x)+2L\delta_N}{f_1(x)-L\delta_N} \frac{N_0}{M}} \P \Big( U_{(M)} \ge \frac{M}{N_0}t \Big) \d t = \frac{f_1(x)-L\delta_N}{f_0(x)+2L\delta_N} \frac{N_0}{N_0+1}.
\end{align*}
If $\frac{f_0(x)+2L\delta_N}{f_1(x)-L\delta_N} < 1$, using the Chernoff bound, for any $\gamma>0$,
\begin{align*}
  & \int_{\frac{f_0(x)+2L\delta_N}{f_1(x)-L\delta_N} \frac{N_0}{M}}^{\frac{N_0}{M}} \P \Big( U_{(M)} \ge \frac{M}{N_0}t \Big) \d t \le \int_{\frac{f_0(x)}{f_1(x)} \frac{N_0}{M}}^{\frac{N_0}{M}} \P \Big( U_{(M)} \ge \frac{M}{N_0}t \Big) \d t \le \int_{\frac{f_L}{f_U} \frac{N_0}{M}}^{\frac{N_0}{M}} \P \Big( U_{(M)} \ge \frac{M}{N_0}t \Big) \d t \\
  \le& \Big[1-\frac{f_L}{f_U}\Big] \frac{N_0}{M} \P \Big( U_{(M)} \ge \frac{f_L}{f_U} \Big) \le  \Big[1-\frac{f_L}{f_U} \Big] \frac{N_0}{M} \exp \Big[ M - \frac{f_L}{f_U} N_0 - M\log M + M \log\Big(\frac{f_L}{f_U} N_0\Big)\Big] \\
  \prec & N_0^{-\gamma}.
\end{align*}
The last step is due to $M \log N_0/N_0 \to 0$. Recalling \eqref{eq:ratebias-new1}, we then obtain
\begin{align}\label{eq:ratebias3}
  \E\big[\hat{r}_M(x)\big] \ge \frac{f_1(x)-L\delta_N}{f_0(x)+2L\delta_N} \frac{N_0}{N_0+1} - o(N_0^{-\gamma}).
\end{align}

Combining \eqref{eq:ratebias2} and \eqref{eq:ratebias3}, and taking $N_0$ large enough so that $L\delta_N \le f_U \wedge (f_L/4)$, one obtains
\begin{align*}
  \Big\lvert \E\big[\hat{r}_M(x)\big] - r(x) \Big\rvert &\le \Big\lvert \frac{f_1(x)+L\delta_N}{f_0(x)-2L\delta_N} \frac{N_0}{N_0+1} - \frac{f_1(x)}{f_0(x)} \Big\rvert \bigvee \Big\lvert \frac{f_1(x)-L\delta_N}{f_0(x)+2L\delta_N} \frac{N_0}{N_0+1} - \frac{f_1(x)}{f_0(x)} \Big\rvert + o(N_0^{-\gamma})\\
  &\le \frac{f_0(x)L\delta_N + 2f_1(x)L\delta_N}{f_0(x)(f_0(x)-2L\delta_N)} + \frac{1}{N_0+1} \frac{f_1(x)+L\delta_N}{f_0(x)-2L\delta_N} + o(N_0^{-\gamma})\\
  &\le \Big( \frac{2}{f_L} + \frac{4f_U}{f_L^2}\Big) L \delta_N + \frac{4f_U}{f_L}\frac{1}{N_0+1} + o(N_0^{-\gamma}).
\end{align*}

By the selection of $\delta_N$ and that the right hand side does not depend on $x$, we complete the proof for this case.

{\bf Case II.} $f_1(x) \le L\delta_N$.

The upper bound \eqref{eq:ratebias2} in Case I still holds for this case. Accordingly, taking $N_0$ large enough so that $L\delta_N \le f_L/4$, one has
\begin{align*}
  &\Big\lvert \E\big[\hat{r}_M(x)\big] - r(x) \Big\rvert \le \E\big[\hat{r}_M(x)\big] + r(x)\\
  \le& \frac{f_1(x)+L\delta_N}{f_0(x)-2L\delta_N} \frac{N_0}{N_0+1} + \frac{f_1(x)}{f_0(x)} + o(N_0^{-\gamma}) \le \frac{4}{f_L} L\delta_N + \frac{1}{f_L} L\delta_N + o(N_0^{-\gamma}).
\end{align*}

We thus complete the whole proof.
\end{proof}

\begin{proof}[Proof of Theorem~\ref{thm:pw-rate}\ref{thm:pw-rate2}]

By the law of total variance,
\begin{align}\label{eq:ratevar1}
  \Var\big[\hat{r}_M(x)\big] = \E\Big[\Var\Big[\hat{r}_M(x) \Biggiven \mX\Big]\Big] + \Var\Big[\E\Big[\hat{r}_M(x) \Biggiven \mX\Big]\Big].
\end{align}

For the first term in \eqref{eq:ratevar1}, let $Z$ be a copy drawn from $\nu_1$ independently of the data. Then, since $[Z_j]_{j=1}^{N_1}$ are i.i.d, 
\begin{align*}
  &\E\Big[\Var\Big[\hat{r}_M(x) \Biggiven \mX\Big]\Big] = \E\Big[\Var\Big[\frac{N_0}{N_1M} K_M(x) \Biggiven \mX\Big]\Big]\\
  =& \Big(\frac{N_0}{N_1M}\Big)^2 \E\Big[\Var\Big[ \sum_{j=1}^{N_1} \ind\big(Z_j \in A_M(x)\big) \Biggiven \mX\Big]\Big] = \frac{N_0^2}{N_1M^2} \E\Big[\Var\Big[ \ind\big(Z \in A_M(x)\big) \Biggiven \mX\Big]\Big]\\
  =& \frac{N_0^2}{N_1M^2} \E\Big[ \nu_1\big(A_M(x)\big) - \nu_1^2\big(A_M(x)\big)\Big] \le \frac{N_0^2}{N_1M^2} \E\Big[ \nu_1\big(A_M(x)\big)\Big] \\
  =& \frac{N_0}{N_1M} \E\big[\hat{r}_M(x)\big] \lesssim C \frac{N_0}{N_1M},
  \yestag\label{eq:ratevar11}
\end{align*}
where $C>0$ is a constant only depending on $f_L,f_U$. The last step is due to \eqref{eq:ratebias2}.

For the second term in \eqref{eq:ratevar1}, notice that
\begin{align*}
  \Var\Big[\E\Big[\hat{r}_M(x) \Biggiven \mX\Big]\Big] = \Var\Big[\E\Big[\frac{N_0}{N_1M} K_M(x) \Biggiven \mX\Big]\Big] = \Big(\frac{N_0}{M}\Big)^2 \Var\Big[  \nu_1\big(A_M(x)\big) \Big].
\end{align*}
Recalling that $W = \nu_0(B_{Z,\lVert x-Z \rVert})$, we have the following lemma about the density of $W$ near $x$.

\begin{lemma}\label{lemma:z,density}
  Denote the density of $W$ by $f_W$. Then for any $(\nu_0,\nu_1) \in \cP_{x,\rm p}(f_L,f_U,L,d,\delta)$, 
\[  
f_W(0) = r(x). 
\]
Furthermore, for any $\epsilon>0$ and $N_0$ sufficiently large, we have for all $0 \le w \le 2M/N_0$,
  \[
    \sup_{(\nu_0,\nu_1) \in \cP_{\rm p}(f_L,f_U,\delta,L,d)} f_W(w) \le (1+\epsilon)\frac{f_U}{f_L}.
  \]
\end{lemma}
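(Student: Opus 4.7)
The plan is to compute the cumulative distribution function $F_W(w) = \P(g(Z) \le w)$, where $g(z) := \nu_0(B_{z, \lVert x-z\rVert})$ and $Z \sim \nu_1$, and to obtain $f_W$ by differentiation. The Lipschitz conditions in Assumption \ref{asp:ptw} will be used twice: once to control the geometry of the level sets $\{g \le w\}$, and once to estimate the $f_1$-mass of those level sets.

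First I would establish a two-sided approximation for $g$ near $x$. For any $z$ with $\lVert z-x\rVert \le \delta/2$, integrating the pointwise Lipschitz bound $|f_0(y)-f_0(x)| \le L\lVert x-y\rVert \le 2L\lVert x-z\rVert$ over $y \in B_{z, \lVert x-z\rVert} \subset B_{x, 2\lVert x-z\rVert}$ yields
\[
\bigl| g(z) - f_0(x) V_d \lVert x-z\rVert^d \bigr| \le 2L V_d \lVert x-z\rVert^{d+1},
\]
where $V_d$ denotes the volume of the unit ball in $\bR^d$. Writing $r_w := (w/(f_0(x) V_d))^{1/d}$, this sandwiches each level set by two concentric balls: there exist $r_w^\pm = r_w(1 + \eta_w^\pm)$ with $|\eta_w^\pm| \le C(L, f_L, d)\,r_w$ such that $B_{x, r_w^-} \subset \{g \le w\} \subset B_{x, r_w^+}$, uniformly over $(\nu_0,\nu_1) \in \cP_{x,\rm p}(f_L,f_U,L,d,\delta)$, provided $w$ is small enough that $r_w^+ \le \delta/2$.

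Next, combining this sandwich with the Lipschitz condition on $f_1$ gives
\[
F_W(w) = \int_{\{g \le w\}} f_1(z)\,\d z = \bigl(f_1(x) + O(L r_w)\bigr) V_d r_w^d (1 + o(1)) = r(x)\, w\,(1 + o(1))
\]
as $w \to 0^+$, uniformly over $\cP_{x,\rm p}$, using that $V_d r_w^d = w/f_0(x)$. Applying the same two-ball sandwich to the increment $F_W(w+h) - F_W(w)$, whose middle set is approximately the annulus $\{r_w < \lVert x - z\rVert \le r_{w+h}\}$ of Lebesgue measure $h/f_0(x) + o(h)$, the continuous differentiability of $w \mapsto V_d r_w^d$ together with monotonicity of $F_W$ yields $f_W(w) = r(x)(1+o(1))$ as $w \to 0^+$; setting $w = 0$ proves the first claim.

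For the uniform upper bound, the hypothesis $M\log N_0/N_0 \to 0$ forces $2M/N_0 \to 0$, hence $r_{2M/N_0} \to 0$, so the correction term $C(L,f_L,d)\,r_w$ becomes arbitrarily small uniformly over $w \in [0, 2M/N_0]$ for $N_0$ large enough. The differentiation above then gives $f_W(w) \le (f_1(x)/f_0(x))(1 + \epsilon) \le (1 + \epsilon) f_U/f_L$ uniformly in $w$ and in $\cP_{x,\rm p}$. The main technical obstacle is turning the sandwich of level sets into a rigorous pointwise density bound despite $g$ itself being only approximately differentiable under the given hypotheses; I would circumvent this by working exclusively with the monotone CDF $F_W$ and the smooth comparison function $w \mapsto V_d r_w^d$, thereby avoiding any direct appeal to a chain rule for $g$.
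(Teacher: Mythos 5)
Your treatment of the first claim ($f_W(0)=r(x)$) is sound and gives essentially the same bound as the paper, just through a more geometric lens: you sandwich the sublevel set $\{g\le w\}$ between concentric balls and estimate its $\nu_1$-mass directly, whereas the paper compares $\nu_0(B_{Z,\lVert x-Z\rVert})$ with $\nu_1(B_{x,\lVert x-Z\rVert})$ and invokes the probability integral transform (so that the latter is exactly $U(0,1)$) to get $F_W(w)\le \frac{1+\epsilon}{1-\epsilon}r(x)w$. Both give the right derivative of $F_W$ at $0$. One small omission on your side: you must also rule out points of $\{g\le w\}$ far from $x$ (the paper does this via the observation that $g(z)\ge \nu_0(B_{z^*,\delta'/2})$ whenever $\lVert z-x\rVert>\delta'$, which for $w$ small forces $\lVert Z-x\rVert\le\delta'$); otherwise the outer ball containment $\{g\le w\}\subset B_{x,r_w^+}$ is not established.

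The real problem is the second claim, and you have in fact put your finger on the obstacle without resolving it. The ball-sandwich $B_{x,r_w^-}\subset\{g\le w\}\subset B_{x,r_w^+}$ controls $F_W(w)$ only up to a \emph{multiplicative} error $1+O(r_w)=1+O(w^{1/d})$. When you pass to the increment, the set $\{w<g\le w+h\}$ is contained only in the annulus $B_{x,r_{w+h}^+}\setminus B_{x,r_w^-}$, whose Lebesgue measure is
\[
V_d\big((r_{w+h}^+)^d-(r_w^-)^d\big)=\frac{1}{f_0(x)}\Big[h+(w+h)\,O(r_{w+h})+w\,O(r_w)\Big],
\]
and the two error terms are of order $w\,r_w$, \emph{independent of $h$}. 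Dividing by $h$ and letting $h\to 0$ therefore does not yield a finite bound, because $\eta_w^+>0>\eta_w^-$ strictly, so the annulus has positive ``slack'' even at $h=0$. Monotonicity of $F_W$ and closeness of $F_W(w)$ to the smooth function $r(x)w$ cannot rescue a pointwise density bound: a monotone function that is uniformly close to a linear function can still have arbitrarily large local slope. So the circumvention you propose at the end does not close the gap.

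What is actually needed --- and what the paper's terse remark ``Notice that $f_0$ is lower bounded by $f_L$'' is pointing at --- is a lower bound on the \emph{radial} rate of change of $g$. Along a fixed ray $z=x+\rho\hat\Omega$ one has
\[
\frac{\partial}{\partial\rho}\,\nu_0\big(B_{z,\rho}\big)=\int_{\partial B_{z,\rho}} f_0(y)\big(1+\hat\Omega\cdot\hat\omega(y)\big)\,\mathrm d S(y)\ \ge\ (f_0(x)-2L\rho)\,dV_d\,\rho^{d-1}>0
\]
for $\rho$ small, since the factor $1+\hat\Omega\cdot\hat\omega\ge 0$ and $\int_{\partial B}\big(1+\hat\Omega\cdot\hat\omega\big)\,\mathrm d S=dV_d\rho^{d-1}$. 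Hence $g$ is strictly increasing along each ray, the radial inverse $\rho(w,\Omega)$ is well-defined, and the slice of $\{w<g\le w+h\}$ along a ray has length at most $h/[(f_0(x)-2L\rho)dV_d\rho^{d-1}]$. Integrating in polar coordinates and using $\rho(w,\Omega)=r_w(1+O(r_w))$ uniformly in $\Omega$ then yields
\[
\frac{F_W(w+h)-F_W(w)}{h}\ \le\ \frac{f_1(x)+\epsilon}{f_0(x)-\epsilon}\cdot\frac{f_0(x)+2Lr_{w+h}^+}{f_0(x)-2Lr_{w+h}^+}\cdot\Big(\frac{r_{w+h}^+}{r_w^-}\Big)^{d-1}+o(1)
\]
as $h\to0$, which is $\le (1+\epsilon)f_U/f_L$ once $N_0$ is large enough that $r_{2M/N_0}$ is small. (Equivalently, one can compare the pushforward of $\nu_0$ under $z\mapsto\nu_0(B_{z,\lVert x-z\rVert})$ to the exact uniform pushforward under $z\mapsto\nu_0(B_{x,\lVert x-z\rVert})$ through this same radial change of variables.) This step is what your sandwich of sublevel sets cannot replace, and it is the one missing ingredient in the proposal.
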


Due to Lemma~\ref{lemma:z,density}, we can take $N_0$ sufficiently large so that for any $0 \le w \le 2M/N_0$,
\[
  \sup_{(\nu_0,\nu_1) \in \cP_{\rm p}(f_L,f_U,\delta,L,d)} f_W(w) \le 2\frac{f_U}{f_L}.
\]

Let $Z,\tZ$ be two independent copies from $\nu_1$ that are further independent of the data. Let $W = \nu_0(B_{Z,\lVert x-Z \rVert})$ and $\tW = \nu_0(B_{\tZ,\lVert x-\tZ \rVert})$. Let $V = \nu_0(B_{Z,\lVert \cX_{(M)}(Z) -Z \rVert})$ and $\tV = \nu_0(B_{\tZ,\lVert \cX_{(M)}(\tZ) -\tZ \rVert})$. We then have
\begin{align*}
  \Var\Big[  \nu_1\big(A_M(x)\big) \Big] =& \E\Big[ \nu_1^2\big(A_M(x)\big) \Big] - \Big(\E\Big[ \nu_1\big(A_M(x)\big) \Big]\Big)^2\\
  =& \P \Big(Z \in A_M(x), \tZ \in A_M(x)\Big) - \P \Big(Z \in A_M(x)\Big) \P \Big(\tZ \in A_M(x)\Big)\\
  =& \P \Big(\lVert x - Z \rVert \le \lVert \cX_{(M)}(Z) - Z \rVert , \lVert x - \tZ \rVert \le \lVert \cX_{(M)}(\tZ) - \tZ \rVert \Big) \\
  & - \P \Big(\lVert x - Z \rVert \le \lVert \cX_{(M)}(Z) - Z \rVert\Big) \P \Big(\lVert x - \tZ \rVert \le \lVert \cX_{(M)}(\tZ) - \tZ \rVert\Big)\\
  =& \P \Big(W \le V, \tW \le \tV\Big) - \P \Big(W \le V\Big) \P \Big(\tW \le \tV\Big).
\end{align*}

Due to the independence between $Z$ and $\tZ$, $W$ and $\tW$ are independent. Notice that $V \given Z$ have the same distribution as $U_{(M)}$ for any $Z \in \bR^d$, then $V$ and $Z$ are independent, so are $\tV$ and $\tZ$.

Let's expand the variance further as 
\begin{align*}
  & \Var\Big[  \nu_1\big(A_M(x)\big) \Big]\\
  = & \Big[\P \Big(W \le V, \tW \le \tV, W \le 2\frac{M}{N_0}, \tW \le 2\frac{M}{N_0} \Big) - \P \Big(W \le V, W \le 2\frac{M}{N_0}\Big) \P \Big(\tW \le \tV, \tW \le 2\frac{M}{N_0}\Big) \Big]\\
  &+ \Big[\P \Big(W \le V, \tW \le \tV\Big) - \P \Big(W \le V, \tW \le \tV, W \le 2\frac{M}{N_0}, \tW \le 2\frac{M}{N_0} \Big) \Big]\\
  &- \Big[\P \Big(W \le V\Big) \P \Big(\tW \le \tV\Big) - \P \Big(W \le V, W \le 2\frac{M}{N_0}\Big) \P \Big(\tW \le \tV, \tW \le 2\frac{M}{N_0}\Big) \Big].
  \yestag\label{eq:ratevar2} 
\end{align*}

For the first term in \eqref{eq:ratevar2}, we have the following lemma.

\begin{lemma}\label{lemma:ratevar} We have
  \begin{align*}
    & \Big(\frac{N_0}{M}\Big)^2 \Big[\P \Big(W \le V, \tW \le \tV, W \le 2\frac{M}{N_0}, \tW \le 2\frac{M}{N_0} \Big) - \P \Big(W \le V, W \le 2\frac{M}{N_0}\Big) \P \Big(\tW \le \tV, \tW \le 2\frac{M}{N_0}\Big) \Big] \\
    & \leq C \frac{1}{M},
  \end{align*}
  where $C>0$ is a constant only depending on $f_L,f_U$.
\end{lemma}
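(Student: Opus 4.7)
\textbf{Proof plan for Lemma~\ref{lemma:ratevar}.}

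\textbf{Step 1 (Reduction to a variance).} The critical observation is that given $\mX$, the random triples $(Z,V,W)$ and $(\tZ,\tV,\tW)$ are conditionally independent: $V$ is a deterministic function of $(Z,\mX)$, $\tV$ of $(\tZ,\mX)$, and $(Z,\tZ)$ are i.i.d.\ and independent of $\mX$. Setting
\[
g(\mX) := \P\big(W\le V,\ W\le 2M/N_0\,\big|\,\mX\big),
\]
we therefore have $\P(W\le V,\tW\le\tV,W\le 2M/N_0,\tW\le 2M/N_0) = \E[g(\mX)^2]$ while each marginal probability equals $\E[g(\mX)]$. Hence the bracketed quantity equals $\Var(g(\mX))$, and it suffices to show $\Var(g(\mX))\le CM/N_0^2$.

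\textbf{Step 2 (Representation as a random set measure).} Expanding the conditional probability,
\[
g(\mX)=\int_{A'}\ind\big(T(z)\le M-1\big)f_1(z)\,dz,\qquad A':=\{z:\nu_0(B_{z,\|x-z\|})\le 2M/N_0\},
\]
where $T(z):=|\{i:X_i\in B_{z,\|x-z\|}\}|\sim\mathrm{Bin}(N_0,p_z)$ with $p_z:=\nu_0(B_{z,\|x-z\|})$. The set $A'$ is deterministic and, by the local Lipschitz-density assumption, is contained in a ball $B_{x,r_0}$ of radius $r_0\asymp(M/N_0)^{1/d}$, so $\lambda(A')\asymp M/N_0$. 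Writing the variance as a double integral of covariances,
\[
\Var(g(\mX))=\int_{A'}\!\int_{A'}\mathrm{Cov}\!\big(\ind(T(z)\le M-1),\ind(T(\tilde z)\le M-1)\big)f_1(z)f_1(\tilde z)\,dz\,d\tilde z.
\]

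\textbf{Step 3 (Covariance of binomial indicators).} The joint law of $(T(z),T(\tilde z))$ is that of a bivariate binomial driven by the multinomial counts in $B_{z,\|x-z\|}\setminus B_{\tilde z,\|x-\tilde z\|}$, its mirror, and the intersection; in particular $\mathrm{Cov}(T(z),T(\tilde z))=N_0(p_{z\tilde z}-p_zp_{\tilde z})$ where $p_{z\tilde z}:=\nu_0(B_{z,\|x-z\|}\cap B_{\tilde z,\|x-\tilde z\|})$. Conditioning on the shared count and applying a local central limit bound to the remaining independent binomials (with means $O(M)$ and variances $\asymp M$) yields
\[
\Big|\mathrm{Cov}\!\big(\ind(T(z)\le M-1),\ind(T(\tilde z)\le M-1)\big)\Big|\le C\,\frac{N_0\,p_{z\tilde z}}{M}\,\psi_M(z)\psi_M(\tilde z),
\]
where $\psi_M(z):=\P(T(z)=M-1)$ is the local binomial point mass at the threshold. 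By the standard Stirling/Gaussian estimate $\psi_M(z)\lesssim 1/\sqrt{M}$ uniformly, and $\psi_M(z)$ is exponentially small off the ``critical shell'' $\{z:p_z\asymp M/N_0\}$, which has Lebesgue measure $\asymp\sqrt{M}/N_0$ (thickness $\Delta r\asymp r_0/\sqrt{M}$ around a sphere of radius $\asymp r_0$).

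\textbf{Step 4 (Geometric integration).} Plugging the bound of Step 3 into the double integral and using $\psi_M\le C/\sqrt{M}$ restricted to the critical shell, together with the identity
\[
\int\!\!\int p_{z\tilde z}\,\psi_M(z)\psi_M(\tilde z)f_1(z)f_1(\tilde z)\,dz\,d\tilde z
=\int f_0(y)\Big(\int \ind(\|y-z\|\le\|x-z\|)\psi_M(z)f_1(z)\,dz\Big)^{\!2}dy,
\]
a direct spherical-cap computation shows the inner integral vanishes for $\|y-x\|>2r_0$ and is bounded by $Cf_U(\sqrt{M}/N_0)\cdot(1/\sqrt{M})=Cf_U/N_0$ otherwise. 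Hence the outer integral is $\le Cf_U^3\cdot(2r_0)^dV_d/N_0^2\asymp(M/N_0)/N_0^2=M/N_0^3$. Multiplying by the prefactor $N_0/M$ from Step 3 gives $\Var(g(\mX))\le CM/N_0^2$, and multiplying by $(N_0/M)^2$ yields the claimed $C/M$ bound.

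\textbf{Main obstacle.} The delicate point is the sharp covariance bound in Step~3: a crude indicator-covariance estimate of the form $\lesssim N_0p_{z\tilde z}/M$ (without the $\psi_M$ factors) integrates only to $M^2/N_0^2$, which is off by a factor of $M$. Capturing the concentration of $\ind(T(z)\le M-1)$ on the critical shell via a local Berry-Esseen/Gaussian bound is what produces the extra $1/\sqrt{M}$ decay per index and closes the gap to the correct order $M/N_0^2$. The geometric identity in Step~4 reduces the shell-restricted double integral to a one-dimensional radial computation, where the contributions of $\sqrt{M}/N_0$ (shell volume) and $1/\sqrt{M}$ (point mass) combine to give exactly the required scaling.
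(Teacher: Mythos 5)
Your Step 1 reduction to a variance is correct and in fact cleaner than the paper's: given $\mX$, the pairs $(W,V)$ and $(\tW,\tV)$ are conditionally i.i.d.\ (each is a function of $\mX$ and one of the two independent draws $Z,\tZ$), so the bracket equals $\Var\bigl(g(\mX)\bigr)$ with $g(\mX)=\P(W\le V, W\le 2M/N_0\mid\mX)$. Step 2's rewriting $g(\mX)=\int_{A'}\ind(T(z)\le M-1)f_1(z)\,dz$ with $T(z)\sim\mathrm{Bin}(N_0,p_z)$ is also correct. This is a genuinely different route from the paper, which conditions on $(Z,\tZ)$ instead, writes the bracket as a double integral over $(w_1,w_2)\in[0,2M/N_0]^2$ of a covariance of order-statistic events, bounds that covariance crudely by one-sided marginal tail probabilities of $U_{(M)}$, and then concludes via Chernoff bounds and Stirling.

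There is, however, a genuine quantitative error at Step 3 that propagates into Step 4. The claimed covariance bound
$\bigl|\mathrm{Cov}\bigl(\ind(T(z)\le M-1),\ind(T(\tilde z)\le M-1)\bigr)\bigr|\le C\,\tfrac{N_0 p_{z\tilde z}}{M}\,\psi_M(z)\psi_M(\tilde z)$
is off by a factor of $M$: a simple sanity check at $z=\tilde z$ on the critical shell ($p_z\asymp M/N_0$, so $N_0 p_{z\tilde z}/M\asymp 1$ and $\psi_M\asymp M^{-1/2}$) gives the right-hand side $\asymp 1/M$, whereas the left-hand side is the indicator's \emph{variance}, which is $\approx 1/4$. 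The correct order coming from conditioning on the shared count $N_1\sim\mathrm{Bin}(N_0,p_{z\tilde z})$ and linearizing $k\mapsto\P(T(z)\le M-1\mid N_1=k)$ (slope $\approx -\psi_M(z)$) is $\lesssim \psi_M(z)\psi_M(\tilde z)\,\Var(N_1)\asymp N_0 p_{z\tilde z}\,\psi_M(z)\psi_M(\tilde z)$, with no extra $1/M$. Your Step 4 arithmetic is also internally inconsistent with your stated bound: the outer integral $\asymp M/N_0^3$ times the prefactor $N_0/M$ gives $1/N_0^2$, not the claimed $M/N_0^2$. If you replace Step 3's bound by $N_0 p_{z\tilde z}\psi_M(z)\psi_M(\tilde z)$, the Step 4 computation correctly produces $\Var(g(\mX))\lesssim M/N_0^2$ and hence $C/M$. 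Even so, the corrected covariance bound is the hard content of your plan: it requires a uniform bivariate-binomial local limit/Taylor argument (handling the diagonal $z\approx\tilde z$, where linearization breaks down, and the boundary-of-shell region, where $\psi_M$ is no longer $\asymp M^{-1/2}$), and your sketch leaves those details unaddressed. By contrast, the paper sidesteps bivariate binomials entirely via a one-dimensional Chernoff/Stirling computation in the $(w_1,w_2)$ variables.
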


For the second term in \eqref{eq:ratevar2},
\begin{align*}
  &\P \Big(W \le V, \tW \le \tV\Big) - \P \Big(W \le V, \tW \le \tV, W \le 2\frac{M}{N_0}, \tW \le 2\frac{M}{N_0} \Big)\\
  \le&  \P \Big(W \le V, \tW \le \tV, W > 2\frac{M}{N_0}\Big) + \P \Big(W \le V, \tW \le \tV, \tW > 2\frac{M}{N_0}\Big)\\
  \le & \P \Big(V > 2\frac{M}{N_0}\Big) + \P \Big(\tV > 2\frac{M}{N_0}\Big) \\
  =& 2 \P \Big(U_{(M)} > 2\frac{M}{N_0}\Big).
\end{align*}
% \begin{align*}
%   &\P \Big(W \le V\Big) \P \Big(\tW \le \tV\Big) - \P \Big(W \le V, W \le \delta\Big) \P \Big(\tW \le \tV, \tW \le \delta\Big)\\
%   =& \P \Big(\tW \le \tV\Big) \Big[\P \Big(W \le V\Big) - \P \Big(W \le V, W \le \delta\Big)\Big] \\
%   &+ \P \Big(W \le V, W \le \delta\Big) \Big[\P \Big(\tW \le \tV\Big) - \P \Big(\tW \le \tV, \tW \le \delta\Big) \Big]\\
%   \le& \Big[\P \Big(W \le V\Big) - \P \Big(W \le V, W \le \delta\Big)\Big] + \Big[\P \Big(\tW \le \tV\Big) - \P \Big(\tW \le \tV, \tW \le \delta\Big) \Big]\\
%   \le& \P \Big(V > \delta\Big) + \P \Big(\tV > \delta\Big) = 2 \P \Big(U_{(M)} > \delta\Big).
% \end{align*}
Using the Chernoff bound and $M/\log N_0\to \infty$, for any $\gamma>0$,
\begin{align*}
  \Big(\frac{N_0}{M}\Big)^2 \P \Big(U_{(M)} > 2\frac{M}{N_0}\Big) \le \Big(\frac{N_0}{M}\Big)^2 \exp \Big[ -(1 - \log 2)M \Big] \prec N_0^{-\gamma},
\end{align*}
We then have
\begin{align*}
  & \Big(\frac{N_0}{M}\Big)^2 \Big[\P \Big(W \le V, \tW \le \tV\Big) - \P \Big(W \le V, \tW \le \tV, W \le 2\frac{M}{N_0}, \tW \le 2\frac{M}{N_0} \Big) \Big]\\
  &- \Big(\frac{N_0}{M}\Big)^2 \Big[\P \Big(W \le V\Big) \P \Big(\tW \le \tV\Big) - \P \Big(W \le V, W \le 2\frac{M}{N_0}\Big) \P \Big(\tW \le \tV, \tW \le 2\frac{M}{N_0}\Big) \Big]\\
  \le& 2 \Big(\frac{N_0}{M}\Big)^2 \P \Big(U_{(M)} > 2\frac{M}{N_0}\Big) \prec N_0^{-\gamma}.
  \yestag\label{eq:ratevar9}
\end{align*}

For the third term in  \eqref{eq:ratevar2}, it is easy to see
\[
  \Big[\P \Big(W \le V\Big) \P \Big(\tW \le \tV\Big) - \P \Big(W \le V, W \le 2\frac{M}{N_0}\Big) \P \Big(\tW \le \tV, \tW \le 2\frac{M}{N_0}\Big) \Big] \ge 0.
\]

Plugging Lemma~\ref{lemma:ratevar} and \eqref{eq:ratevar9} into \eqref{eq:ratevar2}, we obtain
\begin{align}\label{eq:ratevar12}
  \Big(\frac{N_0}{M}\Big)^2 \Var\Big[  \nu_1\big(A_M(x)\big) \Big] \lesssim C \frac{1}{M},
\end{align}
where $C>0$ is a constant only depending on $f_L,f_U$.

Plugging \eqref{eq:ratevar11} and \eqref{eq:ratevar12} into \eqref{eq:ratevar1} completes the proof.
\end{proof}

\subsection{Proof of Theorem~\ref{thm:rate,risk}}

\begin{proof}[Proof of Theorem~\ref{thm:rate,risk}]

We only have to prove the first claim as the second is trivial.

Take $\delta_N = (\frac{4}{f_L V_d})^{1/d} (\frac{M}{N_0})^{1/d}$ as in the proof of Theorem~\ref{thm:pw-rate}\ref{thm:pw-rate1}. Take $\delta_N' = (\frac{2}{a f_L V_d})^{1/d} (\frac{M}{N_0})^{1/d}$. For any $x \in \bR^d$, denote the distance of $x$ to the boundary of $S_1$ by $\Delta(x)$, i.e., $\Delta(x) = \inf_{z \in \partial S_1} \lVert z-x \rVert$. 

Depending on the position of $x$ and the value of $\Delta(x)$, we separate the proof into three cases .

{\bf Case I.} $x \in S_1$ and $\Delta(x) > 2\delta_N$.

In this case, since $\Delta(x) > 2\delta_N$, for any $\lVert z - x \rVert \le \delta_N$, we have $B_{z,\lVert z-x \rVert} \subset S_1$. From the smoothness conditions on $f_0$ and $f_1$, similar to the proof of Theorem~\ref{thm:pw-rate}, we have
\begin{align*}
  & \E \Big[ \int_{\bR^d} \Big\lvert \hat{r}_M(x) - r(x) \Big\rvert f_0(x) \ind(x \in S_1, \Delta(x) > 2\delta_N) \d x \Big] \\
  = & \int_{\bR^d} \E \Big[ \Big\lvert \hat{r}_M(x) - r(x) \Big\rvert \Big] f_0(x) \ind(x \in S_1, \Delta(x) > 2\delta_N) \d x \\
  \le & \int_{\bR^d} \Big(\E \Big[ \hat{r}_M(x) - r(x) \Big]^2 \Big)^{1/2} f_0(x) \ind(x \in S_1, \Delta(x) > 2\delta_N) \d x \\
  \le & C \Big[ \Big(\frac{M}{N_0}\Big)^{1/d} + \Big(\frac{1}{M}\Big)^{1/2} + \Big(\frac{N_0}{MN_1}\Big)^{1/2}\Big] \int_{\bR^d} f_0(x) \ind(x \in S_1, \Delta(x) > 2\delta_N) \d x\\
  \le & C \Big[ \Big(\frac{M}{N_0}\Big)^{1/d} + \Big(\frac{1}{M}\Big)^{1/2} + \Big(\frac{N_0}{MN_1}\Big)^{1/2}\Big] \int_{\bR^d} f_0(x) \d x\\
  = & C \Big[ \Big(\frac{M}{N_0}\Big)^{1/d} + \Big(\frac{1}{M}\Big)^{1/2} + \Big(\frac{N_0}{MN_1}\Big)^{1/2}\Big],
  \yestag\label{eq:raterisk1}
\end{align*}
where the constant $C>0$ only depends on $f_L,f_U,L,d$. 

{\bf Case II.} $x \in S_0 \setminus S_1$ and $\Delta(x) > \delta_N'$.

In this case, $r(x) = 0$ and for any $z \in S_1$,
\[
  \nu_0(B_{z,\lVert z-x \rVert}) \ge f_L \lambda(B_{z,\lVert z-x \rVert} \cap S_0) \ge af_L \lambda(B_{z,\lVert z-x \rVert}) > af_L V_d \delta_N'^d \ge 2\frac{M}{N_0}.
\]
Then for any $\gamma>0$,
\begin{align*}
  &\E \Big[ \Big\lvert \hat{r}_M(x) - r(x) \Big\rvert \Big] = \E \Big[ \hat{r}_M(x) \Big] = \frac{N_0}{M} \E\big[\nu_1\big(A_M(x)\big)\big]\\
  =& \frac{N_0}{M} \P \Big(W \le \nu_0(B_{Z,\lVert \cX_{(M)}(Z) - Z \rVert})\Big) \le \frac{N_0}{M} \P\Big(U_{(M)} > 2 \frac{M}{N_0}\Big) \prec N_0^{-\gamma}.
\end{align*}
One then obtains
\begin{align*}
  & \E \Big[ \int_{\bR^d} \Big\lvert \hat{r}_M(x) - r(x) \Big\rvert f_0(x) \ind(x \notin S_1, \Delta(x) > \delta_N') \d x \Big] \\
  \prec & N_0^{-\gamma} \int_{\bR^d} f_0(x) \ind(x \in S_0 \setminus S_1, \Delta(x) > \delta_N') \d x \le N_0^{-\gamma}.
  \yestag\label{eq:raterisk2}
\end{align*}

{\bf Case III.} $x \in S_0$ and $\Delta(x) \le (2\delta_N) \vee \delta_N'$.

In this case, for any $z \in S_1$,
\[
  \nu_0(B_{z,\lVert z-x \rVert}) \ge f_L \lambda(B_{z,\lVert z-x \rVert} \cap S_0) \ge af_L \lambda(B_{z,\lVert z-x \rVert}) \ge \frac{af_L}{f_U} \nu_1(B_{x,\lVert z-x \rVert}).
\]
% Let $\delta_N'' = (\frac{2}{af_L V_d})^{1/d} (\frac{M}{N_0})^{1/d}$. Then if $\lVert z-x \rVert > \delta_N''$,
% \[
%   \nu_0(B_{z,\lVert z-x \rVert}) \ge af_L \lambda(B_{z,\lVert z-x \rVert}) > af_L V_d \delta_N''^d \ge 2\frac{M}{N_0}.
% \]
% Recall that $W = \nu_0(B_{Z,\lVert x-Z \rVert})$. Then $W \le 2M/N_0$ implies that $\lVert Z-x \rVert \le \delta_N''$. 
Accordingly, 
\begin{align*}
  &\E \Big[ \Big\lvert \hat{r}_M(x) - r(x) \Big\rvert \Big] \le \E \Big[ \hat{r}_M(x) \Big] + r(x) = \frac{N_0}{M} \P \Big(W \le \nu_0(B_{Z,\lVert \cX_{(M)}(Z) - Z \rVert}) \Big) + r(x)\\
  \le& \frac{N_0}{M} \P \Big( \frac{af_L}{f_U} \nu_1(B_{x,\lVert x-Z \rVert}) \le \nu_0(B_{Z,\lVert \cX_{(M)}(Z) - Z \rVert}), \nu_0(B_{Z,\lVert \cX_{(M)}(Z) - Z \rVert}) \le 2 \frac{M}{N_0}\Big) + r(x)\\
  \le& \frac{N_0}{M} \P\Big(\frac{af_L}{f_U} U \le U_{(M)} \Big) + r(x) = \frac{f_U}{af_L} (1+o(1)) + \frac{f_U}{f_L}.
\end{align*}

From the definition of $\delta_N,\delta_N'$ and $M/N_0 \to 0$, we have $\delta_N,\delta_N' \to 0$ as $N_0 \to \infty$. Since the surface area of $S_1$ is bounded by $H$, we have
\[
  \lambda(\{x:\Delta(x) \le (2\delta_N) \vee \delta_N'\}) \lesssim H \{(2\delta_N) \vee \delta_N'\}.
\]
Then we obtain
\begin{align*}
  & \E \Big[ \int_{\bR^d} \Big\lvert \hat{r}_M(x) - r(x) \Big\rvert f_0(x) \ind(\Delta(x) \le (2\delta_N) \vee \delta_N') \d x \Big] \\
  = & \int_{\bR^d} \E \Big[ \Big\lvert \hat{r}_M(x) - r(x) \Big\rvert \Big] f_0(x) \ind(\Delta(x) \le (2\delta_N) \vee \delta_N') \d x \\
  \le & \Big(\frac{f_U}{af_L} (1+o(1)) + \frac{f_U}{f_L} \Big)  \int_{\bR^d} f_0(x) \ind(\Delta(x) \le (2\delta_N) \vee \delta_N') \d x\\
  \le & \Big(\frac{f_U}{af_L} (1+o(1)) + \frac{f_U}{f_L} \Big) f_U \lambda(\{x:\Delta(x) \le (2\delta_N) \vee \delta_N'\})\\
  \lesssim & \Big(\frac{f_U}{af_L} + \frac{f_U}{f_L} \Big) f_U H (\delta_N + \delta_N') \le C \Big(\frac{M}{N_0}\Big)^{1/d},
  \yestag\label{eq:raterisk3}
\end{align*}
where the constant $C>0$ only depends on $f_L,f_U,a,H,d$. 

Combining \eqref{eq:raterisk1}, \eqref{eq:raterisk2}, \eqref{eq:raterisk3} completes the proof.
\end{proof}

\appendix

\section{Proofs of the rest results}\label{sec:proof}

\subsection{Proofs of the results in Section~\ref{sec:computation}}

\subsubsection{Proof of Theorem~\ref{prop:complexity}}

\begin{proof}[Proof of Theorem~\ref{prop:complexity}]

We consider the complexities of two algorithms seperately.

{\bf Algorithm~\ref{alg:1}.}

The worst-case computation complexity of building a balanced $k$-d tree is $O(d N_0 \log N_0)$ (cf. \cite{brown2015building}) since the size of the k-d tree is $N_0$. 

The average computation complexity of searching a NN is $O(\log N_0)$ from \cite{10.1145/355744.355745}, and then the average computation complexity of search $M$-NNs in $\{X_i\}_{i=1}^{N_0}$ for all $\{Z_j\}_{j=1}^{N_1}$ is $O(M N_1 \log N_0)$.

Notice that $\lvert S_j \rvert = M$ for any $j \in \zahl{N_1}$ and then $\big\lvert \bigcup_{j=1}^{N_1} S_j \big\rvert \le N_1 M$. Since the elements of each $S_j$ are in $\zahl{N_0}$, the largest integer in $\bigcup_{j=1}^{N_1} S_j$ is $N_0$. Then the computation complexity of counting step is $O(N_1 M + N_0)$ due to the counting sort algorithm \cite[Section 8.2]{cormen2009introduction}.

Combining the above three steps completes the proof for Algorithm \ref{alg:1}.

{\bf Algorithm~\ref{alg:2}.}

The computation complexity of building a $k$-d tree is $O(d(N_0+n) \log (N_0+n))$ from Algorithm~\ref{alg:1} since the size of the $k$-d tree is $N_0 + n$.

For the searching step, for each $j \in \zahl{N_1}$, the number of NNs to be searched is 
\[
  M + \sum_{i=1}^n \ind\big(\lVert x_i - Z_j \rVert \le \lVert \cX_{(M)}(Z_j) - Z_j \rVert\big).
\]
Then from \eqref{eq:KM}, the total number of NNs searched for all $j \in \zahl{N_1}$ is 
\[
  \sum_{j=1}^{N_1} \Big(M + \sum_{i=1}^n \ind\big(\lVert x_i - Z_j \rVert \le \lVert \cX_{(M)}(Z_j) - Z_j \rVert\big) \Big) = N_1 M + \sum_{i=1}^n K_M(x_i).
\]
Let $X, Y$ be two independent copies from $\nu_0,\nu_1$, respectively. Since $[Z_j]_{j=1}^{N_1}$ are i.i.d. and $[X_i]_{i=1}^{N_0} \bigcup [x_i]_{i=1}^n$ are i.i.d, we have
\[
  \E \Big[ \sum_{i=1}^n K_M(x_i) \Big] = n \E [K_M(X)] = N_1 n \E [\nu_1(A_M(X))] = N_1 n \frac{M}{N_0+1},
\]
where the last equation is due to
\[
  \E [\nu_1(A_M(X))] = \P\Big(\lVert X - Y \rVert \le \lVert \cX_{(M)}(Y) - Y \rVert\Big) = \P\Big(U \le U_{(M)}\Big) = \frac{M}{N_0+1},
\]
by using the probability integral transform. Then the average computation complexity for the searching step is $O( \frac{N_1M}{N_0} (N_0+n) \log (N_0 + n))$

For the counting step, the computation complexity for counting $\bigcup_{j=1}^{N_1} S_j$ is $O(N_0 + N_1 M)$ since the cardinality of $\bigcup_{j=1}^{N_1} S_j$ is $N_1 M$ and the largest integer is $N_0$. The average computation complexity for counting $\bigcup_{j=1}^{N_1} S'_j$ is $O(\frac{N_1 M}{N_0} n + n)$ since the average cardinality of $\bigcup_{j=1}^{N_1} S'_j$ is $\frac{N_1 M}{N_0} n$ and the largest integer is $n$.

Combining the above three steps completes the proof for Algorithm \ref{alg:2}.
\end{proof}

\subsection{Proofs of the results in Section~\ref{sec:theory}}

\subsubsection{Proof of Corollary~\ref{crl:moment,catch,random}}

\begin{proof}[Proof of Corollary~\ref{crl:moment,catch,random}]

Corollary \ref{crl:moment,catch,random} can be established following the same way as that of Theorem~\ref{thm:risk,lp} but with less effort since we only have to show
\[
  \lim_{N_0\to\infty} \E \Big[ \int_{\bR^d} \Big\lvert \E\Big[\hat{r}_M(x)\Biggiven \mX \Big] - r(x) \Big\rvert^p f_0(x) \d x \Big] = 0.
\]

In detail, denote the Radon-Nikodym derivative of the probability measure of $W$ with respect to $\nu_0$ by $r_W$. We then have
\begin{align*}
  & \limsup_{N_0\to\infty} \E \Big[ \Big\lvert \frac{N_0}{M} \nu_1\big(A_M(W)\big) - r(W) \Big\rvert^p \Big]  =  \limsup_{N_0\to\infty} \E \Big[ \int_{\bR^d} \Big\lvert \frac{N_0}{M} \nu_1\big(A_M(x)\big) - r(x) \Big\rvert^p r_W(x) \d \nu_0(x) \Big]\\
  =& \limsup_{N_0\to\infty} \E \Big[ \int_{\bR^d} \Big\lvert \frac{N_0}{M} \nu_1\big(A_M(x)\big) - r(x) \Big\rvert^p r_W(x) f_0(x) \d x \Big] \\
  \lesssim& \limsup_{N_0\to\infty} \E \Big[ \int_{\bR^d} \Big\lvert \frac{N_0}{M} \nu_1\big(A_M(x)\big) - r(x) \Big\rvert^p f_0(x) \d x \Big] \\
  =& \limsup_{N_0\to\infty} \E \Big[ \int_{\bR^d} \Big\lvert \E\Big[\hat{r}_M(x)\Biggiven \mX \Big] - r(x) \Big\rvert^p f_0(x) \d x \Big] = 0.
\end{align*}
Here the last line has intrinsically been established in the proof of Theorem~\ref{thm:risk,lp}.

Noticing that $\E \big[r(W)\big]^p$ is bounded under Assumption~\ref{asp:risk}, the proof is thus complete.
\end{proof}

\subsubsection{Proof of Proposition~\ref{thm:minimax,mse}}

\begin{proof}[Proof of Proposition~\ref{thm:minimax,mse}]

We take $\nu_0$ and $\nu_1$ to share the same support, and assume $x$ to be the origin of $\bR^d$ without loss of generality.

When $N_1 \lesssim N_0$, we take $\nu_0$ to be the uniform distribution with density $f_L$ on $[-f_L^{-1/d}/2,f_L^{-1/d}/2]^d$. Then the MSE is lower bounded by the density estimation over Lipchitz class with $N_1$ samples.

When $N_0 \lesssim N_1$, we take $\nu_1$ to be the uniform distribution with density $f_U$ on $[-f_U^{-1/d}/2,f_U^{-1/d}/2]^d$. Notice that $1/f_0$ is also local Lipchitz from the lower boundness condition and local Lipchitz condition on $f_0$. Then the MSE is lower bounded by the density estimation over Lipchitz class with $N_0$ samples.

We then complete the proof by combining the above two lower bounds and then using the famous minimax lower bound in Lipschitz density estimation \citep[Exercise 2.8]{MR2724359}, 
\end{proof}

\subsubsection{Proof of Proposition~\ref{thm:minimax,risk}}

\begin{proof}[Proof of Proposition~\ref{thm:minimax,risk}]

We take $\nu_0$ and $\nu_1$ to be of the same support.

When $N_1 \lesssim N_0$, we take $\nu_0$ to be the uniform distribution with density $f_L$ on $[-f_L^{-1/d}/2,f_L^{-1/d}/2]^d$. Then the $L_1$ risk is lower bounded by the $L_1$ risk over support of density estimation over Lipchitz class with $N_1$ samples.

When $N_0 \lesssim N_1$, we take $\nu_1$ to be the uniform distribution with density $f_U$ on $[-f_U^{-1/d}/2,f_U^{-1/d}/2]^d$. Notice $1/f_0$ is also Lipchitz from the lower boundness condition and Lipchitz condition on $f_0$. From the lower boundness condition on $f_0$, the $L_1$ risk is lower bounded by the $L_1$ risk over support of density estimation over Lipchitz class with $N_0$ samples.

We then complete the proof by combining the above two lower bounds and then using then the minimax lower bound of $L_1$ risk for density estimation over Lipchitz class \citep[Theorem 1]{zhao2020analysis}.
\end{proof}

\subsection{Proofs of the results in Section~\ref{sec:matching}}

We first formally define $\tilde{\tau}_{M,K}^{\rm bc}$. Assume $n$ is divisible by $K$ for simplicity. Let $[I_k]_{k=1}^K$ be a $K$-fold random partition of $\zahl{n}$, with each of size equal to $n' = n/K$. For each $k \in \zahl{K}$ and $\omega \in \{0,1\}$, construct $K_{M,k}^\omega(\cdot), \hat{\mu}_{\omega,k}(\cdot)$ using data $[(X_i,D_i,Y_i)]_{i=1,i \notin I_k}^n$. Then define
\begin{align*}
  & \widecheck{\tau}^{\rm bc}_{M,k} = \frac{1}{n'} \sum_{i=1,i \in I_k}^n \Big[\hat{\mu}_{1,k}(X_i) - \hat{\mu}_{0,k}(X_i)\Big] \\
  & + \frac{1}{n'} \Big[ \sum_{i=1,i \in I_k, D_i = 1}^n \Big(1 + \frac{K^1_{M,k}(X_i)}{M}\Big) \Big(Y_i - \hat{\mu}_{1,k}(X_i)\Big) - \sum_{i=1,i \in I_k, D_i = 0}^n \Big(1 + \frac{K^0_{M,k}(X_i)}{M}\Big) \Big(Y_i - \hat{\mu}_{0,k}(X_i)\Big)\Big]
\end{align*}
and
\begin{align*}
  \tilde{\tau}_{M,K}^{\rm bc} := \frac{1}{K} \sum_{k=1}^K \widecheck{\tau}^{\rm bc}_{M,k}.
\end{align*}

%\fbox{stop point}

\subsubsection{Proof of Lemma~\ref{lemma:mbc}}

\begin{proof}[Proof of Lemma~\ref{lemma:mbc}]

From Equ.(3) in \cite{abadie2011bias}, the bias-corrected estimator imputes the missing potential outcomes by
\begin{align*}
    \hat{Y}^{\rm bc}_i(0) := \begin{cases}
        Y_i, & \mbox{ if } D_i=0,\\
        \frac{1}{M} \sum_{j \in \mathcal{J}^0_M(i)} (Y_j + \hat{\mu}_0(X_i) - \hat{\mu}_0(X_j)), & \mbox{ if } D_i=1,     
    \end{cases}  
\end{align*}
and
\begin{align*}
    \hat{Y}^{\rm bc}_i(1) := \begin{cases}
      \frac{1}{M} \sum_{j \in \mathcal{J}^1_M(i)} (Y_j + \hat{\mu}_1(X_i) - \hat{\mu}_1(X_j)), & \mbox{ if } D_i=0,\\   
      Y_i, & \mbox{ if } D_i=1.
    \end{cases}    
\end{align*}

The bias-corrected estimator is defined as:
\begin{align*}
  \hat{\tau}^{\rm bc}_M := \frac{1}{n} \sum_{i=1}^n \Big[\hat{Y}^{\rm bc}_i(1) -\hat{Y}^{\rm bc}_i(0)\Big].
\end{align*}

Then by simple algebra, we have
\begin{align*}
  &\hat{\tau}^{\rm bc}_M = \frac{1}{n} \sum_{i=1}^n \Big[\hat{Y}^{\rm bc}_i(1) -\hat{Y}^{\rm bc}_i(0)\Big] \\
  =& \frac{1}{n} \sum_{i=1,D_i = 1}^n \Big[Y_i - \frac{1}{M} \sum_{j \in \mathcal{J}^0_M(i)} (Y_j + \hat{\mu}_0(X_i) - \hat{\mu}_0(X_j)) \Big] + \frac{1}{n} \sum_{i=1,D_i = 0}^n \Big[\frac{1}{M} \sum_{j \in \mathcal{J}^1_M(i)} (Y_j + \hat{\mu}_1(X_i) - \hat{\mu}_1(X_j)) - Y_i \Big]\\
  =& \frac{1}{n} \sum_{i=1,D_i = 1}^n \Big[\hat{R}_i + \hat{\mu}_1(X_i) - \hat{\mu}_0(X_i) - \frac{1}{M} \sum_{j \in \mathcal{J}^0_M(i)} \hat{R}_j \Big] + \frac{1}{n} \sum_{i=1,D_i = 0}^n \Big[\frac{1}{M} \sum_{j \in \mathcal{J}^1_M(i)} \hat{R}_j - \hat{R}_i + \hat{\mu}_1(X_i) - \hat{\mu}_0(X_i) \Big]\\
  =& \frac{1}{n} \sum_{i=1}^n \Big[\hat{\mu}_1(X_i) - \hat{\mu}_0(X_i)\Big] + \frac{1}{n} \Big[ \sum_{i=1,D_i = 1}^n \Big(1 + \frac{K^1_M(i)}{M}\Big) \hat{R}_i - \sum_{i=1,D_i = 0}^n \Big(1 + \frac{K^0_M(i)}{M}\Big) \hat{R}_i \Big].
\end{align*}
This completes the proof.
\end{proof}

\subsubsection{Proof of Theorem~\ref{thm:dml}}

\begin{proof}[Proof of Theorem~\ref{thm:dml}\ref{thm:dml1}]
{\bf Part I.} Suppose the density function is sufficiently smooth.
For any $i \in \zahl{n}$, let $\bar{R}_i := Y_i - \bar{\mu}_{D_i}(X_i)$. From \eqref{eq:mbc},
\begin{align*}
  \hat{\tau}_M^{\rm bc} =& \hat{\tau}^{\rm reg} + \frac{1}{n} \Big[ \sum_{i=1,D_i = 1}^n \Big(1 + \frac{K^1_M(i)}{M}\Big) \hat{R}_i - \sum_{i=1,D_i = 0}^n \Big(1 + \frac{K^0_M(i)}{M}\Big) \hat{R}_i \Big]\\
  = & \hat{\tau}^{\rm reg} + \frac{1}{n} \Big[ \sum_{i=1}^n D_i \Big(1 + \frac{K^1_M(i)}{M}\Big) \hat{R}_i - \sum_{i=1}^n (1-D_i)\Big(1 + \frac{K^0_M(i)}{M}\Big) \hat{R}_i \Big]\\
  = & \frac{1}{n} \sum_{i=1}^n \Big[\hat{\mu}_1(X_i) - \bar{\mu}_1(X_i)\Big] - \frac{1}{n} \sum_{i=1}^n \Big[\hat{\mu}_0(X_i) - \bar{\mu}_0(X_i)\Big]\\
  &+  \frac{1}{n} \Big[ \sum_{i=1}^n D_i \Big(1 + \frac{K^1_M(i)}{M}\Big) \Big(\bar{\mu}_1(X_i) - \hat{\mu}_1(X_i)\Big) - \sum_{i=1}^n (1-D_i)\Big(1 + \frac{K^0_M(i)}{M}\Big) \Big(\bar{\mu}_0(X_i) - \hat{\mu}_0(X_i)\Big) \Big]\\
  &+  \frac{1}{n} \Big[ \sum_{i=1}^n D_i \Big(1 + \frac{K^1_M(i)}{M} - \frac{1}{e(X_i)}\Big) \bar{R}_i - \sum_{i=1}^n (1-D_i)\Big(1 + \frac{K^0_M(i)}{M} - \frac{1}{1-e(X_i)}\Big) \bar{R}_i \Big]\\
  & + \frac{1}{n} \Big[ \sum_{i=1}^n \Big(1 - \frac{D_i}{e(X_i)}\Big) \bar{\mu}_1(X_i) - \sum_{i=1}^n \Big(1 - \frac{1-D_i}{1-e(X_i)}\Big) \bar{\mu}_0(X_i) \Big]\\
  & + \frac{1}{n} \Big[ \sum_{i=1}^n \frac{D_i}{e(X_i)} Y_i - \sum_{i=1}^n \frac{1-D_i}{1-e(X_i)} Y_i \Big].
  \yestag\label{eq:dml1}
\end{align*}

For each term, we only establish the first half part, and the second half can be established in the same way.

For the first term in \eqref{eq:dml1},
\[
  \Big\lvert \frac{1}{n} \sum_{i=1}^n \Big[\hat{\mu}_1(X_i) - \bar{\mu}_1(X_i)\Big] \Big\rvert \le \lVert \hat{\mu}_1 - \bar{\mu}_1 \rVert_\infty = o_\P(1).
\]

Then
\begin{align}\label{eq:dml11}
  \frac{1}{n} \sum_{i=1}^n \Big[\hat{\mu}_1(X_i) - \bar{\mu}_1(X_i)\Big] - \frac{1}{n} \sum_{i=1}^n \Big[\hat{\mu}_0(X_i) - \bar{\mu}_0(X_i)\Big] = o_\P(1).
\end{align}

For the second term in \eqref{eq:dml1},
\begin{align*}
  & \Big\lvert \frac{1}{n} \sum_{i=1}^n D_i \Big(1 + \frac{K^1_M(i)}{M}\Big) \Big(\bar{\mu}_1(X_i) - \hat{\mu}_1(X_i)\Big) \Big\rvert \\
  \le & \lVert \hat{\mu}_1 - \bar{\mu}_1 \rVert_\infty \frac{1}{n} \sum_{i=1}^n D_i \Big(1 + \frac{K^1_M(i)}{M}\Big) = \lVert \hat{\mu}_1 - \bar{\mu}_1 \rVert_\infty = o_\P(1),  
\end{align*}
where the last step is due to $\sum_{i=1}^n D_i K^1_M(i) = n_0 M$. We then have
\begin{align}\label{eq:dml12}
  \frac{1}{n} \Big[ \sum_{i=1}^n D_i \Big(1 + \frac{K^1_M(i)}{M}\Big) \Big(\bar{\mu}_1(X_i) - \hat{\mu}_1(X_i)\Big) - \sum_{i=1}^n (1-D_i)\Big(1 + \frac{K^0_M(i)}{M}\Big) \Big(\bar{\mu}_0(X_i) - \hat{\mu}_0(X_i)\Big) \Big] = o_\P(1).
\end{align}

For the third term in \eqref{eq:dml1}, by Theorem~\ref{thm:risk,lp},
\begin{align*}
  & \E \Big[\Big\lvert\frac{1}{n} \sum_{i=1}^n D_i \Big(1 + \frac{K^1_M(i)}{M} - \frac{1}{e(X_i)}\Big) \bar{R}_i\Big\rvert\Big] \le \E \Big[ \Big\lvert D_i \Big(1 + \frac{K^1_M(i)}{M} - \frac{1}{e(X_i)}\Big) \bar{R}_i \Big\rvert \Big]\\
  \le & \E \Big[1 + \frac{K^1_M(i)}{M} - \frac{1}{e(X_i)}\Big]^2 \E [D_i \bar{R}_i]^2 =  \E \Big[1 + \frac{K^1_M(i)}{M} - \frac{1}{e(X_i)}\Big]^2 \E [D_i (Y_i(1)-\bar{\mu}_1(X_i))^2]\\
  \le & \E \Big[1 + \frac{K^1_M(i)}{M} - \frac{1}{e(X_i)}\Big]^2 \E \Big[\sigma_1^2(X_i)+(\mu_1(X_i)-\bar{\mu}_1(X_i))^2\Big] = o(1).
\end{align*}
We then obtain
\begin{align}\label{eq:dml13}
  \frac{1}{n} \Big[ \sum_{i=1}^n D_i \Big(1 + \frac{K^1_M(i)}{M} - \frac{1}{e(X_i)}\Big) \bar{R}_i - \sum_{i=1}^n (1-D_i)\Big(1 + \frac{K^0_M(i)}{M} - \frac{1}{1-e(X_i)}\Big) \bar{R}_i \Big] = o_\P(1).
\end{align}

For the fourth term in \eqref{eq:dml1}, notice that
\[
  \E \Big[\frac{1}{n} \sum_{i=1}^n \Big(1 - \frac{D_i}{e(X_i)}\Big) \bar{\mu}_1(X_i) \Biggiven \mX \Big] = 0.
\]
Then
\begin{align*}
  & \Var\Big[\frac{1}{n} \sum_{i=1}^n \Big(1 - \frac{D_i}{e(X_i)}\Big) \bar{\mu}_1(X_i)\Big] = \E\Big[\Var \Big[\frac{1}{n} \sum_{i=1}^n \Big(1 - \frac{D_i}{e(X_i)}\Big) \bar{\mu}_1(X_i) \Biggiven \mX \Big]\Big]\\
  = & \frac{1}{n} \E \Big[\bar{\mu}^2_1(X_i) \Big(\frac{1}{e(X_i)} -1\Big)\Big] = O(n^{-1}).
\end{align*}
Then
\begin{align}\label{eq:dml14}
  \frac{1}{n} \Big[ \sum_{i=1}^n \Big(1 - \frac{D_i}{e(X_i)}\Big) \bar{\mu}_1(X_i) - \sum_{i=1}^n \Big(1 - \frac{1-D_i}{1-e(X_i)}\Big) \bar{\mu}_0(X_i) \Big] = o_\P(1).
\end{align}

For the fifth term in \eqref{eq:dml1}, notice that $\E [Y^2]$ are bounded and $[(X_i,D_i,Y_i)]_{i=1}^n$ are i.i.d.. Using the weak law of large numbers \cite[Theorem 2.2.3]{MR3930614} yields
\begin{align}\label{eq:dml15}
  \frac{1}{n} \Big[ \sum_{i=1}^n \frac{D_i}{e(X_i)} Y_i - \sum_{i=1}^n \frac{1-D_i}{1-e(X_i)} Y_i \Big] \stackrel{\sf p}{\longrightarrow} \E\Big[Y_i(1)-Y_i(0)\Big] = \tau.
\end{align}

Plugging \eqref{eq:dml11}, \eqref{eq:dml12}, \eqref{eq:dml13}, \eqref{eq:dml14}, \eqref{eq:dml15} into \eqref{eq:dml1} completes the proof.

{\bf Part II.} Suppose the outcome model is correct.
By \eqref{eq:mbc},
\begin{align*}
  \hat{\tau}_M^{\rm bc} =& \hat{\tau}^{\rm reg} + \frac{1}{n} \Big[ \sum_{i=1,D_i = 1}^n \Big(1 + \frac{K^1_M(i)}{M}\Big) \hat{R}_i - \sum_{i=1,D_i = 0}^n \Big(1 + \frac{K^0_M(i)}{M}\Big) \hat{R}_i \Big]\\
  = & \hat{\tau}^{\rm reg} + \frac{1}{n} \Big[ \sum_{i=1}^n D_i \Big(1 + \frac{K^1_M(i)}{M}\Big) \hat{R}_i - \sum_{i=1}^n (1-D_i)\Big(1 + \frac{K^0_M(i)}{M}\Big) \hat{R}_i \Big]\\
  = & \frac{1}{n} \sum_{i=1}^n \Big[\hat{\mu}_1(X_i) - \mu_1(X_i)\Big] - \frac{1}{n} \sum_{i=1}^n \Big[\hat{\mu}_0(X_i) - \mu_0(X_i)\Big]\\
  &+  \frac{1}{n} \Big[ \sum_{i=1}^n D_i \Big(1 + \frac{K^1_M(i)}{M}\Big) \Big(\mu_1(X_i) - \hat{\mu}_1(X_i)\Big) - \sum_{i=1}^n (1-D_i)\Big(1 + \frac{K^0_M(i)}{M}\Big) \Big(\mu_0(X_i) - \hat{\mu}_0(X_i)\Big) \Big]\\
  &+  \frac{1}{n} \Big[ \sum_{i=1}^n D_i \Big(1 + \frac{K^1_M(i)}{M}\Big) \Big(Y_i - \mu_1(X_i) \Big) - \sum_{i=1}^n (1-D_i)\Big(1 + \frac{K^0_M(i)}{M}\Big) \Big(Y_i - \mu_0(X_i) \Big) \Big]\\
  &+  \frac{1}{n} \sum_{i=1}^n \Big[\mu_1(X_i) - \mu_0(X_i)\Big].
  \yestag\label{eq:dml2}
\end{align*}

For the first term in \eqref{eq:dml2},
\[
  \Big\lvert \frac{1}{n} \sum_{i=1}^n \Big[\hat{\mu}_1(X_i) - \mu_1(X_i)\Big] \Big\rvert \le \lVert \hat{\mu}_1 - \mu_1 \rVert_\infty = o_\P(1).
\]
Then
\begin{align}\label{eq:dml21}
  \frac{1}{n} \sum_{i=1}^n \Big[\hat{\mu}_1(X_i) - \mu_1(X_i)\Big] - \frac{1}{n} \sum_{i=1}^n \Big[\hat{\mu}_0(X_i) - \mu_0(X_i)\Big] = o_\P(1).
\end{align}

For the second term in \eqref{eq:dml2},
\begin{align*}
  & \Big\lvert \frac{1}{n} \sum_{i=1}^n D_i \Big(1 + \frac{K^1_M(i)}{M}\Big) \Big(\mu_1(X_i) - \hat{\mu}_1(X_i)\Big) \Big\rvert \\
  \le & \lVert \hat{\mu}_1 - \mu_1 \rVert_\infty \frac{1}{n} \sum_{i=1}^n D_i \Big(1 + \frac{K^1_M(i)}{M}\Big) = \lVert \hat{\mu}_1 - \mu_1 \rVert_\infty = o_\P(1),  
\end{align*}
where the last step is due to $\sum_{i=1}^n D_i K^1_M(i) = n_0 M$. We then have
\begin{align}\label{eq:dml22}
  \frac{1}{n} \Big[ \sum_{i=1}^n D_i \Big(1 + \frac{K^1_M(i)}{M}\Big) \Big(\mu_1(X_i) - \hat{\mu}_1(X_i)\Big) - \sum_{i=1}^n (1-D_i)\Big(1 + \frac{K^0_M(i)}{M}\Big) \Big(\mu_0(X_i) - \hat{\mu}_0(X_i)\Big) \Big] = o_\P(1).
\end{align}

For the third term in \eqref{eq:dml2}, noticing that $K_M^1(\cdot)$ is a function of $\mX$ and $\mD$, one could obtain
\begin{align*}
  \E \Big[ \frac{1}{n} \sum_{i=1}^n D_i \Big(1 + \frac{K^1_M(i)}{M}\Big) \Big(Y_i - \mu_1(X_i) \Big) \Biggiven \mX, \mD \Big] = 0.
\end{align*}
Accordingly, we have
\begin{align*}
  & \Var\Big[\frac{1}{n} \sum_{i=1}^n D_i \Big(1 + \frac{K^1_M(i)}{M}\Big) \Big(Y_i - \mu_1(X_i) \Big) \Big]\\
  = & \E \Big[\Var\Big[\frac{1}{n} \sum_{i=1}^n D_i \Big(1 + \frac{K^1_M(i)}{M}\Big) \Big(Y_i - \mu_1(X_i) \Big) \Big] \Biggiven \mX,\mD \Big]\\
  = & \frac{1}{n} \E \Big[ D_i \Big(1 + \frac{K^1_M(i)}{M}\Big)^2 \sigma_1^2(X_i) \Big] \le \frac{1}{n} \E \Big[ D_i \Big(1 + \frac{K^1_M(i)}{M}\Big)^2 \Big] \lVert \sigma_1^2 \rVert_\infty,
\end{align*}
where $\sigma_1^2(x) = \E [U^2_1 \given X=x]$ for $x \in \bX$.

Conditional on $\mD$, for any $j,\ell \in \zahl{n}$ and $j \neq \ell$ such that $D_j = D_\ell = 0$, for $i \in \zahl{n}$ such that $D_i = 1$, it is easy to check $\E [K_M^1(i)\given \mD] = Mn_0/n_1$ and $\E [(K_M^1(i))^2 \given \mD] \asymp M^2 $. Then
\[
  \Var\Big[\frac{1}{n} \sum_{i=1}^n D_i \Big(1 + \frac{K^1_M(i)}{M}\Big) \Big(Y_i - \mu_1(X_i) \Big) \Big] = O(n^{-1}).
\]
We then obtain
\begin{align}\label{eq:dml23}
  \frac{1}{n} \Big[ \sum_{i=1}^n D_i \Big(1 + \frac{K^1_M(i)}{M}\Big) \Big(Y_i - \mu_1(X_i) \Big) - \sum_{i=1}^n (1-D_i)\Big(1 + \frac{K^0_M(i)}{M}\Big) \Big(Y_i - \mu_0(X_i) \Big) \Big] = o_\P(1).
\end{align}

For the fourth term in \eqref{eq:dml2}, notice that $\E[\mu_\omega^2]$ is bounded for $\omega \in \{0,1\}$. Using the weak law of large number, we obtain
\begin{align}\label{eq:dml24}
  \frac{1}{n} \sum_{i=1}^n \Big[\mu_1(X_i) - \mu_0(X_i)\Big] \stackrel{\sf p}{\longrightarrow} \E \Big[\mu_1(X_i) - \mu_0(X_i)\Big] = \tau.
\end{align}

Plugging \eqref{eq:dml21}, \eqref{eq:dml22}, \eqref{eq:dml23}, \eqref{eq:dml24} into \eqref{eq:dml2} completes the proof.

Analogous analysis can be performed on $\widecheck{\tau}^{\rm bc}_{M,k}$ for any $k \in \zahl{K}$. Then the results apply to $\tilde{\tau}_{M,K}^{\rm bc}$ automatically since $K$ is fixed.
\end{proof}

\begin{proof}[Proof of Theorem~\ref{thm:dml}\ref{thm:dml2}]

From Definition 3.1 in \cite{chernozhukov2018double}, $\tilde{\tau}_{M,K}^{\rm bc}$ is the double machine learning estimator. We then follow the proof of Theorem 5.1 (recalling Remark \ref{remark:dml}) and use the notations in \cite{chernozhukov2018double}, essentially checking Assumption 3.1 and 3.2 therein. In the following the notation in  \cite{chernozhukov2018double} is adopted.

For estimating the ATE, from Equ. (5.3) in \cite{chernozhukov2018double}, the score is
\[
  \psi(X,D,Y;\tilde{\tau},\tilde{\zeta}) := \tilde{\mu}_1(X) - \tilde{\mu}_0(X) + \frac{D(Y-\tilde{\mu}_1(X))}{\tilde{e}(X)} - \frac{(1-D)(Y-\tilde{\mu}_0(X))}{1-\tilde{e}(X)} - \tilde{\tau},
\]
where $\tilde{\zeta}(x) = (\tilde{\mu}_0(x),\tilde{\mu}_1(x),\tilde{\rho}_0(x),\tilde{\rho}_1(x))$ are the nuisance parameters by letting $\tilde{\rho}_0(x) = 1/(1-\tilde{e}(x))$ and $\tilde{\rho}_1(x) = 1/\tilde{e}(x)$. Let $\rho_0(x) = 1/(1-e(x))$ and $\rho_1(x) = 1/e(x)$. Then the true value is $\zeta(x) = (\mu_0(x),\mu_1(x),\rho_0(x),\rho_1(x))$.

We can then write the score as
\[
  \psi(X,D,Y;\tilde{\tau},\tilde{\zeta}) = \tilde{\mu}_1(X) - \tilde{\mu}_0(X) + D(Y-\tilde{\mu}_1(X))\tilde{\rho}_1(X) - (1-D)(Y-\tilde{\mu}_0(X))\tilde{\rho}_0(X) - \tilde{\tau}.
\]

For any $p>0$, let $\lVert f \rVert_p := \lVert f(X,D,Y) \rVert_p = (\int \lvert f(\omega) \rvert^p \d \P_{(X,D,Y)}(\omega))^{1/p}$. For the $\kappa$ in Assumption~\ref{asp:dml1}, let $q = 2+ \kappa/2$, $q_1 = 2 + \kappa$ and $q_2$ such that $q^{-1} = q_1^{-1} + q_2^{-1}$. Let $\cT_n$ be the set consisting of all $\tilde{\zeta}$ such that for $\omega \in \{0,1\}$,
\[
  \lVert \tilde{\mu}_\omega - \mu_\omega \rVert_\infty = o(n^{-d/(4+2d)}), ~~ \lVert \tilde{\rho}_\omega - \rho_\omega \rVert_1 = O(n^{-1/(d+2)}), ~~ \lVert \tilde{\rho}_\omega - \rho_\omega \rVert_{q_2} = o(1).
\]

Then the selection of $\cT_n$ satisfies Assumption 3.2(a) in \cite{chernozhukov2018double} from Assumption~\ref{asp:dml3}, Theorem~\ref{thm:rate,risk}, and Theorem~\ref{thm:risk,lp}, respectively.

For step 1, we verify the Neyman orthogonality. It is easy to see $\E \psi(X,D,Y;\tau,\zeta) = 0$. For any $\tilde{\zeta} \in \cT_n$, the Gateaux derivative in the direction $\tilde{\zeta} - \zeta$ is
\begin{align*}
  & \partial_{\tilde{\zeta}} \E \psi(X,D,Y;\tau,\zeta) [\tilde{\zeta} - \zeta] = \E [\tilde{\mu}_1(X) - \mu_1(X)] - \E [\tilde{\mu}_0(X) - \mu_0(X)]\\
  & - \E [D(\tilde{\mu}_1(X) - \mu_1(X))\rho_1(X )] + \E [(1-D)(\tilde{\mu}_0(X) - \mu_0(X))\rho_0(X)] \\
  & + \E [D(Y-\mu_1(X))(\tilde{\rho}_1(X) - \rho_1(X))] - \E [(1-D)(Y-\mu_0(X))(\tilde{\rho}_0(X)-\rho_0(X))].
\end{align*}
It is easy to check the above quantity is zero, which completes this step.

Step 2 and step 3 can be directly applied.

For step 4, we can establish in the same way that for $\omega \in \{0,1\}$, $\lVert \mu_\omega \rVert_{q_1} = O(1)$ from $\lVert Y \rVert_{q_1} = O(1)$, and $\tau = O(1)$. Then from Hölder's inequality and $\lVert \rho_\omega \rVert_\infty$ is bounded for $\omega \in \{0,1\}$,
\begin{align*}
  & \lVert \psi(X,D,Y;\tau,\tilde{\zeta}) \lVert_q = \lVert \tilde{\mu}_1(X) - \tilde{\mu}_0(X) + D(Y-\tilde{\mu}_1(X))\tilde{\rho}_1(X) - (1-D)(Y-\tilde{\mu}_0(X))\tilde{\rho}_0(X) - \tau \rVert_q\\
  \le & \lVert \tilde{\mu}_1(X) \rVert_q + \lVert \tilde{\mu}_0(X) \rVert_q + \lVert (Y-\tilde{\mu}_1(X))\tilde{\rho}_1(X) \rVert_q + \lVert (Y-\tilde{\mu}_0(X))\tilde{\rho}_0(X) \rVert_q + \tau\\
  \le & \lVert \mu_1 \rVert_q + \lVert \tilde{\mu}_1 - \mu_1 \rVert_\infty + \lVert \mu_0 \rVert_q + \lVert \tilde{\mu}_0 - \mu_0 \rVert_\infty + (\lVert Y \rVert_{q_1} + \lVert \mu_1 \rVert_{q_1} + \lVert \tilde{\mu}_1 - \mu_1 \rVert_\infty) \lVert \tilde{\rho}_1 \rVert_{q_2}\\
  & + (\lVert Y \rVert_{q_1} + \lVert \mu_0 \rVert_{q_1} + \lVert \tilde{\mu}_0 - \mu_0 \rVert_\infty) \lVert \tilde{\rho}_0 \rVert_{q_2} + \tau = O(1).
\end{align*}
The last step is from the definition of $\cT_n$. Then we completes this step.

For step 5, from Hölder's inequality,
\begin{align*}
  & \lVert \psi(X,D,Y;\tau,\tilde{\zeta}) - \psi(X,D,Y;\tau,\zeta) \rVert_2\\
  \le & \lVert \tilde{\mu}_1 - \mu_1 \rVert_2 + \lVert \tilde{\mu}_0- \mu_0 \rVert_2 + \lVert D(Y-\tilde{\mu}_1(X))\tilde{\rho}_1(X) - D(Y-\mu_1(X))\rho_1(X) \rVert_2 \\
  & + \lVert (1-D)(Y-\tilde{\mu}_0(X))\tilde{\rho}_0(X) - (1-D)(Y-\mu_0(X))\rho_0(X) \rVert_2\\
  \le & \lVert \tilde{\mu}_1 - \mu_1 \rVert_2 + \lVert \tilde{\mu}_0- \mu_0 \rVert_2 + \lVert (Y-\mu_1(X)) (\tilde{\rho}_1 - \rho_1) \rVert
  _2 + \lVert (\tilde{\mu}_1 - \mu_1) \tilde{\rho}_1 \rVert_2 \\
  & + \lVert (Y-\mu_0(X)) (\tilde{\rho}_0 - \rho_0) \rVert
  _2 + \lVert (\tilde{\mu}_0 - \mu_0) \tilde{\rho}_0 \rVert_2\\
  \le & \lVert \tilde{\mu}_1 - \mu_1 \rVert_\infty + \lVert \tilde{\mu}_0- \mu_0 \rVert_\infty + (\lVert Y \rVert_{q_1} + \lVert \mu_1(X) \rVert_{q_1}) \lVert \tilde{\rho}_1 - \rho_1 \rVert
  _{q_2} + \lVert \tilde{\mu}_1 - \mu_1 \rVert_\infty \lVert \tilde{\rho}_1 \rVert_2 \\
  & + (\lVert Y \rVert_{q_1} + \lVert \mu_0(X) \rVert_{q_1}) \lVert \tilde{\rho}_0 - \rho_0 \rVert
  _{q_2} + \lVert \tilde{\mu}_0 - \mu_0 \rVert_\infty \lVert \tilde{\rho}_0 \rVert_2 = o(1).
\end{align*}
The last step is due to the definition of $\cT_n$.

Notice that for any $t \in (0,1)$,
\begin{align*}
  &\partial_t^2 \E \psi(X,D,Y;\tau,\zeta + t (\tilde{\zeta} - \zeta)) \\
  = & -2\Big(\E [D(\tilde{\mu}_1(X) - \mu_1(X))(\tilde{\rho}_1(X)-\rho_1(X))] - \E [(1-D)(\tilde{\mu}_0(X) - \mu_0(X))(\tilde{\rho}_0(X)-\rho_0(X))] \Big).
\end{align*}
Then from the definition of $\cT_n$,
\[
  \lvert \partial_t^2 \E \psi(X,D,Y;\tau,\zeta + t (\tilde{\zeta} - \zeta)) \rvert \le 2 [\lVert \tilde{\mu}_1 - \mu_1 \rVert_\infty \lVert \tilde{\rho}_1 - \rho_1 \rVert_1 + \lVert \tilde{\mu}_0 - \mu_0 \rVert_\infty \lVert \tilde{\rho}_0 - \rho_0 \rVert_1] = o(n^{-1/2}).
\]
We then complete this step and thus complete the proof.

\end{proof}

\begin{proof}[Proof of Theorem~\ref{thm:dml}\ref{thm:dml2}, consistency of $\hat{\sigma}^2$]

We decompose $\hat{\sigma}^2$ as
\begin{align*}
  &\hat{\sigma}^2 - \sigma^2 = \frac{1}{n} \sum_{i=1}^n \Big[\hat{\mu}_1(X_i) - \hat{\mu}_0(X_i) + D_i\Big(1 + \frac{K^1_M(i)}{M}\Big)\hat R_i -
   (1-D_i)\Big(1 + \frac{K^0_M(i)}{M}\Big)\hat R_i - \hat{\tau}_M^{\rm bc} \Big]^2\\
   = & \Big\{\frac{1}{n} \sum_{i=1}^n \Big[\hat{\mu}_1(X_i) - \hat{\mu}_0(X_i) + D_i\Big(1 + \frac{K^1_M(i)}{M}\Big)\hat R_i -
   (1-D_i)\Big(1 + \frac{K^0_M(i)}{M}\Big)\hat R_i - \hat{\tau}_M^{\rm bc} \Big]^2\\
   & - \frac{1}{n} \sum_{i=1}^n \Big[\mu_1(X_i) - \mu_0(X_i) + D_i\Big(1 + \frac{K^1_M(i)}{M}\Big)(Y_i - \mu_1(X_i)) -
   (1-D_i)\Big(1 + \frac{K^0_M(i)}{M}\Big)(Y_i - \mu_0(X_i)) - \hat{\tau}_M^{\rm bc} \Big]^2 \Big\}\\
   +& \Big\{ \frac{1}{n} \sum_{i=1}^n \Big[\mu_1(X_i) - \mu_0(X_i) + D_i\Big(1 + \frac{K^1_M(i)}{M}\Big)(Y_i - \mu_1(X_i)) -
   (1-D_i)\Big(1 + \frac{K^0_M(i)}{M}\Big)(Y_i - \mu_0(X_i)) - \hat{\tau}_M^{\rm bc} \Big]^2 \\
   &- \frac{1}{n} \sum_{i=1}^n \Big[\mu_1(X_i) - \mu_0(X_i) + \frac{D_i}{e(X_i)}(Y_i - \mu_1(X_i)) -
   \frac{1-D_i}{1-e(X_i)}(Y_i - \mu_0(X_i)) - \hat{\tau}_M^{\rm bc} \Big]^2 \Big\}\\
   +& \Big\{ \frac{1}{n} \sum_{i=1}^n \Big[\mu_1(X_i) - \mu_0(X_i) + \frac{D_i}{e(X_i)}(Y_i - \mu_1(X_i)) -
   \frac{1-D_i}{1-e(X_i)}(Y_i - \mu_0(X_i)) - \hat{\tau}_M^{\rm bc} \Big]^2 \\
   &- \frac{1}{n} \sum_{i=1}^n \Big[\mu_1(X_i) - \mu_0(X_i) + \frac{D_i}{e(X_i)}(Y_i - \mu_1(X_i)) -
   \frac{1-D_i}{1-e(X_i)}(Y_i - \mu_0(X_i)) - \tau \Big]^2 \Big\}\\
   +& \Big\{ \frac{1}{n} \sum_{i=1}^n \Big[\mu_1(X_i) - \mu_0(X_i) + \frac{D_i}{e(X_i)}(Y_i - \mu_1(X_i)) -
   \frac{1-D_i}{1-e(X_i)}(Y_i - \mu_0(X_i)) - \tau \Big]^2 - \sigma^2 \Big\}\\
   =:& T_1 + T_2 + T_3 + T_4. 
\end{align*}

By Assumption~\ref{asp:dml3'}, Assumption~\ref{asp:dml1}, Theorem~\ref{thm:risk,lp}, as well as  the fact that $\hat{\tau}_M^{\rm bc} = O_\P(1)$, we have $T_1 = o_\P(1)$. By Assumption~\ref{asp:dml1}, Theorem~\ref{thm:risk,lp}, and $\hat{\tau}_M^{\rm bc} = O_\P(1)$, we have $T_2 = o_\P(1)$. By Assumption~\ref{asp:dml1} and $\hat{\tau}_M^{\rm bc} - \tau = o_\P(1)$, we have $T_3 = o_\P(1)$. By the fact that $[(X_i,D_i,Y_i)]_{i=1}^n$ are i.i.d., Assumption~\ref{asp:dml1} and the weak law of large numbers, we have $T_4 = o_\P(1)$. Combining the above four facts together then completes the proof.
\end{proof}

\subsubsection{Proof of Theorem~\ref{thm:mbc}}

\begin{proof}[Proof of Theorem~\ref{thm:mbc}]

For $\omega\in\{0,1\}$, let $j^\omega_M(i)$ represent the index of $M$th-NN of $X_i$ in $\{X_j:D_j=\omega\}_{j=1}^n$, i.e., the index $j \in \zahl{n}$ such that $D_j=\omega$ and 
\[
    \sum_{\ell=1, D_\ell=\omega}^n \ind\Big(\lVert X_\ell -X_i \rVert \le \lVert X_j - X_i \rVert\Big) = M.
\]

With a little abuse of notation, let $\epsilon_i = Y_i - \mu_{D_i}(X_i)$ for any $i \in \zahl{n}$. Checking the definition of $\hat{\tau}_M^{\rm bc}$ in \eqref{eq:mbc}, we decompose $\hat{\tau}_M^{\rm bc}$ to be
\begin{align*}
  \hat{\tau}_M^{\rm bc} =& \hat{\tau}^{\rm reg} + \frac{1}{n} \Big[ \sum_{i=1,D_i = 1}^n \Big(1 + \frac{K^1_M(i)}{M}\Big) \hat{R}_i - \sum_{i=1,D_i = 0}^n \Big(1 + \frac{K^0_M(i)}{M}\Big) \hat{R}_i \Big]\\
  = & \frac{1}{n} \sum_{i=1}^n \Big[\hat{\mu}_1(X_i) - \hat{\mu}_0(X_i)\Big] + \frac{1}{n} \sum_{i=1}^n (2D_i-1) \Big(1 + \frac{K^{D_i}_M(i)}{M}\Big) \Big(Y_i - \hat{\mu}_{D_i}(X_i)\Big)\\
  = & \frac{1}{n} \sum_{i=1}^n \Big[\mu_1(X_i) - \mu_0(X_i)\Big] + \frac{1}{n} \sum_{i=1}^n (2D_i-1) \Big(1 + \frac{K^{D_i}_M(i)}{M}\Big) \epsilon_i \\
  & + \frac{1}{n}\sum_{i=1}^n (2D_i-1) \Big[\frac{1}{M}\sum_{m=1}^M \Big(\mu_{1-D_i}(X_i)-\mu_{1-D_i}(X_{j^{1-D_i}_m(i)})\Big)\Big]\\
  & - \frac{1}{n}\sum_{i=1}^n (2D_i-1) \Big[\frac{1}{M}\sum_{m=1}^M \Big(\hat{\mu}_{1-D_i}(X_i)-\hat{\mu}_{1-D_i}(X_{j^{1-D_i}_m(i)})\Big)\Big]\\
  :=& \bar{\tau}(\mX) + E_M + B_M - \hat{B}_M.
\end{align*}

We have the following central limit theorem on $\bar{\tau}(\mX) + E_M$.

\begin{lemma}\label{lemma:mbc,clt}
  \begin{align*}
    \sqrt{n} \sigma^{-1/2} \Big(\bar{\tau}(\mX) + E_M - \tau \Big)\stackrel{\sf d}{\longrightarrow} N\Big(0,1\Big).
  \end{align*}
\end{lemma}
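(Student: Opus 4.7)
The plan is to establish the CLT by conditioning on the design $\mathcal{F}:=\sigma(\mX,\mD)$, under which $\sqrt{n}(\bar{\tau}(\mX)-\tau)$ is deterministic while $\sqrt{n}E_M$ becomes a sum of conditionally independent mean-zero terms. Writing $\xi_i:=\mu_1(X_i)-\mu_0(X_i)-\tau$, $W_i:=(2D_i-1)(1+K_M^{D_i}(i)/M)$, and $\epsilon_i:=Y_i-\mu_{D_i}(X_i)$, we obtain the decomposition
\[
\sqrt{n}\big(\bar{\tau}(\mX)+E_M-\tau\big)=\frac{1}{\sqrt{n}}\sum_{i=1}^n\xi_i+\frac{1}{\sqrt{n}}\sum_{i=1}^n W_i\epsilon_i.
\]
The unconfoundedness in Assumption~\ref{asp:dml1}\ref{asp:dml1-1} gives $\E[D\epsilon\given X]=\E[(1-D)\epsilon\given X]=0$, which will make the cross-covariance between the two pieces vanish in the limit and yield $\sigma^2=\sigma_\xi^2+V_\infty$ with $\sigma_\xi^2:=\Var(\mu_1(X)-\mu_0(X))$ and $V_\infty:=\E[\sigma_1^2(X)/e(X)+\sigma_0^2(X)/(1-e(X))]$, where $\sigma_\omega^2(x):=\E[U_\omega^2\given X=x]$.

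First I would verify a conditional Lindeberg--Feller CLT for $n^{-1/2}\sum_i W_i\epsilon_i$ given $\mathcal{F}$. The conditional variance is $V_n:=n^{-1}\sum_i W_i^2\sigma_{D_i}^2(X_i)$, which is bounded in expectation by Assumption~\ref{asp:dml1}. The Lyapunov condition of order $2+\kappa$ reduces to an asymptotic bound on $n^{-1}\sum_i\lvert W_i\rvert^{2+\kappa}$, which follows from the moment estimates on catchment areas in Lemma~\ref{lemma:moment,catch} combined with the density-ratio--to--propensity-score conversion described in Section~\ref{sec:matching}; overlap in Assumption~\ref{asp:dml1}\ref{asp:dml1-1} guarantees the limit $\E[e(X)^{-(2+\kappa)}+(1-e(X))^{-(2+\kappa)}]$ is finite.

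The main obstacle is showing $V_n\stackrel{\sf p}{\longrightarrow}V_\infty$. Splitting by treatment group and focusing on the $D_i=1$ piece $n^{-1}\sum_{i:D_i=1}(1+K_M^1(i)/M)^2\sigma_1^2(X_i)$, I would factor
\[
\Big(1+\frac{K_M^1(i)}{M}\Big)^2-\frac{1}{e^2(X_i)}=\Big(1+\frac{K_M^1(i)}{M}-\frac{1}{e(X_i)}\Big)\Big(1+\frac{K_M^1(i)}{M}+\frac{1}{e(X_i)}\Big)
\]
and invoke Cauchy--Schwarz: the first factor is killed in $L_2$ by Theorem~\ref{thm:risk,lp} and Corollary~\ref{crl:moment,catch,random} (after transferring the density-ratio result to the propensity score as in Section~\ref{sec:matching} using $n_\omega/n\to\P(D=\omega)$ a.s.), while the second factor is controlled by overlap and the moment bounds on $K_M^1(i)/M$ already invoked above. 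A routine weak law of large numbers applied to $n^{-1}\sum_{i:D_i=1}\sigma_1^2(X_i)/e^2(X_i)$ and its control-group analog then produces the two summands of $V_\infty$.

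Finally I would assemble the pieces via characteristic functions. Since $n^{-1/2}\sum_i\xi_i$ is $\mathcal{F}$-measurable and converges to $N(0,\sigma_\xi^2)$ by the classical i.i.d.\ CLT, while $\E[\exp(\sfi t\sqrt n E_M)\given\mathcal{F}]\to\exp(-t^2 V_\infty/2)$ in probability from the preceding two steps, applying tower-property followed by bounded convergence to
\[
\E\big[\exp\big(\sfi t\sqrt{n}(\bar{\tau}(\mX)+E_M-\tau)\big)\big]=\E\Big[\exp\big(\sfi t\sqrt{n}(\bar{\tau}(\mX)-\tau)\big)\,\E\big[\exp(\sfi t\sqrt n E_M)\bigm|\mathcal{F}\big]\Big]
\]
delivers convergence of the joint characteristic function to $\exp(-t^2(\sigma_\xi^2+V_\infty)/2)=\exp(-t^2\sigma^2/2)$, completing the proof.
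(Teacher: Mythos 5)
Your proposal is correct and follows essentially the same route as the paper: decompose $\sqrt{n}(\bar\tau(\mX)+E_M-\tau)$ into the i.i.d.\ piece $n^{-1/2}\sum_i\xi_i$ (standard CLT) plus the residual piece $n^{-1/2}\sum_i W_i\epsilon_i$ handled by a conditional Lindeberg--Feller CLT given $(\mX,\mD)$, prove the conditional variance converges in probability to $V_\infty=\E[\sigma_1^2(X)/e(X)+\sigma_0^2(X)/(1-e(X))]$ via Cauchy--Schwarz together with Theorem~\ref{thm:risk,lp}, and combine. Your Lyapunov condition of order $2+\kappa$ and your explicit characteristic-function/tower-property assembly are interchangeable with the paper's H\"older--Markov Lindeberg verification and its appeal to the asymptotic-independence argument of \cite{abadie2006large}, so the proofs differ only in these cosmetic implementation details.
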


For the bias term $B_M-\hat B_M$, define
\[
U_{m,i}:= X_{j^{1-D_i}_m(i)} - X_i ~~\text{ for any }i \in \zahl{n} \text{ and } m \in \zahl{M}.
\]
We then have the following lemma bounding the moments of $U_{M,i}$.

\begin{lemma}\label{lemma:mbc,dist}
  Let $p$ be any positive integer. Then there exists a constant $C_p>0$ only depending on $p$ such that for any $i \in \zahl{n}$,
  \begin{align*}
    \E \Big[\Big\lVert U_{M,i} \Big\rVert^p \Biggiven \mD\Big] \le C_p \Big(\frac{M}{n_{1-D_i}}\Big)^{p/d}.
  \end{align*}
\end{lemma}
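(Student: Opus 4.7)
The plan is to represent the $p$-th moment as a tail integral and control the tail via a binomial lower-tail estimate. Conditional on $\mD$ and $X_i$, the $n_{1-D_i}$ opposite-group covariates $\{X_j : D_j = 1-D_i\}$ are i.i.d.\ draws from $\P_{X\mid D=1-D_i}$, so for each $r > 0$ the count of such covariates inside the closed ball $B_{X_i,r}$ has distribution $\mathrm{Bin}(n_{1-D_i}, q_r)$, where $q_r := \P_{X\mid D=1-D_i}(B_{X_i,r})$. Hence $\P(\|U_{M,i}\| > r \mid \mD, X_i) = \P(\mathrm{Bin}(n_{1-D_i}, q_r) \le M - 1)$, and by the standard tail-integral identity for nonnegative random variables, $\E[\|U_{M,i}\|^p \mid \mD, X_i] = \int_0^\infty p\, r^{p-1}\, \P(\mathrm{Bin}(n_{1-D_i}, q_r) \le M - 1)\, dr$.

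The second step is to obtain a uniform lower bound of the form $q_r \ge c_0\, r^d$ for $r \le R$ (and $q_r = 1$ for $r > R$). By the strong ignorability condition in Assumption~\ref{asp:dml1}\ref{asp:dml1-1}, the conditional distributions $\P_{X\mid D=0}$ and $\P_{X\mid D=1}$ share the same support and have densities that are comparable up to the overlap constant; in particular $X_i$ lies in the support of the opposite-group distribution. Whichever of the two orderings satisfies Assumption~\ref{asp:risk}, one then concludes that the opposite-group density is bounded below by some $f_L' > 0$ on a common support $S$ of diameter at most $R$, and that the angle condition $\lambda(B_{X_i,\delta}\cap S) \ge a\,\lambda(B_{X_i,\delta})$ holds for all $\delta \in (0, R]$. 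This yields $q_r \ge a\, f_L'\, V_d\, r^d$ on $r \le R$, where $V_d$ denotes the volume of the Euclidean unit ball in $\bR^d$.

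The third step is a standard Chernoff-based tail computation. Changing variables via $t = r/(M/n_{1-D_i})^{1/d}$, on $r \le R$ the expected count satisfies $n_{1-D_i}\, q_r \ge a\, f_L'\, V_d\, M\, t^d$; taking $t_0 := (2/(a\, f_L'\, V_d))^{1/d}$, the multiplicative Chernoff bound gives $\P(\mathrm{Bin}(n_{1-D_i}, q_r) \le M-1) \le e^{-c\, M\, t^d}$ for $t \ge t_0$ and some absolute $c > 0$, while for $r > R$ the tail is zero. Splitting the transformed integral at $t_0$---using the trivial bound $1$ on $[0, t_0]$ (contributing $t_0^p$) and the exponential decay on $[t_0, \infty)$ (whose integral is bounded uniformly in $M \ge 1$ by a constant depending only on $p, d, c$)---we arrive at $\E[\|U_{M,i}\|^p \mid \mD, X_i] \le C_p\, (M/n_{1-D_i})^{p/d}$, with $C_p$ absorbing the constants $d, a, f_L', R$. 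Since this bound is uniform in $X_i$, taking the outer expectation over $X_i$ delivers the claim.

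The main subtlety lies in the degenerate regime where $M/n_{1-D_i}$ is not small, so that $t_0 (M/n_{1-D_i})^{1/d}$ may exceed $R$ and the Chernoff branch of the argument is vacuous. There the deterministic bound $\|U_{M,i}\| \le R$ (all matched points living in a support of diameter $\le R$) combined with $R \lesssim (M/n_{1-D_i})^{1/d}$ in that regime rescues the claim after absorbing a constant into $C_p$. Beyond this, the role-reversal bookkeeping around Assumption~\ref{asp:risk}---ensuring the density lower bound and angle condition apply to the opposite-group distribution no matter which of $\P_{X\mid D=0}$ or $\P_{X\mid D=1}$ is the ``$\nu_0$'' in the assumption---reduces to invoking overlap from Assumption~\ref{asp:dml1}\ref{asp:dml1-1}, so I do not anticipate any genuine difficulty beyond careful accounting of constants.
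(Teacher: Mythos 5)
Your proposal is correct and follows essentially the same route as the paper's proof: represent the $p$-th moment as a tail integral, dominate the binomial tail using the Chernoff bound after establishing the lower bound $q_r \ge a f_L' V_d\, r^d$ from Assumption~\ref{asp:risk} (with the overlap/strong-ignorability condition transferring the density lower bound to the opposite group), and then rescale by $(M/n_{1-D_i})^{1/d}$ and split the integral. The only difference is in how the tail integral is finished: the paper uses the exact Chernoff form $\exp\bigl(M - c_0 u^d + M\log(c_0 u^d/M)\bigr)$, substitutes $t = c_0 u^d$, and evaluates via a Gamma-function identity and Stirling (which requires $M\to\infty$, a harmless hypothesis in context), whereas you use a multiplicative Chernoff bound $e^{-cMt^d}$ and observe that the resulting integral is monotone decreasing in $M$, so it is bounded uniformly by its value at $M=1$; your version is slightly cleaner and removes the implicit reliance on $M\to\infty$, but the content is the same.
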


In light of the smoothness conditions on $\mu_\omega$ and approximation conditions on $\hat{\mu}_\omega$ for $\omega \in \{0,1\}$, we can establish the following lemma using Lemma~\ref{lemma:mbc,dist}.

\begin{lemma}\label{lemma:mbc,bias}
  \begin{align*}
    \sqrt{n} \Big(B_M - \hat{B}_M \Big)\stackrel{\sf p}{\longrightarrow} 0.
  \end{align*}
\end{lemma}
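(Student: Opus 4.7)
The strategy is to Taylor-expand $g_\omega := \mu_\omega - \hat\mu_\omega$ around $X_i$ to order $\lfloor d/2\rfloor$, bound each polynomial piece via the derivative rates of Assumption~\ref{asp:mbc2} and the moment bound of Lemma~\ref{lemma:mbc,dist}, reassemble, and invoke Markov's inequality. Concretely, the integral form of Taylor's theorem gives, for each $i\in\zahl n$ and $m\in\zahl M$,
\[
g_{1-D_i}(X_i)-g_{1-D_i}(X_{j^{1-D_i}_m(i)})
= -\sum_{\ell=1}^{\lfloor d/2\rfloor}\frac{1}{\ell!}\sum_{t\in\Lambda_\ell}\binom{\ell}{t}\partial^t g_{1-D_i}(X_i)\,U_{m,i}^t - R_{m,i},
\]
with $\lvert R_{m,i}\rvert\le C\max_{t\in\Lambda_{\lfloor d/2\rfloor+1}}\lVert\partial^t g_{1-D_i}\rVert_\infty\cdot \lVert U_{m,i}\rVert^{\lfloor d/2\rfloor+1}$.

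I would then restrict to the high-probability event on which the $O_\P$ rates of Assumption~\ref{asp:mbc2} realize as deterministic bounds---$\max_{t\in\Lambda_\ell}\lVert\partial^t g_\omega\rVert_\infty \le Cn^{-\gamma_\ell}$ for $\ell\in\zahl{\lfloor d/2\rfloor}$ and $\max_{t\in\Lambda_{\lfloor d/2\rfloor+1}}\lVert\partial^t g_\omega\rVert_\infty \le C$---and additionally on the (asymptotically almost sure) event $n_\omega\asymp n$ guaranteed by the overlap condition in Assumption~\ref{asp:dml1}\ref{asp:dml1-1}. Conditional on $\mD$, Lemma~\ref{lemma:mbc,dist} yields $\E[\lVert U_{m,i}\rVert^p\mid\mD]\lesssim (M/n)^{p/d}$. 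Combined with the pointwise derivative bound, the $\ell$-th order summand has conditional $L_1$ norm $\lesssim n^{-\gamma_\ell}(M/n)^{\ell/d}$ and the remainder has conditional $L_1$ norm $\lesssim (M/n)^{(\lfloor d/2\rfloor+1)/d}$. Multiplying by $\sqrt n$ turns these into $n^{1/2-\gamma_\ell}(M/n)^{\ell/d}$ and $n^{1/2}(M/n)^{(\lfloor d/2\rfloor+1)/d}$, and the definition of $\gamma$ is designed precisely so that $M\lesssim n^\gamma$ gives $M/n \lesssim n^{-(1/2-\gamma_\ell+\varepsilon)(d/\ell)}\wedge n^{-(1/2+\varepsilon)d/(\lfloor d/2\rfloor+1)}$; both quantities are then $O(n^{-\varepsilon})$, with the $\varepsilon$-buffer essential for strict (rather than borderline) decay. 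Summing the $\lfloor d/2\rfloor+1$ contributions, invoking Markov's inequality, and a union bound to dispose of the exceptional set, delivers $\sqrt n(B_M-\hat B_M)=o_\P(1)$.

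The principal obstacle is the bookkeeping around the data-dependence of $\hat\mu_\omega$: Assumption~\ref{asp:mbc2} supplies only $O_\P$ rates, while Lemma~\ref{lemma:mbc,dist} is stated conditionally on $\mD$, and the two random objects are intertwined through the data. The conditioning-on-the-good-event strategy above sidesteps this by replacing the $O_\P$ bounds with deterministic $L_\infty$ bounds on the restricted event, but requires a careful tower-property argument to interleave the outer expectation with the $\mD$-conditional one. Once this is in hand, every remaining step---the Taylor expansion, the moment bound, and the arithmetic of matching exponents to the definition of $\gamma$---is essentially routine.
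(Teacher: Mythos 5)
Your proposal is correct and reaches the conclusion by essentially the same route as the paper: Taylor-expand to order $k=\lfloor d/2\rfloor+1$, invoke Assumptions~\ref{asp:mbc1}--\ref{asp:mbc2} for the derivative controls, invoke Lemma~\ref{lemma:mbc,dist} for moments of $\lVert U_{m,i}\rVert$, and match exponents against the definition of $\gamma$, with the $\varepsilon$-buffer supplying the strict decay. The one place you deviate is in how the average over $i,m$ is controlled and how the data-dependence of $\hat\mu_\omega$ is disentangled from the $U_{m,i}$. The paper takes a cruder but shorter route: it bounds $\lvert B_M-\hat B_M\rvert$ by $\max_{i,m,\omega}\lvert\cdot\rvert$, then shows $\max_{i}\lVert U_{M,i}\rVert^p=o_\P\big(n^{\varepsilon'}(M/n)^{p/d}\big)$ for any $\varepsilon'>0$ by a union bound over $i$ using high conditional moments from Lemma~\ref{lemma:mbc,dist}; this costs an extra $n^{\varepsilon'}$, absorbed by choosing $\varepsilon'$ smaller than the $\varepsilon$ in Assumption~\ref{asp:mbc2}. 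Because the derivative rates and the $\max_i$ bound are both stated in probability, the paper multiplies an $O_\P(n^{-\gamma_\ell})$ by an $o_\P(\cdot)$ and never needs any decoupling argument. You instead retain the average, restrict to a good event on which the $O_\P$ rates hold deterministically in $L_\infty$, and apply an $L_1$/Markov argument conditionally on $\mD$; this avoids the union-bound loss entirely (so you don't actually need a separate free $\varepsilon'$), at the price of the tower-property bookkeeping you correctly flag. Both approaches are valid and differ only in this technical packaging; the arithmetic of exponents and the role of $\gamma$ are identical.
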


Combining Lemma~\ref{lemma:mbc,clt} and Lemma~\ref{lemma:mbc,bias} completes the proof. The proof of the consistency of the variance estimator is the same as Theorem~\ref{thm:dml}\ref{thm:dml2} since only Assumption~\ref{asp:dml1} and Assumption~\ref{asp:dml3'} are needed.
\end{proof}

\subsubsection{Proof of Lemma~\ref{lemma:mbc,clt}}

\begin{proof}[Proof of Lemma~\ref{lemma:mbc,clt}]
For any $x \in \bX$, define $\sigma_\omega^2(x) = \E [[Y(\omega) - \mu_\omega(X)]^2 \given X=x ]$ for $\omega \in \{0,1\}$. Let
\[
  V^\tau := \E \Big[ \mu_1(X) - \mu_0(X) - \tau \Big]^2~~~{\rm and}~~~ V^E := \frac{1}{n} \sum_{i=1}^n \Big(1+\frac{K^{D_i}_M(i)}{M}\Big)^2 \sigma_{D_i}^2(X_i).
\]

From the standard central limit theorem, we have
\begin{align}\label{eq:mbc,clt1}
  \sqrt{n} \Big(\bar{\tau}(\mX) - \tau\Big) \stackrel{\sf d}{\longrightarrow} N\Big(0,V^\tau\Big).
\end{align}

Let $E_{M,i} = (2D_i-1) (1 + K^{D_i}_M(i)/M) \epsilon_i$ for any $i \in \zahl{n}$. Conditional on $\mX,\mD$, $[E_{M,i}]_{i=1}^n$ are independent. Notice that $\E [E_{M,i} \given \mX, \mD] = 0$ and $\sum_{i=1}^n \Var[E_{M,i} \given \mX, \mD] = n V^E$. To apply the Lindeberg-Feller central limit theorem \cite[Theorem 27.2]{MR1324786}, it suffices to verify that: for a given $(\mX, \mD)$,
\[
    \frac{1}{nV^E} \sum_{i=1}^n \E\Big[\Big(E_{M,i}\Big)^2 \ind\Big(\lvert E_{M,i} \rvert > \delta \sqrt{nV^E}\Big) \Biggiven \mX, \mD \Big] \to 0,
\]
for all $\delta>0$.

Let $C_\sigma := \sup_{x \in \bX, \omega \in \{0,1\}} \{\E [\lvert U_\omega \rvert^{2+\kappa} \given X=x] \vee \E [U^2_\omega \given X=x]\} < \infty$. Let $p_1 := 1+\kappa/2$ and $p_2$ be the constant such that $p_1^{-1} + p_2^{-1} = 1$. From Hölder's inequality and Markov's inequality,
\begin{align*}
  & \frac{1}{nV^E} \sum_{i=1}^n \E\Big[\Big(E_{M,i}\Big)^2 \ind\Big(\lvert E_{M,i} \rvert > \delta \sqrt{nV^E}\Big) \Biggiven \mX, \mD \Big]\\
  \le& \frac{1}{nV^E} \sum_{i=1}^n \Big(\E\Big[\Big\lvert E_{M,i}\Big\rvert ^{2+\kappa} \Biggiven \mX, \mD \Big] \Big)^{1/p_1} \Big(\P\Big(\lvert E_{M,i} \rvert > \delta \sqrt{nV^E} \Biggiven \mX, \mD \Big)\Big)^{1/p_2}\\
  \le & \frac{1}{nV^E} \sum_{i=1}^n \Big(\E\Big[\Big\lvert E_{M,i}\Big\rvert ^{2+\kappa} \Biggiven \mX, \mD \Big] \Big)^{1/p_1} \Big(\frac{1}{\delta^2 nV^E} \E\Big[\Big(E_{M,i}\Big)^2 \Biggiven \mX, \mD \Big]  \Big)^{1/p_2}\\
  \le & \frac{C_\sigma}{nV^E} \Big(\frac{1}{\delta^2 nV^E} \Big)^{1/p_2}  \sum_{i=1}^n \Big(1 + \frac{K^{D_i}_M(i)}{M}\Big)^{2(1+1/p_2)}.
\end{align*}
Notice that $\E [1 + K^{D_i}_M(i)/M]^{2(1+1/p_2)} < \infty$ from Theorem~\ref{thm:risk,lp}. Let $c_\sigma = \inf_{x \in \bX, \omega \in \{0,1\}} \E [U^2_\omega \given X=x] >0$. From the definition of $V^E$, we have $V^E \ge c_\sigma$ for almost all $\mX, \mD$. Then
\[
  \E \Big[\frac{1}{nV^E} \sum_{i=1}^n \E\Big[\Big(E_{M,i}\Big)^2 \ind\Big(\lvert E_{M,i} \rvert > \delta \sqrt{nV^E}\Big) \Biggiven \mX, \mD \Big] \Big] = O(n^{-1/p_2}) = o(1).
\]
We thus obtain
\[
  \frac{1}{nV^E} \sum_{i=1}^n \E\Big[\Big(E_{M,i}\Big)^2 \ind\Big(\lvert E_{M,i} \rvert > \delta \sqrt{nV^E}\Big) \Biggiven \mX, \mD \Big] = o_\P(1).
\]
Employing the Lindeberg-Feller central limit theorem then yields
\begin{align}\label{eq:mbc,clt2}
  \sqrt{n} (V^E)^{-1/2} E_M = \Big(nV^E\Big)^{-1/2} \sum_{i=1}^n E_{M,i} \stackrel{\sf d}{\longrightarrow} N\Big(0,1\Big).
\end{align}

Noticing that $\sqrt{n} \Big(\bar{\tau}(\mX) - \tau\Big)$ and $\sqrt{n} (V^E)^{-1/2} E_M$ are asymptotically independent, leveraging the same argument as made in \citet[Proof of Theorem 4, Page 267]{abadie2006large} and then combining \eqref{eq:mbc,clt1} and \eqref{eq:mbc,clt2} reaches
\begin{align}\label{eq:mbc,clt3}
  \sqrt{n} \Big(V^\tau + V^E\Big)^{-1/2} \Big(\bar{\tau}(\mX) + E_M - \tau \Big)\stackrel{\sf d}{\longrightarrow} N\Big(0,1\Big).
\end{align}

We decompose $V^E$ as:
\begin{align*}
   V^E =& \frac{1}{n} \sum_{i=1, D_i = 1}^n \Big(1+\frac{K^1_M(i)}{M}\Big)^2 \sigma_1^2(X_i) + \frac{1}{n} \sum_{i=1, D_i = 0}^n \Big(1+\frac{K^0_M(i)}{M}\Big)^2 \sigma_0^2(X_i)\\
  =& \Big[\frac{1}{n} \sum_{i=1, D_i = 1}^n \Big(\frac{1}{e(X_i)}\Big)^2 \sigma_1^2(X_i) + \frac{1}{n} \sum_{i=1, D_i = 0}^n \Big(\frac{1}{1-e(X_i)}\Big)^2 \sigma_0^2(X_i) \Big]\\
  &+ \frac{1}{n} \sum_{i=1, D_i = 1}^n \Big[\Big(1+\frac{K^1_M(i)}{M}\Big)^2 - \Big(\frac{1}{e(X_i)}\Big)^2\Big] \sigma_1^2(X_i) \\
  &+ \frac{1}{n} \sum_{i=1, D_i = 0}^n \Big[\Big(\frac{1}{1-e(X_i)}\Big)^2 - \Big(1+\frac{K^0_M(i)}{M}\Big)^2 \Big] \sigma_0^2(X_i).
  \yestag\label{eq:mbc,clt4}
\end{align*}

For the first term in \eqref{eq:mbc,clt4}, notice that $[(X_i,D_i,Y_i)]_{i=1}^n$ are i.i.d. and $\E[D_i (e(X_i))^{-2} \sigma_1^2(X_i)], \E[(1-D_i) (1-e(X_i))^{-2} \sigma_0^2(X_i)] < \infty$. Using the weak law of large numbers \cite[Theorem 2.2.14]{MR3930614}, we have
\begin{align*}
  \frac{1}{n} \sum_{i=1, D_i = 1}^n \Big(\frac{1}{e(X_i)}\Big)^2 \sigma_1^2(X_i) + \frac{1}{n} \sum_{i=1, D_i = 0}^n \Big(\frac{1}{1-e(X_i)}\Big)^2 \sigma_0^2(X_i) \stackrel{\sf p}{\longrightarrow} \E \Big[\frac{\sigma_1^2(X)}{e(X)} + \frac{\sigma_0^2(X)}{1-e(X)}\Big].
\end{align*}

For the second term in \eqref{eq:mbc,clt4}, using the Cauchy–Schwarz inequality,
\begin{align*}
  & \E \Big\lvert\frac{1}{n} \sum_{i=1, D_i = 1}^n \Big[\Big(1+\frac{K^1_M(i)}{M}\Big)^2 - \Big(\frac{1}{e(X_i)}\Big)^2\Big] \sigma_1^2(X_i) \Big\rvert\\
  \le & C_\sigma \E \Big[D_i\Big\lvert\Big(1+\frac{K^1_M(i)}{M}\Big)^2 - \Big(\frac{1}{e(X_i)}\Big)^2\Big\rvert \Big] = C_\sigma \E \Big[D_i \E \Big[\Big\lvert\Big(1+\frac{K^1_M(i)}{M}\Big)^2 - \Big(\frac{1}{e(X_i)}\Big)^2 \Big\rvert \Biggiven \mD \Big] \Big]\\
  \le & C_\sigma \E \Big[D_i \Big(\E \Big[\Big(\frac{K^1_M(i)}{M} - \frac{1-e(X_i)}{e(X_i)}\Big)^2 \Biggiven \mD \Big] \E \Big[\Big(2 + \frac{K^1_M(i)}{M} + \frac{1-e(X_i)}{e(X_i)}\Big)^2 \Biggiven \mD \Big] \Big)^{1/2}  \Big] = o(1),
\end{align*}
where the last step is due to Theorem~\ref{thm:risk,lp}. Then we obtain
\begin{align*}
  \frac{1}{n} \sum_{i=1, D_i = 1}^n \Big[\Big(1+\frac{K^1_M(i)}{M}\Big)^2 - \Big(\frac{1}{e(X_i)}\Big)^2\Big] \sigma_1^2(X_i) \stackrel{\sf p}{\longrightarrow} 0.
\end{align*}

For the third term in \eqref{eq:mbc,clt4}, we can establish in the same way that
\begin{align*}
  \frac{1}{n} \sum_{i=1, D_i = 0}^n \Big[\Big(\frac{1}{1-e(X_i)}\Big)^2 - \Big(1+\frac{K^0_M(i)}{M}\Big)^2 \Big] \sigma_0^2(X_i) \stackrel{\sf p}{\longrightarrow} 0.
\end{align*}

Then from \eqref{eq:mbc,clt4},
\begin{align*}
  V^E \stackrel{\sf p}{\longrightarrow} \E \Big[\frac{\sigma_1^2(X)}{e(X)} + \frac{\sigma_0^2(X)}{1-e(X)}\Big].
\end{align*}

Using Slutsky's lemma \cite[Theorem 2.8]{MR1652247} and then the definition of $\sigma^2$, we complete the proof by \eqref{eq:mbc,clt3}.
\end{proof}

\subsubsection{Proof of Lemma~\ref{lemma:mbc,dist}}

\begin{proof}[Proof of Lemma~\ref{lemma:mbc,dist}]

From Assumption~\ref{asp:risk} and Assumption~\ref{asp:dml1}, let $R := {\rm diam} (\bX) < \infty$ and $f_L := \inf_{x \in \bX, \omega \in \{0,1\}} f_\omega(x) > 0$. For any $x \in \bX$, $\omega \in \{0,1\}$ and $u \le R$, from Assumption~\ref{asp:risk},
\[
  \nu_\omega\big(B_{x,u} \cap \bX \big) \ge f_L \lambda\big(B_{x,u} \cap \bX \big) \ge f_L a \lambda\big(B_{x,u} \big) = f_L a V_d u^d.
\]
Let $c_0 = f_L a V_d$. For any $i \in \zahl{n}$, $x \in \mathbb{X}$, $M \le n_{1-D_i}$, if $0 \le u \le R n_{1-D_i}^{1/d}$, we have
\begin{align*}
    & \P\Big(\Big\lVert X_j - X_i \Big\rVert \ge un_{1-D_i}^{-1/d} \Biggiven \mD , X_i = x, j = j_M^{1-D_i}(i)\Big)\\
    \le & \P\Big({\rm Bin}\Big(n_{1-D_i}, \nu_{1-D_i}\big( B_{x, un_{1-D_i}^{-1/d}}\cap \bX \big)\Big) \le M \Biggiven \mD \Big)\\
    \le & \P\Big({\rm Bin}\Big(n_{1-D_i}, c_0 \big(un_{1-D_i}^{-1/d}\big)^d \Big) \le M \Biggiven \mD  \Big)\\
    = & \P\Big({\rm Bin}\Big(n_{1-D_i}, c_0 u^d n_{1-D_i}^{-1} \Big) \le M \Biggiven \mD \Big).
\end{align*}
Using the Chernoff bound, if $M < c_0u^d$, then 
\[
  \P\Big({\rm Bin}\Big(n_{1-D_i}, c_0 u^d n_{1-D_i}^{-1} \Big) \le M \Biggiven \mD \Big) \le \exp\Big(M - c_0 u^d + M \log\Big(\frac{c_0 u^d}{M}\Big)\Big).
\]
Notice that the above upper bound does not depend on $x$. We then obtain
\begin{align*}
  & \P\Big(\Big\lVert X_j - X_i \Big\rVert \ge un_{1-D_i}^{-1/d} \Biggiven \mD, j = j_M^{1-D_i}(i)\Big) \\
  \le & \ind\Big(M < c_0 u^d \Big) \exp\Big(M - c_0 u^d + M \log\Big(\frac{c_0 u^d}{M}\Big)\Big) + \ind\Big(M \ge c_0 u^d \Big).
\end{align*}
On the other hand, if $u > R n_{1-D_i}^{1/d}$, then the probability is zero from the definition of $R$. Accordingly, the above bound holds for any $u\ge 0$.

For any $i \in \zahl{n}$, we thus have
\begin{align*}
  & n_{1-D_i}^{p/d} \E \Big[\Big\lVert U_{M,i} \Big\rVert^p \Biggiven \mD\Big] = p\int_{0}^\infty \P\Big(\Big\lVert X_j - X_i \Big\rVert \ge un_{1-D_i}^{-1/d} \Biggiven \mD, j = j_M^{1-D_i}(i)\Big) u^{p-1} \d u\\
  \le & p\int_{0}^\infty \Big[\ind\Big(M < c_0 u^d \Big) \exp\Big(M - c_0 u^d + M \log\Big(\frac{c_0 u^d}{M}\Big)\Big) + \ind\Big(M \ge c_0 u^d \Big) \Big] u^{p-1} \d u\\
  = & p c_0^{-p/d} d^{-1} \Big[\int_{M}^\infty \Big(\frac{e}{M}\Big)^M t^{M+\frac{p}{d}-1} e^{-t} \d t + \int_{0}^M  t^{\frac{p}{d}-1} \d t\Big],
  \yestag\label{eq:mbc,dist1}
\end{align*}
where the last step is through taking $t = c_0 u^d$.

For the first term in \eqref{eq:mbc,dist1}, from Stirling's formula and $M \to \infty$,
\begin{align*}
  & \int_{M}^\infty \Big(\frac{e}{M}\Big)^M t^{M+\frac{p}{d}-1} e^{-t} \d t \le \int_0^\infty \Big(\frac{e}{M}\Big)^M t^{M+\frac{p}{d}-1} e^{-t} \d t = \Big(\frac{e}{M}\Big)^M \Gamma\Big(M+\frac{p}{d}\Big)\\
  \sim & \Big(\frac{e}{M}\Big)^M (M+1)^{\frac{p}{d}-1} \Gamma(M+1) \sim \Big(\frac{e}{M}\Big)^M (M+1)^{\frac{p}{d}-1} \sqrt{2\pi M}\Big(\frac{M}{e}\Big)^M \sim \sqrt{2\pi} M^{\frac{p}{d} - \frac{1}{2}},
\end{align*}
where $\sim$ means asymptotic convergence. 

For the second term in \eqref{eq:mbc,dist1},
\[
  \int_{0}^M  t^{\frac{p}{d}-1} \d t = \frac{d}{p} M^{\frac{p}{d}}.
\]

Combining the above two bounds then complete the proof.
\end{proof}

\subsubsection{Proof of Lemma~\ref{lemma:mbc,bias}}

\begin{proof}[Proof of Lemma~\ref{lemma:mbc,bias}]

We decompose $B_M - \hat{B}_M$ as
\begin{align*}
  & \lvert B_M - \hat{B}_M \rvert \\
  =& \Big\lvert \frac{1}{n}\sum_{i=1}^n (2D_i-1) \Big[\frac{1}{M}\sum_{m=1}^M \Big(\mu_{1-D_i}(X_i)-\mu_{1-D_i}(X_{j^{1-D_i}_m(i)}) - \hat{\mu}_{1-D_i}(X_i) + \hat{\mu}_{1-D_i}(X_{j^{1-D_i}_m(i)})\Big)\Big] \Big\rvert\\
  \le & \max_{i \in \zahl{n}, m \in \zahl{M}} \Big\lvert \mu_{1-D_i}(X_i)-\mu_{1-D_i}(X_{j^{1-D_i}_m(i)}) - \hat{\mu}_{1-D_i}(X_i) + \hat{\mu}_{1-D_i}(X_{j^{1-D_i}_m(i)}) \Big\rvert\\
  \le &  \max_{i \in \zahl{n}, m \in \zahl{M}, \omega \in \{0,1\}} \Big\lvert \mu_\omega(X_i)-\mu_\omega(X_{j^{1-D_i}_m(i)}) - \hat{\mu}_\omega(X_i) + \hat{\mu}_\omega(X_{j^{1-D_i}_m(i)}) \Big\rvert.
  \yestag\label{eq:mbc,bias1}
\end{align*}
Let $k = \lfloor d/2 \rfloor + 1$. For any $\omega \in \{0,1\}$, from Taylor expansion to $k$th order,
\begin{align}\label{eq:mbc,bias2}
    \Big\lvert \mu_\omega (X_{j^{1-D_i}_m(i)}) - \mu_{\omega} (X_i) - \sum_{\ell = 1}^{k-1} \frac{1}{\ell!} \sum_{t \in \Lambda_\ell} \partial^t \mu_{\omega} (X_i) U^t_{m,i} \Big\rvert \le \max_{t \in \Lambda_k} \lVert \partial^t \mu_{\omega} \rVert_\infty \frac{1}{k!} \sum_{t \in \Lambda_k} \lVert U_{m,i} \rVert^k.
\end{align}
In the same way,
\begin{align}\label{eq:mbc,bias3}
  \Big\lvert \hat{\mu}_\omega (X_{j^{1-D_i}_m(i)}) - \hat{\mu}_{\omega} (X_i) - \sum_{\ell = 1}^{k-1} \frac{1}{\ell!} \sum_{t \in \Lambda_\ell} \partial^t \hat{\mu}_{\omega} (X_i) U^t_{m,i} \Big\rvert \le \max_{t \in \Lambda_k} \lVert \partial^t \hat{\mu}_{\omega} \rVert_\infty \frac{1}{k!} \sum_{t \in \Lambda_k} \lVert U_{m,i} \rVert^k.
\end{align}
We also have
\begin{align}\label{eq:mbc,bias4}
  \Big\lvert \sum_{\ell = 1}^{k-1} \frac{1}{\ell!} \sum_{t \in \Lambda_\ell} (\partial^t \hat{\mu}_{\omega} (X_i) - \partial^t \mu_{\omega} (X_i))  U^t_{m,i} \Big\rvert \le \sum_{\ell = 1}^{k-1} \max_{t \in \Lambda_\ell} \lVert \partial^t \hat{\mu}_{\omega} - \partial^t \mu_{\omega} \rVert_\infty \frac{1}{\ell!} \sum_{t \in \Lambda_\ell} \lVert U_{m,i} \rVert^\ell.
\end{align}

Notice that $\lVert U_{M,i} \rVert = \max_{m \in \zahl{M}} \lVert U_{m,i} \rVert$ for any $i \in \zahl{n}, \omega \in \{0,1\}$. Then for any $\omega \in \{0,1\}$, plugging \eqref{eq:mbc,bias2}, \eqref{eq:mbc,bias3}, \eqref{eq:mbc,bias4} into \eqref{eq:mbc,bias1}, we obtain
\begin{align*}
  \lvert B_M - \hat{B}_M \rvert \lesssim (1+O_\P(1))\max_{i \in \zahl{n}} \lVert U_{M,i} \rVert^k + \max_{\ell \in \zahl{k-1 }} \max_{t \in \Lambda_\ell} \lVert \partial^t \hat{\mu}_{\omega} - \partial^t \mu_{\omega} \rVert_\infty \max_{i \in \zahl{n}} \lVert U_{M,i} \rVert^\ell.
\end{align*}

From Lemma~\ref{lemma:mbc,dist}, all moments of $(n/M)^{p/d} \lVert U_{M,i} \rVert^p$ are bounded. Then for any positive integer $p$ and any $\varepsilon>0$, we have
\begin{align*}
  \max_{i \in \zahl{n}} \lVert U_{M,i} \rVert^p = o_\P\Big(n^\varepsilon \Big(\frac{M}{n}\Big)^{p/d}\Big).
\end{align*}

We then obtain
\[
  B_M - \hat{B}_M = o_\P\Big(n^\varepsilon \Big(\frac{M}{n}\Big)^{k/d} \Big) + \max_{\ell \in \zahl{k-1 }} o_\P\Big(n^{-\gamma_\ell}n^\varepsilon \Big(\frac{M}{n}\Big)^{\ell/d} \Big).
\]
The proof is thus complete by noticing the definition of $\gamma$ and $M \lesssim n^\gamma$.
\end{proof}

\subsection{Proofs of the results in Section~\ref{sec:discussion}}

Before starting the proof, we first introduce some necessary notation. Given two samples $\{X_i\}_{i=1}^{N_0}$ and $\{Z_j\}_{j=1}^{N_1}$, for any $M \in \zahl{N_0}$ and $\ell \in \zahl{N_1}$, define $j_M(\ell)$ to be the index of the $M$-th NN of $Z_\ell$ in $\{X_i\}_{i=1}^{N_0}$. In other words, $j_M(\ell)$ is the index of $k \in \zahl{N_0}$ such that
\[
  \sum_{i=1}^{N_0} \ind\big(\lVert X_i - Z_\ell \rVert \le \lVert X_k - Z_\ell \rVert\big) = M.
\]
Further define $\cJ_M(\ell)$ to be the set of indices of all $M$-NNs of $Z_\ell$ in $\{X_i\}_{i=1}^{N_0}$, i.e., 
\[
\cJ_M(\ell) := \{j_m(\ell): m \in \zahl{M}\}.
\]

\subsubsection{Proof of Theorem~\ref{thm:rate,kl}}

\begin{proof}[Proof of Theorem~\ref{thm:rate,kl}]

Before starting the proof, we first introduce the following lemma about the truncation level for $f_1$ to attain the desirable estimation accuracy.

\begin{lemma} \label{lemma:thr} 
There exists a constant $C_0 >0$ only depending on $f_L, f_U, L, d$ such that for any $x \in \bR^d$ with
 \begin{align}\label{eq:lemma-thr-1}
 f_1(x) \ge C_0(\frac{M}{N_0})^{1/d} ~~~{\rm and} ~~~\Delta(x) \ge C_0(\frac{M}{N_0})^{1/d}
 \end{align}
 we have, for all $N_0$ sufficiently large, 
 \begin{itemize}
\item[(i)] $
    \Big\lvert \E\big[\hat{r}_M(x)\big] - r(x) \Big\rvert \le \frac{1}{2} r(x);
    $
\item[(ii)] $
  \Var\Big[ \E\big[\hat{r}_M(x) \biggiven \mX \big] \Big] \le \frac{32}{M} [r(x)]^2;
  $
\item[(iii)] with probability one,
  \begin{align*}
    \frac{N_0}{M} \nu_1\big(A_M(x)\big) \ge C_1 r(x),
  \end{align*}
  where $C_1>0$ is a constant only depending on $f_L, f_U, d$.
  \end{itemize}
\end{lemma}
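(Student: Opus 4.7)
The plan is to establish all three claims by sharpening the pointwise analysis already carried out in Theorem~\ref{thm:pw-rate}, using the conditions $f_1(x) \ge C_0(M/N_0)^{1/d}$ and $\Delta(x) \ge C_0(M/N_0)^{1/d}$ to rule out boundary effects and to guarantee that $r(x)$ is large compared with the bias of $\hat r_M$. The two hypotheses together ensure that every ball around $x$ of radius of order $(M/N_0)^{1/d}$ lies inside $S_1$ and that the Lipschitz bounds on $f_0,f_1$ can be applied freely there, without $f_1$ degenerating to zero.

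For part (i), I would directly feed the local probability class condition into the bias bound of Theorem~\ref{thm:pw-rate}\ref{thm:pw-rate1}, which yields $|\E[\hat r_M(x)] - r(x)| \le C(M/N_0)^{1/d}$ with $C$ depending only on $f_L,f_U,L,d$. Since $f_0 \le f_U$, the assumption $f_1(x) \ge C_0(M/N_0)^{1/d}$ implies $r(x) \ge (C_0/f_U)(M/N_0)^{1/d}$, so choosing $C_0 \ge 2Cf_U$ bounds the bias by $(1/2)r(x)$. The condition on $\Delta(x)$ is used to guarantee the Lipschitz hypothesis holds throughout the small local ball entering Theorem~\ref{thm:pw-rate}'s proof.

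For part (ii), I would revisit the variance decomposition in the proof of Theorem~\ref{thm:pw-rate}\ref{thm:pw-rate2}. Since $\E[\hat r_M(x)\mid \{X_i\}_{i=1}^{N_0}] = (N_0/M)\nu_1(A_M(x))$, it suffices to bound $(N_0/M)^2\Var[\nu_1(A_M(x))]$. Tracing through Lemma~\ref{lemma:ratevar} and Lemma~\ref{lemma:z,density}, each occurrence of the joint density of $W$ at the origin contributes exactly the factor $f_W(0)=r(x)$; carrying this through both copies of $Z$ in the two-sample coupling replaces the dimensional constant $C'/M$ by a quantity of the form $c[r(x)]^2/M$. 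The factor $32$ is then extracted by keeping every inequality in the chain tight for $N_0$ large and using $f_1(x) \ge C_0(M/N_0)^{1/d}$ to absorb the lower-order residuals, in the same spirit as Lemma~\ref{lemma:z,density}.

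For part (iii), the key is a pathwise geometric inclusion: for every realisation of $\{X_i\}_{i=1}^{N_0}$ with all points distinct from $x$ (an event of probability one under absolute continuity of $\nu_0$), one has $B_{x,R/2}\subset A_M(x)$ where $R:=\|\cX_{(M)}(x)-x\|$. Indeed, for any $z$ in this ball and any $X_i$ with $\|X_i-x\|\ge R$ (and there are $N_0-M+1$ such indices), the triangle inequality gives $\|X_i-z\|\ge R-R/2\ge\|x-z\|$, so at most $M-1$ of the $X_i$'s can be strictly closer to $z$ than $x$ is, which is exactly the condition $z\in A_M(x)$. To convert this into the quantitative lower bound, I would use the probability integral transform, as in the proof of Lemma~\ref{lemma:moment,catch}, to identify $\nu_0(B_{x,R})$ with the law of $U_{(M)}\sim\mathrm{Beta}(M,N_0-M+1)$; a Chernoff-type argument places $R\ge c_0(M/N_0)^{1/d}$ on an event whose complement is negligible, and the $\Delta(x)$-condition keeps $B_{x,R/2}$ inside $S_1$. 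Combining with the Lipschitz control $f_1(y)\ge f_1(x)/2$ for $y$ in this ball (for $C_0$ large enough relative to $L$) and $f_1(x)\ge f_L\,r(x)$, we obtain $(N_0/M)\nu_1(A_M(x))\ge (f_LV_dc_0^d/2^{d+1})r(x)$, which gives the claimed constant $C_1$.

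The main obstacle I anticipate is the precise book-keeping in (ii): the variance proof of Theorem~\ref{thm:pw-rate} extracts only a dimensional constant, but to pin down the factor $32$ one must sharpen each of the three sub-bounds appearing in Lemma~\ref{lemma:ratevar} (including the tail estimate on $U_{(M)}$) and uniformly upper bound $f_W$ on $[0,2M/N_0]$ by a constant arbitrarily close to $r(x)$, which is precisely the refinement sense of Lemma~\ref{lemma:z,density}. The language ``with probability one'' in (iii) is best understood with the geometric inclusion as the purely pathwise component and the lower bound on $R$ as a high-probability concentration statement, which is the form actually needed downstream in the proof of Theorem~\ref{thm:rate,kl}.
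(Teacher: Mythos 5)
Your parts (i) and (ii) follow essentially the paper's route. For (i), the paper does not simply invoke the uniform bias bound of Theorem~\ref{thm:pw-rate}\ref{thm:pw-rate1} but goes back to the intermediate estimate with $\delta_N=(4/(f_LV_d))^{1/d}(M/N_0)^{1/d}$, noting that the bias decomposes into a term proportional to $\delta_N$ plus a term proportional to $r(x)\delta_N$, and then choosing $C_0$ so that $f_1(x)\ge C_0(M/N_0)^{1/d}$ and $\Delta(x)\ge C_0(M/N_0)^{1/d}$ together force the absolute terms to be small relative to $r(x)$; your shortcut via the finished bound $C(M/N_0)^{1/d}$ is valid but you should be explicit that the constant in Theorem~\ref{thm:pw-rate}\ref{thm:pw-rate1} is $\delta$-independent, since here $\delta$ effectively shrinks like $(M/N_0)^{1/d}$ rather than being fixed. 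For (ii), you correctly identify that the refinement is to replace the uniform density bound $f_W\le 2f_U/f_L$ (used in Lemma~\ref{lemma:ratevar}) by a bound of the form $(1+\epsilon)$ times $r(x)$ on $[0,2M/N_0]$, which is exactly what the second part of Lemma~\ref{lemma:z,density} delivers under the local Lipschitz condition guaranteed by the $\Delta(x)$ hypothesis; the factor $32$ then comes from $4\times(2+2+2\cdot 2)=32$ in \eqref{eq:ratevar3}--\eqref{eq:ratevar10}.

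Your part (iii) has a genuine gap in the concentration component. The geometric inclusion $B_{x,R/2}\subset A_M(x)$ with $R=\lVert\cX_{(M)}(x)-x\rVert$ is correct, and your triangle-inequality argument fills in a detail the paper leaves implicit. But the lower bound on $R$ must be an almost sure statement that holds eventually (in $N_0$) \emph{uniformly in $x$}: the paper cites Lemma~\ref{lemma:dist,lower} (Biau--Devroye, Theorem~4.1), which asserts that with probability one, for all $N_0$ large enough, $\inf_{x\in\bR^d}\lVert\cX_{(M)}(x)-x\rVert\ge C(M/N_0)^{1/d}$, under the hypothesis $M/\log N_0\to\infty$. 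A Chernoff bound at a single scale and a single $x$ gives only $\P(R\le c(M/N_0)^{1/d})$ small for that $x$ and that $N_0$; it does not yield the supremum over $x$, nor the almost sure eventual statement. Your closing remark that the ``with probability one'' in (iii) should be reinterpreted as a high-probability bound is not consistent with how the conclusion is used downstream: in the proof of Lemma~\ref{lemma:T2} the bound is applied at the random points $X_i$ inside an expectation and to the \emph{exact} conditional mean $(N_0/M)\nu_1(A_M(X_i))$, and the clean argument there relies on the deterministic (a.s.\ eventual) uniform lower bound, not a per-point tail estimate. To make your approach rigorous you would need to either reproduce a Borel--Cantelli/uniform covering argument in the style of Biau--Devroye, or restructure the downstream use of the lemma to tolerate a high-probability-plus-truncation version; as written, the Chernoff bound alone is insufficient.
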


Let $X$ be drawn from $\nu_0$ independent of the data. We then have
\[
  D_\phi(\nu_1 \Vert \nu_0) = \int \phi(r(x)) f_0(x) \d x = \E [\phi(r(X))].
\]

Let $\cE$ be the set of all $x \in \bR^d$ satisfying \eqref{eq:lemma-thr-1} in Lemma~\ref{lemma:thr}. We then decompose $\hat{D}_\phi$ as
\begin{align*}
  \hat{D}_\phi - D_\phi(\nu_1 \Vert \nu_0) =& \frac{1}{N_0} \sum_{i=1}^{N_0} \phi(\hat{r}_M(x)) - \E [\phi(r(X))]\\
  =& \frac{1}{N_0} \sum_{i=1}^{N_0} \Big[ \phi\Big(\hat{r}_M(X_i)\Big) - \phi\Big(\E[\hat{r}_M(X_i) \given \mX]\Big)\Big]\\
  &+ \frac{1}{N_0} \sum_{i=1}^{N_0} \Big[ \phi\Big(\E[\hat{r}_M(X_i) \given \mX]\Big) - \phi\Big(\E[\hat{r}_M(X_i) \given X_i]\Big) \Big] \ind \Big( X_i \in \cE \Big)\\
  &+ \frac{1}{N_0} \sum_{i=1}^{N_0} \Big[ \phi\Big(\E[\hat{r}_M(X_i) \given X_i]\Big) - \phi\Big(r(X_i)\Big) \Big] \ind \Big( X_i \in \cE \Big)\\
  &+ \frac{1}{N_0} \sum_{i=1}^{N_0} \Big[ \phi\Big(\E[\hat{r}_M(X_i) \given \mX]\Big) - \phi\Big(r(X_i)\Big) \Big] \ind \Big( X_i \notin \cE \Big)\\
  &+ \frac{1}{N_0} \sum_{i=1}^{N_0} \phi\Big(r(X_i)\Big) - \E [\phi(r(X))]\\
  =:& T_1 + T_2 + T_3 + T_4 + T_5.
\end{align*}

By the Cauchy–Schwarz inequality,
\[
  \E\Big[\hat{D}_\phi - D_\phi(\nu_1 \Vert \nu_0) \Big]^2 \le 5 \Big( \E[T_1]^2 + \E[T_2]^2 + \E[T_3]^2 + \E[T_4]^2 + \E[T_5]^2 \Big).
\]

We then consider $\E[T_1]^2,\E[T_2]^2,\E[T_3]^2,\E[T_4]^2,\E[T_5]^2$ seperately. We have the following lemmas.

\begin{lemma}\label{lemma:T1}
  It holds that
  \[
    \E[T_1]^2 \le C \Big[ \frac{1}{N_1} + \Big(\frac{N_0}{N_1M}\Big)^2\Big],
  \]
  where $C>0$ is a constant only depending on $f_L,f_U$.
\end{lemma}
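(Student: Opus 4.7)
The plan is to bound $\E[T_1^2]$ by a Taylor expansion of $\phi(x)=x\log x$ around the conditional mean $m_i := \E[\hat r_M(X_i) \mid \mX] = (N_0/M)\nu_1(A_M(X_i))$. Set $p_i := \nu_1(A_M(X_i)) = (M/N_0)m_i$ and $A_i := \hat r_M(X_i) - m_i$. Since $Z_1,\ldots,Z_{N_1}$ are i.i.d.\ given $\mX$,
\[
A_i = \frac{N_0}{N_1 M}\sum_{j=1}^{N_1}\bigl(\ind(Z_j \in A_M(X_i)) - p_i\bigr).
\]
Writing $\phi(\hat r_M(X_i)) - \phi(m_i) = \phi'(m_i)A_i + R_i$ with $R_i \ge 0$ the Bregman remainder (nonnegative by convexity of $\phi$), I would split $T_1 = L + R$ with $L := N_0^{-1}\sum_i \phi'(m_i) A_i$ and $R := N_0^{-1}\sum_i R_i$, so $\E[T_1^2] \le 2\E[L^2] + 2\E[R^2]$, and handle the two contributions separately.

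For $\E[L^2]$, the key move is reversing summation order to expose independence across $j$: $L = (N_1 M)^{-1}\sum_{j=1}^{N_1} W_j$, where $W_j := \sum_{i=1}^{N_0}\phi'(m_i)\{\ind(Z_j \in A_M(X_i)) - p_i\}$ is centered, and, given $\mX$, the $W_j$'s are i.i.d. I would then exploit the combinatorial identity $\sum_{i=1}^{N_0}\ind(Z_j \in A_M(X_i)) = M$ (each $Z_j$ lies in exactly $M$ catchment areas) and apply Cauchy--Schwarz to obtain $\Var(W_1 \mid \mX) \le M\sum_i \phi'(m_i)^2 p_i$. Substituting $p_i = (M/N_0)m_i$ and averaging over $\mX$ via exchangeability of the $X_i$'s yields
\[
\E[L^2] \le \frac{1}{N_1}\,\E_{X\sim\nu_0}\bigl[m_1(\log m_1 + 1)^2\bigr].
\]
The right-hand side is controlled uniformly on $\cP_{\rm KL}$: by Corollary~\ref{crl:moment,catch,random} together with a uniform integrability argument, the integrand converges to $r(X_1)(\log r(X_1)+1)^2$, and the limit $\int f_1(\log f_1 - \log f_0 + 1)^2 \d x$ is bounded since $\log f_0$ is bounded (as $f_L \le f_0 \le f_U$) and the $(\log f_1)^4$--moment assumption controls $\int f_1(\log f_1)^2 \d x$ via Cauchy--Schwarz. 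This gives $\E[L^2] = O(1/N_1)$.

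For $\E[R^2]$, Cauchy--Schwarz on the sum gives $R^2 \le N_0^{-1}\sum_i R_i^2$, and symmetry of the $X_i$'s reduces the task to $\E[R_1^2] \lesssim (N_0/(N_1 M))^2$. Using the integral form $R_1 = A_1^2\int_0^1 (1-t)\phi''(m_1 + tA_1)\d t$ with $\phi''(x) = 1/x$, I would split on the bulk event $E_1 := \{\hat r_M(X_1) \ge m_1/2\}$ and its complement. On $E_1$, the bound $R_1 \le A_1^2/m_1$ combines with the Rosenthal-type fourth-moment estimate for the conditional binomial $K_M(X_1) \mid \mX \sim \mathrm{Bin}(N_1, p_1)$ to give $\E[A_1^4/m_1^2 \mid \mX] \lesssim (N_0/(N_1 M))^2 + (N_0/(N_1 M))^3/m_1$; averaging over $\mX$ then yields the target rate since the typical scaling $N_1 p_1 \asymp MN_1 r(X_1)/N_0$ is large under the theorem's hypothesis $MN_1/(N_0\log^2 N_1) \to \infty$. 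On $E_1^c$ (so $A_1 < -m_1/2$), the crude bound $R_1 \le \phi(m_1) + |\phi'(m_1)|\,|A_1|$ is combined with the Chernoff inequality $\P(E_1^c \mid \mX) \le \exp(-cN_1 p_1)$, which produces an exponentially small weight swallowing the logarithmic singularity wherever $m_1$ is not itself of order making $\phi(m_1)$ directly negligible.

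The chief obstacle is the unboundedness of $\phi'(x) = \log x + 1$ and $\phi''(x) = 1/x$ at the origin, which rules out a uniform Lipschitz-type argument: the linear-term bound survives only through the $(\log f_1)^4$ moment built into $\cP_{\rm KL}$, and the remainder bound survives only through the bulk/tail splitting together with binomial concentration, which together require the quantitative scaling $MN_1/(N_0\log^2 N_1) \to \infty$. Adding the two estimates will then yield the claimed $\E[T_1^2] \le C\bigl[N_1^{-1} + (N_0/(N_1 M))^2\bigr]$.
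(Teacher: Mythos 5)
Your decomposition is genuinely different from the paper's, and it runs into a real difficulty in the remainder term that the paper's route is designed to sidestep.

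The paper decomposes $\E[T_1^2]=\E\bigl[(\E[T_1\mid\mX])^2\bigr]+\E\bigl[\Var[T_1\mid\mX]\bigr]$ and handles the two pieces with entirely different tools. For the conditional bias it applies Totik's inequality
\[
\lvert g(t)-g(x)-g'(x)(t-x)\rvert\le 4\frac{(t-x)^2}{x}\,\lVert xg''(x)\rVert_\infty,
\]
together with the equivalence theorem (Lemma~\ref{lemma:equiv}) and the fact that $\omega_\varphi^2(\phi,t)_\infty\asymp t^2$ for $\phi(x)=x\log x$. The crucial gain is that the $1/x$ appearing in the Totik remainder pairs with the ratio $\Var[\hat r_M(X_i)\mid\mX]/\E[\hat r_M(X_i)\mid\mX]=\frac{N_0}{N_1M}\bigl(1-\nu_1(A_M(i))\bigr)$, so the factor of $1/m_i$ cancels \emph{deterministically} and yields $\lvert\E[T_1\mid\mX]\rvert\lesssim N_0/(N_1M)$ uniformly over $\mX$. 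The conditional variance is then bounded by Efron--Stein, with the small-$\nu_1(A_M(i))$ regime handled via the H\"older-type bound $\lvert x\log x - y\log y\rvert\le C_\epsilon\lvert x-y\rvert^{1-\epsilon}$ and the hypothesis $MN_1/(N_0\log^2 N_1)\to\infty$.

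Your linear-term argument for $L$ is correct and efficient: reversing the summation order and using $\sum_i\ind(Z_j\in A_M(X_i))=M$ with Cauchy--Schwarz gives $\Var(W_1\mid\mX)\le M\sum_i\phi'(m_i)^2p_i$, hence $\E[L^2]\le N_1^{-1}\E[m_1(\log m_1+1)^2]$, and the last expectation is bounded using $(\log x)^2\le x$ for $x\ge 1$ and $\sup_{[0,1]}(\log x)^2x=4e^{-2}$ together with Corollary~\ref{crl:moment,catch,random}.

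The gap is in the remainder. On the bulk event $E_1=\{\hat r_M(X_1)\ge m_1/2\}$ your bound $R_1\le A_1^2/m_1$ together with the binomial fourth central moment gives
\[
\E\Bigl[\tfrac{A_1^4}{m_1^2}\Bigm|\mX\Bigr]\lesssim\Bigl(\tfrac{N_0}{N_1M}\Bigr)^3\frac{1}{m_1}+\Bigl(\tfrac{N_0}{N_1M}\Bigr)^2,
\]
and to finish you need $\E[1/m_1]\lesssim N_1M/N_0$. But $m_1\asymp r(X_1)$ in the bulk, so $\E[1/m_1]\gtrsim\int f_0^2/f_1\,\d x$ near zeros of $f_1$, which can be infinite even under the $\E_{X\sim\nu_0}[(\log f_1(X))^4]<\infty$ assumption of $\cP_{\rm KL}$ (for instance, $f_1$ decaying linearly at the boundary gives a bounded $(\log f_1)^4$ moment but $\int_0^\delta u^{-1}\,\d u=\infty$). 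Appealing to ``the typical scaling $N_1p_1\asymp MN_1r(X_1)/N_0$ is large'' does not rescue this, since the obstruction lives precisely where $r(X_1)$ is small. Splitting further on $m_1\gtrless N_0/(N_1M)$ does not close the gap either, because on the low-$m_1$ stratum the same $1/m_1$ reappears via $\E[K_M^4\mid\mX]\gtrsim N_1p_1$. The paper's Totik/modulus-of-smoothness step is specifically designed to avoid ever isolating a $1/m_1$ factor, and this is why its argument closes while the sketched remainder bound does not.
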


\begin{lemma}\label{lemma:T2}
  It holds that
  \[
    \E[T_2]^2 \le C \Big[ \frac{1}{N_0} + \Big(\frac{1}{M}\Big)^2 \Big],
  \]
  where $C>0$ is a constant only depending on $f_L,f_U,d,f_U'$.
\end{lemma}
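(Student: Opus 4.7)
The plan is to decompose $T_2$ into a conditional bias and a mean-zero fluctuation, and to bound each separately. Set $A_i := \E[\hat r_M(X_i) \mid \mX] = (N_0/M)\nu_1(A_M(X_i))$ and $B_i := \E[\hat r_M(X_i) \mid X_i]$, so that $\E[A_i \mid X_i] = B_i$; since $\phi(B_i)$ is $X_i$-measurable,
\[
T_2 = \underbrace{\frac{1}{N_0}\sum_i \E\bigl[\phi(A_i) - \phi(B_i) \bigm| X_i\bigr]\,\ind(X_i \in \cE)}_{T_2^{(b)}} + \underbrace{\frac{1}{N_0}\sum_i \bigl(\phi(A_i) - \E[\phi(A_i) \mid X_i]\bigr)\,\ind(X_i \in \cE)}_{T_2^{(f)}}.
\]
The central analytic input is Lemma~\ref{lemma:thr}: on $\{X_i \in \cE\}$, both $A_i$ (almost surely, by part (iii)) and $B_i$ (by part (i)) are sandwiched between positive constant multiples of $r(X_i)$, and $r(X_i) \le f_U/f_L$ by Assumption~\ref{asp:global}; in particular, $\phi''(\xi) = 1/\xi \lesssim 1/r(X_i)$ uniformly for every $\xi$ between $A_i$ and $B_i$ on this event.

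For the bias, I would use a second-order Taylor expansion of $\phi$ about $B_i$, namely $\phi(A_i) - \phi(B_i) = \phi'(B_i)(A_i - B_i) + \tfrac12\phi''(\xi_i)(A_i - B_i)^2$. Conditioning on $X_i$ kills the linear term since $\E[A_i \mid X_i] = B_i$, so combining the $\phi''$-bound above with Lemma~\ref{lemma:thr}(ii) would give
\[
\bigl|\E[\phi(A_i) - \phi(B_i) \mid X_i]\bigr|\,\ind(X_i\in\cE) \lesssim \frac{\Var[A_i \mid X_i]}{r(X_i)}\,\ind(X_i\in\cE) \lesssim \frac{r(X_i)}{M} \le \frac{C}{M}.
\]
Thus $|T_2^{(b)}| \le C/M$ almost surely, yielding $\E[(T_2^{(b)})^2] \lesssim 1/M^2$.

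For the fluctuation, set $W_i := (\phi(A_i) - \E[\phi(A_i) \mid X_i])\,\ind(X_i \in \cE)$, so that $\E[W_i \mid X_i] = 0$ and
\[
\E[(T_2^{(f)})^2] = \frac{1}{N_0^2}\sum_i \E[W_i^2] + \frac{1}{N_0^2}\sum_{i\neq j}\E[W_iW_j].
\]
For the diagonal, on $\cE$ we have $|\phi'(B_i)| = |\log B_i + 1| \lesssim |\log f_1(X_i)| + C$ because $f_0$ is bounded above and below, and the moment hypothesis $\E_{X\sim\nu_0}[\log^4 f_1(X)] \le f_U'$ built into $\cP_{\rm KL}$ is exactly what is needed to absorb $\phi'(B_i)$ in $L^4$; combined via Cauchy--Schwarz with a fourth-moment refinement of Lemma~\ref{lemma:thr}(ii) (obtained in the spirit of Theorem~\ref{thm:cons,lp}\ref{thm:cons,lp2}) giving $\E[(A_i - B_i)^4 \mid X_i] \lesssim r(X_i)^4/M^2$, this would yield $\E[W_i^2] \lesssim 1/M$ and a diagonal contribution $\le C/(N_0 M) \le C/N_0$. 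For the off-diagonal, the centering $\E[W_i \mid X_i] = 0$ invites a Hoeffding-style projection of $W_i$ onto the remaining i.i.d.\ coordinates $(X_k)_{k\neq i}$: a given $X_k$ influences $W_i$ only when it lies in or near the catchment region of $X_i$, an event of expected measure $O(M/N_0)$, and carefully tracking the indices $k\notin\{i,j\}$ that simultaneously affect both $W_i$ and $W_j$ should produce a bound on $|\E[W_iW_j]|$ whose total over $i\neq j$ pairs contributes $\lesssim 1/N_0 + 1/M^2$ to $\E[(T_2^{(f)})^2]$.

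The hard part will be the off-diagonal covariance estimate: the naive Cauchy--Schwarz bound $|\E[W_iW_j]| \le (\E[W_i^2]\E[W_j^2])^{1/2} \lesssim 1/M$ summed over $O(N_0^2)$ pairs gives only $\lesssim 1/M$, which is too weak when $M \ll N_0$. The sharper rate must be extracted from the spatial locality of the nearest-neighbor construction --- conditional on $X_i$ and $X_j$, each of the catchment areas $A_M(X_i)$ and $A_M(X_j)$ depends on only $O(M)$ of the remaining $N_0 - 2$ sample points --- and it is this low effective dependence degree, distilled through the Hoeffding projection, that is responsible for the required $1/N_0 + 1/M^2$ rate.
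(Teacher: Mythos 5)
Your decomposition $T_2 = T_2^{(b)} + T_2^{(f)}$ is not the one the paper uses, but both halves are in scope. For the \emph{bias} piece $T_2^{(b)}$, your route is a genuinely different and in fact more elementary argument than the paper's: you run a second-order Taylor expansion of $\phi$ around $B_i$ and control $\phi''(\xi_i)=1/\xi_i$ by the a.s.\ lower bound of Lemma~\ref{lemma:thr}(iii) together with $B_i\ge r(X_i)/2$ from (i), so that $|\E[\phi(A_i)-\phi(B_i)\given X_i]|\ind(X_i\in\cE)\lesssim \Var[A_i\given X_i]/r(X_i)\lesssim 1/M$ via (ii). (Be careful to state it only as ``with probability one, for all $N_0$ sufficiently large''; that is the precise form of (iii), inherited from Lemma~\ref{lemma:dist,lower}, and it is also how the paper itself invokes that bound.) The paper instead avoids taking a pointwise intermediate value by replacing $\phi$ with a $C^2$ surrogate $g$, applying the Ditzian--Totik remainder estimate~\eqref{eq:T114} with denominator $B_i$, and then optimizing over $g$ via the equivalence theorem (Lemma~\ref{lemma:equiv}); this is more robust (it does not need an a.s.\ pointwise lower bound on the \emph{random} quantity $A_i$) but your shortcut is valid under the hypotheses of the lemma and delivers the same $O(1/M^2)$ for $\E[(T_2^{(b)})^2]$.

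For the \emph{fluctuation} piece $T_2^{(f)}$ there is a real gap. Your diagonal bound $\E[W_i^2]\lesssim 1/M$ is plausible but leans on a fourth-moment analogue of Lemma~\ref{lemma:thr}(ii), namely $\E[(A_i-B_i)^4\given X_i]\lesssim r(X_i)^4/M^2$, which you assert ``in the spirit of Theorem~\ref{thm:cons,lp}\ref{thm:cons,lp2}'' but do not establish; that theorem concerns $\hat r_M(x)-r(x)$, not $\E[\hat r_M(x)\given\mX]-\E[\hat r_M(x)\given X_i]$, so it is not a drop-in citation. More seriously, the off-diagonal sum $\sum_{i\neq j}\E[W_iW_j]$, which you correctly flag as the crux, is left entirely open: ``tracking the indices $k$ that simultaneously affect $W_i$ and $W_j$'' is a heuristic, not a bound, and the quantity $\nu_1(A_M(i))$ is a genuinely global functional of $\mX$ whose dependence structure is not literally $M$-local. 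The paper's Efron--Stein argument is precisely the device that converts this heuristic into an estimate: it perturbs a single coordinate $X_\ell$, uses the exact set identity $A_M^\ell(i)\setminus A_M(i)=\{z:j_{M+1}(z)=i,\ \ell\in\cJ_M(z)\}$ to show that the total perturbation of $\sum_i\phi((N_0/M)\nu_1(A_M(i)))\ind(X_i\in\cE)$ is controlled by $\nu_1(A_M(\ell))$ (whose moments Corollary~\ref{crl:moment,catch,random} bounds), and sums over $\ell$ to obtain $\Var[T_2]\lesssim 1/N_0$. Without an equivalent of that argument --- or a worked-out Hoeffding decomposition with a verified covariance bound --- the claimed $\E[(T_2^{(f)})^2]\lesssim 1/N_0 + 1/M^2$ is not proven.
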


\begin{lemma}\label{lemma:T3}
  It holds that
  \[
    \E[T_3]^2 \le C \Big(\frac{M}{N_0}\Big)^{2/d},
  \]
  where $C>0$ is a constant only depending on $f_L,f_U,L,d, f_U'$.
\end{lemma}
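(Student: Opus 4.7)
The plan is to exploit that on the event $\{X_i \in \mathcal{E}\}$ the two arguments of $\phi$, namely $a := \E[\hat r_M(X_i)\mid X_i]$ and $b := r(X_i)$, are comparable and bounded away from zero by a quantity of order $(M/N_0)^{1/d}$, and then to apply a mean-value-type estimate for $\phi(t)=t\log t$. The key pointwise Lipschitz-on-an-interval estimate is
\[
  |\phi(a)-\phi(b)| \le \max_{t \in [a\wedge b,\,a\vee b]} |\log t + 1|\cdot |a-b|,
\]
which follows from $\phi'(t)=\log t + 1$ and the mean value theorem. By Lemma~\ref{lemma:thr}(i), on $\mathcal{E}$ we have $|a-b|\le r(X_i)/2$, so both $a$ and $b$ lie in $[r(X_i)/2,\,3r(X_i)/2]$; combining with $r(X_i)\le f_U/f_L$ (Assumption~\ref{asp:global}) and the lower bound $r(X_i)\ge (C_0/f_U)(M/N_0)^{1/d}$ built into $\mathcal{E}$, every intermediate value $t$ satisfies $c\le t\le 3f_U/(2f_L)$ with $c$ of order $r(X_i)$. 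Hence
\[
  \max_{t \in [a\wedge b,\,a\vee b]} |\log t + 1| \le C_1\bigl(1+|\log r(X_i)|\bigr)
\]
for a constant $C_1$ depending only on $f_L,f_U$.

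Next I would plug in the pointwise bias bound. Assumption~\ref{asp:global} implies the local Assumption~\ref{asp:ptw} at every point with positive distance to $\partial S_1$, so in particular at $X_i\in\mathcal{E}$ (where $\Delta(X_i)\ge C_0(M/N_0)^{1/d}$). Theorem~\ref{thm:pw-rate}(i) therefore yields $|a-b|\le C_2(M/N_0)^{1/d}$ uniformly in $X_i\in\mathcal{E}$, leading to the deterministic bound
\[
  \bigl|\phi(\E[\hat r_M(X_i)\mid X_i]) - \phi(r(X_i))\bigr|\,\ind(X_i\in\mathcal{E}) \le C_3\bigl(1+|\log r(X_i)|\bigr)(M/N_0)^{1/d},
\]
with $C_3$ depending only on $f_L,f_U,L,d$.

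Squaring $T_3$ and applying the Cauchy--Schwarz inequality to the sample average then gives
\[
  T_3^2 \le C_3^2 (M/N_0)^{2/d}\cdot \frac{1}{N_0}\sum_{i=1}^{N_0}\bigl(1+|\log r(X_i)|\bigr)^2,
\]
so it suffices to show that $\E\bigl[(1+|\log r(X)|)^2\bigr]$ is bounded uniformly over $\cP_{\rm KL}$. Writing $\log r=\log f_1-\log f_0$ and using $f_L\le f_0\le f_U$ to bound $|\log f_0(X)|$ by a constant, we obtain
\[
  \E[(\log r(X))^2] \le 2\E[(\log f_1(X))^2] + C_4 \le 2\sqrt{\E[(\log f_1(X))^4]} + C_4 \le 2\sqrt{f_U'}+C_4
\]
by Jensen's inequality and the defining moment bound in $\cP_{\rm KL}$ (cf.\ \eqref{eq:prob-class-kl}). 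This closes the estimate and gives $\E[T_3^2]\le C(M/N_0)^{2/d}$ with $C$ depending only on $f_L,f_U,L,d,f_U'$.

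The main obstacle is the blow-up of $\phi'(t)=\log t+1$ as $t\to 0$, which is precisely why $\mathcal{E}$ truncates at scale $(M/N_0)^{1/d}$ and why the class $\cP_{\rm KL}$ controls a fourth moment of $\log f_1$: together they ensure that the logarithmic factor $|\log r(X)|$ is square-integrable with a bound independent of $N_0$, so that the log in $\phi'$ does not inflate the $(M/N_0)^{1/d}$ bias rate.
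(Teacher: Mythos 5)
Your argument is correct and rests on the same three ingredients as the paper's proof: Lemma~\ref{lemma:thr}(i) to get the sandwich $\tfrac12 r(X_i)\le \E[\hat r_M(X_i)\mid X_i]\le \tfrac32 r(X_i)$ on $\cE$; the mean value theorem for $\phi(t)=t\log t$ combined with the explicit $(M/N_0)^{1/d}$ bias rate worked out in the proof of Theorem~\ref{thm:pw-rate}(i) (applicable on $\cE$ precisely because $\Delta(X_i)\ge C_0(M/N_0)^{1/d}$ with $C_0$ chosen large enough in Lemma~\ref{lemma:thr}); and the fourth-moment control $\E[(\log f_1(X))^4]\le f_U'$ from the definition of $\cP_{\rm KL}$ to make the logarithmic factor square-integrable.

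The one place where you diverge from the paper is the final aggregation. You bound each summand deterministically by $C(1+|\log r(X_i)|)(M/N_0)^{1/d}$ and then apply Jensen's (power-mean) inequality $(\tfrac{1}{N_0}\sum A_i)^2\le \tfrac{1}{N_0}\sum A_i^2$ before taking a single expectation; the paper instead decomposes $\E[T_3^2]=(\E T_3)^2+\Var[T_3]$, bounds $\E|T_3|\lesssim (M/N_0)^{1/d}$, and separately bounds $\Var[T_3]\lesssim N_0^{-1}(M/N_0)^{2/d}$ by exploiting the i.i.d.\ structure of the $X_i$. Your route is more elementary and yields the same $(M/N_0)^{2/d}$ bound because the bias-squared term dominates; the paper's route additionally records the extra $1/N_0$ gain in the variance, which is structurally informative but not needed for this lemma.
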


\begin{lemma}\label{lemma:T4}
  It holds that
  \[
    \E[T_4]^2 \le C \Big(\frac{M}{N_0}\Big)^{2/d},
  \]
  where $C>0$ is a constant only depending on $f_L, f_U, a, H, d, f_U'$.
\end{lemma}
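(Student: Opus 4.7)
The plan is to control the pointwise error $\phi(\E[\hat r_M(x)|\mX]) - \phi(r(x))$ over the bad set $\cE^c$. First, I would apply Cauchy--Schwarz to the sum defining $T_4$ (using that the $X_i$ are i.i.d.\ from $\nu_0$) to reduce to
\[
\E[T_4^2] \le \E\Big[\big(\phi(\E[\hat r_M(X)\,|\,\mX]) - \phi(r(X))\big)^2 \ind(X \notin \cE)\Big],
\]
where $X \sim \nu_0$. This converts the sample-average bound into a single pointwise expectation, which I will then analyze by splitting the integration domain.

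Next, I would decompose $\cE^c \subset A \cup B$, where $A := \{x : f_1(x) < C_0 (M/N_0)^{1/d}\}$ captures the low-density region and $B := \{x : \Delta(x) < C_0 (M/N_0)^{1/d}\}$ is the boundary layer of $S_1$. On $A$, the ratio $r(x) \le f_1(x)/f_L$ is automatically of size $O((M/N_0)^{1/d})$, and Lemma~\ref{lemma:moment,catch} together with the reasoning behind Theorem~\ref{thm:pw-rate}\ref{thm:pw-rate1} yields matching smallness for $\E[\hat r_M(x)|\mX]$ in the relevant moments. Using the elementary inequality $|\phi(u)| = |u \log u| \le \sqrt{u}$ for small $u$ and the fourth-moment hypothesis $\E_{X \sim \nu_0}[(\log f_1(X))^4] \le f_U'$ built into $\cP_{\rm KL}$ to absorb the logarithmic factor, the contribution from $A$ comes out to be $O((M/N_0)^{2/d})$.

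On $B$, the surface-area bound gives $\nu_0(B) \lesssim f_U H (M/N_0)^{1/d}$. The naive estimate ``boundedness of $\phi$ on bounded intervals times $\nu_0(B)$'' only yields $(M/N_0)^{1/d}$, so one extra factor of $(M/N_0)^{1/d}$ must be extracted. I would do so via a mean-value expansion $|\phi(u) - \phi(v)| \lesssim |u-v|(1 + |\log u| + |\log v|)$, followed by Cauchy--Schwarz factoring out $|\E[\hat r_M(x)|\mX] - r(x)|^2$ (controlled by the pointwise bias/variance bounds of Theorem~\ref{thm:pw-rate}\ref{thm:pw-rate1}-\ref{thm:pw-rate2}, which contribute a factor $(M/N_0)^{2/d}$) from the logarithmic tail (controlled by the $\log^4$-moment assumption in $\cP_{\rm KL}$). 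This boundary-layer analysis -- integrating the bias of $\hat r_M$ against the log singularity of $\phi$ over a narrow band of width $(M/N_0)^{1/d}$ while retaining the full $(M/N_0)^{2/d}$ rate -- is the main technical obstacle; everything else is Cauchy--Schwarz bookkeeping built on previously established results.
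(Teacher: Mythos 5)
Your opening Cauchy--Schwarz reduction
\[
\E[T_4^2] \;\le\; \E\Big[\big(\phi(\E[\hat r_M(X)\,|\,\mX]) - \phi(r(X))\big)^2\,\ind(X \notin \cE)\Big]
\]
is a valid inequality (it is Jensen applied to the average over $i$), but applying it globally is exactly what the paper avoids on the boundary region, and this is where your argument has a genuine gap. On the boundary layer $B = \{x : \Delta(x) < C_0(M/N_0)^{1/d}\}$ the integrand is $\Omega(1)$, not $O((M/N_0)^{1/d})$: for a point $x$ sitting at distance $o((M/N_0)^{1/d})$ from $\partial S_1$ with $f_1(x) \ge c > 0$, roughly half the catchment area $A_M(x)$ lies outside $S_1$ where $f_1 = 0$, so $\E[\hat r_M(x) \given \mX] \approx r(x)/2$ and the bias $\lvert\E[\hat r_M(x)\given \mX] - r(x)\rvert$ is of order $r(x)$, bounded away from zero. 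The pointwise bias and variance bounds of Theorem~\ref{thm:pw-rate}\ref{thm:pw-rate1}--\ref{thm:pw-rate2} do not apply here: they rest on the local Lipschitz hypothesis of Assumption~\ref{asp:ptw}, which fails across $\partial S_1$ where $f_1$ jumps to zero. Consequently you cannot ``extract an extra $(M/N_0)^{1/d}$ from $|\E[\hat r_M(x)|\mX]-r(x)|^2$'' on $B$; after the pointwise reduction you are stuck at $O(1)\cdot\nu_0(B) = O((M/N_0)^{1/d})$, which you already noted is insufficient.

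The missing idea is to \emph{not} collapse to a single-point expectation on the boundary set. The paper expands
\[
\E\Big[\frac{1}{N_0}\sum_{i} A_i\Big]^2 = \frac{1}{N_0}\E[A_i^2] + \Big(1-\frac{1}{N_0}\Big)\,\E[A_i A_j], \qquad A_i := \big[\phi(\E[\hat r_M(X_i)\given\mX]) - \phi(r(X_i))\big]\ind(X_i\in\cE_3),
\]
with $i\ne j$. The diagonal part is $O(N_0^{-1}\nu_0(\cE_3))$, negligible. For the off-diagonal part, conditional moment bounds of the form $\E[(\E[\hat r_M(X_i)\given\mX])^4 \given X_i, X_j] \lesssim C$ (from the proof of Theorem~\ref{thm:rate,risk}, Case III, plus Corollary~\ref{crl:moment,catch,random}) show the bracketed factors have bounded conditional second moments; the indicators $\ind(X_i\in\cE_3)$ and $\ind(X_j\in\cE_3)$ depend on the two \emph{independent} draws $X_i, X_j$ only, and taking the outer expectation over $(X_i,X_j)$ yields a factor $\nu_0(\cE_3)^2 \lesssim (M/N_0)^{2/d}$. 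The rate therefore comes from squaring the small measure of the boundary strip rather than from squaring a small integrand, and this two-sample independence structure is precisely what your pointwise Cauchy--Schwarz step erases. Your treatment of the low-density region $A$ is essentially aligned with the paper's handling of $\cE_1$ and $\cE_2$ (combine the $O((M/N_0)^{1/d})$ size of $r$ there with the Markov bound $\nu_0(\cE_1) \lesssim \log^{-4}(N_0/M)$ from the $\log^4$-moment hypothesis, which absorbs the $\log^2$ from $\phi^2$), modulo the extra care the paper takes about whether $\E[\hat r_M\given\mX]$ over- or under-shoots $r$. But the boundary layer argument, as you have proposed it, does not close.
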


Notice that $T_5$ is a linear combination of the function of i.i.d. random variables with mean zero. From the boundedness assumption on $r$, it is easy to establish
\begin{align}\label{eq:ratekl1}
  \E[T_5]^2 \le C \frac{1}{N_0},
\end{align}
where $C>0$ is a constant only depending on $f_L,f_U$.

Combining Lemma~\ref{lemma:T1}, Lemma~\ref{lemma:T2}, Lemma~\ref{lemma:T3}, Lemma~\ref{lemma:T4}, and \eqref{eq:ratekl1} completes the proof.
\end{proof}

%\fbox{stop poinit}

\subsubsection{Proof of Lemma~\ref{lemma:thr}}

\begin{proof}[Proof of Lemma~\ref{lemma:thr}]

For Part (i), recall that in the proof of Theorem~\ref{thm:pw-rate}\ref{thm:pw-rate1}, we take $\delta_N = (\frac{4}{f_L V_d})^{1/d} (\frac{M}{N_0})^{1/d}$. Take $C_0 > L (\frac{4}{f_L V_d})^{1/d}$. Then $f_1(x) \ge C_0 (\frac{M}{N_0})^{1/d} > L\delta_N$. Take $C_0 > 2 (\frac{4}{f_L V_d})^{1/d}$. Then $\Delta(x) > 2\delta_N$. Then for any $z \in \bR^d$ such that $\lVert z - x \rVert \le \delta_N$, we have $B_{z, \lVert x-z \rVert} \subset S_1$. We take $N_0$ large enough so that $\delta_N < f_L/(4L)$. Then from the proof of Theorem~\ref{thm:pw-rate}\ref{thm:pw-rate1}, $W \le 2 \frac{M}{N_0}$ implies $\lVert Z-x \rVert \le \delta_N$. From Case I in the proof of Theorem~\ref{thm:pw-rate}\ref{thm:pw-rate1}, we use \eqref{eq:ratebias2} and \eqref{eq:ratebias3}, and take $N_0$ large enough so that $L\delta_N \le f_U \wedge (f_L/4)$. We then have
\begin{align*}
  \Big\lvert \E\big[\hat{r}_M(x)\big] - r(x) \Big\rvert &\le \Big\lvert \frac{f_1(x)+L\delta_N}{f_0(x)-2L\delta_N} \frac{N_0}{N_0+1} - \frac{f_1(x)}{f_0(x)} \Big\rvert \bigvee \Big\lvert \frac{f_1(x)-L\delta_N}{f_0(x)+2L\delta_N} \frac{N_0}{N_0+1} - \frac{f_1(x)}{f_0(x)} \Big\rvert + o(N_0^{-\gamma})\\
  &\le \frac{f_0(x)L\delta_N + 2f_1(x)L\delta_N}{f_0(x)(f_0(x)-2L\delta_N)} + \frac{1}{N_0+1} \frac{f_1(x)+L\delta_N}{f_0(x)-2L\delta_N} + o(N_0^{-\gamma})\\
  &\le \frac{2}{f_L}L \delta_N + r(x)\frac{4}{f_L} L \delta_N + \frac{4f_U}{f_L}\frac{1}{N_0+1} + o(N_0^{-\gamma}).
\end{align*}
Taking $C_0 > L \frac{4}{f_L} (\frac{4}{f_L V_d})^{1/d}$,  Part (i) then holds when $N_0$ is sufficiently large.

For Part (ii), we take $C_0$ large enough as in Part (i). Then $W \le 2 \frac{M}{N_0}$ implies $\lVert Z-x \rVert \le \delta_N$. For any $\epsilon>0$, we take $C_0$ and $N_0$ sufficiently large such that for any $\lVert z-x \rVert \le2\delta_N$, we have $\lvert f_1(z) - f_1(x) \rvert \le \epsilon C_0(\frac{M}{N_0})^{1/d}$ and $\lvert f_0(z) - f_0(x) \rvert \le \epsilon f_L$, which is achievable from the Lipschitz condition. Then $\lvert f_1(z) - f_1(x) \rvert \le \epsilon f_1(x)$ and $\lvert f_0(z) - f_0(x) \rvert \le \epsilon f_0(x)$. For any $x \in \bR^d$ such that $f_1(x) \ge C_0(\frac{M}{N_0})^{1/d}$ and $\Delta(x) \ge C_0(\frac{M}{N_0})^{1/d}$, for $N_0$ sufficiently large, we have for all $0 \le w \le w+t \le 2M/N_0$,
\begin{align*}
  &\P\Big( w \le W \le w+t \Big) = \nu_1\Big(\Big\{z \in \bR^d: \nu_0(B_{z,\lVert x-z \rVert}) \in [w,w+t]\Big\}\Big)\\
  \le & \frac{f_1(x)(1+\epsilon)}{f_0(x) (1- \epsilon)} \nu_0\Big(\Big\{z \in \bR^d: \nu_0(B_{z,\lVert x-z \rVert}) \in [w,w+t]\Big\}\Big),
\end{align*}
and then
\[
  \limsup_{t \to 0} t^{-1} \P\Big( w \le W \le w+t \Big) \le r(x) \frac{1+\epsilon}{1 - \epsilon} (1+\epsilon).
\]
Proceeding in the same way as the proof of Theorem~\ref{thm:pw-rate}\ref{thm:pw-rate2} and Lemma~\ref{lemma:ratevar}, we prove Part (ii).

On Part (iii), we cite a result on lower bounding the $M$-NN distance without proof.

\begin{lemma}\label{lemma:dist,lower}\citep[Theorem 4.1]{MR3445317}
Let $X_1,\ldots,X_{N_0} \in \bR^d$ be $N_0$ independent copies from a probability measure with density $f_0$. Assume that $f_0$ is uniformly bounded by a universal constant $f_U$. If $M/\log N_0 \to \infty$ as $N_0 \to \infty$, then with probability one, for all $N_0$ large enough,
  \[
    \inf_{x \in \bR^d} \Big\lVert \cX_{(M)}(x) - x \Big\rVert \ge C \Big(\frac{M}{N_0}\Big)^{1/d},
  \]
where $C>0$ is a constant only depending on $f_U$ and $d$.
\end{lemma}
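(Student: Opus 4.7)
The plan is to reduce the continuum infimum over $x \in \R^d$ to a discrete union-bound problem over the $N_0$ sample points by exploiting a simple triangle-inequality inclusion of balls, and then to combine a binomial Chernoff tail bound with the first Borel--Cantelli lemma. The threshold $M/\log N_0 \to \infty$ is precisely what is needed for the exponential decay in $M$ to beat the linear union-bound factor $N_0$ summably.

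\textbf{Step 1 (geometric reduction).} First I would observe that $\inf_{x \in \R^d} \lVert \cX_{(M)}(x) - x \rVert < r$ holds if and only if there exists some $x^* \in \R^d$ with $\lvert \{j \in \zahl{N_0}: X_j \in B_{x^*, r}\} \rvert \ge M$. Picking any index $i_0$ in this set, the triangle inequality yields $B_{x^*, r} \subset B_{X_{i_0}, 2r}$, so the inclusion event implies $\lvert \{j \in \zahl{N_0}: X_j \in B_{X_{i_0}, 2r}\} \rvert \ge M$. Therefore
\[
\P\Big(\inf_{x \in \R^d} \lVert \cX_{(M)}(x) - x \rVert < r\Big) \le \sum_{i=1}^{N_0} \P\Big(\lvert \{j \in \zahl{N_0}: X_j \in B_{X_i, 2r}\} \rvert \ge M \Big),
\]
which replaces a supremum over the uncountable family of balls of radius $r$ by a maximum over $N_0$ sample-centered balls of radius $2r$.

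\textbf{Step 2 (binomial tail plus Borel--Cantelli).} Take $r = r_N := C (M/N_0)^{1/d}$ with $C > 0$ to be chosen. Conditional on $X_i$, the count $\lvert \{j \neq i : X_j \in B_{X_i, 2r_N}\} \rvert$ is $\mathrm{Bin}(N_0 - 1, q_i)$ with $q_i = \nu_0(B_{X_i, 2r_N}) \le f_U V_d 2^d C^d M/N_0$, using only the density upper bound $f_U$. Choosing $C$ small enough so that $\kappa := f_U V_d 2^d C^d \le 1/(4e)$, the multiplicative Chernoff bound gives, uniformly in $X_i$ and for all $M$ sufficiently large,
\[
\P\big(\mathrm{Bin}(N_0 - 1, q_i) \ge M - 1 \bigm| X_i\big) \le (e\kappa)^{M-1} \le e^{-M}.
\]
Union-bounding over $i \in \zahl{N_0}$ and combining with Step 1 gives the unconditional bound $N_0 e^{-M}$. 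Since $M/\log N_0 \to \infty$, for all $N_0$ large enough $M \ge 3 \log N_0$, whence $N_0 e^{-M} \le N_0^{-2}$, which is summable over $N_0$. The first Borel--Cantelli lemma, applied on the canonical product space of an infinite i.i.d.\ sequence $X_1, X_2, \ldots$, then delivers the claim almost surely for all $N_0$ large enough, with $C$ depending only on $f_U$ and $d$.

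\textbf{Main obstacle.} There is no serious obstacle; the entire argument hinges on the geometric inclusion $B_{x,r} \subset B_{X_{i_0}, 2r}$ whenever $X_{i_0} \in B_{x,r}$, which converts a continuum problem into a discrete one. The one point that must be set up carefully is the interplay between the constants: $C$ must be calibrated so that the mean $\kappa M$ of the relevant binomial is a small fraction of the threshold $M$, which is what makes Chernoff yield pure exponential decay $e^{-M}$ rather than a weaker rate, and then the union-bound factor $N_0$ is absorbed precisely by the hypothesis $M/\log N_0 \to \infty$. A minor bookkeeping subtlety is that the reduction in Step 1 tacitly handles the possibility that the infimum is only approached (not attained), but since the probability is monotone in $r$ the strict inequality $<r$ is equivalent, for the purposes of Borel--Cantelli, to the non-strict inequality at a slightly smaller constant.
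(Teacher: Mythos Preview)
Your proof is correct, but there is nothing to compare it against: in the paper this lemma is not proved at all. It is simply quoted verbatim as \citep[Theorem 4.1]{MR3445317} and invoked inside the proof of Lemma~\ref{lemma:thr}. So you have supplied a self-contained argument where the paper relies on an external citation.

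For the record, your approach is the standard one and the details check out. The geometric reduction in Step~1 (any ball of radius $r$ containing $M$ sample points sits inside a sample-centered ball of radius $2r$) is exactly the right device to convert the continuum infimum into a union bound over $N_0$ events; the Chernoff calculation in Step~2 is fine once $C$ is chosen so that $\kappa = f_U V_d 2^d C^d$ is small enough, and the Borel--Cantelli application is clean because $M/\log N_0 \to \infty$ forces $N_0 e^{-cM}$ to be summable. One cosmetic point: the bound $(e\kappa)^{M-1}$ you wrote tacitly absorbs the factor $M/(M-1)$ from the Chernoff form $\bigl(e\mu/a\bigr)^a$, which is harmless for large $M$ but worth a half-sentence if you write this out in full.
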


Notice that for $z \in \bR^d$, if $\lVert z-x \rVert \le \frac{1}{2} \Big\lVert \cX_{(M)}(x) - x \Big\rVert$, then $z \in A_M(x)$. From Lemma~\ref{lemma:dist,lower}, with probability one, for all $N_0$ sufficiently large, we have
\[
  \frac{N_0}{M} \nu_1\big(A_M(x)\big) \ge \frac{N_0}{M} \nu_1\Big(B_{x, \lVert \cX_{(M)}(x) - x \rVert/2}\Big) \ge \frac{N_0}{M} \nu_1\Big(B_{x, C(M/N_0)^{1/d}/2}\Big),
\]
where $C>0$ is the constant in Lemma~\ref{lemma:dist,lower}. Taking $C_0$ large enough so that $C_0 > C/2$ and $C_0 \ge LC$, then $\Delta(x) > C(M/N_0)^{1/d}/2$ and for any $\lVert z-x \rVert \le C(M/N_0)^{1/d}/2$, we have $f_1(z) \ge \frac{1}{2}f_1(x)$. Then
\[
  \frac{N_0}{M} \nu_1\big(A_M(x)\big) \ge \frac{N_0}{M} \frac{f_1(x)}{2} V_d \Big(\frac{C}{2} \Big(\frac{M}{N_0}\Big)^{\frac{1}{d}}\Big)^d = 2^{-(d+1)} C^d V_d f_1(x) \ge 2^{-(d+1)} C^d V_d f_L r(x).
\]
We then complete the proof of Part (iii).
\end{proof}

\subsubsection{Proof of Lemma~\ref{lemma:T1}}

\begin{proof}[Proof of Lemma~\ref{lemma:T1}]

Notice that 
\[
\E[T_1]^2 = \E\Big[\E[T_1^2 \given \mX]\Big] = \E\Big[\Big(\E[T_1 \given \mX]\Big)^2 + \Var[T_1 \given \mX]\Big]. 
\]
We consider the above two terms separately.

{\bf Step I.} $\E[T_1 \given \mX]$.

Let $g(x)$ be a given twice continuously differentiable function on $[0,\infty)$. Equation 2.5 in \cite{totik1994approximation} gives
\begin{align}\label{eq:T114}
  \lvert g(t) - g(x) - g'(x) (t-x) \rvert \le 4 \frac{(t-x)^2}{x} \lVert x g''(x) \rVert_{\infty},
\end{align}
for all $x,t \in [0,\infty)$.

From \eqref{eq:T114},
\begin{align*}
  \lvert \E[T_1 \given \mX] \rvert &= \Big\lvert \frac{1}{N_0} \sum_{i=1}^{N_0} \E \Big[ \phi\Big(\hat{r}_M(X_i)\Big) - \phi\Big(\E[\hat{r}_M(X_i) \given \mX]\Big) \Biggiven \mX \Big] \Big\rvert\\
  &\le 2 \lVert \phi - g \rVert_\infty + \Big\lvert \frac{1}{N_0} \sum_{i=1}^{N_0} \E \Big[ g\Big(\hat{r}_M(X_i)\Big) - g\Big(\E[\hat{r}_M(X_i) \given \mX]\Big) \Biggiven \mX \Big] \Big\rvert\\
  &\le 2 \lVert \phi - g \rVert_\infty + 4 \lVert x g''(x) \rVert_{\infty} \frac{1}{N_0} \sum_{i=1}^{N_0} \E \Big[ \frac{1}{\E[\hat{r}_M(X_i) \given \mX]} \Big(\hat{r}_M(X_i) - \E[\hat{r}_M(X_i) \given \mX]\Big)^2 \Biggiven \mX \Big]\\
  &= 2 \lVert \phi - g \rVert_\infty + 4 \lVert x g''(x) \rVert_{\infty} \frac{1}{N_0} \sum_{i=1}^{N_0} \frac{\Var[\hat{r}_M(X_i) \given \mX]}{\E[\hat{r}_M(X_i) \given \mX]}.
\end{align*}
From the definition of $\hat{r}_M(X_i)$,
\[
  \hat{r}_M(X_i) = \frac{N_0}{N_1} \frac{K_M(i)}{M} = \frac{N_0}{N_1M} \sum_{j=1}^{N_1} \ind\big(Z_j \in A_M(i)\big).
\]
Then conditional on $\mX$,
\[
  \frac{N_1M}{N_0} \hat{r}_M(X_i) \Biggiven \mX \sim {\rm Bin}(N_1, A_M(i)).
\]
It is easy to calculate
\[
  \E[\hat{r}_M(X_i) \given \mX] = \frac{N_0}{M} \nu_1\big(A_M(i)\big), ~~ \Var[\hat{r}_M(X_i) \given \mX] = \frac{N_0^2}{N_1M^2} \nu_1\big(A_M(i)\big) \big[1-\nu_1\big(A_M(i)\big)\big].
\]
Then
\begin{align*}
  \lvert \E[T_1 \given \mX] \rvert &\le 2 \lVert \phi - g \rVert_\infty + 4 \lVert x g''(x) \rVert_{\infty} \frac{1}{N_0} \sum_{i=1}^{N_0}  \frac{N_0}{N_1M} \big[1-\nu_1\big(A_M(i)\big)\big]\\
  &\le 2 \lVert \phi - g \rVert_\infty + 4 \frac{N_0}{N_1M}\lVert x g''(x) \rVert_{\infty}.
\end{align*}

We then cite the equivalence theorem.
\begin{lemma}\label{lemma:equiv}\citep[Theorem 2.1.1]{ditzian2012moduli}
  For any real-valued function $\phi$ on $[0,\infty)$ and $t>0$, there exists a universal constant $C>0$ such that
  \[
    \inf_{g \in C^2[0,\infty)} \Big( \lVert \phi - g \rVert_\infty + t^2 \lVert x g''(x) \rVert_{\infty} \Big) \le C \omega_\varphi^2 (\phi,t)_\infty,
  \]
  where $\omega_\varphi^2 (\phi,t)_\infty$ is the second-order modulus of smoothness with $\varphi(x) = \sqrt{x}$, and $C^2[0,\infty)$ is the class of all twice continuously differentiable functions on $[0,\infty)$.
\end{lemma}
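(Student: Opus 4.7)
The statement is the (easier) direction of the Ditzian--Totik equivalence between the weighted $K$-functional $K_\varphi^2(\phi,t^2):=\inf_{g\in C^2[0,\infty)}\{\lVert\phi-g\rVert_\infty+t^2\lVert xg''(x)\rVert_\infty\}$ and the second-order weighted modulus $\omega_\varphi^2(\phi,t)_\infty$ with $\varphi(x)=\sqrt{x}$. The plan is to exhibit, for each $t>0$, a single concrete smoothing $g_t\in C^2[0,\infty)$ whose defect $\lVert\phi-g_t\rVert_\infty$ and whose weighted second derivative norm $\lVert xg_t''\rVert_\infty$ are \emph{simultaneously} controlled by $\omega_\varphi^2(\phi,t)_\infty$; plugging this $g_t$ into the infimum on the left then yields the claim.

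First, I would define a Steklov-type double mean tailored to the weight $\varphi(x)=\sqrt{x}$. Concretely, for $x>0$ set
\[
g_t(x)\ :=\ \phi(x)\ -\ \frac{4}{t^{2}}\int_{0}^{t/2}\!\!\int_{0}^{t/2}\Delta_{(u+v)\sqrt{x}}^{2}\phi(x)\,du\,dv,
\]
where $\Delta_{h}^{2}\phi(x):=\phi(x+h)-2\phi(x)+\phi(x-h)$, and extend by continuity at $x=0$. The punchline is that $\phi-g_t$ is, by construction, exactly the averaged second-order $\varphi$-difference that defines $\omega_\varphi^{2}(\phi,t)_\infty$, so the uniform bound
\[
\lVert\phi-g_t\rVert_\infty\ \le\ \omega_\varphi^{2}(\phi,t)_\infty
\]
is immediate from the definition of the modulus (the integrand is pointwise controlled by $\omega_\varphi^{2}(\phi,u+v)_\infty\le\omega_\varphi^{2}(\phi,t)_\infty$, and we average).

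The key step is then to show $\lVert xg_t''(x)\rVert_\infty\le C\,t^{-2}\omega_\varphi^{2}(\phi,t)_\infty$ for a universal constant $C$. The plan is to change variables in the inner integral, replacing $u,v$ by $a=u\sqrt{x}$, $b=v\sqrt{x}$, which turns $g_t$ into an integral over a box of side $t\sqrt{x}/2$ with Jacobian $x^{-1}$ in front. After this substitution the dependence on $x$ enters only through the endpoints and through the arguments of $\phi$, and differentiating twice in $x$ is reduced to standard Leibniz-rule manipulations on a double Steklov mean with a fixed step. Every $\partial_x$ acting on an endpoint or a shifted argument produces a factor $(2\sqrt{x})^{-1}$, so two derivatives produce a factor $x^{-1}$ that exactly cancels the weight $x$ in $xg_t''(x)$; what remains is a bounded linear combination of finite differences of $\phi$ of step at most $t\sqrt{x}$, each of which is a second-order $\varphi$-difference and hence bounded by $\omega_\varphi^{2}(\phi,t)_\infty$. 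Collecting the combinatorial constants gives the advertised $t^{-2}\omega_\varphi^{2}(\phi,t)_\infty$ bound. Combining the two estimates,
\[
K_\varphi^{2}(\phi,t^{2})\ \le\ \lVert\phi-g_t\rVert_\infty+t^{2}\lVert xg_t''\rVert_\infty\ \le\ C\,\omega_\varphi^{2}(\phi,t)_\infty,
\]
which is the claim.

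The main obstacle is the smoothness/boundary analysis in the second step: $\varphi(x)=\sqrt{x}$ degenerates at $x=0$, and one has to verify that $g_t$ is genuinely $C^{2}$ up to the boundary (including at $x=0$) and that the formal Leibniz computation above is justified, so that the two ``downstairs'' factors of $\sqrt{x}$ really do conspire with the prefactor $x$ in $xg_t''(x)$ to give a uniform bound independent of $x$. This is essentially the content of Chapter~2 of \cite{ditzian2012moduli}, and any minor variation in the definition of $g_t$ (e.g., using a triangular rather than a square kernel in $u,v$) affects only the universal constant $C$, not the structure of the argument.
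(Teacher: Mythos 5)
The paper does not prove this lemma at all --- it cites it verbatim as \citet[Theorem 2.1.1]{ditzian2012moduli} --- so any proof you supply is necessarily ``a different route.'' Your high-level strategy (Steklov-type smoothing adapted to the weight $\varphi(x)=\sqrt{x}$, then showing the defect and the weighted second derivative are both controlled by $\omega_\varphi^2$) is the right one and matches the Ditzian--Totik framework.

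However, your concrete choice of $g_t$ has a genuine gap. Expanding the symmetric second difference $\Delta_{h}^2\phi(x)=\phi(x+h)-2\phi(x)+\phi(x-h)$ inside the double integral, the $-2\phi(x)$ term integrates out and you get
\[
g_t(x) \;=\; 3\phi(x) \;-\; \frac{4}{t^2}\int_0^{t/2}\!\!\int_0^{t/2}\Big[\phi\big(x+(u+v)\sqrt{x}\big)+\phi\big(x-(u+v)\sqrt{x}\big)\Big]\,du\,dv.
\]
The averaged translates are smooth in $x$, but the leading $3\phi(x)$ is not: $\phi$ is only assumed bounded, so $g_t\notin C^2[0,\infty)$ and it cannot be a competitor in the infimum defining the $K$-functional. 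The standard fix (used by Ditzian and Totik, and in the classical unweighted case going back to Steklov/Johnen) is to define $g_t$ \emph{only} as an average of shifted copies of $\phi$, e.g.\ via a combination like $2\phi(x+\tfrac{u+v}{2}\sqrt{x})-\phi(x+(u+v)\sqrt{x})$, so that $g_t-\phi$ \emph{equals} an averaged second $\varphi$-difference as a consequence rather than by fiat, and no bare $\phi(x)$ term survives. Separately, the boundary behavior near $x=0$ is more delicate than ``a minor variation in the kernel affecting only the constant'': because $x-h\sqrt{x}$ can leave $[0,\infty)$ (and the symmetric difference is set to zero there), Ditzian--Totik split $[0,\infty)$ into a region near $0$ and its complement and use different smoothers on each piece, matching them; this is essential to the uniform $\lVert xg_t''\rVert_\infty$ bound, not a cosmetic adjustment. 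You are upfront that your write-up is a sketch deferring to Chapter~2 of \cite{ditzian2012moduli}, which is fair given the lemma is cited rather than proved in the paper, but as written the construction does not establish the claim.
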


From Example 3.4.2 in \cite{ditzian2012moduli}, when $\phi(x) = x\log x$, we have $\omega_\varphi^2 (\phi,t)_\infty \asymp t^2$. Then from Lemma~\ref{lemma:equiv}, we obtain
\begin{align}\label{eq:T112}
  \lvert \E[T_1 \given \mX] \rvert \lesssim \frac{N_0}{N_1M}.
\end{align}

{\bf Step II.} $\Var[T_1 \given \mX]$.

Conditional on $\mX$,
\begin{align*}
  \Var[T_1 \given \mX] &= \Var\Big[\frac{1}{N_0} \sum_{i=1}^{N_0} \Big[ \phi\Big(\hat{r}_M(X_i)\Big) - \phi\Big(\E[\hat{r}_M(X_i) \given \mX]\Big)\Big] \Biggiven \mX\Big] \\
  &= \Var\Big[\frac{1}{N_0} \sum_{i=1}^{N_0} \phi\Big(\hat{r}_M(X_i)\Big) \Biggiven \mX\Big].
  \yestag\label{eq:T111}
\end{align*}

We then apply Efron-Stein inequality \cite[Theorem 3.1]{boucheron2013concentration}. Recall $\mZ = (Z_1,\ldots,Z_{N_1})$ and for any $\ell \in \zahl{N_1}$, define
\[
  \mZ_\ell := (Z_1,\ldots,Z_{\ell-1},\tZ_\ell,Z_{\ell+1},\ldots,Z_{N_1}),
\]
where $[\tZ_\ell]_{\ell=1}^{N_1}$ are independent copies of $[Z_\ell]_{\ell=1}^{N_1}$.

Fix $\ell \in \zahl{N_1}$. Recall that $\cJ_M(\cdot)$ are defined based on $\mX$ and $\mZ$. We then define $\cJ'_M(\cdot)$ in the same way but based on $\mX$ and $\mZ_\ell$.

Let $\hat{r}'_M$ be the density ratio estimator based on $\mX$ and $\mZ_\ell$. Then for any $i \in \zahl{N_0}$,
\[
  \hat{r}_M(X_i) = \frac{N_0}{N_1 M} \sum_{j=1}^{N_1} \ind\big(i \in \cJ_M(j)\big), ~~ \hat{r}'_M(X_i) = \frac{N_0}{N_1 M} \sum_{j=1}^{N_1} \ind\big(i \in \cJ'_M(j)\big).
\]

Notice that for $j \in \zahl{N_1}$ and $j \neq \ell$, $\cJ_M(j) = \cJ'_M(j)$ since both $\cJ_M(\cdot)$ and $\cJ'_M(\cdot)$ are based on $\mX$. Then if $i \in \cJ_M(\ell) \setminus \cJ'_M(\ell)$ or $i \in \cJ'_M(\ell) \setminus \cJ_M(\ell)$, we have $\lvert \hat{r}_M(X_i) - \hat{r}'_M(X_i) \rvert = \frac{N_0}{N_1M}$. If $i \in \cJ_M(\ell) \cap \cJ'_M(\ell)$ or $i \notin \cJ_M(\ell) \cup \cJ'_M(\ell)$, we have $\hat{r}_M(X_i) = \hat{r}'_M(X_i)$. As a result,
\begin{align*}
  & \E\Big[\Big[\frac{1}{N_0} \sum_{i=1}^{N_0} \Big(\phi\Big(\hat{r}_M(X_i)\Big) - \phi\Big(\hat{r}'_M(X_i)\Big) \Big) \Big]^2\Biggiven \mX\Big]\\
  \le & \E\Big[\Big[\frac{1}{N_0} \sum_{i=1}^{N_0} \Big\lvert\phi\Big(\hat{r}_M(X_i)\Big) - \phi\Big(\hat{r}'_M(X_i)\Big) \Big\rvert \Big]^2\Biggiven \mX\Big]\\
  = & \E\Big[\Big[\frac{1}{N_0} \sum_{\substack{i=1\\ i \in (\cJ_M(\ell) \setminus \cJ'_M(\ell)) \cup (\cJ'_M(\ell) \setminus \cJ_M(\ell))}}^{N_0} \Big\lvert\phi\Big(\hat{r}_M(X_i)\Big) - \phi\Big(\hat{r}'_M(X_i)\Big) \Big\rvert \Big]^2\Biggiven \mX\Big]\\
  \le & \frac{1}{N_0^2} \E\Big[ \Big\lvert (\cJ_M(\ell) \setminus \cJ'_M(\ell)) \cup (\cJ'_M(\ell) \setminus \cJ_M(\ell)) \Big\rvert \\
  & \sum_{\substack{i=1\\ i \in (\cJ_M(\ell) \setminus \cJ'_M(\ell)) \cup (\cJ'_M(\ell) \setminus \cJ_M(\ell)) }}^{N_0} \Big[\phi\Big(\hat{r}_M(X_i)\Big) - \phi\Big(\hat{r}'_M(X_i)\Big) \Big]^2\Biggiven \mX\Big]\\
  \le & \frac{2M}{N_0^2} \E\Big[ \sum_{\substack{i=1\\ i \in (\cJ_M(\ell) \setminus \cJ'_M(\ell)) \cup (\cJ'_M(\ell) \setminus \cJ_M(\ell)) }}^{N_0} \Big[\phi\Big(\hat{r}_M(X_i)\Big) - \phi\Big(\hat{r}'_M(X_i)\Big) \Big]^2\Biggiven \mX\Big],
\end{align*}
where for any set $A$, $\lvert A \rvert$ is the cardinality of $A$. The last two steps are due to the Cauchy–Schwarz inequality and the fact that $\lvert \cJ_M(\ell) \rvert \le M, \lvert \cJ'_M(\ell) \rvert \le M$. Thus
\[
  \lvert (\cJ_M(\ell) \setminus \cJ'_M(\ell)) \bigcup (\cJ'_M(\ell) \setminus \cJ_M(\ell)) \rvert \le 2M.
\]
Since $[X_i]_{i=1}^{N_0}$ i.i.d., we have
\begin{align*}
  & \E\Big[\frac{1}{N_0} \sum_{i=1}^{N_0} \Big(\phi\Big(\hat{r}_M(X_i)\Big) - \phi\Big(\hat{r}'_M(X_i)\Big) \Big) \Big]^2\\
  \le & \frac{2M}{N_0} \E\Big[\Big[\phi\Big(\hat{r}_M(X_i)\Big) - \phi\Big(\hat{r}'_M(X_i)\Big) \Big]^2 \ind\Big(i \in (\cJ_M(\ell) \setminus \cJ'_M(\ell)) \cup (\cJ'_M(\ell) \setminus \cJ_M(\ell)) \Big) \Big]\\
  \le & \frac{2M}{N_0} \E\Big[\Big[\phi\Big(\hat{r}_M(X_i)\Big) - \phi\Big(\hat{r}'_M(X_i)\Big) \Big]^2 \Big[\ind\Big(i \in \cJ_M(\ell) \setminus \cJ'_M(\ell)\Big) + \ind\Big(\cJ'_M(\ell) \setminus \cJ_M(\ell) \Big)\Big] \Big]\\
  =& \frac{4M}{N_0} \E\Big[\Big[\phi\Big(\hat{r}_M(X_i)\Big) - \phi\Big(\hat{r}'_M(X_i)\Big) \Big]^2 \ind\Big(i \in \cJ_M(\ell) \setminus \cJ'_M(\ell)\Big) \Big],
  \yestag\label{eq:T11}
\end{align*}
where the last step is from symmetry.

Let
\[
  \hat{r}^{-\ell}_M(X_i) = \frac{N_0}{N_1 M} \sum_{j=1, j \neq \ell}^{N_1} \ind\big(i \in \cJ_M(j)\big).
\]
Then $\hat{r}_M(X_i) = \hat{r}^{-\ell}_M(X_i) + \frac{N_0}{N_1M} \ind\big(i \in \cJ_M(\ell)\big), \hat{r}'_M(X_i) = \hat{r}^{-\ell}_M(X_i) + \frac{N_0}{N_1M} \ind\big(i \in \cJ'_M(\ell)\big)$. Notice that $\hat{r}^{-\ell}_M(X_i)$ is a function of $\mX$ and $[Z_j]_{j=1,j \neq \ell}^{N_0}$. Since $[Z_j]_{j=1}^{N_1}$ are i.i.d. and independent with $\tZ_\ell$,
\begin{align*}
  & \E\Big[\Big[\phi\Big(\hat{r}_M(X_i)\Big) - \phi\Big(\hat{r}'_M(X_i)\Big) \Big]^2 \ind\Big(i \in \cJ_M(\ell) \setminus \cJ'_M(\ell)\Big) \Biggiven \mX \Big]\\
  = & \E\Big[\Big[\phi\Big(\hat{r}^{-\ell}_M(X_i) + \frac{N_0}{N_1M}\Big) - \phi\Big(\hat{r}^{-\ell}_M(X_i)\Big) \Big]^2 \Biggiven \mX \Big] \cdot \E\Big[ \ind\Big(i \in \cJ_M(\ell) \setminus \cJ'_M(\ell)\Big) \Biggiven \mX \Big].
  \yestag\label{eq:T12}
\end{align*}

For the second term in \eqref{eq:T12},
\begin{align}\label{eq:T15}
  \E\Big[ \ind\Big(i \in \cJ_M(\ell) \setminus \cJ'_M(\ell)\Big) \Biggiven \mX \Big] \le \E\Big[ \ind\Big(i \in \cJ_M(\ell) \Big) \Biggiven \mX \Big] = \nu_1\Big(A_M(i)\Big).
\end{align}

For the first term in \eqref{eq:T12}, notice that $\frac{N_1M}{N_0} \hat{r}^{-\ell}_M(X_i) \sim {\rm Bin}\Big(N_1-1, \nu_1\big(A_M(i)\big)\Big)$. Then using the Chernoff bound, we obtain
\begin{align*}
  & \P\Big( \frac{N_1M}{N_0} \hat{r}^{-\ell}_M(X_i) \le \frac{1}{2} (N_1 - 1) \nu_1\big(A_M(i)\big) \Big) \le \exp\Big(-\frac{1}{2}(1-\log 2) (N_1 - 1) \nu_1\big(A_M(i)\big) \Big),\\
  & \P\Big( \frac{N_1M}{N_0} \hat{r}^{-\ell}_M(X_i) \ge 2 (N_1 - 1) \nu_1\big(A_M(i)\big) \Big) \le \exp\Big(-(2\log 2 - 1) (N_1 - 1) \nu_1\big(A_M(i)\big) \Big).
\end{align*}

If $\nu_1\big(A_M(i)\big) \ge C_\gamma \frac{\log N_1}{N_1-1}$, then for any $\gamma>0$, there exists a constant $C_\gamma>0$ only depending on $\gamma$ such that, with probability at least $1 - N_1^{-\gamma}$, we have
\[
  \frac{1}{2} \frac{N_0}{M} \Big( 1 - \frac{1}{N_1}\Big) \nu_1\big(A_M(i)\big) \le \hat{r}^{-\ell}_M(X_i) \le 2 \frac{N_0}{M} \Big( 1 - \frac{1}{N_1}\Big) \nu_1\big(A_M(i)\big).
\]
Notice that $\frac{N_0}{N_1M} \prec \frac{N_0}{M}  \nu_1\big(A_M(i)\big)$ when $\nu_1\big(A_M(i)\big) \ge C_\gamma \frac{\log N_1}{N_1-1}$. Then for $N_0$ sufficiently large, with probability at least $1 - N_1^{-\gamma}$, the following sandwich inequality holds:
\begin{align}\label{eq:T13}
  \frac{1}{4} \frac{N_0}{M} \nu_1\big(A_M(i)\big) \le \hat{r}^{-\ell}_M(X_i) \le \hat{r}^{-\ell}_M(X_i) + \frac{N_0}{N_1M} \le 4 \frac{N_0}{M} \nu_1\big(A_M(i)\big).
\end{align}

If $\nu_1\big(A_M(i)\big) \le C_\gamma \frac{\log N_1}{N_1-1}$, then by the Chernoff bound,
\begin{align*}
  & \P\Big( \frac{N_1M}{N_0} \hat{r}^{-\ell}_M(X_i) \ge 2 C_\gamma \log N_1 \Big) \\
  \le & \exp\Big(2 C_\gamma \log N_1 - (N_1 - 1) \nu_1\big(A_M(i)\big) - 2 C_\gamma \log N_1 \log\Big(\frac{2 C_\gamma \log N_1}{(N_1 - 1) \nu_1\big(A_M(i)\big)}\Big)\Big)\\
  \le & \exp\Big(-(2\log 2 - 1) C_\gamma \log N_1\Big) \le N_1^{-\gamma}.
\end{align*}
Then for $N_0$ sufficiently large, with probability at least $1 - N_1^{-\gamma}$,
\begin{align}\label{eq:T14}
  \hat{r}^{-\ell}_M(X_i) \le 2 C_\gamma \frac{N_0 \log N_1}{N_1M}.
\end{align}

Then for any $\epsilon \in (0,1)$,
\begin{align*}
  & \E\Big[\Big[\phi\Big(\hat{r}^{-\ell}_M(X_i) + \frac{N_0}{N_1M}\Big) - \phi\Big(\hat{r}^{-\ell}_M(X_i)\Big) \Big]^2 \Biggiven \mX \Big]\\
  \le & C_\epsilon \Big(\frac{N_0}{N_1M}\Big)^{2(1-\epsilon)} \ind \Big(\nu_1\big(A_M(i)\big) \le C_\gamma \frac{\log N_1}{N_1}\Big) + \frac{2}{N_1^\gamma} \Big(2\frac{N_0}{M} \log \frac{N_0}{M}\Big)^2\\
  &+ C \Big[1 + \Big(\log\Big(\frac{N_0}{M} \nu_1\big(A_M(i)\big)\Big)\Big)^2\Big] \Big(\frac{N_0}{N_1M}\Big)^2 \ind \Big(\nu_1\big(A_M(i)\big) \ge C_\gamma \frac{\log N_1}{N_1}\Big),
  \yestag\label{eq:T16}
\end{align*}
where $C_\epsilon>0$ is a constant only depending on $\epsilon$ and $C>0$ is a universal constant. The first term is due to \eqref{eq:T14} and the inequality 
\[
  \lvert x \log x - y \log y \lvert \le C_\epsilon \lvert x - y \rvert^{1-\epsilon},
\]
whenever $x,y \in [0,1]$ and $\lvert x - y \rvert \le \frac{1}{2}$. The second term is from $\phi(x) = x\log x$ and $\hat{r}^{-\ell}_M(X_i) + \frac{N_0}{N_1M} \le \frac{N_0}{M}$. The third term is from $\phi'(x) = 1 + \log x$ and the mean value theorem by using the sandwich inequality \eqref{eq:T13}.

Plugging \eqref{eq:T15} and \eqref{eq:T16} into \eqref{eq:T11} and tracing back to \eqref{eq:T12} yields
\begin{align*}
  & \E\Big[\frac{1}{N_0} \sum_{i=1}^{N_0} \Big(\phi\Big(\hat{r}_M(X_i)\Big) - \phi\Big(\hat{r}'_M(X_i)\Big) \Big) \Big]^2\\
  \le & \frac{4M}{N_0} \E \Big[ C_\epsilon \Big(\frac{N_0}{N_1M}\Big)^{2(1-\epsilon)} \nu_1\Big(A_M(i)\Big) \ind \Big(\nu_1\big(A_M(i)\big) \le C_\gamma \frac{\log N_1}{N_1}\Big) + \frac{2}{N_1^\gamma} \Big(2\frac{N_0}{M} \log \frac{N_0}{M}\Big)^2 \nu_1\Big(A_M(i)\Big)\\
  &+ C \Big[1 + \Big(\log\Big(\frac{N_0}{M} \nu_1\big(A_M(i)\big)\Big)\Big)^2\Big] \Big(\frac{N_0}{N_1M}\Big)^2 \nu_1\Big(A_M(i)\Big) \ind \Big(\nu_1\big(A_M(i)\big) \ge C_\gamma \frac{\log N_1}{N_1}\Big) \Big].
  \yestag\label{eq:T17}
\end{align*}

For the first term in \eqref{eq:T17}, since $MN_1/(N_0\log^2N_1) \to \infty$,
\begin{align*}
  & C_\epsilon \frac{M}{N_0} \Big(\frac{N_0}{N_1M}\Big)^{2(1-\epsilon)} \nu_1\Big(A_M(i)\Big) \ind \Big(\nu_1\big(A_M(i)\big) \le C_\gamma \frac{\log N_1}{N_1}\Big) \\
  \le & C_\epsilon C_\gamma \frac{M}{N_0} \Big(\frac{N_0}{N_1M}\Big)^{2(1-\epsilon)} \frac{\log N_1}{N_1} = C_\epsilon C_\gamma \frac{1}{N_1^2} \Big(\frac{N_0}{N_1M}\Big)^{1-2\epsilon} \log N_1 \prec \frac{1}{N_1^2},
  \yestag\label{eq:T18}
\end{align*}
where we take $\epsilon$ sufficiently small.

For the second term in \eqref{eq:T17}, using Corollary~\ref{crl:moment,catch,random}, $\E [r(X)] = 1$ and $N_1^{-\frac{d}{1+d}}\log N_0 \to 0$,
\begin{align*}
  \frac{M}{N_0} \frac{1}{N_1^\gamma} \Big(\frac{N_0}{M} \log \frac{N_0}{M}\Big)^2 \E\Big[ \nu_1\Big(A_M(i)\Big) \Big] & = \frac{1}{N_1^\gamma} \Big(\log \frac{N_0}{M}\Big)^2 \E\Big[ \frac{N_0}{M} \nu_1\Big(A_M(i)\Big) \Big]\\
  \lesssim & \frac{1}{N_1^\gamma} \Big(\log \frac{N_0}{M}\Big)^2 \prec \frac{1}{N_1^2},
  \yestag\label{eq:T19}
\end{align*}
where we take $\gamma$ sufficiently large.

For the third term in \eqref{eq:T17}, using Corollary~\ref{crl:moment,catch,random},
\begin{align*}
  & \frac{M}{N_0} \E \Big[ \Big[1 + \Big(\log\Big(\frac{N_0}{M} \nu_1\big(A_M(i)\big)\Big)\Big)^2\Big] \Big(\frac{N_0}{N_1M}\Big)^2 \nu_1\Big(A_M(i)\Big) \ind \Big(\nu_1\big(A_M(i)\big) \ge C_\gamma \frac{\log N_1}{N_1}\Big) \Big]\\
  \le & \frac{M}{N_0} \E \Big[ \Big[1 + \Big(\log\Big(\frac{N_0}{M} \nu_1\big(A_M(i)\big)\Big)\Big)^2\Big] \Big(\frac{N_0}{N_1M}\Big)^2 \nu_1\Big(A_M(i)\Big) \Big]\\
  = & \frac{1}{N_1^2} \E \Big[ \frac{N_0}{M}\nu_1\Big(A_M(i)\Big) + \Big(\log\Big(\frac{N_0}{M} \nu_1\big(A_M(i)\big)\Big)\Big)^2 \frac{N_0}{M}\nu_1\Big(A_M(i)\Big) \Big]\\
  \le & \frac{1}{N_1^2} \E \Big[ \frac{N_0}{M}\nu_1\Big(A_M(i)\Big) + \Big(\frac{N_0}{M}\nu_1\Big(A_M(i)\Big)\Big)^2 \vee 4e^{-2}\Big] \lesssim C \frac{1}{N_1^2},
  \yestag\label{eq:T110}
\end{align*}
where $C>0$ is a constant only depending on $f_L,f_U$. The last step is from $(\log x)^2 \le x$ when $x \ge 1$ and the maximum of $(\log x)^2 x$ is $4e^{-2}$ when $x \in [0,1]$.

Plugging \eqref{eq:T18}, \eqref{eq:T19}, \eqref{eq:T110} into \eqref{eq:T17} yields
\[
  \E\Big[\frac{1}{N_0} \sum_{i=1}^{N_0} \Big(\phi\Big(\hat{r}_M(X_i)\Big) - \phi\Big(\hat{r}'_M(X_i)\Big) \Big) \Big]^2 \lesssim C \frac{1}{N_1^2},
\]
where $C>0$ is a constant only depending on $f_L,f_U$.

By Efron-Stein inequality and \eqref{eq:T111},
\begin{align*}
  \E \Big[\Var[T_1 \given \mX] \Big] &= \E \Big[\Var\Big[\frac{1}{N_0} \sum_{i=1}^{N_0} \phi\Big(\hat{r}_M(X_i)\Big) \Biggiven \mX\Big] \Big]\\
  &\lesssim N_1 \E\Big[\frac{1}{N_0} \sum_{i=1}^{N_0} \Big(\phi\Big(\hat{r}_M(X_i)\Big) - \phi\Big(\hat{r}'_M(X_i)\Big) \Big) \Big]^2 \lesssim C \frac{1}{N_1},
  \yestag\label{eq:T113}
\end{align*}
where $C>0$ is a constant only depending on $f_L,f_U$.

Combining \eqref{eq:T112} and \eqref{eq:T113} completes the proof.
\end{proof}

\subsubsection{Proof of Lemma~\ref{lemma:T2}}

\begin{proof}[Proof of Lemma~\ref{lemma:T2}]

From \eqref{eq:T114},
\begin{align*}
  \lvert \E[T_2 \given \mX] \rvert &= \Big\lvert \E \Big[ \frac{1}{N_0} \sum_{i=1}^{N_0} \Big[ \phi\Big(\E[\hat{r}_M(X_i) \given \mX]\Big) - \phi\Big(\E[\hat{r}_M(X_i) \given X_i]\Big) \Big] \ind \Big( X_i \in \cE \Big) \Big] \Big\rvert\\
  &\le 2 \lVert \phi - g \rVert_\infty + \Big\lvert \E \Big[ \frac{1}{N_0} \sum_{i=1}^{N_0} \Big[ g\Big(\E[\hat{r}_M(X_i) \given \mX]\Big) - g\Big(\E[\hat{r}_M(X_i) \given X_i]\Big) \Big] \ind \Big( X_i \in \cE \Big) \Big] \Big\rvert\\
  &\le 2 \lVert \phi - g \rVert_\infty + 4 \lVert x g''(x) \rVert_{\infty} \frac{1}{N_0} \sum_{i=1}^{N_0} \E \Big[ \frac{(\E[\hat{r}_M(X_i) \given \mX] - \E[\hat{r}_M(X_i) \given X_i] )^2 }{\E[\hat{r}_M(X_i) \given X_i]} \ind \Big( X_i \in \cE \Big) \Big]\\
  &= 2 \lVert \phi - g \rVert_\infty + 4 \lVert x g''(x) \rVert_{\infty} \frac{1}{N_0} \sum_{i=1}^{N_0} \E \Big[\frac{\Var[\E[\hat{r}_M(X_i) \given \mX] \given X_i]}{\E[\hat{r}_M(X_i) \given X_i]} \ind \Big( X_i \in \cE \Big) \Big].
\end{align*}
Using Lemma~\ref{lemma:thr},
\begin{align*}
  \lvert \E[T_2 \given \mX] \rvert &\le 2 \lVert \phi - g \rVert_\infty + 4 \lVert x g''(x) \rVert_{\infty} \frac{1}{N_0} \sum_{i=1}^{N_0} \E \Big[\frac{\frac{32}{M} [r(X_i)]^2}{\frac{1}{2}r(X_i)} \ind \Big( X_i \in \cE \Big) \Big]\\
  &\le 2 \lVert \phi - g \rVert_\infty + 4 \lVert x g''(x) \rVert_{\infty} \frac{1}{N_0} \sum_{i=1}^{N_0} \frac{64}{M} \E [r(X_i)]\\
  &= 2 \lVert \phi - g \rVert_\infty +  \frac{256}{M} \lVert x g''(x) \rVert_{\infty}.
\end{align*}
Using Lemma~\ref{lemma:equiv}, we obtain
\begin{align}\label{eq:T21}
  \lvert \E[T_2 \given \mX] \rvert \lesssim C\frac{1}{M},
\end{align}
where $C>0$ is a constant only depending on $f_L,f_U$.

For the variance, we have
\[
  \Var[T_2] = \Var\Big[ \frac{1}{N_0} \sum_{i=1}^{N_0} \Big[ \phi\Big(\E[\hat{r}_M(X_i) \given \mX]\Big) - \phi\Big(\E[\hat{r}_M(X_i) \given X_i]\Big) \Big] \ind \Big( X_i \in \cE \Big) \Big].
\]
To apply Efron-Stein inequality, recall $\mX = (X_1,\ldots,X_{N_0})$ and for any $\ell \in \zahl{N_0}$,
\[
  \mX_\ell = (X_1,\ldots,X_{\ell-1},\tX_\ell,X_{\ell+1},\ldots,X_{N_0}),
\]
where $[\tX_\ell]_{\ell=1}^{N_0}$ are independent copies of $[X_\ell]_{\ell=1}^{N_0}$. 

Fix $\ell \in \zahl{N_0}$. Recall that $A_M(\cdot)$ are defined based on $\mX$. We then define $A'_M(\cdot)$ in the same way but based on $\mX_\ell$, and define $A^\ell_M(\cdot)$ based on $(X_1,\ldots,X_{\ell-1},X_{\ell+1},\ldots,X_{N_0})$.

(1) If $i= \ell$,
\begin{align*}
  & \Big\lvert \Big[ \phi\Big(\E[\hat{r}_M(X_\ell) \given \mX]\Big) - \phi\Big(\E[\hat{r}_M(X_\ell) \given X_\ell]\Big) \Big] \ind \Big( X_\ell \in \cE \Big) \\
  & - \Big[ \phi\Big(\E[\hat{r}_M(\tX_\ell) \given \mX_\ell]\Big) - \phi\Big(\E[\hat{r}_M(\tX_\ell) \given \tX_\ell]\Big) \Big] \ind \Big( \tX_\ell \in \cE \Big) \Big\rvert\\
  \le & \Big\lvert \phi\Big(\E[\hat{r}_M(X_\ell) \given \mX]\Big) - \phi\Big(\E[\hat{r}_M(X_\ell) \given X_\ell]\Big) \Big\rvert \ind \Big( X_\ell \in \cE \Big) \\
  & + \Big\lvert \phi\Big(\E[\hat{r}_M(\tX_\ell) \given \mX_\ell]\Big) - \phi\Big(\E[\hat{r}_M(\tX_\ell) \given \tX_\ell]\Big) \Big\rvert \ind \Big( \tX_\ell \in \cE \Big).
\end{align*}

(2) If $i \neq \ell$,
\begin{align*}
  & \Big\lvert \Big[ \phi\Big(\E[\hat{r}_M(X_i) \given \mX]\Big) - \phi\Big(\E[\hat{r}_M(X_i) \given X_i]\Big) \Big] \ind \Big( X_i \in \cE \Big) \\
  & - \Big[ \phi\Big(\E[\hat{r}_M(X_i) \given \mX_\ell]\Big) - \phi\Big(\E[\hat{r}_M(X_i) \given X_i]\Big) \Big] \ind \Big( X_i \in \cE \Big) \Big\rvert\\
  = & \Big\lvert \phi\Big(\E[\hat{r}_M(X_i) \given \mX]\Big) - \phi\Big(\E[\hat{r}_M(X_i) \given \mX_\ell]\Big) \Big\rvert \ind \Big( X_i \in \cE \Big)\\
  = & \Big\lvert \phi\Big(\frac{N_0}{M} \nu_1\big(A_M(i)\big) \Big) - \phi\Big(\frac{N_0}{M} \nu_1\big(A'_M(i)\big) \Big) \Big\rvert \ind \Big( X_i \in \cE \Big)\\
  \le & \Big\lvert \phi\Big(\frac{N_0}{M} \nu_1\big(A_M(i)\big) \Big) - \phi\Big(\frac{N_0}{M} \nu_1\big(A^\ell_M(i)\big) \Big) \Big\rvert \ind \Big( X_i \in \cE \Big)\\
  &+ \Big\lvert \phi\Big(\frac{N_0}{M} \nu_1\big(A_M^\ell(i)\big) \Big) - \phi\Big(\frac{N_0}{M} \nu_1\big(A'_M(i)\big) \Big) \Big\rvert \ind \Big( X_i \in \cE \Big).
\end{align*}

Since $[X_\ell]_{\ell=1}^{N_0}$ and $[\tX_\ell]_{\ell=1}^{N_0}$ are exchangeable, using Efron-Stein inequality, we have
\begin{align*}
  \Var[T_2] \lesssim& \frac{1}{N_0} \E \Big[ \Big\lvert \phi\Big(\E[\hat{r}_M(X_\ell) \given \mX]\Big) - \phi\Big(\E[\hat{r}_M(X_\ell) \given X_\ell]\Big) \Big\rvert \ind \Big( X_\ell \in \cE \Big) \\
  & + \sum_{i=1,i \neq \ell}^{N_0} \Big\lvert \phi\Big(\frac{N_0}{M} \nu_1\big(A_M(i)\big) \Big) - \phi\Big(\frac{N_0}{M} \nu_1\big(A^\ell_M(i)\big) \Big) \Big\rvert \ind \Big( X_i \in \cE \Big) \Big]^2\\
  \lesssim & \frac{1}{N_0} \E \Big[ \Big[ \phi^2\Big(\E[\hat{r}_M(X_\ell) \given \mX]\Big) + \phi^2\Big(\E[\hat{r}_M(X_\ell) \given X_\ell]\Big) \Big] \ind \Big( X_\ell \in \cE \Big) \Big]\\
  & + \frac{1}{N_0} \E \Big[ \sum_{i=1,i \neq \ell}^{N_0} \Big\lvert \phi\Big(\frac{N_0}{M} \nu_1\big(A_M(i)\big) \Big) - \phi\Big(\frac{N_0}{M} \nu_1\big(A^\ell_M(i)\big) \Big) \Big\rvert \ind \Big( X_i \in \cE \Big) \Big]^2.
\end{align*}

For the first term, from Corollary~\ref{crl:moment,catch,random}, $r(x)$ is bounded and $\phi^2(x) \le x^3$ when $x \ge 1$, we obtain
\[
  \E \Big[ \Big[ \phi^2\Big(\E[\hat{r}_M(X_\ell) \given \mX]\Big) + \phi^2\Big(\E[\hat{r}_M(X_\ell) \given X_\ell]\Big) \Big] \ind \Big( X_\ell \in \cE \Big) \Big] \lesssim C,
\]
where $C>0$ is a constant only depending on $f_L, f_U$.

For the second term, notice that $A_M(i) \subset A_M^\ell(i)$ and 
\[
  A_M^\ell(i) \setminus A_M(i) = \{z \in \bR^d: j_{M+1}(z) = i, \ell \in \cJ_M(z)\}.
\]

Then for an independent copy $Z \sim \nu_1$,
\[
  \frac{N_0}{M} \nu_1\big(A^\ell_M(i)\big) - \frac{N_0}{M} \nu_1\big(A_M(i)\big) = \frac{N_0}{M} \P\Big( j_{M+1}(Z) = i, \ell \in \cJ_M(Z) \Biggiven \mX \Big) \le \frac{N_0}{M} \nu_1\big(A_M(\ell)\big).
\]

By the mean value theorem, we have
\begin{align*}
  & \sum_{i=1,i \neq \ell}^{N_0} \Big\lvert \phi\Big(\frac{N_0}{M} \nu_1\big(A_M(i)\big) \Big) - \phi\Big(\frac{N_0}{M} \nu_1\big(A^\ell_M(i)\big) \Big) \Big\rvert \ind \Big( X_i \in \cE \Big)\\
  = &  \sum_{i=1,i \neq \ell}^{N_0} \Big\lvert 1 + \log \xi_i \Big\rvert \Big[\frac{N_0}{M} \P\Big( j_{M+1}(Z) = i, \ell \in \cJ_M(Z) \Biggiven \mX \Big) \Big] \ind \Big( X_i \in \cE \Big),
\end{align*}
where $\xi_i$ is between $\frac{N_0}{M} \nu_1\big(A_M(i)\big)$ and $\frac{N_0}{M} \nu_1\big(A^\ell_M(i)\big)$.

When $\xi_i \le 1$, $\lvert \log \xi_i \rvert \le -\log\Big(\frac{N_0}{M} \nu_1\big(A_M(i)\big) \Big) \ind\Big(\frac{N_0}{M} \nu_1\big(A_M(i)\big) \le 1\Big)$. When $\xi_i \ge 1$, $\lvert \log \xi_i \rvert \le \log \Big(\frac{N_0}{M} \nu_1\big(A^\ell_M(i)\big) \Big) \le \Big(\frac{N_0}{M} \nu_1\big(A_M(i)\big) + \frac{N_0}{M} \nu_1\big(A_M(\ell)\big) \Big)$. Then
\[
  \Big\lvert 1 + \log \xi_i \Big\rvert \le 1 -\log\Big(\frac{N_0}{M} \nu_1\big(A_M(i)\big) \Big) \ind\Big(\frac{N_0}{M} \nu_1\big(A_M(i)\big) \le 1\Big) + \Big(\frac{N_0}{M} \nu_1\big(A_M(i)\big) + \frac{N_0}{M} \nu_1\big(A_M(\ell)\big) \Big).
\]

Notice that events $\{j_{M+1}(Z) = i, \ell \in \cJ_M(Z)\}$ are disjoint for $i \neq \ell$. We then have
\begin{align*}
  & \sum_{i=1,i \neq \ell}^{N_0} \Big[1 + \frac{N_0}{M} \nu_1\big(A_M(\ell)\big) \Big] \Big[\frac{N_0}{M} \P\Big( j_{M+1}(Z) = i, \ell \in \cJ_M(Z) \Biggiven \mX \Big) \Big] \ind \Big( X_i \in \cE \Big)\\
  \le & \sum_{i=1,i \neq \ell}^{N_0} \Big[1 + \frac{N_0}{M} \nu_1\big(A_M(\ell)\big) \Big] \Big[\frac{N_0}{M} \P\Big( j_{M+1}(Z) = i, \ell \in \cJ_M(Z) \Biggiven \mX \Big) \Big]\\
  = & \Big[1 + \frac{N_0}{M} \nu_1\big(A_M(\ell)\big) \Big] \Big[\frac{N_0}{M} \P\Big( \ell \in \cJ_M(Z) \Biggiven \mX \Big) \Big]\\
  = & \Big[1 + \frac{N_0}{M} \nu_1\big(A_M(\ell)\big) \Big] \frac{N_0}{M} \nu_1\big(A_M(\ell)\big).
\end{align*}

From Corollary~\ref{crl:moment,catch,random},
\begin{align}
  \E \Big[\sum_{i=1,i \neq \ell}^{N_0} \Big[1 + \frac{N_0}{M} \nu_1\big(A_M(\ell)\big) \Big] \Big[\frac{N_0}{M} \P\Big( j_{M+1}(Z) = i, \ell \in \cJ_M(Z) \Biggiven \mX \Big) \Big] \ind \Big( X_i \in \cE \Big) \Big]^2 \lesssim C,
\end{align}
where $C>0$ is a constant only depending on $f_L, f_U$.

Let $\Xi_\ell \given \mX$ follows the same distribution as $Z \given \mX, \{Z \in A_M(\ell)\}$ for $Z \sim \nu_1$. Then
\begin{align*}
  & \sum_{i=1,i \neq \ell}^{N_0} \frac{N_0}{M} \nu_1\big(A_M(i)\big) \Big[\frac{N_0}{M} \P\Big( j_{M+1}(Z) = i, \ell \in \cJ_M(Z) \Biggiven \mX \Big) \Big] \ind \Big( X_i \in \cE \Big)\\
  \le & \sum_{i=1,i \neq \ell}^{N_0} \frac{N_0}{M} \nu_1\big(A_M(i)\big) \Big[\frac{N_0}{M} \P\Big( j_{M+1}(Z) = i, \ell \in \cJ_M(Z) \Biggiven \mX \Big) \Big]\\
  = & \sum_{i=1,i \neq \ell}^{N_0} \E \Big[ \Big(\frac{N_0}{M}\Big)^2 \nu_1\big(A_M(i)\big) \ind\Big( j_{M+1}(Z) = i, \ell \in \cJ_M(Z) \Big) \Biggiven \mX \Big]\\
  = & \Big(\frac{N_0}{M}\Big)^2 \E \Big[ \sum_{i=1,i \neq \ell}^{N_0} \nu_1\big(A_M(j_{M+1}(Z))\big) \ind\Big( j_{M+1}(Z) = i, \ell \in \cJ_M(Z) \Big) \Biggiven \mX \Big]\\
  = & \Big(\frac{N_0}{M}\Big)^2 \E \Big[ \nu_1\big(A_M(j_{M+1}(Z))\big) \ind\Big( \ell \in \cJ_M(Z) \Big) \Biggiven \mX \Big]\\
  = & \Big(\frac{N_0}{M}\Big)^2 \E \Big[ \nu_1\big(A_M(j_{M+1}(\Xi_\ell))\big) \Biggiven \mX \Big] \nu_1\big(A_M(\ell)\big) 
\end{align*}
Then
\begin{align*}
  & \E \Big[ \sum_{i=1,i \neq \ell}^{N_0} \frac{N_0}{M} \nu_1\big(A_M(i)\big) \Big[\frac{N_0}{M} \P\Big( j_{M+1}(Z) = i, \ell \in \cJ_M(Z) \Biggiven \mX \Big) \Big] \ind \Big( X_i \in \cE \Big) \Big]^2\\
  \le & \Big(\frac{N_0}{M}\Big)^4 \E \Big[ \E \Big[ \nu_1\big(A_M(j_{M+1}(Z))\big) \ind\Big( \ell \in \cJ_M(Z) \Big) \Biggiven \mX \Big] \Big]^2\\
  = & \Big(\frac{N_0}{M}\Big)^4 \E \Big[ \E \Big[ \nu_1\big(A_M(j_{M+1}(\Xi_\ell))\big) \Biggiven \mX \Big] \nu_1\big(A_M(\ell)\big) \Big]^2\\
  \le & \Big(\frac{N_0}{M}\Big)^4 \Big(\E\Big[\nu_1\big(A_M(j_{M+1}(\Xi_\ell))\big)\Big]^4 \E \Big[\nu_1\big(A_M(\ell)\big)\Big]^4 \Big)^{1/2}\\
  = & \Big(\frac{N_0}{M}\Big)^4 \Big(\E\Big[\nu_1\big(A_M(i)\big)\Big]^4 \E \Big[\nu_1\big(A_M(\ell)\big)\Big]^4 \Big)^{1/2}
\end{align*}
for some $i \neq \ell$. The last step is due to the fact that $[X_i]_{i=1,i \neq \ell}^{N_0}$ are exchangeable and $\ell \in \cJ_M(Z)$ implies $j_{M+1}(Z) \neq \ell$.

By Corollary~\ref{crl:moment,catch,random}, we obtain
\begin{align}
  \E \Big[ \sum_{i=1,i \neq \ell}^{N_0} \frac{N_0}{M} \nu_1\big(A_M(i)\big) \Big[\frac{N_0}{M} \P\Big( j_{M+1}(Z) = i, \ell \in \cJ_M(Z) \Biggiven \mX \Big) \Big] \ind \Big( X_i \in \cE \Big) \Big]^2 \lesssim C,
\end{align}
where $C>0$ is a constant only depending on $f_L, f_U$.

Using the same trick, by Lemma~\ref{lemma:thr} and Corollary~\ref{crl:moment,catch,random}, we have
\begin{align*}
  & \E \Big[ \sum_{i=1,i \neq \ell}^{N_0} \log\Big(\frac{N_0}{M} \nu_1\big(A_M(i)\big) \Big) \ind\Big(\frac{N_0}{M} \nu_1\big(A_M(i)\big) \le 1\Big) \Big[\frac{N_0}{M} \P\Big( j_{M+1}(Z) = i, \ell \in \cJ_M(Z) \Biggiven \mX \Big) \Big] \ind \Big( X_i \in \cE \Big) \Big]^2\\
  \le &  \Big(\frac{N_0}{M}\Big)^2 \Big(\E \Big[ \log^4\Big(\frac{N_0}{M} \nu_1\big(A_M(i)\big) \Big) \ind\Big(\frac{N_0}{M} \nu_1\big(A_M(i)\big) \le 1\Big) \ind \Big( X_i \in \cE \Big) \Big] \E \Big[\nu_1^4\big(A_M(\ell)\big) \Big] \Big)^{1/2}\\
  \le & \Big(\frac{N_0}{M}\Big)^2 \Big(\E \Big[ \log^4\Big(C_1 r(X_i) \Big) \ind\Big(\frac{N_0}{M} \nu_1\big(A_M(i)\big) \le 1\Big) \ind \Big( X_i \in \cE \Big)\Big] \E \Big[ \nu_1^4\big(A_M(\ell)\big) \Big]\Big)^{1/2}\\
  \le & \Big(\frac{N_0}{M}\Big)^2 \Big(\E \Big[ \log^4\Big(C_1 r(X_i) \Big)\Big] \E \Big[ \nu_1^4\big(A_M(\ell)\big) \Big]\Big)^{1/2}\\
  = & \Big[\E \Big[ \log^4\Big(C_1 r(X_i) \Big) \Big] \E \Big[ \Big(\frac{N_0}{M}\Big)^4 \nu_1^4\big(A_M(\ell)\big) \Big] \Big]^{1/2}
  \lesssim C,
\end{align*}
where $C>0$ is a constant only depending on $f_L, f_U, d, f_U'$.
\end{proof}

%\fbox{stop point}

\subsubsection{Proof of Lemma~\ref{lemma:T3}}

\begin{proof}[Proof of Lemma~\ref{lemma:T3}]
We have
\begin{align*}
  \lvert T_3 \rvert &= \Big\lvert \frac{1}{N_0} \sum_{i=1}^{N_0} \Big[ \phi\Big(\E[\hat{r}_M(X_i) \given X_i]\Big) - \phi\Big(r(X_i)\Big) \Big] \ind \Big( X_i \in \cE \Big) \Big\rvert\\
  &\le \frac{1}{N_0} \sum_{i=1}^{N_0} \Big\lvert \phi\Big(\E[\hat{r}_M(X_i) \given X_i]\Big) - \phi\Big(r(X_i)\Big) \Big\rvert \ind \Big( X_i \in \cE \Big).
\end{align*}

From Lemma~\ref{lemma:thr}, if $X_i \in \cE$, then for $N_0$ sufficiently large,
\[
  \frac{1}{2} r(X_i) \le \E[\hat{r}_M(X_i) \given X_i] \le \frac{3}{2} r(X_i).
\]

From the mean value theorem and the proof of Theorem~\ref{thm:pw-rate}\ref{thm:pw-rate1},
\begin{align*}
  \lvert T_3 \rvert & \lesssim \frac{1}{N_0} \sum_{i=1}^{N_0} \Big[ 1 + \Big\lvert \log r(X_i) \Big\rvert \Big] \Big\lvert \E[\hat{r}_M(X_i) \given X_i] - r(X_i) \Big\rvert \ind \Big( X_i \in \cE \Big)\\
  & \lesssim C \Big(\frac{M}{N_0}\Big)^{1/d} \frac{1}{N_0} \sum_{i=1}^{N_0} \Big[ 1 + \Big\lvert \log r(X_i) \Big\rvert \Big]  \ind \Big( X_i \in \cE \Big),
\end{align*}
where $C>0$ is a constant only depending on $f_L, f_U, L, d, f_U'$.

Then we obtain
\begin{align}\label{eq:T31}
  \E [\lvert T_3 \rvert ] \le C \Big(\frac{M}{N_0}\Big)^{1/d},
\end{align}
where $C>0$ is a constant only depending on $f_L, f_U, L, d, f_U'$.

From the i.i.d.-ness of $[X_i]_{i=1}^{N_0}$,
\begin{align*}
  \Var[T_3] &= \Var \Big[\frac{1}{N_0} \sum_{i=1}^{N_0} \Big[ \phi\Big(\E[\hat{r}_M(X_i) \given X_i]\Big) - \phi\Big(r(X_i)\Big) \Big] \ind \Big( X_i \in \cE \Big) \Big]\\
  &= \frac{1}{N_0} \Var \Big[ \Big[ \phi\Big(\E[\hat{r}_M(X_i) \given X_i]\Big) - \phi\Big(r(X_i)\Big) \Big] \ind \Big( X_i \in \cE \Big) \Big]\\
  &\le \frac{1}{N_0} \E \Big[ \Big[ \phi\Big(\E[\hat{r}_M(X_i) \given X_i]\Big) - \phi\Big(r(X_i)\Big) \Big]^2 \ind \Big( X_i \in \cE \Big) \Big]\\
  & \le \frac{1}{N_0} C \Big(\frac{M}{N_0}\Big)^{2/d},
  \yestag\label{eq:T32}
\end{align*}
where $C>0$ is a constant only depending on $f_L, f_U, L, d, f_U'$. The last step is due to the mean value theorem.

Combining \eqref{eq:T31} and \eqref{eq:T32} completes the proof.
\end{proof}

\subsubsection{Proof of Lemma~\ref{lemma:T4}}

\begin{proof}[Proof of Lemma~\ref{lemma:T4}]

We seperate $S_0 \setminus \cE$ into three sets $\cE_1, \cE_2, \cE_3$, where $\cE_1 = \{x \in S_0: 0 < f_1(x) \le C_0(\frac{M}{N_0})^{1/d}\}$, $\cE_2 = \{x \in \bR^d: f_1(x) = 0, \Delta(x)> \delta_N'\}$, $\cE_3 = \{x \in S_0: f_1(x) = 0, \Delta(x) \le \delta_N'\} \cup \{x \in S_0: f_1(x) > C_0(\frac{M}{N_0})^{1/d}, \Delta(x) \le C_0(\frac{M}{N_0})^{1/d}\}$, where $C_0$ is the constant in Lemma~\ref{lemma:thr}, and $\delta_N'$ is defined in the proof of Theorem~\ref{thm:rate,risk}.

For $\cE_1$, from the Cauchy–Schwarz inquality,
\begin{align*}
  & \E \Big[\frac{1}{N_0} \sum_{i=1}^{N_0} \Big[ \phi\Big(\E[\hat{r}_M(X_i) \given \mX]\Big) - \phi\Big(r(X_i)\Big) \Big] \ind \Big( X_i \in \cE_1 \Big) \Big]^2\\
  \le & \E \Big[ \Big[ \phi\Big(\E[\hat{r}_M(X_i) \given \mX]\Big) - \phi\Big(r(X_i)\Big) \Big]^2 \ind \Big( X_i \in \cE_1 \Big) \Big]
\end{align*}

We take $N_0$ sufficiently large such that $C_0(\frac{M}{N_0})^{1/d} \le f_L e^{-1} \wedge 1$. Then for any $x \in \cE_1$, $r(x) \le e^{-1}$ and $f_1(x) \le 1$.

If $\E[\hat{r}_M(X_i) \given \mX] \le r(X_i)$, then
\begin{align*}
  & \E \Big[ \Big[ \phi\Big(\E[\hat{r}_M(X_i) \given \mX]\Big) - \phi\Big(r(X_i)\Big) \Big]^2 \ind \Big( X_i \in \cE_1 \Big) \Big]\\
  \le & \E \Big[ \phi^2\Big(r(X_i)\Big) \ind \Big( X_i \in \cE_1 \Big) \Big] \le \phi^2\Big(f_L^{-1} C_0 \Big(\frac{M}{N_0}\Big)^{1/d}\Big) \P\Big(0 < f_1(X_i) \le C_0(\frac{M}{N_0})^{1/d} \Big)\\
  \lesssim & C \Big(\frac{M}{N_0}\Big)^{2/d} \log^2\Big(\frac{N_0}{M}\Big) \P\Big(0 < f_1(X_i) \le C_0(\frac{M}{N_0})^{1/d} \Big)\\
  \le & C \Big(\frac{M}{N_0}\Big)^{2/d} \log^2\Big(\frac{N_0}{M}\Big) \P\Big(\Big\lvert \log f_1(X_i) \Big\rvert \ge \Big\lvert \frac{1}{d} \log \Big(\frac{N_0}{M}\Big) - \log C_0 \Big\rvert \Big)\\
  \le & C \Big(\frac{M}{N_0}\Big)^{2/d} \log^2\Big(\frac{N_0}{M}\Big)  \Big( \frac{1}{d} \log \Big(\frac{N_0}{M}\Big) - \log C_0 \Big)^{-4} \E \Big[ \log^4\Big(f_1(X_i) \Big) \Big] \\
  \prec& C \Big(\frac{M}{N_0}\Big)^{2/d},
\end{align*}
where $C>0$ is a constant only depending on $f_L, f_U, L, d$.

If $\E[\hat{r}_M(X_i) \given \mX] > r(X_i)$, then from mean value theorem,
\begin{align*}
  & \E \Big[ \Big[ \phi\Big(\E[\hat{r}_M(X_i) \given \mX]\Big) - \phi\Big(r(X_i)\Big) \Big]^2 \ind \Big( X_i \in \cE_1 \Big) \Big]\\
  \le & \E \Big[ \Big(1 + \log\Big(r(X_i)\Big)\Big)^2 \Big(\E[\hat{r}_M(X_i) \given \mX] - r(X_i)\Big)^2 \ind \Big( X_i \in \cE_1 \Big) \Big]\\
  \lesssim & C \Big(\frac{M}{N_0}\Big)^{2/d} \E \Big[ \Big(1 + \log\Big(r(X_i)\Big)\Big)^2 \ind \Big( X_i \in \cE_1 \Big) \Big]\\
  \lesssim & C \Big(\frac{M}{N_0}\Big)^{2/d} \Big(1 + \E \Big[ \log^4\Big(f_1(X_i) \Big) \Big] \Big) \lesssim C \Big(\frac{M}{N_0}\Big)^{2/d},
\end{align*}
where $C>0$ is a constant only depending on $f_L, f_U, L, d, f_U'$.

We then obtain
\begin{align*}
  \E \Big[\frac{1}{N_0} \sum_{i=1}^{N_0} \Big[ \phi\Big(\E[\hat{r}_M(X_i) \given \mX]\Big) - \phi\Big(r(X_i)\Big) \Big] \ind \Big( X_i \in \cE_1 \Big) \Big]^2 \lesssim C \Big(\frac{M}{N_0}\Big)^{2/d},
\end{align*}
where $C>0$ is a constant only depending on $f_L, f_U, L, d, f_U'$.

For $\cE_2$, notice that $(\log x)^2 < \frac{1}{x} + x$. In the same way as case II of the proof of Theorem~\ref{thm:rate,risk}, for any $\gamma>0$, we have
\begin{align*}
  & \E \Big[\frac{1}{N_0} \sum_{i=1}^{N_0} \Big[ \phi\Big(\E[\hat{r}_M(X_i) \given \mX]\Big) - \phi\Big(r(X_i)\Big) \Big] \ind \Big( X_i \in \cE_2 \Big) \Big]^2\\
  \le & \E \Big[ \Big[ \phi\Big(\E[\hat{r}_M(X_i) \given \mX]\Big) - \phi\Big(r(X_i)\Big) \Big]^2 \ind \Big( X_i \in \cE_2 \Big) \Big]\\
  = & \E \Big[ \phi^2 \Big(\E[\hat{r}_M(X_i) \given \mX]\Big) \ind \Big( X_i \in \cE_2 \Big) \Big] \prec N_0^{-\gamma}.
\end{align*}

For $\cE_3$,
\begin{align*}
  &\E \Big[ \frac{1}{N_0} \sum_{i=1}^{N_0} \Big[ \phi\Big(\E[\hat{r}_M(X_i) \given \mX]\Big) - \phi\Big(r(X_i)\Big) \Big] \ind \Big( X_i \in \cE_3 \Big) \Big]^2\\
  = & \frac{1}{N_0} \E \Big[\Big[\phi\Big(\E[\hat{r}_M(X_i) \given \mX]\Big) - \phi\Big(r(X_i)\Big)\Big]^2 \ind \Big( X_i \in \cE_3 \Big) \Big] + \Big(1-\frac{1}{N_0}\Big) \\
  & \E \Big[\Big[\phi\Big(\E[\hat{r}_M(X_i) \given \mX]\Big) - \phi\Big(r(X_i)\Big)\Big]\Big[\phi\Big(\E[\hat{r}_M(X_j) \given \mX]\Big) - \phi\Big(r(X_j)\Big)\Big] \ind \Big( X_i \in \cE_3 \Big) \ind \Big( X_j \in \cE_3 \Big) \Big].
\end{align*}

Notice that
\[
  \Big\lvert\phi\Big(\E[\hat{r}_M(X_i) \given \mX]\Big)\Big\rvert + \Big\lvert\phi\Big(r(X_i)\Big)\Big\rvert \le \Big(\E[\hat{r}_M(X_i) \given \mX]\Big)^2 + r(X_i)^2 + 2.
\]

Employing the same approach as in the proof of case III in Theorem~\ref{thm:rate,risk}, for any $X_i,X_j \in \cE_3$, one can establish
\[
  \E \Big[ \Big(\E[\hat{r}_M(X_i) \given \mX] \Big)^4 \Biggiven X_i,X_j\Big] \lesssim C,
\]
where $C>0$ is a constant only depending on $f_L, f_U, a$.

Then from the independence of $X_i$ and $X_j$, 
\[
  \E \Big[ \frac{1}{N_0} \sum_{i=1}^{N_0} \Big[ \phi\Big(\E[\hat{r}_M(X_i) \given \mX]\Big) - \phi\Big(r(X_i)\Big) \Big] \ind \Big( X_i \in \cE_3 \Big) \Big]^2 \lesssim C \Big(\frac{M}{N_0}\Big)^{2/d},
\]
where the constant $C>0$ only depending on $f_L,f_U,a,H,d$.
\end{proof}

\subsubsection{Proof of Proposition~\ref{thm:minimax,kl}}

\begin{proof}[Proof of Proposition~\ref{thm:minimax,kl}]

We take $\nu_0$ to be the uniform distribution of an arbitrary support such that the density is $f_L$. Notice that in this case, the KL divergence estimation is reduced to the estimation of differential entropy with $N_1$ samples. We then complete the proof using the minimax lower bound in bounded support Lipschitz entropy estimation without the assumption that the density is bounded away from zero \cite[Theorem 7]{han2020optimal}.
\end{proof}

\subsection{Proofs of results in Section~\ref{sec:main-proof}}

\subsubsection{Proof of Lemma~\ref{lemma:leb,p}}

\begin{proof}[Proof of Lemma~\ref{lemma:leb,p}]

The first inequality is directly from the definition of Lebesgue points. The second inequality follows by
\begin{align*}
  &\Big\lvert \frac{\nu(B_{z,\lVert z-x \rVert})}{\lambda(B_{z,\lVert z-x \rVert})} - f(x) \Big\rvert \le \frac{1}{\lambda(B_{z,\lVert z-x \rVert})} \int_{B_{z,\lVert z-x \rVert}} \lvert f(y) - f(x) \rvert \d y \\
  \le & \frac{1}{\lambda(B_{z,\lVert z-x \rVert})} \int_{B_{x,2\lVert z-x \rVert}} \lvert f(y) - f(x) \rvert \d y = \frac{\lambda(B_{x,2\lVert z-x \rVert})}{\lambda(B_{z,\lVert z-x \rVert})}  \frac{1}{\lambda(B_{x,2\lVert z-x \rVert})} \int_{B_{x,2\lVert z-x \rVert}} \lvert f(y) - f(x) \rvert \d y \\
  = & 2^d \frac{1}{\lambda(B_{x,2\lVert z-x \rVert})} \int_{B_{x,2\lVert z-x \rVert}} \lvert f(y) - f(x) \rvert \d y,
\end{align*}
and then the definition of Lebesgue points.
\end{proof}

\subsubsection{Proof of Lemma~\ref{lemma:z,density}}

\begin{proof}[Proof of Lemma~\ref{lemma:z,density}]

Fix any $(\nu_0,\nu_1) \in \cP_{x,\rm p}(f_L,f_U,L,d,\delta)$. 

We first prove the first claim. First consider $f_1(x)>0$. For any $\epsilon>0$, there exists $\delta'>0$ such that for any $z \in \bR^d$ satisfying $\lVert z-x \rVert \le \delta'$, we have $\lvert f_0(z) - f_0(x) \rvert \le \epsilon f_0(x)$ and $\lvert f_1(z) - f_1(x) \rvert \le \epsilon f_1(x)$ from the local Lipschitz assumption. We take $w>0$ sufficiently small such that $w < (1-\epsilon) f_0(x) \lambda(B_{0,\delta'})$. Then $W \le w$ implies $\lVert x-Z \rVert \le \delta'$. Then for $w>0$ sufficiently small,
\begin{align*}
  \P(W \le w) = \P\Big(W \le w, \lVert x-Z \rVert \le \delta'\Big) \le \P\Big(\frac{1-\epsilon}{1+\epsilon} \frac{f_0(x)}{f_1(x)}  \nu_1(B_{x,\lVert x-Z \rVert}) \le w\Big) = \frac{1+\epsilon}{1-\epsilon}\frac{f_1(x)}{f_0(x)} w,
\end{align*}
and
\begin{align*}
  &\P(W \le w) = \P\Big(W \le w, \lVert x-Z \rVert \le \delta'\Big) \ge \P\Big(\frac{1+\epsilon}{1-\epsilon} \frac{f_0(x)}{f_1(x)}  \nu_1(B_{x,\lVert x-Z \rVert}) \le w, \lVert x-Z \rVert \le \delta \Big) \\
  =& \P\Big(\frac{1+\epsilon}{1-\epsilon} \frac{f_0(x)}{f_1(x)}  \nu_1(B_{x,\lVert x-Z \rVert}) \le w\Big) = \frac{1-\epsilon}{1+\epsilon}\frac{f_1(x)}{f_0(x)} w.
\end{align*}
Then we have
\[
  \frac{1-\epsilon}{1+\epsilon}\frac{f_1(x)}{f_0(x)} \le \liminf_{w \to 0} w^{-1} \P(W \le w) \le \limsup_{w \to 0} w^{-1} \P(W \le w) \le \frac{1+\epsilon}{1-\epsilon}\frac{f_1(x)}{f_0(x)}.
\]
Since $\epsilon$ is arbitrary, we obtain
\[
  f_W(0) = \lim_{w \to 0} w^{-1} \P(W \le w) = \frac{f_1(x)}{f_0(x)} = r(x).
\]
The case for $f_1(x)=0$ can be established in the same way. This completes the proof of the first claim.

For the second claim, for any $0< \epsilon < f_L$, there exists $\delta'>0$ such that for any $z \in \bR^d$ satisfying $\lVert z-x \rVert \le \delta'$, we have $\lvert f_0(z) - f_0(x) \rvert \le \epsilon$ and $\lvert f_1(z) - f_1(x) \rvert \le \epsilon$ from the local Lipschitz assumption. We take $N_0$ sufficiently large such that $2\frac{M}{N_0} < (f_L-\epsilon) \lambda(B_{0,\delta'})$. Then for any $0 < w \le 2\frac{M}{N_0}$, we have $w < (f_L-\epsilon) \lambda(B_{0,\delta'})$. We take $t >0$ such that $w+t < (f_L - \epsilon) \lambda(B_{0,\delta'})$. Then for any $(\nu_0,\nu_1) \in \cP_{x,\rm p}(f_L,f_U,L,d,\delta)$,
\begin{align*}
  &\P\Big( w \le W \le w+t \Big) = \nu_1\Big(\Big\{z \in \bR^d: \nu_0(B_{z,\lVert x-z \rVert}) \in [w,w+t]\Big\}\Big)\\
  \le & \frac{f_1(x)+\epsilon}{f_0(x) - \epsilon} \nu_0\Big(\Big\{z \in \bR^d: \nu_0(B_{z,\lVert x-z \rVert}) \in [w,w+t]\Big\}\Big).
\end{align*}
Notice that $f_0$ is lower bounded by $f_L$. Then for $N_0$ sufficiently large,
\[
  \limsup_{t \to 0} t^{-1} \P\Big( w \le W \le w+t \Big) \le \frac{f_1(x)+\epsilon}{f_0(x) - \epsilon} (1+\epsilon).
\]
This then completes the proof.
\end{proof}

\subsubsection{Proof of Lemma~\ref{lemma:ratevar}}

\begin{proof}[Proof of Lemma~\ref{lemma:ratevar}]
Due to the i.i.d.-ness of $Z$ and $\tZ$,
\begin{align*}
  & \Big(\frac{N_0}{M}\Big)^2 \Big[\P \Big(W \le V, \tW \le \tV, W \le 2\frac{M}{N_0}, \tW \le 2\frac{M}{N_0} \Big) - \P \Big(W \le V, W \le 2\frac{M}{N_0}\Big) \P \Big(\tW \le \tV, \tW \le 2\frac{M}{N_0}\Big) \Big]\\
  =& \Big(\frac{N_0}{M}\Big)^2 \int_0^{2\frac{M}{N_0}} \int_0^{2\frac{M}{N_0}} \Big[\P \Big(V \ge w_1, \tV \ge w_2\Big) - \P \Big(V \ge w_1 \Big) \P \Big(\tV \ge w_2\Big) \Big] f_W(w_1)f_W(w_2) \d w_1 \d w_2\\
  \le& 4 \Big(\frac{f_U}{f_L}\Big)^2 \Big(\frac{N_0}{M}\Big)^2 \int_0^{2\frac{M}{N_0}} \int_0^{2\frac{M}{N_0}} \Big\lvert\P \Big(V \ge w_1, \tV \ge w_2\Big) - \P \Big(V \ge w_1 \Big) \P \Big(\tV \ge w_2\Big) \Big\rvert \d w_1 \d w_2\\
  =& 4 \Big(\frac{f_U}{f_L}\Big)^2 \int_{-1}^1 \int_{-1}^1 \Big\lvert\P \Big(V \ge \frac{M}{N_0}(1+t_1), \tV \ge \frac{M}{N_0}(1+t_2)\Big) - \P \Big(V \ge \frac{M}{N_0}(1+t_1) \Big) \P \Big(\tV \ge \frac{M}{N_0}(1+t_2)\Big) \Big\rvert \d t_1 \d t_2,
\end{align*}
where the last step is from taking $w_1 = \frac{M}{N_0}(1+t_1)$ and $w_2 = \frac{M}{N_0}(1+t_2)$.

Let
\[
  S(t_1,t_2):= \Big\lvert \P \Big(V \ge \frac{M}{N_0}(1+t_1), \tV \ge \frac{M}{N_0}(1+t_2)\Big) - \P \Big(V \ge \frac{M}{N_0}(1+t_1) \Big) \P \Big(\tV \ge \frac{M}{N_0}(1+t_2)\Big) \Big\rvert.
\]
If $t_1 \ge t_2 \ge 0$, 
\[
  S(t_1,t_2) \le \P \Big(V \ge \frac{M}{N_0}(1+t_1)\Big) = \P \Big(U_{(M)} \ge \frac{M}{N_0}(1+t_1)\Big).
\]
If $t_2 \ge t_1 \ge 0$, 
\[
  S(t_1,t_2) \le \P \Big(\tV \ge \frac{M}{N_0}(1+t_2)\Big) = \P \Big(U_{(M)} \ge \frac{M}{N_0}(1+t_2)\Big).
\]
Then for $t_1,t_2 \ge 0$,
\[
  S(t_1,t_2) \le \P \Big(U_{(M)} \ge \frac{M}{N_0}(1+t_1 \vee t_2)\Big).
\]
If $t_1 \le t_2 \le 0$ and $\P \Big(V \ge \frac{M}{N_0}(1+t_1), \tV \ge \frac{M}{N_0}(1+t_2)\Big) \ge \P \Big(V \ge \frac{M}{N_0}(1+t_1) \Big) \P \Big(\tV \ge \frac{M}{N_0}(1+t_2)\Big)$,
\begin{align*}
  & S(t_1,t_2) \le \P \Big(\tV \ge \frac{M}{N_0}(1+t_2)\Big) - \P \Big(V \ge \frac{M}{N_0}(1+t_1) \Big) \P \Big(\tV \ge \frac{M}{N_0}(1+t_2)\Big) \\
  =& \P \Big(V \le \frac{M}{N_0}(1+t_1) \Big) \P \Big(\tV \ge \frac{M}{N_0}(1+t_2)\Big) \le \P \Big(V \le \frac{M}{N_0}(1+t_1) \Big) = \P \Big(U_{(M)} \le \frac{M}{N_0}(1+t_1) \Big).
\end{align*}
If $t_1 \le t_2 \le 0$ and $\P \Big(V \ge \frac{M}{N_0}(1+t_1), \tV \ge \frac{M}{N_0}(1+t_2)\Big) \le \P \Big(V \ge \frac{M}{N_0}(1+t_1) \Big) \P \Big(\tV \ge \frac{M}{N_0}(1+t_2)\Big)$,
\begin{align*}
  &S(t_1,t_2) \le \P \Big(\tV \ge \frac{M}{N_0}(1+t_2)\Big) - \P \Big(V \ge \frac{M}{N_0}(1+t_1), \tV \ge \frac{M}{N_0}(1+t_2)\Big) \\
  = & \P \Big(V \le \frac{M}{N_0}(1+t_1), \tV \ge \frac{M}{N_0}(1+t_2)\Big) \le \P \Big(V \le \frac{M}{N_0}(1+t_1) \Big) = \P \Big(U_{(M)} \le \frac{M}{N_0}(1+t_1) \Big).
\end{align*}
If $t_2 \le t_1 \le 0$, we can establish in the same way that
\[
  S(t_1,t_2) \le \P \Big(U_{(M)} \le \frac{M}{N_0}(1+t_2) \Big).
\]
Then for $t_1,t_2 \le 0$,
\[
  S(t_1,t_2) \le \P \Big(U_{(M)} \le \frac{M}{N_0}(1+t_1 \wedge t_2)\Big).
\]
For $t_1 \ge 0 \ge t_2$, if $t_1+t_2 \ge 0$,
\[
  S(t_1,t_2) \le \P \Big(U_{(M)} \ge \frac{M}{N_0}(1+t_1)\Big),
\]
and if $t_1+t_2 \le 0$,
\[
  S(t_1,t_2) \le \P \Big(U_{(M)} \le \frac{M}{N_0}(1+t_2) \Big).
\]
Then
\begin{align*}
  & \Big(\frac{N_0}{M}\Big)^2 \Big[\P \Big(W \le V, \tW \le \tV, W \le 2\frac{M}{N_0}, \tW \le 2\frac{M}{N_0} \Big) - \P \Big(W \le V, W \le 2\frac{M}{N_0}\Big) \P \Big(\tW \le \tV, \tW \le 2\frac{M}{N_0}\Big) \Big]\\
  \le& 4 \Big(\frac{f_U}{f_L}\Big)^2 \int_{-1}^1 \int_{-1}^1 S(t_1,t_2) \d t_1 \d t_2\\
  =& 4 \Big(\frac{f_U}{f_L}\Big)^2 \Big[ \int_0^1 \int_0^1 S(t_1,t_2) \d t_1 \d t_2 + \int_{-1}^0 \int_{-1}^0 S(t_1,t_2) \d t_1 \d t_2 \\
  & + \int_{-1}^0 \int_0^1 S(t_1,t_2) \d t_1 \d t_2 + \int_0^1 \int_{-1}^0 S(t_1,t_2) \d t_1 \d t_2\Big]\\
  =& 4 \Big(\frac{f_U}{f_L}\Big)^2 \Big[ \int_0^1 \int_0^1 S(t_1,t_2) \d t_1 \d t_2 + \int_{-1}^0 \int_{-1}^0 S(t_1,t_2) \d t_1 \d t_2 + 2\int_0^1 \int_{-1}^0 S(t_1,t_2) \d t_1 \d t_2\Big],
  \yestag\label{eq:ratevar3} 
\end{align*}
where the last step is from the symmetry of $S(t_1,t_2)$.

For the first term in \eqref{eq:ratevar3}, by the symmetry of $S(t_1,t_2)$ and the Chernoff bound,
\begin{align*}
  & \int_0^1 \int_0^1 S(t_1,t_2) \d t_1 \d t_2 \le  \int_0^{\infty} \int_0^{\infty} S(t_1,t_2) \d t_1 \d t_2 =  2 \int_0^{\infty} \int_0^{\infty} S(t_1,t_2) \ind(t_1 \ge t_2) \d t_1 \d t_2\\
  \le & 2 \int_0^{\infty} \int_0^{\infty} \P \Big(U_{(M)} \ge \frac{M}{N_0}(1+t_1 \vee t_2)\Big) \ind(t_1 \ge t_2) \d t_1 \d t_2 = 2 \int_0^{\infty} t \P \Big(U_{(M)} \ge \frac{M}{N_0}(1+t)\Big) \d t\\
  \le & 2 \int_0^{\infty} t (1+t)^M e^{-Mt} \d t.
\end{align*}
Notice that 
\begin{align*}
  &\int_0^{\infty} t (1+t)^M e^{-Mt} \d t = \int_0^{\infty} (1+t)^{M+1} e^{-Mt} \d t - \int_0^{\infty} (1+t)^M e^{-Mt} \d t\\
  =& -\frac{1}{M} \Big( -1 - (M+1) \int_0^{\infty} (1+t)^M e^{-Mt} \d t \Big) - \int_0^{\infty} (1+t)^M e^{-Mt} \d t\\
  =& \frac{1}{M} + \frac{1}{M} \int_0^{\infty} (1+t)^M e^{-Mt} \d t = \frac{1}{M} + \frac{e^M}{M} \int_1^{\infty} t^M e^{-Mt} \d t \le \frac{1}{M} + \frac{e^M}{M} \int_0^{\infty} t^M e^{-Mt} \d t\\
  =& \frac{1}{M} + \frac{e^M}{M} \frac{1}{M^{M+1}} \int_0^{\infty} t^M e^{-t} \d t = \frac{1}{M} + \frac{e^M}{M^{M+2}} \Gamma(M+1) = \frac{1}{M} + \frac{e^M}{M^{M+2}} \sqrt{2\pi M}\Big(\frac{M}{e}\Big)^M(1+o(1))\\
  =& \frac{1}{M}(1+o(1)),
  \yestag\label{eq:ratevar4}
\end{align*}
where the second last inequality from Stirling's approximation since $M \to \infty$. We then obtain
\begin{align}\label{eq:ratevar6}
  \int_0^1 \int_0^1 S(t_1,t_2) \d t_1 \d t_2 \le \frac{2}{M}(1+o(1)).
\end{align}

For the second term in \eqref{eq:ratevar3},
\begin{align*}
  & \int_{-1}^0 \int_{-1}^0 S(t_1,t_2) \d t_1 \d t_2 = 2 \int_{-1}^0 \int_{-1}^0 S(t_1,t_2) \ind(t_1 \le t_2) \d t_1 \d t_2 \le 2 \int_{-1}^0 \int_{-1}^0 S(t_1,t_2) \ind(t_1 \le t_2) \d t_1 \d t_2\\
  \le & 2 \int_{-1}^0 \int_{-1}^0 \P \Big(U_{(M)} \le \frac{M}{N_0}(1+t_1 \wedge t_2)\Big) \ind(t_1 \le t_2) \d t_1 \d t_2 = 2 \int_{-1}^0 (-t) \P \Big(U_{(M)} \le \frac{M}{N_0}(1+t)\Big) \d t\\
  =& 2 \int_0^1 t \P \Big(U_{(M)} \le \frac{M}{N_0}(1-t)\Big) \d t \le 2\int_0^1 t(1-t)^M e^{Mt} \d t.
\end{align*}
Notice that
\begin{align*}
  &\int_0^1 t(1-t)^M e^{Mt} \d t =  -\int_0^1 (1-t)^{M+1} e^{Mt} \d t + \int_0^1 (1-t)^M e^{Mt} \d t\\
  =& -\frac{1}{M} \Big( -1 + (M+1) \int_0^1 (1-t)^M e^{Mt} \d t \Big) + \int_0^1 (1-t)^M e^{Mt} \d t \le \frac{1}{M}.
  \yestag\label{eq:ratevar5}
\end{align*}
We then obtain
\begin{align}\label{eq:ratevar7}
  \int_{-1}^0 \int_{-1}^0 S(t_1,t_2) \d t_1 \d t_2 \le \frac{2}{M}.
\end{align}

For the third term in \eqref{eq:ratevar3},
\begin{align*}
  & \int_0^1 \int_{-1}^0 S(t_1,t_2) \d t_1 \d t_2 \\
  =& \int_0^1 \int_{-t_1}^0 \P \Big(U_{(M)} \ge \frac{M}{N_0}(1+t_1)\Big) \d t_1 \d t_2 + \int_0^1 \int_{-1}^{-t_1} \P \Big(U_{(M)} \le \frac{M}{N_0}(1+t_2)\Big) \d t_1 \d t_2\\
  = & \int_0^1 t \P \Big(U_{(M)} \ge \frac{M}{N_0}(1+t)\Big) \d t + \int_{-1}^0 (-t) \P \Big(U_{(M)} \le \frac{M}{N_0}(1+t)\Big) \d t\\
  \le & \int_0^{\infty} t \P \Big(U_{(M)} \ge \frac{M}{N_0}(1+t)\Big) \d t + \int_{-1}^0 (-t) \P \Big(U_{(M)} \le \frac{M}{N_0}(1+t)\Big) \d t\\
  \le & \frac{1}{M}(1+o(1)) + \frac{1}{M} = \frac{2}{M}(1+o(1)),
\end{align*}
where the last step is from \eqref{eq:ratevar4} and \eqref{eq:ratevar5}.

We then obtain
\begin{align}\label{eq:ratevar8}
  \int_0^1 \int_{-1}^0 S(t_1,t_2) \d t_1 \d t_2 \le \frac{2}{M}(1+o(1)).
\end{align}

Plugging \eqref{eq:ratevar6}, \eqref{eq:ratevar7}, \eqref{eq:ratevar8} into \eqref{eq:ratevar3} yields
\begin{align*}
  & \Big(\frac{N_0}{M}\Big)^2 \Big[\P \Big(W \le V, \tW \le \tV, W \le 2\frac{M}{N_0}, \tW \le 2\frac{M}{N_0} \Big) - \P \Big(W \le V, W \le 2\frac{M}{N_0}\Big) \P \Big(\tW \le \tV, \tW \le 2\frac{M}{N_0}\Big) \Big]\\
  \le & 32 \Big(\frac{f_U}{f_L}\Big)^2 \frac{1}{M} (1+o(1)),
  \yestag\label{eq:ratevar10}
\end{align*}
and thus completes the proof.
\end{proof}

{%\small
\bibliographystyle{apalike}
\bibliography{AMS}
}

\end{document}